 \numberwithin{equation}{section}
 \theoremstyle{plain}
  \newtheorem{thm}{Theorem}
 \newtheorem{prop}{Proposition}[section]
 \newtheorem{lem}[prop]{Lemma}
 \theoremstyle{definition}
 \newtheorem{definition}[prop]{Definition}
 \theoremstyle{remark}
 \newtheorem{remark}[prop]{Remark}
 \let\pa=\partial
 \let\al=\alpha
 \let\b=\beta
 \let\d=\delta
 \let\g=\gamma
 \let\e=\varepsilon
 \let \kp = \kappa
 \let\lam=\lambda
 \let\f=\frac
 \let\inf = \infty
 \let \les = \lesssim
 \let \gtr = \gtrsim
 \let\om=\omega
 \let \th = \theta
\let\B = \Big
 \let\D=\Delta
 \let\td = \tilde
 \let\wh=\widehat
 \let\teq \triangleq
 \let\pa=\partial
 \let \bsh = \backslash
 \def\cH{{\mathcal H}}
 \def\cL{{\mathcal L}}
 \def\cR{{\mathcal R}}
 \def\na{\nabla}
 \def\la{\langle}
 \def\ra{\rangle}
\def\lt{\left}
\def\rt{\right}
\def\one{\mathbf{1}}
 \newcommand{\beq}{\begin{equation}}
 \newcommand{\eeq}{\end{equation}}
  \newcommand{\bal}{\begin{aligned} }
  \newcommand{\eal}{\end{aligned}}
 \newcommand{\ben}{\begin{eqnarray}}
 \newcommand{\een}{\end{eqnarray}}
 \newcommand{\beno}{\begin{eqnarray*}}
 \newcommand{\eeno}{\end{eqnarray*}}
 \newcommand{\uu}{\mathbf{u}}
 \newcommand{\R}{\mathbb{R}}
  \newcommand{\bZ}{\mathbb{Z}}
\newcommand{\sgn}{\mathrm{sgn}}
 \author{Jiajie Chen }
 \address{Applied and Computational Mathematics, California Institute of Technology, Pasadena, CA 91125, USA. Email: jchen@caltech.edu}
 \date{\today}
\title[ Regularity of the De Gregorio Model]{
On the Regularity of the De Gregorio Model for the 3D Euler equations}
\begin{document}

\begin{abstract}

% We study the regularity of the De Gregorio (DG) model $\omega_t + u\omega_x = u_x \omega$ on $S^1$ for odd initial data $\omega_0$ with $\pi$ period and $\omega_0 \leq 0 $ on $[0,\pi/2]$. These sign and symmetry properties are the same as those of the smooth initial data that leads to singularity formation of the De Gregorio model on $\R$ or the generalized Constantin-Lax-Majda (gCLM) model on $\R$ or $S^1$ with positive parameter. Thus, to establish global regularity of the DG model with general smooth data, an important step is to rule out finite time blowup with initial data in the above class. We accomplish this by establishing a one-point blowup criterion and proving global well-posedness for $C^{1,\alpha}$ initial data with any $\alpha \in (0,1)$. These results verify the regularity conjecture of the DG model for smooth initial data in the above class. On the other hand, for any $ \alpha \in (0,1)$, we construct finite time blowup solution from a sub-class of initial data with $\omega_0 \in C^{\alpha} \cap C^{\infty}(S^1 \backslash \{0\})$. Our results imply that singularities developed in the DG model on $S^1$ and the gCLM model can be regularized by stronger advection. 

We study the regularity of the De Gregorio (DG) model $\omega_t + u\omega_x = u_x \omega$ on $S^1$ for initial data $\omega_0$ with period  $\pi$ and in class $X$: $\omega_0$ is odd and $\omega_0 \leq 0 $ (or $\omega_0 \geq 0$) on $[0,\pi/2]$. These sign and symmetry properties are the same as those of the smooth initial data that lead to singularity formation of the De Gregorio model on $\mathbb{R}$ or the generalized Constantin-Lax-Majda (gCLM) model on $\mathbb{R}$ or $S^1$ with a positive parameter. Thus, to establish global regularity of the DG model for general smooth initial data, which is a conjecture on the DG model, an important step is to rule out potential finite time blowup from smooth initial data in $X$. We accomplish this by establishing a one-point blowup criterion and proving global well-posedness for initial data  $ \om_0 \in H^1 \cap X$ with $\om_0(x) x^{-1} \in L^{\inf}$.
%$C^{1,\alpha}$ initial data with any $\alpha \in (0,1)$. 
On the other hand, for any $ \alpha \in (0,1)$, we construct a finite time blowup solution from a class of initial data with $\omega_0 \in C^{\alpha} \cap C^{\infty}(S^1 \backslash \{0\}) \cap X$. Our results imply that singularities developed in the DG model and the gCLM model on $S^1$  can be prevented by stronger advection.

\end{abstract}

 \maketitle

\section{Introduction}

% The three-dimensional (3D) incompressible Euler equations in the vorticity formulation read
% \beq\label{eq:Euler}
%   \omega_{t} + u \cdot \nabla \omega = \omega \cdot \nabla u,  \quad u = \na \times (-\D)^{-1} \om
% \eeq
% where $u$ is the velocity field and $\omega = \nabla \times u$ is the vorticity vector of the fluid. The problem of finite time blowup vs global regularity of the 3D Euler equations from smooth initial data remains one of the major open questions in nonlinear partial differential equations. One of the main difficulties is the vortex stretching term $ \omega \cdot \nabla u$, which is nonlinear and nonlocal. 

To model the effect of the vortex stretching in the three-dimensional(3D) incompressible Euler equations, Constantin, Lax, and Majda \cite{CLM85} proposed a one-dimensional model (CLM)
%To model the effect of the vortex stretching, Constantin, Lax and Majda \cite{CLM85} proposed an one-dimensional model (CLM)
\beq\label{eq:CLM}
\om_t = u_x\om , \quad u_x = H \om,
\eeq
where $H$ is the Hilbert transform. %{\color{blue}and established the singularity formation of \eqref{eq:CLM}.}
Singularity formation of \eqref{eq:CLM} was established and was studied in detail in \cite{CLM85}. The effect of the advection in the 3D Euler equations is not modeled in \eqref{eq:CLM}. 

De Gregorio \cite{DG90,DG96} considered both effects by adding an advection term $u \om_x$ to \eqref{eq:CLM}
%De Gregorio \cite{DG90,DG96} considered the effects of advection and vortex stretching by adding $u \om_x$ to \eqref{eq:CLM}
%both effects by adding an advection term $u \om_x$ to \eqref{eq:CLM}
\beq\label{eq:DG}
\bal
\om_t +  u \om_x& = u_x \om , \quad u_x=  H  \om,
\eal
\eeq
and provided some evidence that \eqref{eq:DG} admits no blowup. To understand the effect of the advection in \eqref{eq:DG}, we can neglect the vortex stretching 
$u_x \om$ in \eqref{eq:DG}. The resulting model can also be seen as \eqref{eq:gCLM} with infinite weight $a=\infty$ in the advection. One can obtain the global well-posedness of this model 
using the conservation of $|| \om||_{L^{\inf}}$, see, e.g. \cite{OSW08}.
Numerical simulations performed in \cite{OSW08,lushnikov2020collapse} and the report in \cite{Sve19} suggest that a solution of \eqref{eq:DG} from smooth initial data exists globally. 
%This leads to the regularity conjecture of the De Gregorio (DG) model 
These lead to the conjecture that the De Gregorio (DG) model is globally well-posed for smooth initial data, which was made in \cite{OSW08,lushnikov2020collapse,Elg17}.
%and a natural question of whether the advection in \eqref{eq:DG} can prevent the singularity developed in the CLM model \eqref{eq:CLM}.
Note that the question of regularity for the DG model is listed as one of the open problems in \cite{grafakos2017some}. In contrast to the CLM model, there is a strong competition in the DG model between the nonlocal stabilizing effect due to the advection and a destabilizing effect due to the vortex stretching. These two effects are comparable, making it very challenging to analyze \eqref{eq:DG}. 
%which makes it very challenging to analyze \eqref{eq:DG}. 
We remark that the stabilizing effect of the advection has been studied in \cite{lei2009stabilizing,hou2008dynamic} for the 3D Navier Stokes equations.
%and Euler equations.

Regarding the global regularity of \eqref{eq:DG}, the first result seems to be established only recently by Jia-Stewart-Sverak \cite{Sve19}, who proved the nonlinear stability of a steady state $A \sin (2x)$ of \eqref{eq:DG} with period $\pi$ using spectral theories. %Since the CLM model with initial data close to $ A\sin (2x)$ blows up in finite time, as a result of \cite{Sve19}, the advection in \eqref{eq:DG} prevents singularity formation for such initial data. 
In \cite{lei2019constantin}, Lei-Liu-Ren discovered a novel equation (see \eqref{eq:imp0}) and a conserved quantity for initial data $\om_0$ with a fixed sign and established the global regularity of \eqref{eq:DG} for such initial data.
%We note that for initial data $\om_0$ that is strictly positive or negative,
We note that for strictly positive or negative initial data $\om_0$, the CLM model \eqref{eq:CLM} does not blow up. 
On the other hand, in recent joint work with Hou and Huang \cite{chen2019finite}, we established finite time blowup of \eqref{eq:DG} on $\R$ with initial data $\om_0 \in C_c^{\inf}$
%odd initial data $\om_0 \in C_c^{\inf}$ and $\om_0 \leq 0$ on $\R_+$ 
by proving the nonlinear stability of an approximate blowup profile. %This result implies that the 
Thus the above conjecture on the regularity of the DG model is not valid for all smooth initial data in the case of $\R$.  
%This result disproves the regularity conjecture of \eqref{eq:DG} in the case of $\R$. 

%For initial data $\om_0$ with a fixed sign, the global regularity of \eqref{eq:DG} was obtained by Lei-Liu-Ren in \cite{lei2019constantin} 
%\eqref{eq:DG} 

In this paper, we study the regularity of the De Gregorio model \eqref{eq:DG} on $S^1$ with period $\pi$. We focus on odd initial data $\om_0$ in class $X$ (see \eqref{eq:X}): $\om_0(x) \geq  0$ or $\om_0(x) \leq 0$ for all $x \in [0,  \f{\pi}{2}]$. These properties are preserved dynamically. 
The class of initial data in $X$ seems to provide the most promising scenario for a potential blowup solution of \eqref{eq:DG} on $S^1$ up to now for the following reasons. 
%The scenario with initial data in $X$ seems to be the most promising potential blowup scenario for \eqref{eq:DG} on $S^1$ up to now for the following reasons. 
Firstly, the initial data considered in \cite{chen2019finite} that lead to finite time blowup of \eqref{eq:DG} on $\R$ satisfies the same sign and symmetry properties as those in $X$. Secondly, for the generalized Constantin-Lax-Majda (gCLM) model \cite{OSW08} 
\beq\label{eq:gCLM}
\om_t + a u \om_x = u_x \om, \quad u_x = H \om
\eeq
%with positive parameter $a$, which replaces $u\om_x$ in \eqref{eq:DG} by $a u\om_x$ with $a>0$, 
with $a>0$, which is closely related to \eqref{eq:DG}, singularity formation \cite{Elg17,chen2020singularity,chen2019finite,chen2020slightly,Elg19} all develops from initial data with the same sign and symmetry properties as those in $X$. %In particular, in \cite{chen2020slightly}, we established singularity formation of \eqref{eq:gCLM} on $S^1$ with $a$ slightly less than $1$, which can be seen as a slight perturbation to \eqref{eq:DG}, with smooth initial data in $X$.
In particular, in \cite{chen2020slightly}, we established that the gCLM model on $S^1$ with $a$ slightly less than $1$, which can be seen as a slight perturbation to \eqref{eq:DG}, develops finite time singularity from some smooth initial data in $X$.
Thirdly, this scenario can be seen as a 1D analog of the hyperbolic blowup scenario for the 3D Euler equations reported by Hou-Luo \cite{luo2013potentially-2,luo2014potentially}. See also \cite{kiselev2013small,kiselev2015finite,chen2019finite2}. 
In fact, the restriction of the (angular) vorticity in \cite{luo2013potentially-2,luo2014potentially,kiselev2013small,kiselev2015finite,chen2019finite2} to the boundary has the same sign and symmetry properties as those in $X$. Thus, to establish global regularity of \eqref{eq:DG} for general smooth initial data, we need to address the important question of whether there is a finite time blowup in this class. We note that the initial data considered in \cite{Sve19} is close to the steady state $A \sin(2x)$ of \eqref{eq:DG}. Thus it belongs to or is close to that in $X$.  
%and initial data considered in \cite{Sve19} are belong to $X$ or close to that in $X$.

%The analysis of \eqref{eq:DG} with initial data in $X$ can also provide useful insight on the analysis of \eqref{eq:DG} with general initial data $\om_0$ near a potential singularity. 
%Note that the CLM model blows up at the point in $S = \{ x : \om(x) = 0, \ H\om(x) >0 \}$ \cite{CLM85}.
%at $x$ in finite time if and only if $\om(x)=0$ and $H\om(x) > 0$ \cite{CLM85}.
%See also Section \ref{sec:CLM}. 

Note that the CLM model can only blow up in finite time at the zeros of $\om$ \cite{CLM85}. Since the vortex stretching is the driving force for a potential blowup of \eqref{eq:DG}, it is likely that a potential singularity of \eqref{eq:DG} with general data is also located at the zeros of $\om$. For a zero $x_0$ of $\om$, across which $\omega$ changes sign, the leading order term of $\om$ near $x_0$ is $\pa_x^k \om(x_0) (x-x_0)^k$ for some odd $k \in Z_+$. It has the same sign and symmetry properties as those in $X$. Thus, our analysis of \eqref{eq:DG} with  $\om \in X$ can provide valuable insights on the local analysis of these potential singularities. 
For a zero $x_0$ of $\om$, across which $\om$ does not change sign, the local analysis could benefit from \cite{lei2019constantin}.
%estimate in \cite{lei2019constantin} provides useful analysis. 

%the point in $S$. For each point in $S$, 

%Moreover, the analysis of \eqref{eq:DG} with initial data in $X$ can 

%The data considered in \cite{Sve19} is closely related to the above class of initial data since $\sin(2x )$ is in such class. 

There are other 1D models for the 3D Euler equations and SQG equation, see, e.g. \cite{cordoba2005formation,choi2014on}. We refer to \cite{Elg17,choi2014on} for excellent surveys and \cite{Elg17,choi2014on,chen2021HL} for discussions on the connections.

\subsection{Main results}

Throughout this paper, we consider initial data $\om_0$ in the following class $X$
\beq\label{eq:X}
X \teq \B\{ f: f \mathrm{ \ is \ odd \ }, \pi-\mathrm{periodic \ and \ }   f(x) \leq 0 , x \in [0, \f{\pi}{2} ] \B\},
\eeq
unless we specify otherwise. We assume $\om_0 \leq 0$ on $[0, \f{\pi}{2}]$ without loss of generality. For the case of $\om_0 \geq 0$ on $ [0, \f{\pi}{2} ]$, we can consider a new variable $\om_{new}(x) \teq \om(x + \f{\pi}{2})$ and then reduce it to the previous case. 
It is not difficult to show that the solution  $\om(t)$ remains in $X$.

Our first main result is a one-point blowup criterion. A similar blowup criterion has been obtained in our previous work \cite{chen2020singularity} for the DG model and the gCLM model with dissipation.

\begin{thm}\label{thm:criterion}
Suppose that $\om_0 \in X \cap  H^1 $ and $\int_{0}^{\pi/2} \B| \f{\om_{0,x}^2}{\om_0} \sin(2x)  \B| dx <+\infty$. The unique local in time solution of \eqref{eq:DG} cannot be extended beyond $T > 0$ if and only if
\beq\label{eq:onept}
\int_0^{T} u_x(0, t) d t = \infty.
\eeq

\end{thm}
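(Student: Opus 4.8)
The plan is to establish the blowup criterion through a combination of a BKM-type necessary condition and a sufficiency argument based on controlling $H^1$-type norms. The ``only if'' direction is the easier one: if the solution extends past $T$, then $u_x(0,t)$ stays bounded on $[0,T]$ (it is controlled by, say, $\|\om(t)\|_{C^\alpha}$ via the Hilbert transform, or more simply by the flow-map estimates), so $\int_0^T u_x(0,t)\,dt < \infty$. The substantive content is the ``if'' direction: assuming $\int_0^T u_x(0,t)\,dt < \infty$, one must show the solution does not break down at $T$, i.e. that the relevant norm — here $\|\om(t)\|_{H^1}$ together with the weighted quantity $\int_0^{\pi/2}|\om_x^2 \om^{-1}\sin(2x)|\,dx$ — stays finite up to $T$. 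First I would set up the Lagrangian/flow-map formulation: let $\phi_t(x)$ solve $\partial_t\phi = u(\phi,t)$, $\phi_0 = \mathrm{id}$; along trajectories $\om(\phi(x,t),t)$ evolves by $\frac{d}{dt}\om(\phi,t) = u_x(\phi,t)\om(\phi,t)$, and $\partial_x\phi$ evolves by $\frac{d}{dt}\partial_x\phi = u_x(\phi,t)\partial_x\phi$. The key structural fact, exploited in Lei–Liu–Ren \cite{lei2019constantin} and in \cite{chen2020singularity}, is that the ratio $\om_x/\om$ (or an equivalent quantity built from $\om$ and the stretching factor) satisfies a transport equation whose forcing is controlled by $u_x$ at the origin because of the odd symmetry and the fixed sign of $\om$ on $[0,\pi/2]$; the point $x=0$ is the stagnation point of the flow, so $u_x(0,t)$ governs the worst-case exponential stretching.

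The main steps, in order, would be: (1) derive from \eqref{eq:DG} the evolution equation for $v \teq \om_x/\om$ on $(0,\pi/2)$ — something of the form $v_t + u v_x = -u_x v + (\text{lower-order / Hilbert-transform terms})$ — and observe that since $\om \le 0$ on $[0,\pi/2]$ with $\om$ odd, the sign structure forces the dangerous term at the origin to be exactly $u_x(0,t)$; this gives a Grönwall bound $\|v(t)\|$ in the appropriate space by $\exp(C\int_0^t u_x(0,s)\,ds)$. (2) Use this to control the flow map: $\partial_x\phi$ and its reciprocal are bounded by $\exp(\int_0^t\|u_x(s)\|_{L^\infty}\,ds)$, and I would show $\|u_x(t)\|_{L^\infty} \lesssim u_x(0,t) + (\text{controlled terms})$, again using the monotone sign structure in $X$ and the explicit form $u_x = H\om$; the crucial inequality is that $u_x(0,t) = \frac1\pi\int_0^\pi \cot(y)\,(-\om(y,t))\,dy \ge 0$ dominates the relevant integrals. (3) Propagate the $H^1$ norm: compute $\frac{d}{dt}\|\om\|_{H^1}^2$, integrate by parts, and bound the stretching contributions by $\|u_x\|_{L^\infty}\|\om\|_{H^1}^2$ plus terms handled by the weighted quantity; likewise show the weighted integral $\int_0^{\pi/2}|\om_x^2\om^{-1}\sin(2x)|\,dx$ is propagated — this is exactly where the hypothesis on the initial data enters, since that integral is the conserved-type quantity adapted to the sign class $X$. (4) Conclude: all quantities remain finite on $[0,T]$ when $\int_0^T u_x(0,t)\,dt<\infty$, so by the local well-posedness theory the solution extends past $T$, a contradiction; hence \eqref{eq:onept} must hold at an actual blowup time.

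The main obstacle, I expect, is step (1) together with the bootstrap in step (3): showing that \emph{every} term that could cause $\|\om\|_{H^1}$ (or the weighted quantity) to blow up is in fact controlled by $u_x(0,t)$ rather than by a strictly larger quantity like $\|u_x\|_{L^\infty}$. The Hilbert transform is nonlocal, so $\|u_x(t)\|_{L^\infty}$ is not a priori bounded by $u_x(0,t)$ for general $\om$; the argument must lean essentially on the odd symmetry and the one-signedness on $[0,\pi/2]$ — properties guaranteed by $\om_0\in X$ and preserved by the flow — to reduce the kernel estimate $u_x(x,t) = \frac1\pi\,\mathrm{p.v.}\int \cot(x-y)\om(y,t)\,dy$ at a general $x$ to something comparable to its value at $0$, perhaps after splitting the integral at $\pi/2$ and using monotonicity of $\om$ near the origin. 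A secondary technical point is handling the singular weight $\sin(2x)\sim 2x$ near $x=0$ and the division by $\om$ (which also vanishes like a power of $x$ at $0$); one must check the combination $\om_x^2\om^{-1}\sin(2x)$ is genuinely integrable and that its time-derivative does not generate uncontrolled boundary terms at $x=0$ or $x=\pi/2$, which is precisely why the blowup criterion is formulated with this particular weighted hypothesis rather than a plain $H^1$ assumption.
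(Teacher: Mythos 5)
Your high-level skeleton (a weighted energy estimate, then control of $\|\omega\|_{L^\infty}$, then a BKM-type continuation criterion, with the ``solution extends $\Rightarrow$ integral finite'' direction being trivial) is in the same spirit as the paper, but the two mechanisms that actually make the substantive direction close are missing, and the steps you substitute for them would fail. In your step (1), the quantity $v=\omega_x/\omega$ satisfies $v_t+uv_x=u_{xx}-u_xv$, so the forcing is the \emph{nonlocal} term $u_{xx}=H\omega_x$, which is not controlled by $u_x(0,t)$ through any sign or symmetry argument; no Gr\"onwall bound of the form $\exp\bigl(C\int_0^t u_x(0,s)\,ds\bigr)$ comes out of this equation directly. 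The paper instead works with the Lei--Liu--Ren identity $\tfrac12\partial_t\tfrac{\omega_x^2}{\omega}=-\tfrac12\bigl(u\tfrac{\omega_x^2}{\omega}\bigr)_x+\omega_xH\omega_x$, tests it against the specific function $\sin 2x$, and the dangerous quadratic term vanishes by the exact algebraic cancellation $\int_{S^1}\omega_x H\omega_x\sin(2x)\,dx=0$ (Lemma~\ref{lem:cancel}, a Cotlar-identity computation), while the transported term $\bigl(u\tfrac{\omega_x^2}{\omega}\bigr)_x$ is handled by the extrapolation inequality $\|u/\sin x\|_{L^\infty[0,\pi/4]}\lesssim(u_x(0)+\|\omega\|_{L^1}+1)\log(\|\omega\|_{L^\infty[0,\pi/3]}+2)$ of Lemma~\ref{lem:extrap}. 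Nothing in your outline produces either the cancellation or this logarithmic structure, which is what lets the energy $A(\omega)+u_x(0)+\|\omega\|_{L^1}$ close with only $E\log(E+2)$ growth; also note this weighted quantity is not conserved, only propagated with such growth.

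The second gap is your claim in step (2) that $\|u_x(t)\|_{L^\infty}\lesssim u_x(0,t)+(\text{controlled terms})$. This is not available: the interpolation bounds one gets from $A(\omega)$, $u_x(0)$ and $\|\omega\|_{L^1}$ control $\omega$ only with the degenerate weight $|\cos x|^{1/2}$, i.e.\ away from $x=\pi/2$, so neither $\|\omega\|_{L^\infty}$ nor $\|u_x\|_{L^\infty}$ is yet under control near $\pi/2$. The paper needs a separate, genuinely different argument there: it derives the ODE for $u_x(\pi/2,t)$, uses that $u_x(\pi/2)\le 0$ for $\omega\in X$ so the quadratic stretching term acts as \emph{damping}, proves the quadratic-form lower bound $I+II_1\ge\tfrac14 u_x^2(\pi/2)-C|u_x(\pi/2)|\,\|\omega\|_{L^\infty}$ by a kernel symmetrization near $\pi/2$, and only then obtains $\|\omega\|_{L^\infty}$ bounded in terms of $\int_0^t u_x(0,s)\,ds$, after which the BKM-type criterion applies. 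Your proposal never identifies the region near $x=\pi/2$ as the obstruction or the sign of $u_x(\pi/2)$ as the saving mechanism, and without that piece the bootstrap in your steps (2)--(3) cannot be closed.
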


For $\om \in X \cap H^1$, we have $u_x(0, t) \geq 0$.
%Under the assumption on the initial data, $u_x(t, 0)$ is nonnegative. 
Suppose that $\om$ vanishes to the order $|x|^{\b}, \b >0$ near $x=0$. Then $\f{\om_x^2}{\om} \sin(2x)$ is of order $| x|^{2 (\b - 1) - \b + 1} = |x|^{\b - 1}$ near $x=0$, which is locally integrable. A similar conclusion holds for the local integrability near $ x = \f{\pi}{2}$. 
For $\om \in C^{1,\al} \cap X$, the sign condition in $X$ implies that $\om$ degenerates at its zeros in $S^1 \backslash \{0, \pi/2 \}$ with an order $\b > 1$, if it exists, and thus $\f{\om_x^2}{\om} \sin(2x)$ is still locally integrable. 
% For smooth initial data in $X$, e.g. $\om_0 \in C^{\inf} $ with $\om_0 <0$ on $(0,\pi/2)$, $\pa_x^k \om_0(0) \neq 0, \pa_x^l \om_0(\pi/2) \neq 0$ for some $k, l \in \bZ_+$, the assumption $\int_{0}^{\pi/2} \B| \f{\om_{0,x}^2}{\om_0} \sin(2x)  \B| dx < +\inf$ in Theorem \ref{thm:criterion} holds automatically. 
In particular, for $\om_0 \in C^{\inf} \cap X$ with a finite number of zeros and a finite order of degeneracy, the assumption 
$\int_{0}^{\pi/2} \B| \f{\om_{0,x}^2}{\om_0} \sin(2x)  \B| dx < +\inf$ holds automatically. 
%For smooth initial data in $X$, e.g. $\om_0 \in C^{\inf} $ with $\om_0 <0$ on $(0,\pi/2)$, $\pa_x^k \om_0(0) \neq 0, \pa_x^l \om_0(\pi/2) \neq 0$ for some $k, l \in \bZ_+$, the assumption $\int_{0}^{\pi/2} \B| \f{\om_{0,x}^2}{\om_0} \sin(2x)  \B| dx < +\inf$ in Theorem \ref{thm:criterion} holds automatically. 
%Based on the above blowup criterion, 
Based on Theorem \ref{thm:criterion}, we obtain the following global well-posedness result.
% of \eqref{eq:DG}. 
% Based on the above result, we obtain global regularity of \eqref{eq:DG} with initial data in the class $X$ that
% %that  smooth initial data in the class $X$. 
% vanishes at least linearly near $x=0$.

\begin{thm}\label{thm:GWP}
Suppose that $\om_0 \in X \cap H^1$, $ \om_0(x) x^{-1} \in L^{\inf}$, and $A(\om_0) = \int_{0}^{\pi/2} \B| \f{\om_{0,x}^2}{\om_0} \sin(2x)  \B| dx < +\inf$. There exists a global solution $\om$ of \eqref{eq:DG} with initial data $\om_0$. In particular, (a) for $\om_0 \in X \cap C^{1,\al}$ with $ \al \in (0, 1)$ and $A(\om_0) < +\inf$, there exists a global solution from $\om_0$; 
%there exists a global solution from $\om_0 \in X \cap C^{1,\al}$ with some $ \al \in (0, 1)$ and $A(\om_0) < +\inf$;
(b) for $\om_0 \in X \cap C^1 $ with $A(\om_0) < +\inf$, the unique local solution $\om \in \cap_{\al < 1} C^{\al} $ from $\om_0$ exists globally. If the initial data  further satisfies $\om_0 \in C^{1,\al}$ with $\al \in (0, 1)$ and $\om_{0,x}(0) =0 $, we have
\[
 || \om(t)||_{L^1} + | u_x(0,t) | \leq K(\om_0) e^{ C Q(2) t},  \quad ||\om(t)||_{L^{\inf}} \leq K(\om_0) \exp( 2 \exp( K(\om_0) \exp( C Q(2) t)   )),
 \]
where $Q(2) = \int_0^{\pi/2} |\om_0| \cot^2 y dy$ and $K(\om_0)$ is some constant depending on $ H \om_0(0) , H\om_0( \f{\pi}{2})$, $ || \om_0||_{L^1}, Q(2), A(\om_0)$.
\end{thm}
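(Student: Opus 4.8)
The plan is to bootstrap from the one-point blowup criterion of Theorem \ref{thm:criterion}: it suffices to produce, for any initial data in the stated class, an \emph{a priori} bound showing $\int_0^T u_x(0,t)\,dt < \infty$ on every finite interval $[0,T]$. The natural strategy is to identify a conserved (or slowly-growing) quantity for data in $X$ playing the role of the Lei--Liu--Ren invariant. Following the observation that for $\om \in X$ the ratio $\om(x)/\sin(2x)$ (or $\om(x)x^{-1}$ near the origin) behaves well, I would introduce the weighted quantities $L(t) = \|\om(t)\|_{L^1}$, the boundary stretching rate $u_x(0,t) = H\om(0,t)$, and the functional $A(\om(t)) = \int_0^{\pi/2} |\om_x^2 \om^{-1} \sin(2x)|\,dx$. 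The first step is to track the evolution of $u_x(0,t)$. Writing $u_x(0,t) = \frac{1}{\pi}\int_{S^1}\om(y,t)\cot y \,dy$ (or the appropriate principal-value formula for the $\pi$-periodic Hilbert transform), differentiating in time, and using the equation $\om_t = -u\om_x + u_x\om$ together with integration by parts and the oddness/sign structure, one should obtain a differential inequality $\frac{d}{dt} u_x(0,t) \le C\, u_x(0,t)^2 + (\text{lower order in } \om)$, or better, a bound controlling $u_x(0,t)$ by $\|\om\|_{L^1}$ and $Q(2) = \int_0^{\pi/2}|\om_0|\cot^2 y\,dy$. This is where the hypothesis $\om_0 x^{-1}\in L^\infty$ enters: it ensures $Q(2)<\infty$ and is propagated, keeping the kernel singularity at $x=0$ integrable against $\om$.

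The second step is to propagate the structural quantities. I would show: (i) $\|\om(t)\|_{L^1}$ grows at most exponentially, $\|\om(t)\|_{L^1}\le \|\om_0\|_{L^1} e^{\int_0^t u_x(0,s)\,ds}$ or, combined with step one, $\le K(\om_0)e^{CQ(2)t}$; (ii) the pointwise ratio $\om(x,t)x^{-1}$ stays bounded, by writing the equation for $g = \om/\sin(2x)$ (or along characteristics, using that $u(0)=0$ so the origin is a fixed point of the flow map) and controlling $\partial_x(\om/\sin(2x))$ via $u_x$; (iii) the functional $A(\om(t))$ stays finite — this requires computing $\frac{d}{dt}A(\om(t))$, which involves $\om_{xt}$ and hence $u_{xx}$, and showing the resulting terms are controlled by $A(\om(t))$, $\|\om\|_{L^1}$ and $u_x(0,t)$; the sign condition on $X$ is essential here to keep $\om^{-1}$ from blowing up except at the (integrable-order) zeros. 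Chaining (i)--(iii) with step one gives $\int_0^T u_x(0,t)\,dt<\infty$ for all $T$, so Theorem \ref{thm:criterion} forbids blowup and yields a global solution; the explicit double-exponential bound on $\|\om(t)\|_{L^\infty}$ then follows by a Gr\"onwall argument using $\|\om_x\|_{L^\infty}\lesssim \|\om\|_{L^1} + \text{(BMO-type correction)}$ after establishing the finer control when $\om_{0,x}(0)=0$.

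For the claims (a) and (b) about $C^{1,\al}$ and $C^1$ data, the point is that $\om_0\in C^{1,\al}\cap X$ automatically satisfies $\om_0 x^{-1}\in L^\infty$ (since $\om_0(0)=0$ by oddness and $\om_0$ is Lipschitz) and $A(\om_0)<\infty$ (the zeros of a $C^{1,\al}$ function in $X$ have degeneracy order $\ge 1$, making $\om_x^2\om^{-1}\sin(2x)$ locally integrable, as already noted after Theorem \ref{thm:criterion}), so (a) is a direct specialization. For (b), local well-posedness in $\cap_{\al<1}C^\al$ for $C^1$ data is standard for the transport-type structure of \eqref{eq:DG}; one then applies the global bound to the local solution, and the extra regularity $\om_0\in C^{1,\al}$ with $\om_{0,x}(0)=0$ is what upgrades the generic a priori estimate to the stated quantitative $K(\om_0)e^{CQ(2)t}$ and double-exponential bounds, since $\om_{0,x}(0)=0$ removes the borderline logarithmic loss in controlling $u_x(0,t)$ near the origin.

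The main obstacle I anticipate is step (iii): differentiating $A(\om(t))$ in time brings in $u_{xx}$, which is only log-controlled by $\|\om\|_{L^\infty}$, not by $\|\om\|_{L^1}$, so naively the estimate does not close. The resolution should exploit a cancellation specific to the combination $\om_x^2\om^{-1}$ — likely the same algebraic identity underlying the Lei--Liu--Ren conserved quantity — so that the dangerous $u_{xx}$ term either integrates by parts into something proportional to $u_x(0,t)\cdot A(\om(t))$ or is absorbed by the sign-definite structure on $[0,\pi/2]$. Making this cancellation precise, and verifying it survives the singular weight $\sin(2x)$ and the endpoint behavior at $x=0$ and $x=\pi/2$, is the technical heart of the argument.
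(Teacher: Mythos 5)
There is a genuine gap, and it sits exactly at the heart of the theorem: your outline never explains how $\int_0^T u_x(0,t)\,dt$ is actually controlled for all time. Two specific problems. First, your claim that the hypothesis $\om_0 x^{-1}\in L^\infty$ ``ensures $Q(2)<\infty$ and is propagated'' is false: if $\om_0$ vanishes only linearly at $x=0$ (which is the generic case in $X\cap H^1$, e.g.\ $\om_0\approx -cx$), then $|\om_0|\cot^2 y\sim c/y$ near $y=0$ and $Q(2)=\int_0^{\pi/2}|\om_0|\cot^2y\,dy$ diverges. Finiteness of $Q(2)$ is available only in the special case $\om_{0,x}(0)=0$, which is precisely why the paper splits the proof into that special case and a much harder general case. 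Second, the evolution of $u_x(0)$, equation \eqref{eq:EE_A3}, gives only the Riccati-type inequality $\f{d}{dt}u_x(0)\le u_x^2(0)$, which by itself yields finite-time control and is already exploited inside the proof of Theorem \ref{thm:criterion}; your phrase ``or better, a bound controlling $u_x(0,t)$ by $\|\om\|_{L^1}$ and $Q(2)$'' is exactly the nontrivial content of Theorem \ref{thm:GWP}, and no mechanism for it is proposed. The paper obtains it from the coercivity of the quadratic form $B(\b)$ in \eqref{eq:def_Q}--\eqref{eq:ODE_Q} (Lemma \ref{lem:comp}), proved by symmetrizing the form into a convolution kernel and establishing Fourier positivity (a Bochner-type argument, partly verified by rigorous computation). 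This gives monotonicity of $Q(2,t)$ when $\om_{0,x}(0)=0$, and in the general case the differential inequality of Lemma \ref{lem:extrap_Q} with the critical small factor $(2-\b)$, the damping term $D(t)$ coming from the interaction near $x=\pi/2$, the leading-order structure $(2-\b)Q(\b,t)\les \|\om_0x^{-1}\|_{L^\infty}$ inherited from the flow-map bound, and finally the bootstrap in $\b$ of Lemma \ref{lem:boost}. None of these ingredients appears in your plan, and without them the argument does not close.

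Your steps (i)--(iii) are sound but are essentially already contained in the paper: the exponential $L^1$ bound, the propagation of $\om x^{-1}\in L^\infty$ along characteristics, and the control of $A(\om)$ via the cancellation $\int \om_x H\om_x\sin(2x)\,dx=0$ together with the extrapolation for $u/\sin x$ constitute the proof of Theorem \ref{thm:criterion} (so the ``technical heart'' you anticipate in step (iii) is the blowup criterion, not the new content of Theorem \ref{thm:GWP}); once that theorem is cited, the remaining and essential task is solely the a priori bound on $\int_0^T u_x(0,t)\,dt$. A minor further inaccuracy: $A(\om_0)<\infty$ is not automatic for every $\om_0\in C^{1,\al}\cap X$ (it requires, e.g., finitely many zeros of finite degeneracy), which is why it is retained as a hypothesis in part (a).
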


% \begin{thm}\label{thm:GWP}
% Suppose that $\om_0 \in X \cap C^{1,\al}$ for some $\al \in (0, 1)$  and $A(\om_0) = \int_{0}^{\pi/2} \B| \f{\om_{0,x}^2}{\om_0} \sin(2x)  \B| dx < +\inf$. There exists a global solution $\om$ of \eqref{eq:DG} with initial data $\om_0$. If the initial data further satisfies $\om_{0,x}(0) =0 $, we have
% \[
%  || \om(t)||_{L^1} + | u_x(0,t) | \leq K(\om_0) e^{ C Q(2) t},  \quad ||\om(t)||_{L^{\inf}} \leq K(\om_0) \exp( 2 \exp( K(\om_0) \exp( C Q(2) t)   )),
%  \]
% where $Q(2) = \int_0^{\pi/2} |\om_0| \cot^2 y dy$ and $K(\om_0)$ is some constant depending on $ H \om_0(0) , H\om_0( \f{\pi}{2})$, $ || \om_0||_{L^1}, Q(2), A(\om_0)$.
% \end{thm}

% {\color{blue}In the general case}, the a-priori estimates are much weaker. See Lemma \ref{lem:boost} and Remark \ref{rem:boost} for more discussions. Note that $C^k(S^1) \subset C^{1,\al}(S^1)$ for $k \in \bZ, k \geq 2$. Since $H^s \hookrightarrow C^{1,\al}$ for $s > \al + \f{3}{2}$, Theorem \ref{thm:GWP} implies the global well-posedness (GWP) in $H^s \cap X$ with $s> \f{3}{2}$. 
% {\color{blue}
% The condition that $\om_0$ vanishes at least linearly near $x=0$ in Theorem \ref{thm:GWP}, i.e. $\om_0(x)x^{-1} \in L^{\inf}$, is necessary since we can obtain a finite time blowup for $\om_0$ that is less regular near $x=0$.
% }

In the general case, the a-priori estimates are much weaker. See Lemma \ref{lem:boost} and Remark \ref{rem:boost} for more discussions. %Note that $C^k(S^1) \subset C^{1,\al}(S^1)$ for $k \in \bZ, k \geq 2$. 
Since $H^s \hookrightarrow C^{1,\al}$ for $s > \al + \f{3}{2}$, Theorem \ref{thm:GWP} implies the global well-posedness (GWP) in $H^s \cap X$ with $s> \f{3}{2}$. The condition $\om_0(x)x^{-1} \in L^{\inf}$ in Theorem \ref{thm:GWP} is necessary since we can obtain a finite time blowup for $\om_0$ that is less regular near $x=0$.

%On the other hand, %for initial data that is less regular near $x=0$, we obtain a finite time blowup. 
%vanishes near $x=0$ at order less than linear, we obtain finite time blowup. 
%{\color{blue}On the other hand, for initial data that vanishes near $x=0$ at order $|x|^{\g}$ with $\g < 1$, we obtain finite time blowup.}

\begin{thm}\label{thm:blowup}
For any $0 < \al <1, s < \f{3}{2}$, there exists $\om_0 \in X \cap C^{\al} \cap H^s \cap C^{\inf}(S^1 \bsh \{0\} )$ with $\int_{0}^{\pi/2} \B| \f{\om_{0,x}^2}{\om_0} \sin(2x)  \B| dx <+\infty$,
%odd $\om_0$ with $\om_0 < 0$ on $( 0, \f{\pi}{2})$ and $\om_0 \in C^{\al}$, 
such that the solution of \eqref{eq:DG} with initial data $\om_0$ develops a singularity in finite time. In particular, we have $\int_0^T u_x(0, t) dt = \inf$.
\end{thm}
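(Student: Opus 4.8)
The plan is to invoke the one-point blowup criterion (Theorem~\ref{thm:criterion}) and thereby reduce the problem to producing a datum $\om_0\in X$ whose local solution satisfies $\int_0^T u_x(0,t)\,dt=\inf$ for some finite $T$. For $\om_0$ I would take an explicit function in $C^\inf(S^1\bsh\{0\})$ with leading behavior $\om_0(x)\sim -\sgn(x)\,|x|^{\b}$ near $x=0$, where $\b\in\big(\max(\al,\,s-\tfrac12,\,\tfrac12),\,1\big)$ (a nonempty interval since $s<\tfrac32$). Then $\om_0\in X\cap C^\al\cap H^s\cap H^1$, and $A(\om_0)=\int_0^{\pi/2}|\om_{0,x}^2\om_0^{-1}\sin(2x)|\,dx<\inf$ because the integrand is $\sim|x|^{\b-1}$, so Theorem~\ref{thm:criterion} applies; the fact that $\b<1$ is precisely what will force the blowup.

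The mechanism I expect is as follows. Near $x=0$ the solution should keep the form $\om(x,t)\approx -c(t)\,\sgn(x)\,|x|^{\b}$: the Hölder exponent $\b$ at the flow's fixed point $x=0$ is preserved, while the coefficient $c(t)=-\lim_{x\to0^+}\om(x,t)x^{-\b}$ evolves. Since $u(0,t)=0$, characteristics near $0$ obey $\dot X\approx u_x(0,t)X$, so with $B(t)=\int_0^t u_x(0,s)\,ds$ the preimage of $x$ is $\approx x\,e^{-B(t)}$; substituting this into $\f{D}{Dt}\om = u_x\om\approx u_x(0,t)\om$ along the characteristic gives $c(t)\approx c(0)\,e^{(1-\b)B(t)}$, i.e. $\dot c=(1-\b)\,u_x(0,t)\,c$. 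On the other hand $u_x(0,t)=H\om(0,t)\ge 0$ (as noted after Theorem~\ref{thm:criterion}), being a positive multiple of $\int_0^{\pi/2}(-\om(y,t))\cot y\,dy$, to which the singular part of $\om$ contributes $\gtr\lam_\b\,c(t)$ with $\lam_\b>0$ (the Hilbert transform of an $|\xi|^{\b}$-homogeneous function is again $|\xi|^{\b}$-homogeneous with strictly positive coefficient for $\b\in(0,1)$), while the remaining part of $\om$ contributes with a favorable sign. Hence $\dot B=u_x(0,t)\gtr\lam_\b c(0)\,e^{(1-\b)B}$, a Riccati-type inequality whose solution reaches $+\inf$ at a finite time $T\approx[(1-\b)\lam_\b c(0)]^{-1}$. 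The factor $1-\b$ makes plain why $\b<1$ (equivalently $\om_0 x^{-1}\notin L^\inf$) is essential and consistent with the global well-posedness in Theorem~\ref{thm:GWP} when $\om_0 x^{-1}\in L^\inf$: near $x=0$ the advection is simply too weak to neutralize the vortex stretching of a $C^\b$ datum.

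To make this rigorous I would pass to dynamically rescaled variables $\om(x,t)=C_\om(\tau)\Om(\xi,\tau)$, $\xi=x/C_l(\tau)$, $\f{dt}{d\tau}=C_\om(\tau)^{-1}$, with $C_\om,C_l>0$ fixed by two normalization conditions on $\Om$ (normalizing $H\Om(0,\tau)$ and the $|\xi|^{\b}$-coefficient of $\Om$ at $\xi=0$); then \eqref{eq:DG} becomes $\Om_\tau+(c_l(\tau)\xi+U)\Om_\xi=(c_\om(\tau)+U_\xi)\Om$, $U_\xi=H\Om$, with $c_l,c_\om$ determined by the normalization. Next I would construct an approximate self-similar profile $\bar\Om=\bar\Om_\b\in X$ of this equation with $\bar\Om_\b\in C^\b\cap C^\inf(S^1\bsh\{0\})$ and $\bar\Om_\b(\xi)\sim-\sgn(\xi)|\xi|^{\b}$ near $\xi=0$, obtained by solving the profile equation using that near $\xi=0$ the Hilbert transform acts on the leading $|\xi|^{\b}$-homogeneous part as multiplication by an explicit constant; this also pins down $\bar c_l,\bar c_\om$, with $\bar c_\om/\bar c_l$ encoding the factor $1-\b$ from the heuristic. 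Finally, writing $\Om=\bar\Om_\b+\wt\Om$ and using the normalization to kill the finitely many neutral/unstable modes coming from the scaling symmetries, I would prove that $\wt\Om$ stays small for all $\tau\in[0,\inf)$ in a weighted norm — an $H^1$- or $\dot H^1\cap L^\inf$-type norm with weights singular at $\xi=0$ (and $\xi=\pi/2$) of the kind appearing in the hypothesis $A(\om_0)<\inf$ and in \cite{chen2019finite,chen2020slightly} — exploiting the damping provided by the rescaling/advection terms $(\bar c_l\xi+\bar U)\pa_\xi$ (taking $\bar c_l$ large enough makes advection dominate stretching in this norm) together with Hardy- and Hölder-type bounds to control the nonlocal terms $H\wt\Om$ against the singular weights (whose finiteness uses that $\wt\Om$ vanishes like $|\xi|^{\b}$ at $\xi=0$). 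Then $C_\om(\tau)\to\inf$ while $\int^\inf c_\om(\tau)\,d\tau$ corresponds to a finite physical time $T$, so $\int_0^T u_x(0,t)\,dt=\int^\inf H\Om(0,\tau)\,d\tau=+\inf$, and Theorem~\ref{thm:criterion} upgrades this to a genuine finite-time singularity; the regularity claims follow at once, since $|\xi|^{\b}\in H^{s'}_{\mathrm{loc}}$ for $s'<\b+\tfrac12$ and $C^\b\subset C^\al$.

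The main obstacle will be this nonlinear stability estimate, and within it the choice of weights: they must be singular enough at $\xi=0$ (and $\xi=\pi/2$) to resolve the $C^\b$ degeneracy and to make the linearized operator coercive — advection beating stretching — yet mild enough that the nonlocal Hilbert-transform terms and the quadratic terms close, and compatible with the normalization conditions that remove the unstable directions. Establishing the required coercivity of the linearized operator, in the spirit of the weighted estimates in \cite{chen2019finite,chen2020slightly}, is the technically heaviest step and may require delicate, possibly computer-assisted, estimates.
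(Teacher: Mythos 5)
Your overall skeleton (dynamic rescaling, an approximate blowup profile behaving like a power of $|x|$ at the origin, nonlinear stability in weighted norms, then the criterion of Theorem \ref{thm:criterion} to conclude $\int_0^T u_x(0,t)\,dt=\inf$) is the right philosophy, but the core of your plan is asserted rather than achieved, and at the decisive point it would not go through as stated. You propose a two-parameter rescaling $\om = C_{\om}\Om(x/C_l,\tau)$, an approximate self-similar profile $\bar\Om_{\b}\sim -\sgn(\xi)|\xi|^{\b}$ ``obtained by solving the profile equation,'' and coercivity of the linearization ``by taking $\bar c_l$ large enough so that advection dominates stretching.'' None of these steps is available for $\b$ in the range you allow (just above $\max(\al,s-\tfrac12,\tfrac12)$). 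First, $c_l$ is not a free parameter: once the behavior of the profile at $\xi=0$ and the normalization are fixed, $c_l$ and $c_\om$ are tied to each other and to $\b$ through the profile equation, so you cannot crank up $\bar c_l$ to manufacture damping. Second, on $S^1$ the rescaling term $c_l\,\xi\,\pa_\xi$ is not periodic, so the two-parameter formulation is meaningful only for a localized (focusing or expanding) construction; for exponents that are not small there is no known focusing profile, and indeed the singular solution constructed in the paper is neither focusing nor expanding. Third, for $\b$ away from the endpoints the advection and the vortex stretching are genuinely comparable, and no coercivity of the linearized operator around a hypothetical $|\xi|^{\b}$ profile is known — this is precisely the difficulty your plan leaves open. (Your preliminary Riccati/characteristics heuristic also cannot be closed directly, since controlling $u_x(X(s),s)-u_x(0,s)$ along characteristics requires H\"older bounds on $u_x$ that degenerate at the blowup time; but you correctly flag that part as heuristic.)

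The paper resolves these issues differently, and the differences are essential. It uses only the amplitude/time rescaling (no $C_l$), with the single normalization $c_{\om}=(\al-1)u_x(0,t)$, and takes as approximate steady state the explicit function $\om_{\al}=-c_{\al}\,\sgn(x)|\sin x|^{\al}$ (period $2\pi$ formulation), which for $\al$ close to $1$ is an $O(1-\al)$ perturbation of the exact equilibrium $-\sin x$. Coercivity is then not produced by the rescaling terms at all: it is imported from the Lei--Liu--Ren estimate for the linearization $\cL_1$ around the equilibrium (Lemma \ref{lem:linop}, in the $\cH$-norm with weight $(\sin\tfrac x2)^{-2}$), and the remainder $\cR_{\al}$, the error $F(\om_{\al})$ and the nonlinearity are all shown to be $O((1-\al)^{1/2})$ relative to this coercive term, which is why $\al$ must be taken close to $1$. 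The full range $0<\al<1$, $s<\tfrac32$ in Theorem \ref{thm:blowup} is then obtained for free from the inclusions $C^{\al}\subset C^{\al_1}$ and $H^{s}\subset H^{s_1}$ for $\al_1<\al$, $s_1<s$, not by building a profile with the prescribed exponent as you suggest. If you want to salvage your outline, the fix is to choose the data's H\"older exponent close to $1$, drop the length-scale rescaling, and replace your ``large $\bar c_l$'' damping mechanism by perturbation off the equilibrium $A\sin 2x$ together with the known coercivity of its linearization; as written, the stability step is a genuine gap, not merely a technical one.
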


%Near $x=0$, the initial data in Theorem \ref{thm:blowup} satisfies $\om_0 \approx -C |x|^{\al}$.  
%In view of

One can establish the local well-posedness of \eqref{eq:DG} in $C^{k, \al}$ with any $k \in \bZ_+ \cup \{ 0\}$
%integer $k \geq 0$ 
and $\al \in (0, 1)$ using the particle trajectory method \cite{majda2002vorticity}. From the ill-posedness result for the incompressible Euler equations in \cite{bourgain2015strong}, it is conceivable that \eqref{eq:DG} is ill-posed in $C^1$. 
%$\om_0 \in C^1$. 
For $C^1$ initial data, there is a unique local solution in $\cap_{\al < 1} C^{\al}$.
Thus, in view of the above Theorems, in the class $\om \in X$, the blowup criterion in Theorem \ref{thm:criterion} and the regularity results in Theorems \ref{thm:GWP} and \ref{thm:blowup} are sharp. 

Theorem \ref{thm:GWP} verifies the conjecture on the GWP of \eqref{eq:DG} on $S^1$ and rules out potential blowup of \eqref{eq:DG} from initial data in $C^{\inf} \cap X$. It also addresses the conjecture made in \cite{Elg17} in the case of $S^1$ that the strong solution to \eqref{eq:DG} is global for $C^1$ initial data in class $X$.
Note that the smooth initial data that lead to singularity formation of the gCLM model \eqref{eq:gCLM} on $S^1$ \cite{chen2019finite,chen2020singularity,chen2020slightly} or the CLM model \cite{CLM85} can be chosen in the class in Theorem \ref{thm:GWP}. Thus, Theorem \ref{thm:GWP} implies that the advection in \eqref{eq:DG} can prevent singularity formation in the CLM model or the gCLM model for such initial data. The global regularity results in Theorem \ref{thm:GWP} can be generalized to the DG model \eqref{eq:DG} with an external force $f \om$ linear in $\om$, where  $f \in C^{\inf}$ is a given even function. Theorem \ref{thm:blowup} resolves the conjecture made in \cite{Elg17,sverak2017certain} 
that \eqref{eq:DG} develops a finite time singularity from initial data $\om_0 \in C^{\al}$ or $\om_0 \in  H^s$ for any $\al \in (0,1) $ and $s < \f{3}{2}$ in the case of $S^1$. The case of $\R$ has been resolved in \cite{chen2019finite} with $\om_0 \in C_c^{\inf}$.

% Nov 8 old remark 
% \begin{remark}\label{remark:order}

% As we will see later, for $\om_0 \in X$, the vanishing order of $\om_0$ near $x=0$ characterizes the 
% relative strength between the advection and vortex stretching and determines the regularity of the solution. 
% %term and the vortex stretching term in \eqref{eq:DG} and determines the regularity of solution. 
% As the vanishing order increases, the effect of advection increases.
% %  The advection $u \om_x$ in \eqref{eq:DG} is stronger than, comparable to, and weaker than the 
% % vortex stretching $u_x \om$, if near $x =0$, $\om$ vanishes to the order $|x|^a$ with $a<1$, $a=1$ and $a>1$, respectively.  See Section \ref{sec:intro_comp} for heuristic discussions on these relations. The case $a=1$ relates to the global well-posedness (GWP) result in Theorem \ref{thm:GWP} with $\om_{0,x}(0) \neq 0$.  
% %In the case of $\om \in C^{1,\al}$ with $\om_{0,x} \neq 0$ in Theorem \ref{thm:GWP}, 
% {\color{blue}In the case of $\om$ vanishing linearly near $x=0$ in Theorem \ref{thm:GWP}, e.g. 
% $\om \in C^{1,\al}$ with $\om_{0,x}(0) \neq 0$, }
% %$\om \in C^{1,\al}$ with $\om_{0,x} \neq 0$ in Theorem \ref{thm:GWP}, 
% the effects of two terms balance, which makes it very challenging to establish the GWP result in Theorem \ref{thm:GWP}.

% \end{remark}

In \cite{Elg17}, Elgindi-Jeong made an important observation that the advection can be substantially weakened by choosing $C^{\al}$ data with sufficiently small $\al$, and constructed $C^{\al}$ self-similar blowup solution of \eqref{eq:DG} on $\R$ with small $\al$.
For \eqref{eq:DG} on $S^1$, a finite time blowup from $C_c^{\al}$ data with small $\al$ was obtained in \cite{chen2019finite}. In Theorem \ref{thm:blowup}, the H\"older exponent $\al$ can be arbitrary close to $1$. As we will see in the proof, it suffices to weaken the advection slightly. Theorem  \ref{thm:blowup} is inspired by our previous work \cite{chen2020slightly}, where we constructed a finite time blowup solution for the gCLM model \eqref{eq:gCLM} with $a$ slightly less than $1$ and smooth initial data.
%in the case where the advection is only slightly weaker than the vortex stretching. 

% {\color{blue}
% As we will see later, it suffices to weaken the advection slightly. Theorem  \ref{thm:blowup} is inspired by our previous work \cite{chen2020slightly} on singularity formation of the gCLM model
%  \eqref{eq:gCLM} with $C^{\inf}$ initial data and $a$ slightly less than $1$.
% }

%{\color{blue}%on singularity formation of the gCLM model \eqref{eq:gCLM} with $a$ slightly less than $1$}.

%In \cite{Elg17}, Elgindi-Jeong constructed the $C^{\al}$ self-similar blowup solution of \eqref{eq:DG} with sufficiently small $\al$ on the real line. In the joint work with Hou and Huang \cite{chen2019finite}, we established asymptotically self-similar blowup of \eqref{eq:DG} from $C_c^{\al}$ data with sufficiently small $\al$ both on the real line and the circle. See also \cite{Elg19}. In these results, the advection is substantially weakened by choosing $C^{\al}$ data with sufficient small $\al$. This observation was first made in \cite{Elg17}. In Theorem \ref{thm:blowup}, the H\"older exponent $\al$ can be arbitrary close to $1$. As we will see in the proof, it suffices to weaken the advection slightly. Theorem  \ref{thm:blowup} is inspired by and an anologue of the result in our previous work \cite{chen2020slightly}, where we constructed finite time blowup solution for the gCLM model in the case where the advection is only slightly weaker than the vortex stretching. 

\subsection{Connection with the CLM model}\label{sec:CLM}

%Explicit formula of the solution to \eqref{eq:CLM} was obtained in \cite{CLM85}
The CLM model \eqref{eq:CLM} can be solved explicitly \cite{CLM85}
%In \cite{CLM85},  Constantin-Lax-Majda obtained the explicit solution of \eqref{eq:CLM} 
%In \cite{CLM85},  Constantin-Lax-Majda obtained the explicit solution of \eqref{eq:CLM} 
\beq\label{eq:CLM_sol}
\om(x, t) = \f{ 4 \om_0(x)}{ (2 - t H\om_0(x))^2 + t^2 \om_0^2(x) }, 
\quad H \om(x, t) = \f{2 H\om_0(x) (2 - t H \om_0(x)) - 2 t \om^2_0(x) }{ (2 - t H\om_0(x))^2 + t^2 \om_0^2(x) } .
\eeq

%From the above formula, 
We consider the solution of \eqref{eq:CLM} with period $\pi$ . From \eqref{eq:CLM_sol}, the solution can blow up at $x$ in finite time if and only if $\om_0(x) =0 $ and $H \om_0(x) > 0$.
Consider odd $\om_0$ with $\om_0 < 0$ on $( 0, \f{\pi}{2})$. 
%For $\om_0 \in X$, 
Since $ H\om_0( 0 ) > 0$ and $H \om_0(\f{\pi}{2}) < 0$, the only point $x$ with $\om_0(x) =0 $ and $H \om_0(x) > 0$ is $x =0$. 
Within this class of initial data, from Theorem \ref{thm:criterion}, $u_x(0, t)$ controls the 
blowup in both the CLM model and the De Gregorio model. On the other hand, the CLM model blows up in finite time for smooth initial data , while from Theorems \ref{thm:GWP}, \ref{thm:blowup}, the advection term in the De Gregorio model can prevent singularity formation if the initial data is smooth enough.

%There are  connections among our results, the DG model and incompressible fluids. We will discuss them in Section \ref{sec:incomp}.

\subsection{Competition between advection and vortex stretching}\label{sec:intro_comp}

%The relative strength of the advection and vortex stretching and 
The competition between advection and vortex stretching and its relation with the vanishing order of $\om \in X$ near $x=0$
%regularity of $\om \in X$ 
can be illustrated by a simple Taylor expansion. Suppose that near $x=0$, $\om = - x^{a} + l.o.t.$  for $a>0$ and $u = c x + l.o.t.$ for some $c>0$, where $l.o.t.$ denotes the lower order terms. We impose the latter assumption on $u$ since $u = -(-\pa_{xx})^{-1/2} \om$ is odd and at least $C^1$ with $u_x(0) > 0$ for nontrivial $\om \in X$. The leading order term of $u \om_x$ and $u_x\om$ near $x=0$ are given by 
\[
u \om_x = - ac x^{a} + l.o.t., \quad u_x \om = - c x^a + l.o.t.
\]

This simple calculation suggests that $a-1$ characterizes the relative strength between the advection $|u\om_x|$ and the vortex stretching $|u_x\om|$ near $x=0$. The advection is weaker than, comparable to, and stronger than the vortex stretching if $a<1$, $a=1$, and $a>1$, respectively. 
% This simple calcultion suggests that $a-1$ characterizes the relative strength between the advection $|u\om_x|$ and the vortex stretching $|u_x\om|$ near $x=0$. In particular, the advection is weaker if $a < 1$, and it is comparable to and stronger than the vortex stretching if $a= 1$ and $a>1$, respectively. 
Considering the stabilizing effect of advection \cite{lei2009stabilizing,chen2020slightly,OSW08} and the destabilizing effect of vortex stretching \cite{CLM85}, one would expect that
% Since the advection can have stablizing effect that regularizes the solution \cite{lei2009stabilizing,chen2020slightly,OSW08} and the vortex stretching can lead to fast growth of the solution, for $\om \in X$, in three different cases, one would exoect that
there exists singularity formation in the case of $a<1$ and global well-posedness in the case of $a \geq 1$. Theorems \ref{thm:GWP} and \ref{thm:blowup} confirm this formal analysis. In the case of $a=1$, e.g. $\om_0 \in C^{1,\al}$ with $\om_{0,x}(0) \neq 0$ in Theorem \ref{thm:GWP}, the effects of two terms balance, making it very challenging to establish the GWP result in Theorem \ref{thm:GWP}.
%From the above heuristics and Theorems \ref{thm:GWP}, \ref{thm:blowup}, the case $a<1, a=1$ and $a>1$ can be interpreted as the \textit{supercritical, critical}, and \textit{subcritical} case, respectively. 
To prove these results, we need to quantitatively characterize the competition in three different cases and precisely control the effects of advection and vortex stretching. See more discussions in Section \ref{sec:idea}.

\subsection{Connections with incompressible fluids}\label{sec:incomp}
%We discuss the connections between our results and 
\subsubsection{The effect of advection}

Theorem \ref{thm:GWP} provides some valuable insights on potential singularity formation in incompressible fluids. We consider the 2D Boussinesq equations 
\beq\label{eq:Bous}
\om_t + \uu \cdot \na \om  = \th_x,  \quad \th_t + \uu \cdot \na \th = 0,
\eeq
where $\om$ is the vorticity, $\th$ is the density, and $\uu$ is the velocity field determined by $ \na^{\perp}(-\D)^{-1}\om$.

In the whole space, a promising potential blowup scenario is the hyperbolic-flow scenario with $\th_x, \om$ being odd in both $x, y$, and positive $\th_x, \om$ in the first quadrant. 
The induced flow is clockwise in the first quadrant near the origin. A similar scenario has been used in \cite{zlatovs2015exponential,he2021small}. In this scenario, the flow in the $y$-direction in the first quadrant moves away from the origin. To understand the effect of $y-$advection, we derive a model on $\th_x$, which is the driving force for the growth in \eqref{eq:Bous}. Taking $x-$derivative on \eqref{eq:Bous} and using the incompressible condition $u_{2, y} = -u_{1,x}$, we yield 
\beq\label{eq:bous20}
\pa_t \th_x + \uu \cdot \na  \th_{x} = - u_{1,x} \th_x - u_{2,x}\th_y =
u_{2,y}\th_x -  u_{2,x}\th_y.
\eeq

Dropping $\th_y$ term and the advection in $x$ direction and simplifying $\om = \th_x$, we further derive
\begin{align}
&\pa_t \th_x + u_2  \pa_y \th_x = u_{2, y} \th_x , \label{eq:Bous2} \\
&\uu = \na^{\perp} (-\D)^{-1} \th_x, \quad u_{2, y} = \pa_{xy}(-\D)^{-1} \th_x . \label{eq:Bous3} 
\end{align}

See more  motivations for these simplifications in Appendix \ref{app:bous}. 
% we derive the following model on $\th_x$
% from \eqref{eq:Bous}, which is the driving force for the growth in \eqref{eq:Bous}%
% \begin{align}
% &\pa_t \th_x + u_2  \pa_y \th_x = u_{2, y} \th_x , \label{eq:Bous2} \\
% &\uu = \na^{\perp} (-\D)^{-1} \th_x, \quad u_{2, y} = \pa_{xy}(-\D)^{-1} \th_x , \label{eq:Bous3} 
% \end{align}
% In the derivation, we have dropped $\th_y$, the advection in $x$ direction, applied the incompressible condition $u_{2,y} = u_{1,x}$ and simplified $\om = \th_x$. We refer to Appendix \ref{app:bous} for the derivation. 
% Note that the $\th-$equation in \eqref{eq:Bous} with \eqref{eq:Bous3} reduces to the incompressible porous media equation \cite{cordoba2007analytical,cordoba2011lack}. Equation \eqref{eq:Bous2} captures the competition between the vortex stretching $u_{2,y}\th_x$ and the $y$-advection in the Boussinesq equations \eqref{eq:bous20}. This model relates to \eqref{eq:DG} via the connections $\th_x \to -\om, \pa_{xy}(-\D)^{-1} \to -H$.
Note that the $\th-$equation in \eqref{eq:Bous} with \eqref{eq:Bous3} reduces to the incompressible porous media equation \cite{cordoba2007analytical,cordoba2011lack}. %Equations \eqref{eq:Bous2}-\eqref{eq:Bous3} capture the competition between advection $u_{2} \pa_y\th_x$ and vortex stretching $ u_{2,y}\th_x$ in \eqref{eq:Bous}.
Equation \eqref{eq:Bous2} captures the competition between the vortex stretching $u_{2,y}\th_x$ and the $y$-advection $u_{2} \pa_y\th_x$ in \eqref{eq:bous20}. This model relates to \eqref{eq:DG} via the connections $\th_x \to -\om, \pa_{xy}(-\D)^{-1} \to -H$. 
%Both solutions enjoy similar sign and symmetry properties. 
Moreover, the solutions of the two models enjoy similar sign and symmetry properties. 
See more discussions in Appendix \ref{app:bous}. The connection between $\pa_{xy}(-\D)^{-1} $ and $H$ can be justified under some assumptions \cite{choi2014on,hou2013finite,chen2021HL}, though it may not be consistent with the current setting.

Valuable insight from Theorem \ref{thm:GWP} and the connection between the above model and \eqref{eq:DG} is that if $\th_x(x, y)$ vanishes near $y=0$ to order $|y|^{a}$ with $a \geq 1$, the advection may be strong enough to destroy potential singularity formation. 
In the hyperbolic flow scenario, due to the odd symmetry in $y$, a typical $\th$ near the origin is of the form $\th(x, y) \approx c_1  x^{1+\al} y + l.o.t.$ for $\th \in C^{1,\al}$ and $\th(x, y) \approx c_1  x^{2} y + l.o.t.$ for $\th \in C^{\infty}$. In both cases, $\th_x$ vanishes linearly in $y$, and thus the effect of $y-$advection can be an obstacle to singularity formation. Such effect can be overcome by imposing a solid boundary on $y=0$ and singularity formation with $C^{1,\al}$ velocity has been established in \cite{chen2019finite2}. For smooth data, the importance of boundary has been studied in \cite{luo2014potentially,luo2013potentially-2}. In the absence of a boundary, new mechanisms to overcome the advection or a new scenario may be required to obtain singularity formation of \eqref{eq:Bous} in $\R^2$.

\subsubsection{Connections with the SQG equation}

In \cite{Cor10}, Castro-C\'ordoba observed that a solution $\om(y, t)$ of the De Gregorio model \eqref{eq:DG} can be extended to a solution of the SQG equation 
\beq\label{eq:SQG}
\th_t + \uu \cdot \na \theta = 0, \quad \uu = \na^{\perp}(-\D)^{-1/2} \th
\eeq
with infinite energy via the connection $ \th(x, y, t) = x \om(y, t)$. 
We can perform derivations for \eqref{eq:SQG} similar to those in \eqref{eq:Bous}-\eqref{eq:Bous3}. 
%We can derive the competition in \eqref{eq:SQG} similar to those in \eqref{eq:Bous}-\eqref{eq:Bous3}. 
Under this connection, the terms dropped in the derivations are {\it exactly} $0$, and the SQG equation in the hyperbolic-flow scenario \cite{he2021small} reduces {\it exactly} to the DG model \eqref{eq:DG} with a solution in class $X$. Hence, our analysis of \eqref{eq:DG} provides valuable insight into the effect of advection in \eqref{eq:SQG} in such a scenario.
%the hyperbolic-flow scenario.
%in the SQG equation. 
% and the terms dropped in the derivations are {\it exactly} $0$ under this connection. In particular, the SQG equation in the hyperbolic-flow scenario \cite{he2021small} 
% reduces {\it exactly} to the DG model \eqref{eq:DG} with a solution in class $X$. 
Moreover, from Theorem \ref{thm:GWP}, we obtain  a new class of globally smooth non-trivial solutions to \eqref{eq:SQG} with infinite energy.
%and finite time $C^{\al}$ singular solution with $\al < 1$ to \eqref{eq:SQG} with infinite energy.
Note that a globally smooth solution to \eqref{eq:SQG} with finite energy has been constructed in \cite{castro2020}. See also \cite{gravejat2019smooth}. 
Singularity formation of \eqref{eq:SQG} from smooth initial data with infinite energy follows from \cite{chen2019finite}.

Under the radial homogeneity ansatz $\th(t, r, \b) = r^{2 - 2\al} g (t, \b)$, Elgindi-Jeong \cite{elgindi2020symmetries} established a connection between a solution $\th$ to the generalized SQG equation and a solution $g(t, \b)$ to the gCLM model \eqref{eq:gCLM} with $a>1$ up to some lower order term in the velocity operator. Our analysis of the global regularity of \eqref{eq:DG} sheds useful light on the analysis of %gCLM model 
\eqref{eq:gCLM} with $a>1$ and constructing globally non-trivial solutions to the generalized SQG equation using the connection in \cite{elgindi2020symmetries}. In particular, our argument to analyze $u_x(0)$ and a singular integral, which is defined in \eqref{eq:idea_Q} and characterizes the competition between advection and vortex stretching in \eqref{eq:DG}, can be generalized to the gCLM model with $a>1$. See more discussions in Section \ref{sec:conclude}.

% Using this connection and derivations similar to those in \eqref{eq:bous20}-\eqref{eq:Bous3}, 

% we can reduce the SQG equation in the hyperbolic-flow scenario \cite{he2021small}  {\it exactly} to the DG model \eqref{eq:DG} with solution in the class $X$. 

%Derivations for the SQG equation similar to \eqref{eq:bous20}-\eqref{eq:Bous3} can performed. Under such connection, the SQG equation in the hyperbolic

%Finally, it is interesting to note that a solution of the De Gregorio model \eqref{eq:DG} can be extended to a solution of the SQG equation with infinite energy \cite{Cor10}.

\vspace{0.1in}
\paragraph{\bf{Organization of the paper}}

In Section \ref{sec:idea}, we discuss the main ideas in the proofs of the main theorems.
In Section \ref{sec:criterion}, we establish the one-point blowup criterion. In Section \ref{sec:regular}, we discuss the stabilizing effect of the advection in \eqref{eq:DG} and study the positive-definiteness of several quadratic forms, which are the building blocks for the %global well-posedness 
GWP results in Theorem \ref{thm:GWP}. In Section \ref{sec:GWP}, we prove Theorem \ref{thm:GWP}. In Section \ref{sec:blowup}, we construct finite time blowup of \eqref{eq:DG} with $C^{\al}\cap H^s$ data. %In Section \ref{sec:incomp}, we discuss some connections between the DG model and incompressible fluids. 
We make some concluding remarks on the potential generalization of the results in Section \ref{sec:conclude}. Some technical Lemmas and derivations are deferred to the Appendix.

\section{Main ideas and the outline of the proofs}\label{sec:idea}

In this section, we discuss the main ideas and outline the proofs of the main theorems.

\subsection{ Difference between the De Gregorio on $\R$ and on $S^1$}\label{sec:intro_diff}

%In \cite{chen2019finite}, Chen-Hou-Huang established singularity formation of \eqref{eq:DG} on $\R$ from odd initial data $\om_0 \in C_c^{\inf}$ with $\om_0 \leq 0$ for $x \geq 0$. The symmetry and sign properties of such data are the same as those in $X$. 
%Note that from \cite{chen2019finite}, \eqref{eq:DG} on $\R$ blows up in finte time from initial data that has the same sign and symmetry properties as those in $X$.
Note that the initial condition considered in \cite{chen2019finite} that leads to finite time blowup of \eqref{eq:DG} on $\R$ has the same sign and symmetry properties as those in $X$.
To establish the well-posedness results in Theorems \ref{thm:criterion} and \ref{thm:GWP}, 
we need to understand the mechanism on $S^1$ that prevents singularity formation similar to \cite{chen2019finite}. 

 For \eqref{eq:DG} on $S^1$ with $\om \in X$, we have two special points $x=0, x = \pi/2$, which correspond to $x=0, x = \infty$ in the case of $\R$. One of the key differences between two cases is captured by the evolution of $|| \om||_{L^1}$ 
%In \eqref{eq:EE_L12},\eqref{eq:EE_L13}, we derive the following identity for $|| \om||_{L^1}$
%\beq\label{eq:intro_L1}
\[
\f{d}{dt} \B( -\int_0^{\pi/2} \om(x) dx\B)
= \f{2}{\pi} \int_0^{\pi/2}\int_0^{\pi/2} \om(x) \om(y) \cot(x + y) dx dy ,
\]
%\eeq
which is derived in \eqref{eq:EE_L12},\eqref{eq:EE_L13}. Since $\om \leq 0$ on $[0,\pi/2]$, 
%For $ \om \in X $, we have $ \om \leq 0$ and thus 
$-\int_0^{\pi/2} \om(x) dx$ is the same as $|| \om||_{L^1}$.

%The interaction with $x + y \leq \f{\pi}{2}$ on the right hand side leads to the growth of $|| \om||_{L^1}$ due to $\cot(x+y) \geq 0$, while in the caes of $x+ y \geq \pi/2$, it contributes to the decrease of $|| \om||_{L^1}$ due to $\cot(x+y) \geq 0$.
For $x + y \leq \f{\pi}{2}$, the interaction on the right hand side has a positive sign due to $\cot(x+y) \geq 0$, which leads to the growth of $|| \om||_{L^1}$. On the other hand, for $x+ y \geq \f{\pi}{2}$, the interaction has a negative sign, which contributes to the decrease of $|| \om||_{L^1}$. 
% For $x + y \leq \f{\pi}{2}$, the interaction on the right hand side has a positive sign due to $\cot(x+y) \geq 0$, which leads to the growth of $|| \om||_{L^1}$. On the other hand, for $x+ y \geq \f{\pi}{2}$, the interaction has a negative sign, which contributes to the decrease of $|| \om||_{L^1}$. 
The former and the latter interaction can be seen as the interaction near $0$ and $\pi/2$, respectively. % while the latter as that near $x=\pi/2$.
%As we will see later, 
The latter plays a crucial role in our proof as a damping term. For comparison, a similar ODE can be derived 
for \eqref{eq:DG} on $\R$ with $\cot(x+y)$ replaced by $\f{1}{x+y}$. The interaction is always positive and can contribute to the unbounded growth of the singular solution in \cite{chen2019finite} in the far field. 
Yet, for \eqref{eq:DG} on $S^1$, similar growth near $ x= \pi/2$ is prevented due to the above damping term.

Moreover, for \eqref{eq:DG} on $S^1$ with $\om \in X$, we have $- u \in X$ and thus $u_x(0) >0$ and $u_x( \f{\pi}{2}) < 0$ for nontrivial $\om$. The sign of $u_x( \f{\pi}{2})$ suggests that near $x =  \f{\pi}{2}$, the vortex stretching $u_x \om$ in \eqref{eq:DG} depletes the growth of the solution. Using these observations, we show that the nonlinear terms near $x = \f{\pi}{2}$ are harmless. Thus, the main difficulty is the analysis of \eqref{eq:DG} near $x=0$. 

%there are other damping terms near $x= \pi/2$ similar to the above. Using these quantities, we show that interaction near $x =\pi/2$ is harmless. Thus, the main difficulty is the analysis of \eqref{eq:DG} near $x=0$. 

%Another important quantity on $S^1$ is $u_x( \f{\pi}{2})$, which is absent in the case of $\R$ (the analogue is $u_x(\infty)$). We derive another damping term in the ODE of $u_x( \f{\pi}{2})$ that enables us to control $u_x( \f{\pi}{2})$. See more discussions in Section \ref{sec:idea_criterion}. 
%Using these two main damping terms, we show that interaction near $x =\pi/2$ is harmless. Thus, the main difficulty lies in the analysis of \eqref{eq:DG} near $x=0$. 

\subsection{The one-point blowup criterion}\label{sec:idea_criterion}

In \cite{lei2019constantin}, an important equation was discovered 
%To prove Theorem \ref{thm:criterion}, we use an important equation of $\f{\om_x^2}{\om}$ discovered in \cite{lei2019constantin}
\beq\label{eq:imp0}
\f{1}{2} \pa_t ( ( \sqrt{\om} )^{\prime} )^2
= -\f{1}{2} u ( ( (\sqrt{\om}) ^{\prime} )^2 )^{\prime} - \f{1}{2} H \om ( ( \sqrt{\om} )^{\prime} )^2 + \f{1}{4} ( H \om)^{\prime} \om^{\prime},
\eeq
%To prove Theorem \ref{thm:criterion}, we use the following identity derived from \eqref{eq:imp0}
%A direct consequence of \eqref{eq:imp0} is the following identity
which implies
\beq\label{eq:imp}
\f{1}{2} \pa_t \f{\om_x^2}{\om} = - \f{1}{2} \B(  u \f{\om_x^2}{\om} \B)_x + \om_x H \om_x.
\eeq
Identity \eqref{eq:imp} can also be obtained from the equation of $\om_x$ and $\om^{-1}$ using \eqref{eq:DG}. 

To prove Theorem \ref{thm:criterion}, one of the key steps is the estimate of a new quantity $\int_{0}^{\pi/2}  \f{\om_x^2}{\om} \sin(2x) dx$. The vanishing property of $\sin(2x)$ near $x =0, \f{\pi}{2}$ cancels the singularity caused by $\f{1}{\om}$ for $\om \in X$. Since $\om(t)$ remains in $X$ \eqref{eq:X} and $\om \leq 0$ on $[0, \f{\pi}{2} ]$, 
%It is not difficult to obtain that the symmetry and sign properties of the solution are preserved and 
$ \f{\om_x^2}{\om} \sin(2x) $ has a fixed sign. To control the nonlinear terms in the energy estimate, 
%right hand side of \eqref{eq:imp} in the energy estimate, 
we will exploit the conservation form $ \B(  u \f{\om_x^2}{\om} \B)_x $, use 
an important cancellation on a quadratic form of $\om_x$ and a crucial extrapolation inequality on $u$.
%, and estimate $u  (\sin 2x)^{-1}$ using $u_x(0), || \om||_{L^1}$, $\int_{0}^{\pi/2}  \f{\om_x^2}{\om} \sin(2x) dx$ and a crucial extrapolation inequality. 
Using some estimates in \cite{chen2020singularity,chen2019finite}, we derive a-priori estimates on $ u_x(0), || \om||_{L^1}, \int_{0}^{\pi/2}  \f{\om_x^2}{\om} \sin(2x) dx$, which controls $\om(x)$ away from $x = \f{\pi}{2} $ by interpolation. 
By exploiting the damping mechanisms near $x= \pi/2$ discussed in Section \ref{sec:intro_diff}, we further show that  $u_x( \f{\pi}{2}, t)$ cannot blow up before the blowup of $u_x(0, t)$. 
%We further control the solution near $x = \f{\pi}{2}$ using the important observation in Section \ref{sec:intro_diff} that the vortex stretching $u_x\om$ depletes the growth of the solution near $x = \f{\pi}{2}$. In particular, we show that  $u_x( \f{\pi}{2})$ cannot blowup before the blowup of $u_x(0)$. 
With these estimates, we obtain an a-priori estimate on $|| \om||_{L^{\inf}}$ in terms of $\int_0^{t} u_x(0,s) ds$, and establish the one-point blowup criterion by applying the Beale-Kato-Majda type blowup criterion \cite{beale1984remarks,Sve19}. See also \cite{OSW08}.

\subsection{Global well-posedness}\label{sec:idea_GWP}

To prove Theorem \ref{thm:GWP} using Theorem \ref{thm:criterion}, we need to further control $u_x(0)$. In the special case of $\om_0 \in C^{1,\al}$ with $\om_{0, x}(0) = 0$, the key step is to establish
\beq\label{eq:conser}
 \f{d}{dt} \int_{ 0 }^{ \f{\pi}{2}} \om \cot^2 x dx  =  \int_{ 0 }^{ \f{\pi}{2}} (u_x \om - u \om_x) \cot^2 x dx \geq 0 .
%\f{d}{dt} Q(2, t)  = - \int_{ 0 }^{ \f{\pi}{2}} (u_x \om - u \om_x) \cot^2 x dx \leq 0 , \quadQ(\b, t) \teq -\int_{ 0 }^{ \f{\pi}{2}} \om \cot^2 x dx
\eeq
The quantity $\int_0^{\pi/ 2} \om \cot^2 x  dx $ is well-defined for $\om \in C^{1,\al}$ with $\om_x(0) = 0$ and $\al > 0$. The above inequality quantifies that the stabilizing effect of advection is stronger than the effect of vortex stretching in some sense for $\om$ in this case.
%if $\om$ vanishes near $x=0$ to order $|x|^{a}, a> 1$. 
%The above inequality characterizes the competition between the advection and vortex stretching, and quantifies that the regularizing effect of advection is stronger than the effect of vortex stretching in some sense. 
%Formally, the case $\om_{0,x}(0) = 0$ can be regarded as the subcritical case. 
 %advection dominates. 
 We will exploit the convolution structure in the quadratic form 
in \eqref{eq:conser} and use an idea from Bochner's theorem for a positive-definite function to establish \eqref{eq:conser}. We remark that an inequality similar to \eqref{eq:conser} has been established in the arXiv version of \cite{chen2019finite}, where a more singular function $\cot^{\b} x$ with $\b \geq 2.2$ is used.
%the singular weight $\cot^2 x$ is 
%exponent $\b$ of the singular weight $\cot^{\b} x$ satisfies $\b \geq 2.2$. 
The inequality \eqref{eq:conser} is stronger than that in \cite{chen2019finite} since 
$\int_0^{\pi/2} \om (\cot x)^{\b} dx$ is not well-defined for $\om \in C^{1,\al}$ with $\al \in (0,\b-2)$ and $\om_x(0)=0$.  
 %the latter is not applicable and $\om(y) \cot^{\b}(y) \notin L^1( [0,\pi/2])$ for $\om \in C^{1,\al}$ with $\al \in (0,\b-2)$. 
%for $\om_0$ that vanishes near $x=0$ to order $|x|^{\g}$ with $1 < \g < \b - 1$.  
Since $\om \leq 0$ on $[0, \pi /2]$, \eqref{eq:conser} implies an a-priori estimate of $ \int_0^{\pi/2} | \om \cot^2 x | dx$, based on which we can further control $||\om||_{L^1}, u_x(0)$ and establish the global well-posedness. 
%Then following the argument in the arXiv version of \cite{chen2019finite} and using Theorem \ref{thm:criterion}, we control $u_x(0)$ and establish the global well-posedness. 

% The above case with $\om_{0, x}(0) = 0$ can be interpreted as the subcritical case since $\om \approx - C |x|^{1+\al}$ near $x=0$ and the regularizing effect of advection is stronger. See the discussion in section \ref{sec:intro_comp}. Similarly, the case $\om_{0, x} \neq 0$ can be interpreted as the critical case.

%In the case of $\om_{0,x}(0) \neq 0$, 
In the general case, $\om_0$ can vanish only linearly near $x=0$. The proof is much more challenging since $\int_0^{\pi/2} | \om \cot^2 x | dx$ is not well-defined, and there is no similar coercive conserved quantity. Note that in this case, for $\om_0$ close to $A \sin 2 x$ in the $C^2$ norm, the solution $\om(x, t)$ converges to $A \sin 2 x$ as $t \to \infty$ \cite{Sve19}. As pointed out in \cite{Sve19}, this imposes strong constraints on possible conserved quantities. 
Thus, it is not expected that there is any good conserved quantity similar to some weighted norm of $\om$. 
%Similar observations have been made in \cite{Sve19}. 

To illustrate our main ideas, we consider $\om_0 \in  C^{1,\al} \cap X$ with $\om_{0,x} \neq 0$. In this case,
%In this case with  $\om_0 \in C^{1,\al} \cap X$, 
the only conserved quantities seems to be $\om_x( x, t) \equiv \om_{0,x}(x)$ for $x = 0, \f{\pi}{2}$. Surprisingly, the one-point conservation law $\om_x(0, t) \equiv \om_{0,x}(0)$ allows us to control $Q(\b, t)$ defined below
%$Q(\b, t) = \int_0^{\pi/ 2} |\om \cot^{\b} x | dx$ 
for $\b <2$. We remark that we do not have monotonicity of $Q(\b, t)$ in $t$ similar to \eqref{eq:conser} when $\b < 2$. A crucial observation is the following leading order structure 
\beq\label{eq:idea_Q}
Q(\b, t) \teq \int_0^{\pi/2} -\om(y,t) (\cot y )^{\b} dy 
= \f{ - \om_x(0)}{2-\b} + \cR(\b, t), \quad  | \cR(\b, t)| \les_{\al} || \om||_{C^{1,\al}},
\eeq
for any $\b < 2$. As long as $\om(t)$ remains in $C^{1,\al}$, we can choose $\b$ sufficiently close to $2$, such that $(2-\b) Q(\b, t)$ is comparable to $ - \om_x(0)$, which is time-independent. 
Using this observation, an ODE of $Q(\b, t)$ similar to \eqref{eq:conser} but with a nonlinear forcing term and an additional extrapolation-type estimate, we can control $Q( \b(t), t)$ with $\b(t)$ sufficiently close to $2$. %{\color{blue}For the less regular initial data $\om_0 \in X \cap H^1$ with $\om_0 x^{-1} \in L^{\inf}$,  we will establish an estimate similar to the above.} 
In the case of the less regular initial data $\om_0 \in X \cap H^1$ with $\om_0 x^{-1} \in L^{\inf}$, we will establish an estimate similar to \eqref{eq:idea_Q}. This enables us to further control $u_x(0)$ and establish the global well-posedness.

% In the above subcritical case ($\om_{0, x}(0)= 0$), the proof relies on the coercive conserved quantity $Q(2,t)$. % In the critical case, the proof is much more challenging since $\int_0^{\pi} | \om \cot^2 x | dx$ is not well defined and there is no similar coercive conserved quantity. Note that in the critical case, from \cite{Sve19}, for $\om_0$ close to $A \sin 2 x$, the solution $\om(x, t)$ converges to $A \sin 2 x$ as $t \to \infty$. This put strong constraints on possible conserved quantities. Thus, it is not expected that there is any good conserved quantity similar to some weighted norm of $\om$. See also similar discussion in \cite{Sve19}. 

\subsection{Finite time blowup}
To prove Theorem \ref{thm:blowup}, we follow the method in the work of Chen-Hou-Huang \cite{chen2019finite}. We also adopt an idea developed in our previous work  \cite{chen2020slightly} 
that a singular solution of the gCLM model \eqref{eq:gCLM} can be constructed by perturbing the equilibrium $ \sin (2 x)$ of \eqref{eq:DG}. %Our key observation is that the advection in \eqref{eq:DG} can be weaken slightly by constructing 
We first construct a $C^{\al}$ approximate self-similar profile of \eqref{eq:DG} $\om_{\al} = C \cdot \sgn(x) | \sin 2x|^{\al}$ with $\al < 1$ sufficiently close to $1$. Our key observation is that for $\al < 1$, the advection $u \om_x$ is slightly weaker than the vortex stretching $u_x \om$. See the discussions in the paragraph before Section \ref{sec:CLM} and in Section \ref{sec:intro_comp}. Then we establish the nonlinear stability of the profile $\om_{\al}$ in the dynamic rescaling formulation of \eqref{eq:DG} based on the coercivity estimates of a linearized operator established in \cite{lei2019constantin} and several weighted estimates. 
Using the nonlinear stability results and the argument in \cite{chen2019finite,chen2020slightly}, we further establish finite time blowup.

The finite time singularity of \eqref{eq:DG} on $\R$ from $C_c^{\inf}$ initial data established in \cite{chen2019finite} has expanding support, and the vorticity blows up at $\inf$.
The singularities of the gCLM model \eqref{eq:gCLM} with weak advection constructed in \cite{Elg17,chen2019finite,Elg19,chen2020singularity} are focusing, and the blowups occur at the origin. Due to the relatively strong advection and the compactness of a circle, the $C^{\al}$ singular solution of \eqref{eq:DG} on $S^1$ we construct is neither expanding nor focusing, which is similar to that in \cite{chen2020slightly}. Moreover, the solution blows up in most places at the blowup time. 
%Compared to the blowup analysis of the gCLM model in our previous work \cite{chen2020slightly}, 
Compared to the analysis of the gCLM model in \cite{chen2020slightly}, the blowup analysis of \eqref{eq:DG} with $C^{\al}$ data is more complicated due to the less regular profile and its estimates in the nonlinear stability analysis with singular weights.

\section{One-point blowup criterion}\label{sec:criterion}

In this section, we establish the one-point blowup criterion in Theorem \ref{thm:criterion}.

Recall the class $X$ defined in \eqref{eq:X} and the Hilbert transform on a circle with period $\pi$
\beq\label{eq:hil}
u_x = H \om = \f{1}{\pi} P.V. \int_{-\pi/2}^{\pi/2} \om(y) \cot (x-y) dy,  \quad
u =  -\f{1}{\pi} \int_{-\pi/2}^{\pi/2} \om(y) \log \B| \f{\sin(x+y)}{\sin(x-y)} \B| dy.
\eeq
For \eqref{eq:DG} with initial data $\om_0 \in X$, it is not difficult to obtain that $\om( \cdot, t ), -u(\cdot , t)$ remain in $X$.
%$, and $-u( \cdot, t) \in X$.

\subsection{Energy estimate}\label{sec:EE_near}
To perform energy estimate using \eqref{eq:imp}, we multiply both sides of \eqref{eq:imp} with $-\sin(2x) \in X$ so that $-\f{\om_x^2}{\om} \sin(2x) \geq 0$. Integrating them over $S^1$, we obtain 
\beq\label{eq:EE_A1}
\f{1}{2} \f{d}{dt} \int_{S^1}  -\f{\om_x^2}{\om} \sin(2x) dx 
=   \f{1}{2} \int_{S^1} \B(  u \f{\om_x^2}{\om} \B)_x \sin (2x) dx 
- \int_{S^1} \om_x H \om_x \sin(2x ) dx \teq I + II.
\eeq

We introduce the following functionals  
\beq\label{eg:A}
A(\om) \teq  \int_{S^1}  -\f{\om_x^2}{\om} \sin(2x) dx , \quad 
E(\om) = A(\om) + u_x(0) + || \om||_{L^1}, 
\quad U(t) \teq \int_0^{t} u_x(0, s) ds.
\eeq

%\subsection{ Estimate of $II$}

We choose the special function $\sin 2x$ due to the crucial cancellation in Lemma \ref{lem:cancel}
\beq\label{eq:cancel1}
II =  \int_{S^1} \om_x H \om_x \sin(2x ) dx  = 0.
\eeq

% An important reason that we use the special function $\sin 2x$ is to obtain a crucial cancellation 
% \beq\label{eq:cancel1}
% II =  \int_{S^1} \om_x H \om_x \sin(2x ) dx  = 0,
% \eeq
% whose proof is deferred to Section \ref{sec:ineq}.

For $I$, using integration by parts, we obtain 
\[
I = - \f{1}{2} \int_{S^1}  u \f{\om_x^2}{\om}  (\sin (2x) )_xdx 
= -\int_{S^1} \f{u \cos(2x) }{\sin(2x)} \f{\om_x^2}{\om} \sin(2x) dx 
= -2 \int_0^{\pi/2} \f{u \cos(2x) }{\sin(2x)} \f{\om_x^2}{\om} \sin(2x) dx .
\]

A crucial observation is that by taking advantage of the conservation form $( u \f{\om_x^2}{\om})_x$ and performing estimate on \eqref{eq:imp} with an explicit function, the coefficient $ \f{u \cos (2x)}{ \sin(2x)} $ in the nonlinear term $I$ for $x$ away from $x =0, \f{\pi}{2}$ is of lower order than $u_x, \om$. %This enables us to use an extrapolation below to close the energy estimate. 
We further estimate $I$ from above. 
Since $\om , - u\in X$, we derive $- \f{\om_x^2}{\om} \sin(2x) \geq 0,  \f{u}{\sin(2x)} \geq 0 $, and $\cos(2x) \leq 0 $ on $ [\f{\pi}{4}, \f{\pi}{2}]$. It follows 
% Since $\om \in X$, it is not difficult to obtain $u$ is odd with $u \geq 0$ on $[0, \f{\pi}{2}]$. Note that $- \f{\om_x^2}{\om} \sin(2x) \geq 0,  \f{u}{\sin(2x)} \geq 0 $, and $\cos(2x) \leq 0 $ on $ [\f{\pi}{4}, \f{\pi}{2}]$. 
%We obtain 
\beq\label{eq:EE_A2}
I \leq -2 \int_0^{\pi/4} \f{u \cos(2x) }{\sin(2x)} \f{\om_x^2}{\om} \sin(2x) dx 
\les \B| \B| \f{u}{\sin x}\B|\B|_{L^{\inf}[0,\f{\pi}{4}]} A(\om),
\eeq
where $A(\om)$ is defined in \eqref{eg:A}. The fact that the nonlinear term in $[\pi/4, \pi/2]$ is harmless is related to the discussion in Section \ref{sec:intro_diff}. To control $ \f{u}{\sin x}$, we use the following extrapolation.

\begin{lem}\label{lem:extrap}
Suppse that $\om \in X$ satisfies $A(\om) <+\inf, u_x(0) < +\inf$ and $\om \in L^1$. We have 
\begin{align}
\B| \B| \f{u}{\sin x} \B| \B|_{L^{\inf}[0,\f{\pi}{4}]} 
&\les ( u_x(0) + || \om||_{L^1} + 1) \log( || \om||_{L^{\inf}[0, \f{\pi}{3}]} + 2) ,  \label{eq:extrap}  \\
|| \ |\cos x|^{1/2} \om||_{L^{\inf} } &\les ( A(\om) ( u_x(0) + ||\om||_{L^1} )  )^{1/2} ,
%|| |\cos x|^{1/2} \om||_{L^{\inf}[0, \f{\pi}{3}  ] } &\les ( A(\om) u_x(0) )^{1/2} ,
\quad 
|| \sin x \cdot \om||_{L^{\inf}  } \les ( A(\om) |u_x( \pi/2 )| )^{1/2} .\label{eq:interp1} 
\end{align}
\end{lem}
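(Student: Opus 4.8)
\textbf{Proof proposal for Lemma \ref{lem:extrap}.}

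The plan is to treat the three estimates by exploiting the representation formulas \eqref{eq:hil} for $u$ together with the sign and symmetry structure of $\om \in X$. For the interpolation bounds \eqref{eq:interp1}, I would argue pointwise: fix $x_0 \in (0,\pi/2)$ and write $\om(x_0)^2 = \big(\int$ of $2\om\om_x\big)$ over a suitable interval adjacent to a zero of $\om$ (namely $0$ or $\pi/2$, whichever is closer), so that $|\om(x_0)|^2 \le \int_0^{\pi/2} \big|\frac{\om_x^2}{\om}\big| w(x)\,dx \cdot \int |\om| w(x)^{-1}\,dx$ by Cauchy--Schwarz, choosing the weight $w = \sin(2x)$ that appears in $A(\om)$. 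With $w = \sin(2x) \sim |\cos x|$ near $x=\pi/2$ and $\sim x$ near $x=0$, the factor $|\cos x_0|^{1/2}$ (resp.\ $\sin x_0$) is exactly what is needed to absorb the reciprocal weight after integrating, and $\int |\om| w^{-1}$ is controlled by $\|\om\|_{L^1}$ plus a boundary contribution; the latter is estimated by $u_x(0)$ near $0$ and by $|u_x(\pi/2)|$ near $\pi/2$, using that $u_x(0) = H\om(0) = \frac1\pi\int \om(y)\cot y\,dy \gtrsim \int_0^{\pi/2}|\om(y)| y^{-1}\,dy$ for $\om \in X$ (and the analogous statement at $\pi/2$). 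The two cases (nearest zero $0$ versus $\pi/2$) give the two displayed inequalities.

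For the extrapolation estimate \eqref{eq:extrap}, I would start from the formula $u(x) = -\frac1\pi\int_{-\pi/2}^{\pi/2}\om(y)\log\big|\frac{\sin(x+y)}{\sin(x-y)}\big|\,dy$, use oddness of $\om$ to fold it to an integral over $[0,\pi/2]$, and estimate $\frac{u(x)}{\sin x}$ for $x \in (0,\pi/4]$. The kernel $\frac{1}{\sin x}\log\big|\frac{\sin(x+y)}{\sin(x-y)}\big|$ behaves, near the diagonal, like $\frac{1}{\sin x}\log\frac{x+y}{|x-y|}$; one splits the $y$-integral into the near-diagonal region $|y-x| \lesssim x$ and the far region. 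On the far region the kernel is bounded by $C\min(1, y/x)\cdot(\text{something integrable})$ and contributes $\lesssim \|\om\|_{L^1}$; on the near-diagonal region the logarithmic singularity is integrable and, after pulling out $\sup|\om|$ over a neighborhood, produces the $\log(\|\om\|_{L^\infty[0,\pi/3]}+2)$ factor — this is the standard trick of bounding a log-divergent integral by splitting at a scale determined by $\|\om\|_{L^\infty}/\|\om\|_{L^1}$. The $u_x(0)$ term enters because $\frac{u(x)}{\sin x} \to u_x(0)$ as $x \to 0$, so I would actually estimate $\frac{u(x)}{\sin x} - u_x(0)$ and bound the difference, which regularizes the kernel at $y$ near $0$ and makes the far-field piece genuinely $\lesssim \|\om\|_{L^1}$.

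The main obstacle I anticipate is the sharp bookkeeping of the kernel $\frac{1}{\sin x}\log\big|\frac{\sin(x+y)}{\sin(x-y)}\big|$ uniformly for $x \in (0,\pi/4]$ and $y \in [0,\pi/2]$: one must simultaneously control the behavior as $x \to 0$ (where the prefactor $1/\sin x$ blows up but is compensated by the vanishing of the log, only after subtracting $u_x(0)$), the logarithmic singularity on the diagonal $y = x$, and the endpoint $y \to \pi/2$ where $\sin(x\pm y)$ does not vanish but the periodized kernel must still be handled. I would organize this by a dyadic decomposition in $|y-x|$ (and a separate treatment of $y \le x/2$ versus $y \ge 2x$ versus $|y - x| \le x/2$), in each piece extracting either an $L^1$ norm or an $L^\infty$ norm of $\om$ with the correct logarithmic loss. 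The cancellation coming from subtracting $u_x(0)$ is what makes the low-$y$ region safe, and the restriction to $x \le \pi/4$ keeps us away from the reflected singularity near $x = \pi/2$. Estimates of this exact type appear in \cite{chen2020singularity,chen2019finite}, so I expect to be able to quote or lightly adapt the relevant kernel lemmas there rather than redo the dyadic sum from scratch.
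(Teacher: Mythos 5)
Your sketch for \eqref{eq:extrap} is essentially the paper's proof: fold the representation of $u$ to $[0,\pi/2]$, split the $y$-integral at a small scale $\e$ around the diagonal $y\approx x$, bound the near-diagonal piece by $\e\log\e^{-1}\,\|\om\|_{L^\infty[0,\pi/3]}$ and the off-diagonal piece by $\log\e^{-1}\,(u_x(0)+\|\om\|_{L^1})$, and then choose $\e\sim(\|\om\|_{L^\infty}+10)^{-1}$; this is exactly what the paper does. One caveat: subtracting $u_x(0)$ does \emph{not} make the off-diagonal contribution ``genuinely $\les\|\om\|_{L^1}$''. The problematic region is $y\ll x$, where after subtraction the kernel behaves like $\cot y$, so you still need $\int_0^{\pi/2}|\om|\cot y\,dy\approx u_x(0)$ there (the paper avoids subtraction by writing the kernel as $\f{\sin y}{\sin x}\log\bigl|\f{\sin(x+y)}{\sin(x-y)}\bigr|$ against $\om(y)/\sin y$ and using the sign condition to get $\int_0^{\pi/2}|\om|(\cot y+1)\,dy\les u_x(0)+\|\om\|_{L^1}$). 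Since $u_x(0)$ is allowed in the final bound this is repairable; note also that your inequality $u_x(0)\gtrsim\int_0^{\pi/2}|\om|y^{-1}dy$ fails near $y=\pi/2$ (where $\cot y\to 0$), and must be replaced by $u_x(0)+\|\om\|_{L^1}\gtrsim\int_0^{\pi/2}|\om|y^{-1}dy$.

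The genuine gap is in your plan for \eqref{eq:interp1}. Both inequalities are global sup bounds over $x\in[0,\pi/2]$, and their right-hand sides are asymmetric: the first contains no $u_x(\pi/2)$, and the second contains \emph{only} $|u_x(\pi/2)|$ (no $u_x(0)$, no $\|\om\|_{L^1}$). Your ``integrate toward the nearest zero'' case split with the symmetric weight $w=\sin 2x$ cannot produce this pairing. Concretely, for $x_0$ near $\pi/2$ your scheme gives $|\om(x_0)|^2\le A(\om)\int_{x_0}^{\pi/2}\f{|\om|}{\sin 2y}\,dy$, and $\int_{x_0}^{\pi/2}\f{|\om|}{\sin 2y}\,dy$ is of the size $\int|\om|\tan y\,dy\approx|u_x(\pi/2)|$; the prefactor $|\cos x_0|^{1/2}\le 1$ does not ``absorb'' this into $u_x(0)+\|\om\|_{L^1}$, because $\cos x_0/\cos y\ge 1$ for $y\ge x_0$, i.e. the monotonicity goes the wrong way. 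Symmetrically, for $x_0$ near $0$ your scheme yields only $u_x(0)+\|\om\|_{L^1}$, not the $|u_x(\pi/2)|$ required by the second inequality. What is missing is the paper's monotonicity trick: for the first inequality one integrates from $0$ for \emph{every} $x$, using $(\cos x)^{1/2}\le(\cos y)^{1/2}$ for $y\le x$ to move the weight inside \emph{before} Cauchy--Schwarz, so that $(\cos y)^{1/2}$ pairs with $(\sin 2y)^{1/2}$ to produce $\f{\cos y}{\sin 2y}=\f{1}{2\sin y}\les\cot y+1$, giving $u_x(0)+\|\om\|_{L^1}$; for the second one integrates from $\pi/2$ for every $x$, using $\sin x\le\sin y$, which produces $\f{\sin^2 y}{\sin 2y}=\f{\tan y}{2}$ and hence $|u_x(\pi/2)|$ alone. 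With your case split you only obtain the mixed bound $\les\bigl(A(\om)(u_x(0)+|u_x(\pi/2)|+\|\om\|_{L^1})\bigr)^{1/2}$ for both weighted sup norms, which is strictly weaker than the lemma and, in particular, undermines its use in Section \ref{sec:EE_far}, where the estimate of $u_x(\pi/2)$ relies on a bound for $\|\,|\cos x|^{1/2}\om\|_{L^\infty}$ that does not already involve $u_x(\pi/2)$.
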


We remark that $|| \om||_{L^{\inf}[0, \f{\pi}{3}]}$ can be further bounded by $|| \ |\cos x|^{1/2} \om||_{L^{\inf} }$.

% The proof of Lemma \ref{lem:extrap} is deferred to Section \ref{sec:ineq}. We remark that
% $|| \om||_{L^{\inf}[0, \f{\pi}{3}]}$ can be further bounded by $|| \ |\cos x|^{1/2} \om||_{L^{\inf} }$. Let us motivate the above inequality. 
% From \eqref{eq:hil}, we get
% \beq\label{eq:extrap1}
% \f{u}{\sin x}  = -\f{1}{\pi} \int_0^{\pi/2} \om(y)  \f{ 1}{\sin x} \log \B| \f{\sin(x+y)}{\sin(x-y)} \B| dy =
% -\f{1}{\pi} \int_0^{\pi/2} \f{\om(y) }{\sin y} \f{ \sin y}{\sin x} \log \B| \f{\sin(x+y)}{\sin(x-y)} \B| dy 
% \eeq
% To control the integral away from the singular region, we use $u_x(0) + || \om||_1$, which can be seen as a weighted $L^1$ norm. To control the integral in the singular region, we use $||\om||_{L^{\inf}}$.  

%\subsection{ Extrapolation inequalities} \label{sec:ineq}

\begin{proof}
Denote 
\[
K(x, y) =  \f{ \sin y}{\sin x} \log \B| \f{\sin(x+y)}{\sin(x-y)} \B|
= \f{ \sin y}{\sin x} \log \B| \f{\tan x + \tan y}{\tan x - \tan y} \B|, \quad f(x) = x \log \B| \f{x+1}{x-1} \B|.
\]

From \eqref{eq:hil}, we get
\beq\label{eq:extrap1}
\f{u}{\sin x}  = -\f{1}{\pi} \int_0^{\pi/2} \om(y)  \f{ 1 }{\sin x} \log \B| \f{\sin(x+y)}{\sin(x-y)} \B| dy =
-\f{1}{\pi} \int_0^{\pi/2} \f{\om(y) }{\sin y} K(x, y) dy .
%\f{ \sin y}{\sin x} \log \B| \f{\sin(x+y)}{\sin(x-y)} \B| dy  .
\eeq

%In this subsection, we prove Lemma \ref{lem:extrap}. 

For $\e < \f{1}{10}$ to be determined, we decompose \eqref{eq:extrap1} as follows 
\[
\bal
\B|\f{u}{\sin x} \B|
&\les \int_0^{\pi/2}\one_{ |y/x - 1| > \e} \B| \f{\om(y)}{\sin y} K(x, y) \B| dy 
+\int_0^{\pi/2} \one_{ |y/x - 1 | \leq \e}  \B| \f{\om(y)}{\sin x} \B| \log \B| \f{\sin(x+y)}{\sin(x-y)} \B| dy   \\
&\teq I + II.
\eal
\]

Denote $z = \f{\tan y}{\tan x}$. For $ |y/x - 1|  > \e, x , y\in [0, \pi/2]$, we have 
\[
 |z-1| = \B|\f{\tan y - \tan x}{\tan x}\B|
 = \B| \f{\sin(x-y) }{ \cos x \cdot  \cos y \cdot \tan x} \B|
 =\B| \f{\sin(x-y)}{ \cos y \cdot \sin x}\B| \gtr \f{|x-y|}{x} \gtr \e.
\]

For $ x \in [0, \f{\pi}{4}]$ and $y \in [0, \f{\pi}{2}]$, using $\sin x \asymp \tan x, \sin y \leq \tan y$ and the above estimate, we get
\[
K(x, y) \les  \f{\tan y}{\tan x}  \log \B| \f{\tan x + \tan y}{\tan x-  \tan y}\B|
= z \log \B|\f{ z+1}{z-1} \B| = f(z) \les \log \e^{-1}, 
\] 
where we have used $f(z) \les 1$ for $z>2$ and $z< \f{1}{2}$ to obtain the last inequality. It follows 
\[
I \les \log \e^{-1} \int_0^{\pi/2} \f{|\om|}{\sin y} dy
\les \log \e^{-1} \int_0^{\pi/2} (-\om(y)) (\cot y + 1) dy
\les \log \e^{-1}  ( u_x(0) + || \om||_1).
\]

For $II$, since $ |\f{y}{x}-1| \leq \e < \f{1}{10} $ and $x \in [0, \pi / 4]$, we yield $y \in [0,\f{\pi}{3}]$. Since $\sin z \asymp z $ on $[0, 3\pi/4]$, we get $ \B|\f{\sin(x+y)}{\sin(x-y)} \B| \les \B|\f{ x+y}{x-y}\B|$. Using these estimates, we derive 
\[
II \les || \om||_{L^{\inf}(0,\f{\pi}{3})} \int_{ |y/x - 1| \leq \e}  (1 + \log \B| \f{y+x}{y-x} \B| ) \f{1}{x} dy
=|| \om||_{L^{\inf}(0,\f{\pi}{3})}  \int_{1-\e}^{1+\e} (1 + \log \B|\f{1+z}{1-z}\B| ) d z.
\]
Using a change of variable $ s = z-1 \in [-\e, \e]$, we further obtain 
\[
II \les || \om||_{L^{\inf}(0,\f{\pi}{3})} \int_{ |s| \leq \e} \log |s|^{-1} ds \les 
\e \log \e^{-1} || \om||_{L^{\inf}(0,\f{\pi}{3})} .
\]

Choosing $\e = ( || \om||_{L^{\inf}(0,\f{\pi}{3})} + 10)^{-1} <\f{1}{10}$, we prove 
\[
|| u (\sin x)^{-1} ||_{L^{\inf}[0,\pi/4]} \les
( u_x(0) + || \om||_1 + 1) \log \e^{-1}
\les ( u_x(0) + || \om||_1 + 1) \log (  || \om||_{L^{\inf}(0,\f{\pi}{3})} + 2),
\]
which is exactly \eqref{eq:extrap}. 

For $x \in [0, \f{\pi}{2}]$, using the Cauchy-Schwarz inequality, we prove 
\[
\bal
| \om(x) (\cos x)^{1/2}| 
&\leq   (\cos x)^{1/2}\int_0^x |\om_x(y) | dy 
\leq \int_0^x |\om_x(y)| (\cos y)^{1/2} dy \\
&\les \B( \int_0^{\pi/2} \f{ \om_x^2}{ |\om|} \sin(2x) dx
\int_0^{\pi/2} |\om| (\cot x + 1) dx \B)^{1/2}  
 \les ( A(\om)  (u_x(0) + ||\om||_{L^1} ) )^{1/2},
\eal
\]
which is the first inequality in \eqref{eq:interp1}. The proof of the second inequality in \eqref{eq:interp1} is similar.
\end{proof}

\subsubsection{Estimates of $|| \om||_{L^1}, u_x(0)$}
To close the energy estimate using Lemma \ref{lem:extrap}, we further estimate $|| \om||_{L^1}, u_x(0)$ in terms of $U(t)$. Similar estimates have been established in  \cite{chen2020singularity} and the arXiv version of \cite{chen2019finite}. Integrating \eqref{eq:DG} over $[0, \f{\pi}{2}]$ and using integration by parts, we yield 
\beq\label{eq:EE_L1}
\f{d}{dt} \int_0^{\pi/2} -\om d x  = \int_0^{\pi/2} -u_x \om + u \om_x dx 
= - 2 \int_0^{\pi/2} u_x \om dx \teq III.
\eeq

Since $\om$ is odd, symmetrizing the kernel in \eqref{eq:hil}, we obtain
\beq\label{eq:EE_L12}
\bal
&III  = - \f{2}{\pi} \int_0^{\pi/2} \om(x)  \int_0^{\pi/2} \om(y) \B( \cot(x-y) - \cot(x+y) \B) dy  dx \\
= &  \f{2}{\pi} \int_0^{\pi/2} \int_0^{\pi/2} \om(x) \om(y) \cot( x+ y) dx dy 
= \f{4}{\pi} \int_0^{\pi/2} (-\om(x) ) \B( - \int_0^{x}  \om(y) \cot(x+y ) dy\B).
\eal
\eeq

Since $-\om(x) \geq 0 $ on $[0, \f{\pi}{2}]$ and $\cot z$ is decreasing on $[0, \pi]$, we get
\beq\label{eq:EE_L13}
 - \int_0^{x}  \om(y) \cot(x+y ) dy
 \leq -\int_0^{x} \om(y) \cot y dy 
 \leq -\int_0^{\pi/2} \om(y) \cot y dy  \les u_x(0).
\eeq

It follows 
\[
III \les u_x(0) \int_0^{\pi/2} (-\om(y)) dy , \quad 
\f{d}{dt} \int_0^{\pi/2} - \om (y) dy = III \les u_x(0)  \int_0^{\pi/2} - \om (y) dy .
\]

Using Gronwall's inequality, we establish 
\[
|| \om(t)||_{L^1} \leq || \om_0||_{L^1} \exp(  C \int_0^t u_x(0, s) ds) \les || \om_0||_{L^1} \exp( C U(t)).
\]

Taking the Hilbert transform on both side of \eqref{eq:DG} and applying Lemma \ref{lem:tri}, we derive %the equation for $u_x(0)$
\beq\label{eq:EE_A3}
\bal
\f{d}{dt} u_x(0) & = H( u_x\om - u \om_x)(0)
= 2 H(u_x \om )(0) - H ( \pa_x( u \om))(0) \\
&= u_x^2(0) - \om^2(0) + \f{1}{\pi} \int_{-\pi/2}^{ \pi/2} \cot y ( u\om)_x(y) dy 
= u_x^2(0)  + \f{1}{\pi } \int_{-\pi/2}^{\pi/2} \f{1}{\sin^2 y} u \om dy .
\eal
\eeq

Note that $u \om \leq 0$ for all $x$ and $u_x(0) \geq 0$ for $\om \in X$. It follows 
\[
\f{d}{dt} u_x(0) \leq u_x^2(0).
\]

Using Gronwall's inequality, we obtain 
%\beq\label{eq:EE_ux0}
\[
0 \leq u_x( 0, t) \leq  u_x(0, 0) \exp( U(t)) = H \om_0(0) \exp( U(t)) .
\]

Plugging the above estimates, \eqref{eq:cancel1}, \eqref{eq:EE_A2}  and Lemma \ref{lem:extrap} in \eqref{eq:EE_A1}, we obtain 
\[
\bal
\f{d}{dt} A(\om) &\les  C( || \om_0||_{L^1}, H\om_0(0)) \exp( C U(t)) \cdot A(\om) 
\log\B(  ( A(\om) ( u_x(0) + ||\om||_{L^1}) )^{1/2} + 2\B)  , \\
%&\les C( || \om_0||_{L^1}, H\om_0(0)) \exp(U(t))A^2(\om) ( \log( A(\om) + 2)+ \log( H \om_0(0) + 2) + U(t) )
\eal
\] 
where $C( || \om_0||_{L^1}, H\om_0(0))$ is some constant only depending on $|| \om_0||_{L^1}, H\om_0(0)$. Recall the energy $E(\om)$ in \eqref{eg:A}. Combining the above estimates, we establish 
\[
\f{d}{dt} E(\om) \les C( || \om_0||_{L^1}, H\om_0(0)) \exp( C U(t))  \cdot E \log( E+ 2).
\]

Solving the differential inequality, we prove 
\beq\label{eq:EE_Aw}
E(\om )  \leq ( E(\om_0) + 2) \exp( \exp (  C( || \om_0||_{L^1}, H\om_0(0)) \int_0^{t}\exp( C U(s)) ds  )) .
\eeq
%  Note that 
% \[
% \bal
% \log( A(\om) ( u_x(0) + ||\om||_{L^1}) + 2)
% &\les \log( A(\om) + 2) + \log( (H \om_0(0) + || \om_0||_{L^1} +  2)  \exp( C U(t)) \\
% &\les C( ||\om_0||_{L^1}, H\om_0(0)) ( U(t)  + 1)  \log( A(\om) + 2).
% \eal
% \]

% We yield
% \[
% \f{d}{dt} A^2(\om) \les C( || \om_0||_{L^1}, H\om_0(0))  \exp( 2U(t))A^2(\om) 
% \log( A(\om) + 2),
% \]
% which further implies 
% \beq\label{eq:EE_Aw}
% A(\om )  \leq ( A(\om_0) + 2) \exp( \exp (  C( || \om_0||_{L^1}, H\om_0(0)) \int_0^{t}\exp( 2U(s)) ds  )) .
% \eeq

%\subsubsection{ Estimate of $u_x(t,\pi/2)$}
\subsection{ Estimate near $x = \f{\pi}{2}$}\label{sec:EE_far}

In view of Lemma \ref{lem:extrap}, we have control of  $||\om||_{L^{\inf}[0,a]}$  using $A(\om), u_x(0)$ and $|| \om||_{L^1}$ only away from $x = \f{\pi}{2}$, i.e. $a< \f{\pi}{2}$,  due to the vanishing weight $(\cos x)^{1/2}$. 
%on part of the domain, e.g. $[0,\pi/3]$, using $A(\om), u_x(0)$ and $|| \om||_{L^1}$. 
We further estimate $u_x(\pi/2, t)$ so that we can apply Lemma \ref{lem:extrap} to control $||\om||_{\inf}$. This will enable us to apply the BKM type blowup criterion for \eqref{eq:DG} to establish Theorem \ref{thm:criterion}.

Using a derivation similar to that in \eqref{eq:EE_A3}, we obtain 
\beq\label{eq:EE_B0}
\f{d}{dt} u_x(\f{\pi}{2}) = u_x^2(\f{\pi}{2}) + \f{1}{\pi} \int_0^{\pi} \f{1}{\cos^2(y)} u \om dy
\teq I + II.
\eeq

A crucial observation is that for $\om \in X$,  $u_x(\f{\pi}{2}) = \f{1}{\pi}\int_0^{\pi}  \om(y) \tan(y) dy $ 
% \f{1}{\pi} \int_{-\pi/2}^{\pi/2} \om(y) \tan(y) dy $ 
is negative. Thus the vortex stretching term $u_x^2(\f{\pi}{2})$ depletes the growth of $u_x(\f{\pi}{2})$, which is the main mechanism that $u_x(\f{\pi}{2})$ does not blowup as long as $U(t)$ is bounded. See also Section \ref{sec:intro_diff}. On the other hand, since $u\om \leq 0$, the advection term $\f{1}{\pi} \int_0^{\pi} \f{1}{\cos^2(y)} u \om dy$ is negative and contributes to the growth of $u_x(\f{\pi}{2})$. Our goal is to show that the growing effect is weaker. The main difficulty is the singular functions $ (\cos y)^{-2}, \tan y$ near $y = \f{\pi}{2}$ in $I$ and $II$ since we can control $\om$ away from $y = \f{\pi}{2}$.

For $II$, we decompose it as follows 
\[
II = \f{1}{\pi} \int_0^{\pi} \tan^2(y) u \om dy + \f{1}{\pi} \int_0^{\pi} u \om dy = II_1 + II_2. 
\]
Since  $II_2$ does not involve a singular function, the estimate of $II_2$ is simple. Using \eqref{eq:hil}, we get
\[
\bal
|u(x)| &\les \int_0^{\pi} |\om(y)| |\cos y|^{1/2} | \cos y|^{-1/2}   | \log |\sin(x-y)| | dy \\
 &\les || \ |\cos x|^{1/2} \om ||_{\inf} 
|| |\cos x|^{-1/2} ||_{L^{4/3}} || \log x ||_{L^4} \les || \ |\cos x|^{1/2} \om ||_{\inf} .
\eal
\]

It follows 
\beq\label{eq:EE_B1}
|II_2| \leq || u||_{L^{\inf}} || \om ||_{L^1}
\les  || \ |\cos x|^{1/2} \om ||_{\inf}  || \om||_{L^1}.
\eeq

For $I$ and $II_1$, our goal is to establish 
\beq\label{eq:EE_B2}
I + II_1 \geq  \f{1}{4} u_x^2(\f{\pi}{2}) - C |u_x( \f{\pi}{2})| \cdot || \om||_{L^{\inf}}.
\eeq

We will further use Lemma \ref{lem:extrap} and $\e-$Young's inequality to estimate $|u_x( \f{\pi}{2})| \cdot || \om||_{L^{\inf}}$ and close the estimate of $u_x(\f{\pi}{2})$ in \eqref{eq:EE_B0}.
Note that near $y = \f{\pi}{2}$, we have $ (\cos y)^{-1}, \tan y = \f{1}{\pi/2 - y} + O( | \pi/2 - y| )$. For simplicity, we consider the coordinate near $\f{\pi}{2}$ and introduce 
\beq\label{eq:rota_h_pi}
f = \om(x + \f{\pi}{2}), \quad g = u(x + \f{\pi}{2}), \quad s(x, y) = \f{\tan y}{\tan x}.
\eeq

\begin{remark}
Since $\tan z = z + O(z^3), \sin z = z + O(z^3)$ near $z = 0$, in the following derivations, we essentially  treat $\tan z, \sin z$ similar to $z$.
\end{remark}

Clearly, $g_x = H f$, $g$ and $f$ are odd and $f \geq  0, g \leq 0$ on $(0, \f{\pi}{2})$. Using \eqref{eq:hil}, \eqref{eq:rota_h_pi}, $(\tan(x+\pi/2))^2 =  (\tan x)^{-2}$ and symmetrizing the integrals in $I, II_1$, we get 
\[
\bal
I &= (H \om(\f{\pi}{2}))^2 = (H f(0) )^2
= \f{4}{\pi^2}\int_0^{\pi/2} \int_0^{\pi/2} f(x) f(y) \cot x \cot y dx dy \\
&= \f{4}{\pi^2} \int_0^{\pi/2} \int_0^{\pi/2} \f{f(x) f(y)}{\tan x \cdot \tan y}  dx dy, \\
% \eal
% \]
% \[
% \bal]
II_1 &= \f{1}{\pi} \int_0^{\pi} \f{f g}{\tan^2 x} dx
= \f{2}{\pi}\int_0^{\pi/2} \f{f g}{\tan^2 x} dx
= -\f{2}{\pi^2} \int_0^{\pi/2} \f{f(x)}{\tan^2 x} \int_0^{\pi/2} f(y) \log \B| \f{\sin(x+y)}{ \sin(x-y)} \B| dy \\
& =  -\f{1}{\pi^2} \int_0^{\pi/2}\int_0^{\pi/2}f(x) f(y) ( \f{1}{\tan^2 x} + \f{1}{\tan^2 y})  \log \B| \f{\sin(x+y)}{ \sin(x-y)} \B| dy. \\
%& = -\f{1}{\pi^2} \int_0^{\pi/2}\int_0^{\pi/2} \f{f(x) f(y)}{ \tan^2 y } ( \f{1}{\tan^2 x} + \f{1}{\tan^2 y})  \log \B| \f{\sin(x+y)}{ \sin(x-y)} \B| dy
%= \f{1}{\pi^2} \iint_{[ 0, \pi/2]^2}
\eal
\]

Recall $s$ from \eqref{eq:rota_h_pi}. Note that 
\beq\label{eq:log_s}
 \B|\f{\sin(x+y)}{\sin(x-y)} \B|= \B|\f{\tan x+ \tan y}{\tan x - \tan y} \B| = \B|\f{s+1}{1-s}\B|,\quad
 \f{1}{\tan x } =  s \f{1}{\tan y}. 
 \eeq

We further obtain 
\[
I + II_1 = \f{1}{\pi^2} \int_0^{\pi/2}\int_0^{\pi/2} \f{f(x) f(y)}{ \tan^2 y }
\B( 4 s -(1+s^2) \log \B|\f{s+1}{1-s}\B| \B) dx dy. 
\]

Note that $f(x) f(y) \geq 0$ for $x , y\in[0, \f{\pi}{2}]$. The competition between $I, II_1$ is characterized by the interaction kernel $K(s) = 4 s -(1+s^2) \log \B|\f{s+1}{1-s}\B|, s\in [0,\infty)$. An important observation is that for large $s$ or small $s$, $K(s) \approx 2 s $. In particular, it is easy to obtain 
\[
K(s) = s^2 K(s^{-1}),  \quad K(s)  \geq s  - (1 + s^2) \log \B| \f{1 + s}{1- s} \B| \one_{a \leq s \leq a^{-1}}
\geq s  - C \log \B| \f{1 + s}{1- s} \B| \one_{a \leq s \leq a^{-1}}
\]
for some absolute constant $0 < a < 1$ and $C>0$. It follows 
\[
I + II_1 \geq 
\f{1}{\pi^2} \int_0^{\pi/2}\int_0^{\pi/2} \f{f(x) f(y)}{ \tan^2 y }
\B(  s - C \log \B|\f{s+1}{1-s}\B| \one_{a \leq s \leq a^{-1}} \B) dx dy.
\]

Repeating the above derivations, we get 
\beq\label{eq:EE_B3}
I + II_1 \geq \f{1}{4} (H f(0))^2 - C\int_0^{\pi/2} \f{ f(y)}{\tan^2 y}
\int_0^{\pi/2}  \log \B|\f{s+1}{1-s}\B| \one_{a \leq s \leq a^{-1}}  f(x) dx  dy.
%\teq \f{1}{4} (H f(0))^2 - J.
\eeq

Next, we show that 
\[
|J(y)| \les || f||_{L^{\infty}}, \quad  J(y) \teq \f{1}{\tan y} \int_0^{\pi/2}  \log \B|\f{s+1}{1-s}\B| \one_{a \leq s \leq a^{-1}}  f(x) dx .
\]

We consider a change of variable $ z= \tan x$. The restriction $s\in [a, a^{-1}]$ implies $z \in [a \tan y, a^{-1} \tan y]$. Using $dx  = \f{1}{1+ z^2} d z$ and \eqref{eq:log_s}, we yield 
\[
\bal
J(y) &\les \f{ || f||_{\inf}}{\tan y} \int_{a \tan y}^{a^{-1} \tan y} \log \B| \f{z + \tan y}{z - \tan y} \B| \f{1}{1 + z^2} dz
\les  \f{ || f||_{\inf}}{\tan y} \int_{a \tan y}^{a^{-1} \tan y} \log \B| \f{z + \tan y}{z - \tan y} \B| dz  \\
&\les  || f||_{L^{\inf}} \int_{a}^{a^{-1}} \log \B| \f{\tau + 1}{\tau - 1}  \B| d\tau \les  || f||_{L^{\inf}} ,
\eal
\]
where we have used another change of variable $z = \tau \tan y$ to obtain the third estimate.

%For $y > \f{\pi}{4}$, the estimate is trivial. For $ y \leq \f{\pi}{4}$, the restriction $s  = \f{\tan y}{\tan x} \in [a , a^{-1}]$ implies that $ \tan x \leq a^{-1}$. Since $a$ is absolute and $   \tan z \asymp z $ for $z \in [0, \arctan a^{-1}  ]$, we obtain 
% \[
% \f{y}{x} \leq C  \f{\tan y}{\tan x} \leq C a^{-1} = C_1, 
% \quad \f{y}{x} \geq C^{-1} \f{\tan y}{\tan x} \geq C^{-1} a = C_1^{-1} .
% \]
% Moreover, since $|x+y|, |x-y| \leq \f{3\pi}{4}$ and $  \sin z \asymp z$ for $z \in [0, 3\pi/4]$, from \eqref{eq:log_s}, we obtain 
% \[
% \B|\f{s+1}{s-1}\B| = \B|\f{\sin(x+y)}{\sin(x-y)}\B| \les \B| \f{x+y}{x-y}\B|.
% \]

% Using the above estimates and a change of variable $ x =  y z$, we yield
% \[
% |J(y)| \les \f{1}{y} || f||_{L^{\inf}}\int_{ y/x \in [C_1^{-1}, C_1] } \B( \log \B|\f{ y/x + 1}{y/x - 1} \B|+ 1 \B) d x 
% \les || f||_{L^{\inf}} \int_{C_1^{-1} }^{C_1} \B(\log \B| \f{ z + 1}{ z -1} \B| + 1 \B) d z \les || f||_{ L^{\inf}}.
% \]

Recall $f = \om( x + \f{\pi}{2})$ from \eqref{eq:rota_h_pi}. Plugging the above estimates in \eqref{eq:EE_B3}, we establish 
\[
I + II_1 \geq \f{1}{4} (Hf(0))^2 - C \int_0^{\pi/2} \f{  |f(y)| }{\tan y} d y || f||_{L^{\inf}}
=  \f{1}{4} (Hf(0))^2 - C | H f(0) |  \cdot || f||_{L^{\inf}},
\]
where we have used the facts that $f$ is odd and that $f$ has a fixed sign on $[0, \f{\pi}{2}]$ 
%$f \geq 0$ on $[0, \f{\pi}{2}]$ 
to obtain the equality.  We prove \eqref{eq:EE_B2}.

\subsubsection{ Estimate of $u_x(\f{\pi}{2})$ }

Combining the estimates \eqref{eq:EE_B0}-\eqref{eq:EE_B2}, we obtain 
\[
\f{d}{dt} u_x( \f{\pi}{2} ) 
\geq \f{1}{4} u_x^2(\f{\pi}{2})  -   C |u_x( \f{\pi}{2})| \cdot || \om||_{L^{\inf}}
- || \ |\cos x|^{1/2} \om ||_{\inf}  || \om||_{L^1}  \teq J.
\]

Recall the energies in \eqref{eg:A}. Using Lemma \ref{lem:extrap}, we derive
\[
|| \om||_{L^{\inf}} \les | u_x(\f{\pi}{2})|^{1/2}  (E(\om))^{1/2} + E(\om) ,\quad  || \ |\cos x|^{1/2} \om ||_{\inf}  || \om||_{L^1} \les E^2(\om).
\]

Using $\e-$Young's inequality, we yield 
\[
%\f{1}{4} u_x^2(\f{\pi}{2})  -   C |u_x( \f{\pi}{2})| \cdot || \om||_{L^{\inf}}
J \geq \f{1}{4} u_x^2(\f{\pi}{2}) - C |u_x(\f{\pi}{2})| \B( | u_x(\f{\pi}{2})|^{1/2}  (E(\om))^{1/2} + E(\om) \B)  - C E^2(\om)
%\f{1}{4} u_x^2(\f{\pi}{2})  -  |u_x( \f{\pi}{2})|^{3/2} A(\om)^{1/2}- ||u_x( \f{\pi}{2})| A(\om)^{1/2} ( u_x(0) + || \om||_{L^1} )
\geq \f{1}{8} u_x^2(\f{\pi}{2}) - C E^2(\om).
\]

Since $u_x(\f{\pi}{2}) \leq 0$, we derive 
\[
\f{d}{dt} |u_x(\f{\pi}{2})| \leq -\f{1}{8} u_x^2( \f{\pi}{2}) + C E^2(\om).
\]

Using the estimate \eqref{eq:EE_Aw}, we prove 
\beq\label{eq:EE_B4}
\bal
|u_x(t,\f{\pi}{2})|& \leq  |H\om_0(\f{\pi}{2})| + C\int_0^{t} E^2(\om(s)) ds\\
&\leq |H\om_0(\f{\pi}{2})| + C ( E(\om_0) + 2)^2 \exp(2 \exp(
C( || \om_0||_{L^1}, H \om_0)  \int_0^t \exp ( C U(s)) ds )).
\eal
\eeq

\subsubsection{ The blowup criterion}\label{sec:w_linf}

Using \eqref{eq:EE_Aw}, \eqref{eq:EE_B4} and Lemma \ref{lem:extrap}, we prove 
\beq\label{eq:w_linf}
|| \om||_{L^{\inf}}
\leq K_1(\om_0) \exp(2 \exp( K_1(\om_0) \int_0^t \exp( CU(s)) ds ) ),
\eeq
where $C$ is some absolute constant, and the constant $K_1(\om_0)$ depends on $H\om_0(0), H\om_0(\f{\pi}{2}), || \om_0||_{L^1} $ and $A(\om_0)$. Applying the BKM-type blowup criterion, we conclude the proof of Theorem \ref{thm:criterion}.

\section{ Stabilizing effect of the advection and several quadratic forms}\label{sec:regular}

%Positivity of a quadratic form}

%\section{Global well posedness for smooth initial data}\label{sec:GWP}

In order to apply Theorem \ref{thm:criterion} to establish the well-posedness result, we need to control $u_x(0)$. Yet, $u_x(0)$ itself does not enjoy a good estimate. Recall the ODE of $u_x(0)$ from \eqref{eq:EE_A3}.
% In fact, it is not difficult to obtain its evolution 
\[
 \f{d}{dt} u_x(0) = u^2_x(0)  + \f{2}{\pi} \int_0^{\pi/2} \f{ u \om }{ \sin^2 y} dy .
\]
Since $u_x(0)\geq 0$ for $\om \in X$, the quadratic nonlinearity $u^2_x(0)$ makes it very difficult to obtain a long time estimate on $u_x(0)$. Since $u_x(0) = \f{-2}{\pi} \int_0^{\pi/2} \om(y) \cot y dy $ can be viewed as a weighted integral of $\om$ with a singular weight near $0$, 
it motivates us to estimate other weighted integral that controls $u_x(0)$. For $\b \in (1,3)$, we introduce 
\beq\label{eq:def_Q}
Q(\b,t) \teq - \int_0^{\pi/2} \om(y, t) ( \cot y)^{\b} dy ,
\quad  B(\b, t) \teq 
  \int_{ 0 }^{ \f{\pi}{2}} (u_x \om - u \om_x) \cot^{\b} x dx , 
  %\  s(x,y) = \f{\tan y}{\tan x}.
\eeq
%For $\om \in C^{1,\al}$, 
For $\om \in X \cap H^1$, $Q(\b, t), B(\b,t)$ are well-defined if $\om$ vanishes near $x=0$ at order $|x|^{\g}$ with $\g > \b-1$.  For $ \om \in X$, since $\om \leq 0$ on $[0, \pi/2]$, we have $Q(\b, t) \geq 0$. %The functional $Q(\b, t)$ allows us to control the weighted integral of $\om$ near $0$. In Section \ref{sec:GWP}, we will combine it and $|| \om ||_{L^1}$ to further control $u_x(0)$. 
The boundedness of $Q(\b, t)$ implies that $\om$ cannot be too large near $0$, and it allows us to control the weighted integral of $\om$ near $0$. In Section \ref{sec:GWP}, we will combine it and $|| \om ||_{L^1}$ to further control $u_x(0)$. 

\begin{remark}
The special singular function $(\cot y)^{\b}$ and functional $Q(\b, t)$ are motivated by the homogeneous function $|y|^{-\b}$ and $ \int_{\R^+} \om  /  y^{\b} dy$, which were 
%have been 
used to analyze the gCLM model on the real line in the arXiv version of \cite{chen2019finite}.
\end{remark}

% \begin{remark}
% The special singular function $(\cot y)^{\b}$ and the functional $ Q(\b, t)$ are motivated by the homogeneous function $|y|^{-\b}$ and functional $ \int_{\R^+} \om  /  y^{\b} dy$, which have been used to analyze the gCLM model in the arXiv version of \cite{chen2019finite}.
% \end{remark}

Using \eqref{eq:DG}, we obtain the ODE of $Q(\b, t)$
%These two quantities are related via an ODE
\beq\label{eq:ODE_Q}
\f{d}{dt} Q(\b, t) = - B(\b, t).
\eeq

We should further estimate $B(\b, t)$. The key Lemma to prove Theorem \ref{thm:GWP} is the following. To simplify the notation, we will drop $``t"$ in some places. 

\begin{lem}\label{lem:comp}
%Suppose that $\om  \in C^{1,\al}$ is odd  with $\al \in (0, 1)$. 
Suppose that $\om  \in C^{\al}$ is odd with $\al \in (0, 1)$ and $\om(x)  x^{-1} \in L^{\inf}$. There exists some absolute constant $\b_0 \in (1,2)$, such that for $ \b \in [\b_0, 2) $, we have 
\beq\label{eq:comp_B}
  B(\b)  \geq  -  (2 - \b)  \B( u_x(0)  Q(\b) +  \f{1}{\pi}\iint_{ [0, \pi/2]^2} \om(x)\om(y) (\cot y)^{\b-1}
  \f{   s (s^{\b-1} - 1) }{ s^2 - 1}  dx dy \B),
\eeq
%for some absolute constant $C > 0$, 
where $ s(x, y) = \f{\cot x}{\cot y}$. %If in addition $\om_x(0) = 0$, 
If in addition $\om \in C^{1, \al}$ with $\al \in (0, 1)$ and $\om_x(0) = 0$, for $\b = 2$, we have 
\[
B(2) \geq 0 .
\]
\end{lem}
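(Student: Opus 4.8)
\textbf{Proof proposal for Lemma \ref{lem:comp}.}
The plan is to expand $B(\b,t)$ as a double integral over $[0,\pi/2]^2$ using the representations \eqref{eq:hil} of $u$ and $u_x$, symmetrize in the two variables $x,y$, and extract the leading-order structure in the parameter $2-\b$. First I would substitute $u_x = H\om$ and $u = -\frac{1}{\pi}\int \om(y)\log|\sin(x+y)/\sin(x-y)|\,dy$ into $B(\b)=\int_0^{\pi/2}(u_x\om - u\om_x)\cot^\b x\,dx$. The term $\int u_x\om\cot^\b x\,dx$ becomes, after symmetrizing the Hilbert kernel using oddness of $\om$ (so $\cot(x-y)-\cot(x+y)$ appears), a double integral with kernel $\cot^\b x\cdot(\text{something})$; the term $-\int u\om_x\cot^\b x\,dx$ I would integrate by parts, moving the derivative off $\om$, producing $\int u_x\om\cot^\b x\,dx + \b\int u\om\cot^{\b-1}x\csc^2 x\,dx$ (the boundary terms vanish because of the vanishing/sign hypotheses on $\om$ near $0$ and $\pi/2$). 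Combining, $B(\b)$ is a double integral $\frac{1}{\pi}\iint \om(x)\om(y)\,\cot^{\b}(\cdot)\,G_\b(x,y)\,dx\,dy$ for an explicit kernel $G_\b$ that, after the change of variables $s=\cot x/\cot y$ and pulling out $(\cot y)^{\b}$ or $(\cot y)^{\b-1}$, depends on $s$ alone modulo elementary factors — exactly as in \eqref{eq:log_s} and the $K(s)$-analysis in Section \ref{sec:EE_far}.

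Once $B(\b)$ is written as $\frac{1}{\pi}\iint \om(x)\om(y)(\cot y)^{\b-1}\,\Phi_\b(s)\,dx\,dy$ with $\Phi_\b$ an explicit combination of $s^{\b}$, $s^{\b-1}$, and $\log|(s+1)/(s-1)|$ (and symmetric under $s\mapsto s^{-1}$ up to a power of $s$, reflecting the $x\leftrightarrow y$ symmetry), the next step is a pointwise inequality on $\Phi_\b$. Writing $\Phi_\b(s) = \Phi_2(s) + (2-\b)\Psi(s,\b)$ by a first-order Taylor expansion in $\b$ around $\b=2$, I would check that $\Phi_2(s)\ge 0$ for all $s\in[0,\infty)$ — this is the $\b=2$ case and should reduce, after the same manipulations, to the nonnegativity of a kernel of the type $4s-(1+s^2)\log|(s+1)/(s-1)|$ that already appeared (up to sign) in \eqref{eq:EE_B3}, or more precisely to a convolution-type form. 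For $\b=2$ the cleanest route is: show $B(2,t) = \frac{d}{dt}\int_0^{\pi/2}\om\cot^2 x\,dx$ equals $\int(u_x\om - u\om_x)\cot^2 x\,dx$, rewrite it via the Fourier/convolution structure on the circle (mapping $[0,\pi/2]$ to a half-line via $\cot$), and invoke the positive-definiteness argument (Bochner-type) flagged in Section \ref{sec:idea_GWP}; this is presumably proved in Section \ref{sec:regular} just before this Lemma as one of the "several quadratic forms." Given $\Phi_2\ge 0$ and $\om(x)\om(y)\ge 0$ on $[0,\pi/2]^2$, the $\b=2$ statement $B(2)\ge 0$ follows immediately, with the extra hypotheses $\om\in C^{1,\al}$, $\om_x(0)=0$ exactly what makes $Q(2,t)$, $B(2,t)$, and the boundary terms in the integration by parts well-defined and finite.

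For the range $\b\in[\b_0,2)$ the claim is the quantitative version: $B(\b)\ge -(2-\b)\big(u_x(0)Q(\b) + \frac{1}{\pi}\iint \om\om(\cot y)^{\b-1}\frac{s(s^{\b-1}-1)}{s^2-1}dx\,dy\big)$. I would identify the right-hand side as precisely $B(2)$-type positive part minus the $O(2-\b)$ correction: that is, establish the kernel identity $\Phi_\b(s) \ge \Phi_2(s) - (2-\b)\big(\text{(term giving }u_x(0)Q(\b)) + \frac{s(s^{\b-1}-1)}{s^2-1}\big)$ (with the appropriate homogeneity weight), valid for $\b$ close enough to $2$, i.e. for $\b\ge\b_0$. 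Concretely, one expands the $\b$-dependent pieces $s^{\b-1}=s\cdot s^{\b-2}= s e^{(\b-2)\log s}$ and the logarithmic kernel, collects the zeroth-order term (which is $\Phi_2\ge 0$ and gets dropped), and bounds the first-order-and-higher remainder uniformly in $s$, using that $\frac{s(s^{\b-1}-1)}{s^2-1}$ is bounded and that the coefficient multiplying $u_x(0)Q(\b)$ comes from the diagonal/singular part of the kernel near $s=1$. The separate appearance of the $u_x(0)Q(\b)$ term reflects the non-integrable singularity of $\cot^\b$ at $x=0$ when $\b\ge 2$ is approached: it is the boundary contribution at $x=0$ in the integration by parts, which is finite only after multiplying by $2-\b$, matching the leading-order structure \eqref{eq:idea_Q}. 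The main obstacle I anticipate is verifying the pointwise kernel inequality $\Phi_\b(s)\ge \Phi_2(s) - (2-\b)(\cdots)$ uniformly for all $s\in(0,\infty)$ and all $\b\in[\b_0,2)$ — the functions involve $\log|(s+1)/(s-1)|$ which is singular at $s=1$ and only conditionally integrable there, so the estimate near $s=1$ has to be done carefully (Taylor expanding in both $s-1$ and $2-\b$ and tracking the sign of the quadratic terms), while the behavior as $s\to 0$ and $s\to\infty$ must be controlled using the homogeneity $\Phi_\b(s)=s^?\Phi_\b(1/s)$ so that it suffices to treat, say, $s\in[0,1]$; establishing the existence of the absolute constant $\b_0$ is exactly the statement that this family of one-variable inequalities holds on a neighborhood of $\b=2$.
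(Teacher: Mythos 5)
Your first step (symmetrizing $B(\b)$ into $\f{1}{\pi}\iint \om(x)\om(y)P_{\b}(x,y)\,dx\,dy$ after an integration by parts, with the kernel depending on $s=\cot x/\cot y$ up to powers of $\cot y$) matches the paper's derivation \eqref{eq:sym}--\eqref{eq:kernel}. But the core of your argument has a genuine gap: you propose to conclude by a \emph{pointwise} inequality $\Phi_2(s)\ge 0$ together with $\om(x)\om(y)\ge 0$. Both ingredients fail here. First, the lemma deliberately makes no sign assumption on $\om$ (the paper stresses this right after the statement), so $\om(x)\om(y)\ge 0$ is not available. Second, and more importantly, the kernel at $\b=2$ is \emph{not} pointwise nonnegative: the paper notes that $P_{1,2}(s)<0$ for $s\le 0.5$ or $s\ge 2$, which is precisely why the pointwise comparison that works for $\b=2.2$ in the earlier arXiv argument cannot be pushed to the critical exponent $\b=2$, let alone $\b<2$. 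Your fallback remark that the positive definiteness is ``presumably proved in Section \ref{sec:regular} just before this Lemma'' is circular, since Lemma \ref{lem:comp} \emph{is} that result.

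What the actual proof does, and what is missing from your proposal, is the following. One does not Taylor-expand in $2-\b$ and estimate a remainder; instead one \emph{adds} the correction terms exactly, defining $K_{1,\b}=P_{1,\b}+(2-\b)(s+s^{\b})$ and $K_{2,\b}=P_{2,\b}+(2-\b)\f{s(s^{\b-1}-1)}{s^2-1}$ as in \eqref{eq:kernel2}; the algebraic identity \eqref{eq:equiv1} shows the added piece integrates to exactly $(2-\b)\,u_x(0)Q(\b)$ (it is not a boundary term from the integration by parts, as you suggest), so the claimed inequality is equivalent to $\td B(\b)\ge 0$, i.e.\ to positive definiteness of $K_{\b}$ as a quadratic form. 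The key idea is then that under $x=\arctan e^r$, $y=\arctan e^t$ the kernels become convolution kernels $W_{j,\b}(t-r)$, so by Plancherel \eqref{eq:pos_B1}, \eqref{eq:pos_B2} positivity of the form reduces to $\wh W_{j,\b}(\xi)\ge 0$ — positivity in the Bochner sense, which holds even though the kernel changes sign pointwise and requires no sign of $\om$. Establishing $\wh W_{2,\b}\ge 0$ is done via convexity of $W_{2,\b}$ (Lemma \ref{lem:pos_W2}), while $\wh W_{1,\b}\ge 0$ needs the delicate combination in Lemma \ref{lem:pos_W}: rigorous (interval-arithmetic) verification of $G(\xi)>0$ on a compact frequency range, an integration-by-parts/oscillation argument exploiting convexity of $W_{1,2}$ near $0$ for large $\xi$, and continuity in $\b$ to get a uniform $\b_0<2$. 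None of these steps appears in your proposal, and the route you do spell out would break down exactly where the difficulty lies.
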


%Remark
Note that in Lemma \ref{lem:comp}, we do not impose the sign condition: %on $\om$., 
$\om \leq 0$ (or $\geq 0$) on $[0, \pi/2]$. Thus, it is likely that Lemma \ref{lem:comp} can be generalized to study \eqref{eq:DG} with a larger class of data. 

Lemma \ref{lem:comp} quantifies the stabilizing effect of the advection, and 
%which has been studied in \cite{chen2020slightly} for the gCLM model and \cite{lei2009stabilizing} for 3D incompressible flows. It also 
reflects that the advection is stronger or almost stronger than the vortex stretching for $\om$ vanishes at least linearly near $x=0$, which has been discussed heuristically in Section \ref{sec:intro_comp}. In fact, if $\om \in C^{1,\al}$ with $\om_x(0) = 0$, using \eqref{eq:ODE_Q} and Lemma \ref{lem:comp}, we obtain that $Q(2, t)$ is bounded uniformly in $t$ and thus $\om$ can not be too large near $0$. %For $\om_x(0) \neq 0$, 
%{\color{blue}In the general case of $\om x^{-1} \in L^{\inf}$,} 
In the general case, $\om$ can vanish only linearly near $x=0$. Then $Q(2, t)$ is not well-defined since $\om (\cot y)^2$ is not integrable. In this case, we apply \eqref{eq:comp_B}. Though $Q(\b, t)$ may not be bounded uniformly in $t$, the critical small factor $2-\b$ indicates that $Q(\b, t)$ cannot grow too fast.

%\subsection{The regularizing effect of advection}\label{sec:comp}
\subsection{ Symmetrization and derivation of the kernel}\label{sec:comp}

To prove Lemma \ref{lem:comp}, we first symmetrize the quadratic form $B(\b)$ and derive its associated interaction kernel. The symmetrization idea has been used in \cite{choi2014on} to analyze some quadratic forms in the Hou-Luo model. Denote 
\beq\label{eq:def_s}
s = \f{\tan y}{\tan x} = \f{\cot x}{\cot y}.
\eeq
% We consider general quadratic form and denote 
% \beq\label{eq:sym0}
% B(\b) \teq 
%   \int_{ 0 }^{ \f{\pi}{2}} (u_x \om - u \om_x) \cot^{\b} x dx , \quad s = \f{\tan y}{\tan x}.
% \eeq
% for $ \b \in (1,3)$, where $\om$ vanishes near $0$ at order $x^{\g}$ with $\g > \b-1$. For example, when $\b = 2$, $\om \in C^{1,\al}$ with $\om_x(0) = 0$ satisfies this assumption. 
Since $\om$ is odd, applying \eqref{eq:hil} and following the symmetrization argument in the arXiv version of \cite{chen2019finite}, we derive \eqref{eq:sym}
%the following 
in Appendix \ref{app:sym} if $\om$ vanishes near $x= 0$ at order $|x|^{\g}$ with $\g > \b-1$ 
\beq\label{eq:sym}
B(\b) =  \f{1}{\pi} \int_0^{\pi/2}\int_0^{\pi/2} \om(x) \om(y) P_{\b}(x, y) dx dy,
\eeq
where 
\beq\label{eq:kernel}
\bal
P_{\b}(x, y) &=  (\cot y)^{\b-1} \lt(  \f{\b}{2} (s^{\b-1}+1 )\log \B|  \f{s+1}{s-1}   \B| - (s^{\b-1}-1)\f{2s}{s^2-1} \rt) \\
& \quad +   (\cot y)^{\b+1} \lt(  \f{\b}{2} (s^{\b+1}+1 )\log \B|  \f{s+1}{s-1}   \B| - (s^{\b+1}-1)\f{2s}{s^2-1} \rt) \\
&\teq (\cot y)^{\b-1} P_{2, \b}(s) 
+ (\cot y)^{\b+1} P_{1, \b}(s) .
\eal
\eeq

Similar derivations and kernels were obtained in the arXiv version of \cite{chen2019finite}.
The logarithm terms come from the advection $u\om_x$ and are positive. Other terms $-(s^{\tau}-1) \f{2s}{s^2-1}, \tau = \b-1, \b+1$ are from the vortex stretching $u_x\om$ and are negative. Thus, the kernel $P_{\b}$ captures the competition between two terms. The main term in $P_{\b}$ is 
$(\cot y)^{\b+1} P_{1, \b}(s) $ since $(\cot y)^{\b + 1}$ is more singular. For $s$ near $1$, 
$P_{\b}(s)$ is positive due to the singularity in $\log \B| \f{1+s}{s-1}\B|$. It is not difficult to see that 
\beq\label{eq:kernel1}
\bal
%& P_1(s) = s^{\b+1} P_1(s^{-1}), \quad P_2(s) = s^{\b-1} P_2(s^{-1}),  \\
& \lim_{ s \to \inf} P_{1,\b}(s) = (\b - 2) s^{\b},  \quad
\lim_{s \to \inf} P_{2,\b}(s) = (\b - 2) s^{\b- 2}.
\eal
\eeq

Formally, as $\b$ increases, the kernel $P_{\b}(x,y)$ becomes more positive-definite. Recall the
ODE of $Q(\b)$ from \eqref{eq:def_Q}, \eqref{eq:ODE_Q}.  
%Since the quantity $B(\b)$ is associated with the energy $Q_{\b} = \int_0^{\pi/2} \om(y) \cot^{\b} y dy$, 
The higher vanishing order of $\om$ near $0$, the larger $\b$ we can choose with $Q(\b)$ being  well-defined, and it is more likely that $Q(\b, t)$ is decreasing and bounded uniformly in $t$.
%decreasing in $t$ and conserved. 
Therefore, the higher vanishing order of $\om$ near $0$ reflects the stronger effect of the advection, which potentially depletes the growing effect of the vortex stretching. 
%The above asymptotic behaviors of $K_i(s)$ 
The asymptotics \eqref{eq:kernel1} suggests that to obtain the positive definiteness of $P_{\b}$, $\b$ should be at least $2$. Indeed, such result is proved %by Chen-Hou-Huang 
in the arXiv version of \cite{chen2019finite} for $\b = 2.2$ under the sign condition $\om \in X$ \eqref{eq:X} by showing that $P_{i,\b}(s) \geq 0$ pointwisely. However, the method in \cite{chen2019finite} can not be applied to the critical case  $\b = 2$ since numerical result shows that $P_{1,2}(s) <0 $ for $s \leq 0.5$ or $s \geq 2$. 

For $\b<2$, it is not expected that $P_{\b}$ is positive-definite and the gap is of order $2-\b$ quantified in Lemma \ref{lem:comp}. We study the modified kernel and its associated quadratic form 
\beq\label{eq:kernel2}
\bal
&K_{1, \b}(s)  = P_{1, \b}(s) + (2-\b) (s + s^{\b}) , \quad 
K_{2, \b}(s) = P_{2, \b}(s) + (2- \b) \f{ (s^{\b-1} - 1) s}{s^2 - 1} , \\
&K_{\b}  = (\cot y)^{\b + 1} K_{1, \b} + (\cot y)^{\b-1} K_{2, \b} , 
\quad \td B(\b) = \int_0^{\pi/2} \int_0^{\pi/2} \om(x) \om(y) K_{\b}(x, y) dx dy, \\
% & B_1(\b) = \int_0^{\pi/2} \int_0^{\pi/2} \om(x) \om(y) 
%  (\cot y)^{\b + 1} K_{1, \b}(x, y) dx dy,
%  \quad B_2(\b) = \int_0^{\pi/2} \int_0^{\pi/2} \om(x) \om(y)  (\cot y)^{\b-1} K_{2, \b}  dx dy,
\eal
\eeq
where $P_i, s$ are defined in \eqref{eq:kernel}, \eqref{eq:def_s}. Using \eqref{eq:sym}, \eqref{eq:kernel}, \eqref{eq:kernel2}, and the following identities
\beq\label{eq:equiv1}
\bal
(s + s^{\b}) (\cot y)^{\b+1} &= \cot x (\cot y)^{\b} + (\cot x )^{\b} \cot y,  \\
%s (\cot y)^{\b+1} = \cot x (\cot y)^{\b}, \quad s^{\b} (\cot y)^{\b+1} = \cot y (\cot x)^{\b}, 
%\quad u_x(0) = - \f{2}{\pi} \int_0^{\pi/2} \om(y) \cot y dy, \\
\f{1}{\pi}\int_0^{ \f{\pi}{2} } \int_0^{ \f{\pi}{2} }
\om(x) \om(y) (s + s^{\b}) (\cot y)^{\b+1} dx dy 
&=  \f{2}{\pi}\int_0^{ \f{\pi}{2}  } \om \cot y  dy 
 \int_0^{ \f{\pi}{2}} \om (\cot y)^{\b} dy =  u_x(0) Q(\b),
\eal
\eeq
we derive 
\beq\label{eq:equiv2}
\bal
\f{\td B(\b)}{\pi} & = \f{1}{\pi} \int_0^{ \f{\pi}{2} } \int_0^{ \f{\pi}{2} } \om(x) \om(y)  \B\{ P_{\b}(x, y)
+ (2-\b) \B(  (s + s^{\b}) (\cot y)^{\b+1} 
+   \f{ (s^{\b-1} - 1) s}{s^2 - 1}  (\cot y)^{\b-1} \B) \B\}  dx dy  \\
& = B(\b) + (2-\b) \B( u_x(0) Q(\b)
+ \f{1}{\pi} \int_0^{ \f{\pi}{2}  } \int_0^{ \f{\pi}{2}} \om(x) \om(y)     \f{ (s^{\b-1} - 1) s}{s^2 - 1}  (\cot y)^{\b-1}   dx dy  \B) . \\
\eal
\eeq
Hence, Lemma \ref{lem:comp} is equivalent to $ \td B(\b) \geq 0$, or the positive definiteness of $K_{ \b}$ for $\b \in [\b_0, 2]$.

Our key observation is that $s(x, y) = \f{\cot x}{  \cot y} $ can be written as $p( u -v)$, for some function $p$ and variables $u,v$, and $K_{\b}$ can be written as a convolution kernel after a change of variable. This allows us to follow the idea in Bochner's theorem for a positive-definite function to leverage the positive part of $K_{\b}(s)$ and establish that $K_{\b}$ is positive-definite.
%Inspired by the Bochner's theorem for positive-definite function, we reformula $K_{\b}$ to a convolution kernel, which further enables us to leverage the positive part of $K_1(s)$ and show that $K_{\b}$ is positive definite.

%To simplify the notation, we drop the subscript $\b$ in $K_{\b}, K_{1,\b}, K_{2,\b}$. 
In the following derivation, we restrict $\b$ to $ \b \in [1.9, 2]$. The reader can think of the special case $\b = 2$, since we will choose $\b$ to be sufficiently close to $2$.

%In the following derivation, we focus on $\b = 2$ and drop the subscript $\b$ in $K_{\b}, K_{1,\b}, K_{2,\b}$. We prove Lemma \ref{lem:comp} for smooth function $\om \in C^{\inf}$, and the general case $\om \in C^{1,\al} \cap X$ can be obtained by approximation.

%\subsection{ Derivation of the concolution kernel}

%\subsubsection{ Positivity of $K_1$}
\subsubsection{ Reformulation of $K_{1,\b}$}  We introduce 
\beq\label{eq:tK1}
\bal
F_1(x) & \teq \om(x) (\cot x)^{ \f{ \b+1}{2}},  \\ 
\td K_{1,\b}(s)  &\teq s^{- \f{\b+1}{2}} K_{1,\b}
=  \f{\b}{2}  ( s^{ \f{\b+1}{2}}  + s^{- \f{\b+1}{2}} ) \log \B| \f{s+1}{s-1} \B| - \f{  s^{ \f{\b+1}{2}}  - s^{- \f{\b+1}{2}} }{s^2 -1} 2s \\
&\quad + (2- \b) ( s^{ \f{\b-1}{2}} + s^{ \f{1-\b}{2}} ).
%\quad  F_2(x) = \f{F_1( \arctan x )}{1 + x^2} .
\eal
\eeq

Recall $ s \cot y  = \cot x$ from \eqref{eq:def_s}. Using $s^{ \f{\b+1}{2} } ( \cot y)^{\b+1} = ( \cot y \cot x )^{ \f{\b+1}{2}}$, we derive
\[
 ( \cot y)^{\b+1} K_{1,\b}(s) = (\cot y)^{\b+1} s^{ \f{\b+1}{2} } s^{- \f{\b+1}{2}} K_{1,\b}(s)=
  ( \cot y \cot x )^{ \f{\b+1}{2} } \td K_{1,\b}(s).
\]

Hence, we can rewrite the quadratic form associated with $K_{1,\b}$ in $\td B(\b)$ \eqref{eq:kernel2} as follows 
\beq\label{eq:form_B1}
B_1(\b) \teq \int_0^{\pi/2} \int_0^{\pi/2} \om(x) \om(y) (\cot y)^{\b+1} K_{1,\b}(s) dx dy
= \int_0^{\pi/2} \int_0^{\pi/2} F_1(x) F_1(y) \td K_{1,\b}(s) d x dy .
\eeq

For $x , y \in [0, \pi/2]$, we consider a change of variable 
\beq\label{eq:kernelW}
x = \arctan e^r, \quad y = \arctan e^t, \quad F_2( z) = \f{e^z F_1(\arctan e^z)}{1 + e^{2z}},
\quad W_{1,\b} (z) = \td K_{1,\b} (e^z).
\eeq
The variables $r = \log \tan x$  maps $(0, \f{\pi}{2})$ to $\R$. Using $ \f{dx}{dr} = \f{e^r}{1 + e^{2r}}$ and $ s = \f{\tan y}{\tan x} = e^{t-r}$, we obtain 
\[
B_1 = \int_{\R} \int_{\R} \f{F_1 (\arctan e^r) F_1(\arctan e^t)}{ (1 + e^{2r})(1 + e^{2t})}
\td K_{1,\b}( e^{t-r}) e^r e^t d t dr  = \int_{\R}\int_{\R} F_2(r) F_2(t)  W_{1,\b}(t- r) d t dr .
\]

% Consider a change of variable 
% $ x = \arctan \td x, y = \arctan \td y$. We get $s = \f{\tan y}{\tan x} = \f{ \td y}{\td x}$ and 
% \[
% B_1 = \int_0^{\inf}\int_0^{\inf} \f{F_1( \arctan \td x) F_1(\arctan \td y)}{ (1 + \td x^2)( 1 + \td y^2)} \td K_1( \f{\td y}{\td x}) d \td x d \td y.
% \]

% We perform a further change of variable 
% \beq\label{eq:kernelW}
% \td x = e^r, \quad \td y = e^s, \quad F_2(r) = \f{e^r F_1( \arctan e^r)}{ 1 + e^{2r}} , \quad 
% W( z) = \td K_1( e^z),
% \eeq
% and derive 
% \[
% \bal
% B_1 &= \int_{\R} \int_{\R} \f{F_1 (\arctan e^r) F_1(\arctan e^s)}{ (1 + e^{2r})(1 + e^{2s})}
% \td K_1( e^{s-r}) e^s e^r d s dr  = \int_{\R}\int_{\R} F_2(r) F_2(s)  W(s- r) d s dr .
% \eal
% \]
Recall $F_1$ in \eqref{eq:tK1}. Since $ \cot (\arctan e^r) = e^{-r}$, we can rewrite $F_2$ in terms of $\om$
\[
F_2(r) = \f{e^r \om( \arctan e^r) (\cot( \arctan e^r) )^{ \f{\b+1}{2} } }{ 1 + e^{2r}}
= \f{e^{-\f{\b-1}{2} r }  \om( \arctan e^r) } { 1 + e^{2r} }.
\]

Next, we discuss the integrability of $W_{1,\b}$ and $F_2$. %Since $\om \in C^{1,\al}(S^1)$ is odd, $\arctan x \les x$
Since $\om(x)x^{-1} \in L^{\inf}$, $\arctan x \les \min(x, 1)$ and $\b \in [1.9, 2]$, we get 
\[
|F_2(r)| \les e^{- \f{\b-1}{2} r} \min(1, e^r) \les \min( e^{r/4}, e^{-r/4} ).
\]
% and $F_2$ is in the Schwarz class. 

Recall the  definition of $\td K_{1,\b}$ in \eqref{eq:tK1}. Clearly, $|\td K_{1,\b}(s)|^p, |W_{1,\b}(z)|^p$ are locally integrable for any $p > 0$. Using \eqref{eq:kernel1}, $\B|\log \B| \f{s+1}{s-1} \B| - \f{2}{s}\B| \les s^{-3}$ for $s>2$ and a direct estimate, we obtain 
\[
\td K_{1,\b}(s) = \td K_{1,\b}(s^{-1}) , \quad   |\td K_{1,\b}(s) | \les  s^{- \f{\b-1}{2}} 
\les s^{-1/4} \mathrm{ \ for \ } s > 2.
\]

Note that for large $s$, the leading exponents $s^{ \f{\b-1}{2}}$ appeared in each term of $\td K_{1,\b}$ are canceled. As a result, we yield 
\beq\label{eq:decay_W1}
W_{1,\b}(z) = \td K_{1,\b}(e^z) = \td K_{1,\b}(e^{-z}) = W_{1,\b}(-z), \quad | W_{1,\b}(z) | \les e^{-|z|/4}  \mathrm{ \ for \ }|z| > 1.
\eeq
%\eeq

Denote by $\hat f = \int_{\R} \exp(-  i x \xi) f(x ) dx $ the Fourier transform of $f$. Using the Plancherel theorem, for some absolute constant $C_1>0$, we get 
\beq\label{eq:pos_B1}
B_1(\b) = C_1  \int_{\R} |\hat F_2(\xi) |^2 \wh W_{1,\b}(\xi) d\xi.
\eeq
%To obtain $B_{1,\b} \geq 0$, it suffices to prove $\hat W_1(\xi) \geq 0$ for any $\xi$. We will discuss it in Section \ref{sec:ker_pos}.

 %where $\hat f(\xi) = \int_{\R} \exp(-  i x \xi) f(x ) dx .$denotes  the Fourier transform of $f$.
%$\hat f$ is the Fourier transform of $f$: 
%defined by 
% \[
% \hat f(\xi) = \int_{\R} \exp(-  i x \xi) f(x ) dx .
% \]

\subsubsection{Reformulation of $K_{2,\b}$}

Similarly, we reformulate the kernel $K_{2,\b}$ and its associated quadratic form in $\td B(\b)$ in \eqref{eq:kernel2} as follows 
\beq\label{eq:pos_B2}
\bal
B_2(\b)  &\teq \int_0^{\pi/2} \int_0^{\pi/2} \om(x) \om(y) (\cot y)^{\b-1} K_{2,\b}(s) d x dy \\
 &= \int_{\R} \int_{\R} F_4(r) F_4(s) W_{2,\b}(t-r) dt dr 
= C_1 \int_{\R} \int_{\R} |\hat F_4(\xi)|^2 \wh W_{2,\b}(\xi) d\xi
\eal
\eeq
for some absolute constant $C_1> 0$, where 
\beq\label{eq:kernel_W2}
\bal
F_4(r) &= \f{ e^{ \f{3-\b}{2} r } \om( \arctan e^r) }{1 + e^{2r}}, \quad 
W_{2,\b}(z) = \td K_{2,\b}( e^z),  \\
\td K_{2,\b}(s)  &= \f{\b}{2} \B(  ( s^{ \f{\b-1}{2}} + s^{ - \f{\b-1}{2}  } ) \log \B| \f{s+1}{s-1} \B|
 -  ( s^{ \f{\b-1}{2} } - s^{ - \f{\b-1}{2}} ) \f{2s}{s^2-1} \B).
\eal
\eeq
The variable $F_4$ corresponds to $\om(x) (\cot x)^{ \f{\b-1}{2}}$ after a change of variable. For $\td K_{2,\b}, W_{2,\b}, F_4$ with $s>2, |z|>1$, we have 
\[
\bal
|F_4(r) | &\les \min( e^{ \f{r}{4}}, e^{-\f{r}{4}}), \quad  && W_{2,\b}(z)  = W_{2,\b}(-z), \quad \td K_{2,\b}(s) = \td K_{2,\b}(s^{-1}), \\
 |W_{2,\b}(z)|  &\les e^{-|z|/4} , \quad 
&& |\td K_{2,\b}(s)| \les s^{- 1/4 }.
\eal
\]

\subsection{ Positivity of $W_{j, \b}$}\label{sec:ker_pos}

Recall the formulas of $B_j(\b)$ \eqref{eq:pos_B1}, \eqref{eq:pos_B2}. To show that $B_j(\b) \geq 0$, it suffices to prove $\wh W_{j,\b}(\xi) \geq 0$ for any $\xi$. Since $W_{j,\b}$ is even, it is equivalent to show that
\beq\label{eq:def_G}
G_{j,\b}(\xi) \teq \f{1}{2}  \hat W_{j,\b}(\xi) = \f{1}{2} \int_{\R} W_{j,\b}(x) e^{-  i x \xi} dx
%= \f{1}{2}\int_{\R} W_{j,\b}(x) \cos(  x \xi) dx 
=  \int_{\R_+} W_{j,\b}(x) \cos (x \xi) dx \geq 0
\eeq
for any $\xi$. Since $G_{j,\b}(\xi), \wh W_{j,\b}(\xi)$ are even, we can further restrict to $\xi \geq 0$. We first study the positivity of $G_{1,\b}$, which is much more difficult than that of $G_{2,\b}$.

%As we have discussed in the paragraph below \eqref{eq:kernel} that the kernel $P_{2,\b}$ is less singular than $P_{1,\b}$. This fact also reflects that it is easier to establish $ G_{2,\b}(\xi)\geq 0$ than that of $G_{1,\b}(\xi)$. We first consider $G_{1,\b}$.

\subsubsection{ Positivity of $G_{1,\b}$}\label{sec:pos_W1}

%Thanks to the fact that $\b$ can be sufficiently close to $2$,
Since we are interested in the case where $\b$ close to $2$, using continuity, 
%Thanks to the smallness of $2-\b$, 
%Since $\b$ can be sufficiently close to $2$,
%using continuity of $W_{1,\b}$ in $\b$, 
%we can essentially 
we can essentially reduce proving $G_{1,\b} \geq 0$ to  the special case $\b = 2$. %We have the following result.

\begin{lem}\label{lem:pos_W}
Let $W = W_{1,2}, G = G_{1,2 }$. Suppose that there exists $x_0 > 0, M > 0$, such that %$G(\xi) > 0$ for $\xi \in [0, M]$, $W^{\prime}(x) > 0$ for $x \in [0, x_0]$ and 
\begin{align}
& G(\xi) > 0 ,  \quad \xi \in [0, M], \label{eq:ver10}  \\
    & W^{\prime \prime}(x) > 0 , \quad x \in [0, x_0],  \label{eq:ver2} \\ 
   &     - W^{\prime}(x_0) - \f{1}{M} \B(  |W^{\prime \prime}(x_0)|
+ \int_{x_0}^{\inf} |W^{\prime \prime \prime}(x) |dx  \B) > 0.	 \label{eq:ver3}
%\eal
\end{align}
Then there exists $\b_0 \in (1,2)$, such that for any $ \b \in [\b_0, 2]$ and $\xi$, we have $G_{1,\b}(\xi) \geq 0 $.

\end{lem}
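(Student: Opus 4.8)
\textbf{Proof strategy for Lemma \ref{lem:pos_W}.}

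The plan is to first dispose of the case $\b=2$ using hypotheses \eqref{eq:ver10}--\eqref{eq:ver3}, and then handle $\b\in[\b_0,2)$ by a continuity/uniformity argument. For the case $\b=2$: on the low-frequency range $\xi\in[0,M]$ the positivity $G(\xi)>0$ is exactly hypothesis \eqref{eq:ver10}, so nothing is to be done there (presumably \eqref{eq:ver10} is checked by an explicit, rigorous numerical estimate on the smooth function $G$ on a compact interval). The real work is the high-frequency range $\xi>M$. Here I would integrate by parts in $G(\xi)=\int_{\R_+}W(x)\cos(x\xi)\,dx$ twice. Using $W(0),W'(0)$ and the exponential decay of $W$ and its derivatives at infinity from \eqref{eq:decay_W1} (and the analogous decay of $W',W'',W'''$, which follows by differentiating the explicit formula for $\td K_{1,2}$), the boundary terms at infinity vanish, and since $W$ is even $W'(0)=0$; one is left with
\[
G(\xi)=\int_0^\infty W(x)\cos(x\xi)\,dx
=-\frac{1}{\xi^2}W''(0)-\frac{1}{\xi^2}\int_0^\infty W''(x)\cos(x\xi)\,dx+\dots,
\]
but it is cleaner to split the $x$-integral at $x_0$ first. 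On $[0,x_0]$, convexity \eqref{eq:ver2} together with a sign analysis of $\cos(x\xi)$ and the fact that $-W'(0)=0$ should force the contribution of $[0,x_0]$ plus a controlled piece of the tail to be nonnegative; on $[x_0,\infty)$ I would integrate by parts once to produce $-W'(x_0)$ (a positive quantity, since $W$ is even with a convex bump near the origin it is decreasing just past $x_0$) as the main term and bound the remainder by $\frac{1}{\xi}\int_{x_0}^\infty|W''|$, then integrate by parts again to get the $\frac{1}{M}(|W''(x_0)|+\int_{x_0}^\infty|W'''|)$ correction. Hypothesis \eqref{eq:ver3} is precisely the statement that the guaranteed positive term dominates the error uniformly for $\xi\ge M$, which closes the case $\b=2$.

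\textbf{Extension to $\b<2$.} For the perturbation in $\b$, note that $\td K_{1,\b}(s)$, and hence $W_{1,\b}(z)$, depends continuously (indeed real-analytically) on $\b$, together with the decay estimates \eqref{eq:decay_W1} which hold uniformly for $\b$ in a neighborhood of $2$. Consequently $G_{1,\b}\to G_{1,2}=G$ uniformly on compact $\xi$-intervals, and more importantly the quantities appearing in \eqref{eq:ver10}--\eqref{eq:ver3} --- $\min_{[0,M]}G_{1,\b}$, $\min_{[0,x_0]}W_{1,\b}''$, and $-W_{1,\b}'(x_0)-\frac1M(|W_{1,\b}''(x_0)|+\int_{x_0}^\infty|W_{1,\b}'''|)$ --- are continuous functions of $\b$. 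Since each is \emph{strictly} positive at $\b=2$ by hypothesis, each remains positive on some interval $[\b_0,2]$; taking $\b_0$ to be the largest left endpoint that works for all three (and shrinking if necessary so the uniform decay bounds persist) gives a single $\b_0$ for which the entire argument of the $\b=2$ case goes through verbatim with the same $x_0$ and $M$. This yields $G_{1,\b}(\xi)\ge0$ for all $\xi$ and all $\b\in[\b_0,2]$.

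\textbf{Main obstacle.} The delicate point is the high-frequency estimate on $[x_0,\infty)$: one must show that after two integrations by parts the genuinely positive boundary term $-W'(x_0)$ is not overwhelmed by the oscillatory tail, and this requires controlling $\int_{x_0}^\infty|W'''|$ and $|W''(x_0)|$ against $-W'(x_0)$ with the explicit constant $1/M$ --- this is the content of \eqref{eq:ver3} and presumably the inequality is verified by careful (computer-assisted) estimates on the explicit transcendental function $W_{1,2}$. A secondary subtlety is making sure the low-range bound \eqref{eq:ver10} and the convexity \eqref{eq:ver2} are genuinely checkable: both concern the smooth explicit function $W_{1,2}$ on compact sets, so interval arithmetic or monotonicity of the derivative formulas suffices, but the bookkeeping of which term in $\td K_{1,2}$ contributes which sign near $x=0$ (the logarithmic singularity of $\log|\frac{s+1}{s-1}|$ at $s=1$, i.e.\ $z=0$) must be handled with care. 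Everything downstream (deducing $B_1(\b)\ge0$, hence $\td B(\b)\ge0$, hence Lemma \ref{lem:comp}) is then formal via \eqref{eq:pos_B1} and the Plancherel identity.
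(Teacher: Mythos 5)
Your overall strategy (low frequencies from \eqref{eq:ver10}; high frequencies by integrating by parts twice and splitting at $x_0$; then continuity in $\b$) matches the paper, but your high-frequency step contains a genuine gap: you treat $W=W_{1,2}$ as if it were regular at $x=0$, asserting ``since $W$ is even, $W'(0)=0$'' and writing a term $-\xi^{-2}W''(0)$. In fact $W(x)=\td K_{1,2}(e^x)$ has a logarithmic singularity at the origin, $W(x)\sim -C\log|x|$, so $W'(x)\sim -C/x$ and $W''(x)\sim C/x^2$; $W'(0)$ and $W''(0)$ do not exist, and the boundary term produced by your second integration by parts (with antiderivative $-\cos(x\xi)/\xi$) is divergent rather than zero. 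The paper's proof hinges on a specific device you do not supply: after the first integration by parts against $\sin(x\xi)$, one chooses the antiderivative $1-\cos(x\xi)$ rather than $-\cos(x\xi)$ (see \eqref{eq:IBP_G}). This antiderivative vanishes to second order at $x=0$, so the boundary term $W'(x)(1-\cos(x\xi))=O(x)$ disappears despite the $1/x$ singularity of $W'$, and, crucially, it is nonnegative, so convexity \eqref{eq:ver2} immediately gives $\int_0^{x_0}W''(x)(1-\cos(x\xi))\,dx\ge 0$; the remaining piece on $[x_0,\infty)$ is then handled exactly by your computation leading to \eqref{eq:ver3}. Your ``sign analysis of $\cos(x\xi)$'' on $[0,x_0]$ is too vague to substitute for this, and as written the argument fails.

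A secondary, smaller issue is the continuity-in-$\b$ step: you claim $\min_{[0,x_0]}W_{1,\b}''$ is continuous in $\b$, but $W_{1,\b}''$ does not converge uniformly on $[0,x_0]$ because of the $1/x^2$ singularity at the origin ($W_{1,\b}''\approx \b/x^2$). The paper repairs this by proving a uniform-in-$\b$ pointwise lower bound $W_{1,\b}''(x)\ge \b/x^2-C/|x|>0$ on a small interval $[0,\d]$ (see \eqref{eq:conv_W13}--\eqref{eq:conv_W14}), and only then invoking uniform convergence on $[\d,x_0]$, together with dominated convergence (using the bound \eqref{eq:conv_W12} and a tail bound on $\pa_x^3 W_{1,\b}$) for $G_{1,\b}$ on $[0,M]$, $W_{1,\b}'(x_0)$, and $\int_{x_0}^\infty|\pa_x^3W_{1,\b}|$. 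Your continuity argument is correct in spirit but needs this splitting near $x=0$ to be complete.
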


Using continuity of $W_{1,\b}$ in $\b$ and the smallness of $2-\b$, we will show that %the conditions 
\eqref{eq:ver10}-\eqref{eq:ver3} hold for $ W_{1,\b},  G_{1,\b}$. The proof 
of this part is standard and is deferred to Appendix \ref{app:pos_W}. 

Next, we prove that \eqref{eq:ver2}, \eqref{eq:ver3} implies $G_{1,2}(\xi) \geq 0$ on $[M, \inf]$, which along with \eqref{eq:ver10} prove $G_{1,2}(\xi)\geq 0$. The same argument applies to $G_{1,\b}$. We simplify $W_{1,2}, G_{1,2}$ defined in \eqref{eq:tK1}, \eqref{eq:kernelW}, \eqref{eq:def_G} as $W, G$.

\

\paragraph{\bf{Large} $\xi$}

We will choose $M$ to be relatively large. This allows us to
%For large $\xi \geq M$, 
 exploit the oscillation in the integral $G(\xi)$ \eqref{eq:def_G} for $\xi \geq M$.
From the definition of $W(x)$ in \eqref{eq:tK1} and \eqref{eq:kernelW}, we know that $W(x)$ is smooth away from $x=0$ and $W(x)$ is singular of order $\log|x|$ near $ x= 0$. Using integration by parts twice, we yield 
\beq\label{eq:IBP_G}
\bal
G(\xi) &= \xi^{-1} \int_{\R_+} W(x)  \pa_x \sin (x \xi) dx
= - \xi^{-1} \int_{\R_+} W^{\prime}(x)  \sin (x\xi) dx  \\
&= - \xi^{-2} \int_{\R_+} W^{\prime}(x) \pa_{x} ( 1- \cos (x\xi) ) dx  
 =  \xi^{-2} \int_{\R_+} W^{\prime \prime}(x)  ( 1- \cos (x\xi)  )dx ,
 \\
\eal
\eeq
where the boundary term vanishes due to $W(x) \sin ( x\xi) = O(x \log x) $ and $W^{\prime}(x) (1 -\cos x \xi) = O( \f{1}{x} x^2) = O(x)$ and the fast decay \eqref{eq:decay_W1}. The advantage of the above formula is that we obtain a nonnegative coefficient $1 - \cos ( x \xi)$. For some $x_0 > 0$, we define 
\beq\label{eq:decomp_G}
G_1(\xi) \teq \int_0^{x_0} W^{\prime \prime}(x)  ( 1- \cos (x\xi)  )dx ,
\quad G_2(\xi ) \teq \int_{x_0}^{\inf} W^{\prime \prime}(x)  ( 1- \cos (x\xi)  )dx ,
\eeq

It suffices to verify $G_1(\xi) \geq 0$ and $G_2(\xi) \geq 0$. Thanks to \eqref{eq:ver2} and $1 - \cos(\xi x) \geq 0$, we obtain $G_1(\xi) \geq 0$.
% For $G_1(\xi)$, we verify 
% \beq\label{eq:ver2}
% W^{\prime \prime}(x) \geq 0  , \quad x \in [0, x_0],
% \eeq
% which implies $G_1(\xi) \geq 0$ since $1 - \cos(\xi x) \geq 0$. 
For $G_2(\xi)$, the main term is associated with $1$ since $\cos (x\xi)$ oscillates. 
%Formally, we have $1 - \cos (x \xi \approx 1$
In fact, using integration by parts again, we yield 
\[
\bal
G_2(\xi) &= - W^{\prime}(x_0) - \int_{x_0}^{\inf} W^{\prime \prime}(x)   \cos (x\xi)  dx 
=  - W^{\prime}(x_0) - \xi^{-1} \int_{x_0}^{\inf} W^{\prime \prime}(x)  \pa_x \sin (x\xi)  dx  \\
 &= - W^{\prime}(x_0)  +  W^{\prime \prime}(x_0) \f{\sin(x_0 \xi)}{\xi}
+  \int_{x_0}^{\inf} W^{\prime \prime \prime}(x)  \f{ \sin (x\xi)}{\xi}  dx  \\
&\geq - W^{\prime}(x_0) - \f{1}{M} \B(  |W^{\prime \prime}(x_0)|
+ \int_{x_0}^{\inf} |W^{\prime \prime \prime}(x) |dx  \B),
 \\ 
\eal
\]
where we have used $\xi \geq M$ in the last inequality. We choose $x_0 > 0$ and decompose the integral into two domains $ x \leq x_0$ and $x > x_0$ in \eqref{eq:decomp_G} since $W^{\prime \prime \prime}$ in the above derivation is not integrable near $x=0$. Using the assumption \eqref{eq:ver3}, we obtain $G_2(\xi) \geq 0$. 

%Therefore, we prove that \eqref{eq:ver2}-\eqref{eq:ver3} implies 

% To show that $G_2(\xi) \geq 0$, we verify the above lower bound is positive 
% \beq\label{eq:ver3}
% - W^{\prime}(x_0) - \f{1}{M} \B(  |W^{\prime \prime}(x_0)|
% + \int_{x_0}^{\inf} |W^{\prime \prime \prime}(x) |dx  \B) > 0.
% \eeq

\subsubsection{ Verification of the conditions in Lemma \ref{lem:pos_W} }\label{sec:ver}
%Since $W_{1,2}$ is given explicitly in \eqref{eq:tK1}, \eqref{eq:kernelW}, 
We discuss how to verify \eqref{eq:ver10}-\eqref{eq:ver3} below. 

%\paragraph{\bf{Small} $\xi \in [0,M]$ }
Firstly, $G(\xi)$ is smooth in $\xi$ and the Lipschitz constant satisfies 
\beq\label{eq:lip}
|\pa_{\xi}  G| \leq \int_{\R_+} | W(x) | x dx \teq b_1.
\eeq
%Since $W = W_{1,2}$ is locally integrable and enjoys the estimate \eqref{eq:decay_W1}, the integral is bounded.
% \[
% |\pa_{\xi}  G| \leq \int_{\R_+} | W(x) | x dx = \int_{\R_+} |\td K_{1,2}(e^x)| x dx 
% = \int_1^{\infty } |\td K_{1,2}(s)| \f{ \log s}{s} ds \teq b_1,
% \]
% where we have used $W(x) = K_{1,2}(e^x)$ and a change of variable $s = e^x$. 
The constant $b_1$ will be estimated rigorously. For small $\xi \in [0, M]$, we compute a lower bound of the integral $G(\xi)$ rigorously 
%compute the integral $G(\xi)$ numerically with rigorous error control 
for the discrete points $\xi = ih, i=0,1,2.., n$, $M = nh$, and verify $G( ih) >0$. %See the discussion in Appendix \ref{app:ver} for rigorous verification. 
For $\xi \in [ih, (i+1) h]$, we use 
\beq\label{eq:ver1}
G(\xi ) \geq \min( G( ih), G( (i+1) h) ) - \f{h}{2} b_1  >0
\eeq
and verify the second inequality to obtain $G(\xi) > 0$. This enables us to establish \eqref{eq:ver10}.

For \eqref{eq:ver2} and \eqref{eq:ver3}, let us first motivate why they 
hold for some $x_0$ and $M$. Using \eqref{eq:tK1} and \eqref{eq:kernelW}, we obtain the asymptotic behavior of $W(x)$ for $x$ near $0$
\[
W(x) \approx  - C \log|e^x - 1| \approx  - C \log x, \quad W^{\prime}(x) \approx - \f{C}{x} < 0,
\quad W^{\prime \prime}(x) \approx \f{C}{x^2 } > 0,
\]
for some constant $C>0$. See also \eqref{eq:conv_W13} for a detailed derivation. Note that $W^{\prime \prime \prime}$ is integrable away from $0$. Thus, \eqref{eq:ver2}, \eqref{eq:ver3} hold for small $x_0$ and large $M$. 

In practice, we choose $x_0 = \log \f{5}{3} $ and $M = 20$ in Lemma \ref{lem:pos_W}. Note that $W_{1, 2}$ is an explicit function. We prove \eqref{eq:ver2} for $x_0 = \log \f{5}{3} $ in Appendix \ref{app:pos_W}. We discuss how to compute the integrals in \eqref{eq:ver1} and \eqref{eq:ver3} and verify these conditions, which are independent of $\xi$, %,  using interval arithmetic \cite{moore2009introduction,rump2010verification} 
rigorously in Appendix \ref{app:ver}. This allows us to establish the conditions in Lemma \ref{lem:pos_W}. The rigorous lower bound of $G(\xi)$ for $\xi = i h \in [0, M]$ is plotted in Figure \ref{fig:Gxi}, and $G(\xi)$ is strictly positive. 
\begin{figure}[t]
\centering
    	\includegraphics[width=0.6\textwidth]{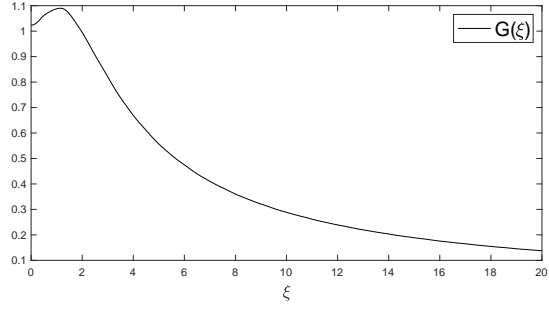}
    \caption{Rigorous lower bound of $G(\xi)$ for $\xi =  i h, h=0.05, 0\leq i \leq 400$, $G(ih) > 0$. } \label{fig:Gxi}
    \vspace{-0.1in}
\end{figure}

%In practice, we choose $x_0 = \log \f{5}{3} $ and $M = 20$ in Lemma \ref{lem:pos_W}. Note that $W_{1, 2}$ is an explicit function. We prove \eqref{eq:ver2} for $x_0 = \log \f{5}{3}$ in Appendix \ref{app:pos_W}.
% We will choose $h$ and verify the conditions \eqref{eq:ver1} and \eqref{eq:ver3}, which are independent of $\xi$, using interval arithmetic \cite{moore2009introduction,rump2010verification}. This allows us to establish the conditions in Lemma \ref{lem:pos_W}. We refer to Appendix \ref{app:ver} for the discussion on rigorous verification. 

\subsubsection{ Positivity of $G_{2,\b}$}

Recall $W_{2,\b}, G_{2,\b}$ defined in  \eqref{eq:kernel_W2} and \eqref{eq:def_G}.
%and its associtaed Fourier transform $G_{2,\b}$ in \eqref{eq:def_G}. 
For $G_{2,\b}$, it is easier to establish its positivity than that of $G_{1,\b}$. 
From the argument in Section \ref{sec:pos_W1} and \eqref{eq:IBP_G}, a sufficient condition for $G_{2,\b}(\xi) \geq 0$ %with any $\xi$ 
is the convexity of $W_{2,\b}$. 
We have the following result.

% Recall $W_{2,\b}$ defined in  \eqref{eq:kernel_W2} and its associtaed Fourier transform 
% $G_{2,\b}$ in \eqref{eq:def_G}. For $G_{2,\b}$, it is easier to establish its positivity than that of $G_{2,\b}$. 
% From the argument in the previous Section and \eqref{eq:IBP_G}, we know that a sufficient condition for $G_{2,\b}(\xi) \geq 0$ for all $\xi$ is the convexity of $W_{2,\b}$. 
% We have the following result.

\begin{lem}\label{lem:pos_W2}
For any $\b \in ( 1, 2] $, we have $W_{2,\b}^{\prime \prime }(x) \geq 0$ for $x \geq 0$.
% and thus 
As a result, $G_{2,\b}(\xi) \geq 0$ for any $\xi$ and $\b \in (1, 2]$.
\end{lem}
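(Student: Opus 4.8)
\textbf{Proof proposal for Lemma \ref{lem:pos_W2}.}

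The plan is to prove the pointwise convexity $W_{2,\b}''(x) \geq 0$ for $x \geq 0$ and then deduce the Fourier positivity exactly as in the treatment of $G_{1,\b}$ for large $\xi$. Recall from \eqref{eq:kernel_W2} that $W_{2,\b}(z) = \td K_{2,\b}(e^z)$ with
\[
\td K_{2,\b}(s) = \f{\b}{2}\B( (s^{\mu} + s^{-\mu})\log\B|\f{s+1}{s-1}\B| - (s^{\mu} - s^{-\mu})\f{2s}{s^2-1}\B), \qquad \mu \teq \f{\b-1}{2} \in (0, \tf12].
\]
First I would exploit the symmetry $W_{2,\b}(z) = W_{2,\b}(-z)$ from \eqref{eq:decay_W1}-type reasoning, which reduces the claim to $z \geq 0$, i.e. $s = e^z \geq 1$. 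On $(1,\infty)$ I would use the series expansion $\log\big|\tf{s+1}{s-1}\big| = 2\sum_{k\geq 0}\tf{1}{(2k+1)s^{2k+1}}$ and $\tf{2s}{s^2-1} = 2\sum_{k\geq 0}\tf{1}{s^{2k+1}}$, so that
\[
\td K_{2,\b}(s) = \b\sum_{k\geq 0}\B( \f{1}{2k+1} - \f{1}{(2k+1)-2\mu} + \f{1}{(2k+1)}\cdot 0 \B)\cdots
\]
— more cleanly, combining the two sums termwise gives
\[
\td K_{2,\b}(s) = \b\sum_{k\geq 1} c_k(\mu)\, s^{-(2k+1)+\mu} \quad\text{plus the } s^{\mu-1}\text{-type head terms},
\]
where each coefficient $c_k(\mu)$ has a definite sign because $\tf{1}{2k+1-2\mu} > \tf{1}{2k+1}$. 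The point of passing to the $z$ variable is that a monomial $s^{-p} = e^{-pz}$ is convex in $z$, and a nonnegative (or sign-definite-in-a-controlled-way) combination of such terms, after accounting for the finitely many low-order terms, should still be convex. So the core computation is: expand $\td K_{2,\b}(s)$ as a power series in $s^{-1}$ with a leading singular/head part, change variables to $z$, differentiate twice, and check the sign of the resulting series term by term.

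The cleanest route is probably to avoid the head-term bookkeeping altogether: write $W_{2,\b}''(z)$ directly as an integral. Using the identity $\log\big|\tf{s+1}{s-1}\big| = \int_0^1 \tf{2s}{s^2 - t^2}\,\tf{dt}{1}$... actually $\int_0^1 \tf{2s\,dt}{s^2-t^2}$ diverges; instead use $\log\big|\tf{s+1}{s-1}\big| - \tf{2}{s} = \int_0^1\big(\tf{2s}{s^2-t^2} - \tf{2}{s}\big)dt = \int_0^1 \tf{2t^2}{s(s^2-t^2)}\,dt$, and similarly organize $\tf{2s}{s^2-1}$. This exhibits $\td K_{2,\b}(s)$ as a superposition over $t \in (0,1)$ of elementary kernels $\tf{2t^2}{s(s^2-t^2)}$ weighted by $(s^{\mu}+s^{-\mu})$ and $(s^{\mu}-s^{-\mu})$. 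Each elementary kernel, pulled back to the $z$-variable and symmetrized, is of the form $h_t(z) = \big(e^{\mu z}+e^{-\mu z}\big)\cdot\tf{2t^2 e^{-z}}{e^{2z}-t^2} - (\text{matching }s^{\mu}-s^{-\mu}\text{ piece})$, and one checks $h_t'' \geq 0$ for each fixed $t \in (0,1)$ by an elementary (if tedious) computation, the key being that $0 < \mu \le \tf12 < 1$ so the hyperbolic cosine growth is dominated by the exponential decay of the $t$-kernel. Integrating over $t$ preserves the inequality.

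Once $W_{2,\b}'' \geq 0$ on $\R$ is in hand, the Fourier conclusion is immediate and mirrors \eqref{eq:IBP_G}: since $W_{2,\b}$ is even, smooth away from $0$, singular only at logarithmic rate near $0$ (so the boundary terms vanish as in Section \ref{sec:pos_W1}), and decays like $e^{-|z|/4}$ by \eqref{eq:kernel_W2}, two integrations by parts give
\[
G_{2,\b}(\xi) = \xi^{-2}\int_{\R_+} W_{2,\b}''(x)\,(1 - \cos(x\xi))\,dx \geq 0
\]
for every $\xi \neq 0$, and $G_{2,\b}(0) = \int_{\R_+} W_{2,\b} \geq 0$ follows by the same power-series positivity (or by continuity). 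Hence $\wh W_{2,\b} \geq 0$, which by \eqref{eq:pos_B2} yields $B_2(\b) \geq 0$.

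\textbf{Main obstacle.} The delicate point is the pointwise convexity $W_{2,\b}'' \geq 0$ near $z = 0$ (equivalently $s \to 1^+$), where $\td K_{2,\b}(s)$ has a $\log$-type singularity: one must verify that $W_{2,\b}(z) \sim -C\log|z|$ there with the right sign so that $W_{2,\b}'' \sim C/z^2 > 0$ dominates, and that no cancellation with the subleading analytic part spoils positivity on all of $[0,\infty)$. The term-by-term sign check of the series (or the $t$-integral representation) handles the far field cleanly because $\mu \le \tf12$; the near field is where a careful expansion — isolating the $\log$ singularity and bounding the remainder — is required, but this is exactly the kind of explicit one-variable estimate already carried out for $W_{1,\b}$ in Appendix \ref{app:pos_W}, and it is strictly easier here since, unlike $P_{1,2}$, the kernel $\td K_{2,\b}$ does not change sign.
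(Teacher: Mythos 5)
Your final step is fine and matches the paper: once $W_{2,\b}''\geq 0$ on $[0,\infty)$ is known, two integrations by parts as in \eqref{eq:IBP_G} (boundary terms vanishing because the singularity at $0$ is only logarithmic and the kernel decays exponentially) give $G_{2,\b}(\xi)=\xi^{-2}\int_{\R_+}W_{2,\b}''(x)(1-\cos(x\xi))\,dx\geq 0$, which is exactly how the paper deduces the second assertion from the first. The problem is that the first assertion --- the pointwise convexity, which is the actual content of the lemma --- is not proved in your proposal. Your series route does not work as stated: expanding for $s>1$ with $\mu=\f{\b-1}{2}$ one gets $\td K_{2,\b}(s)=\b\sum_{k\geq 0}\big[(\f{1}{2k+1}-1)s^{\mu-(2k+1)}+(\f{1}{2k+1}+1)s^{-\mu-(2k+1)}\big]$, so the coefficients have \emph{mixed} signs (every $s^{\mu-(2k+1)}$ term with $k\geq 1$ enters negatively), and termwise convexity in $z=\log s$ fails. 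Worse, within a fixed $k$ the negative term decays like $e^{-(2k+1-\mu)z}$, i.e.\ more slowly than its positive companion $e^{-(2k+1+\mu)z}$, so the naive pairing you suggest loses at large $z$; one must borrow positivity from lower-order terms, and making that uniform down to $s\to 1^+$ (where the series converges slowly and $W''$ blows up) is precisely the work that remains. Your alternative ``superposition over $t\in(0,1)$'' route is not well defined as written: the logarithm is an integral over $t$ but $\f{2s}{s^2-1}$ is the single kernel at $t=1$, so there is no common measure, the object $h_t$ is never pinned down, and the claimed per-$t$ convexity is asserted, not checked. In short, the central inequality is deferred to ``an elementary (if tedious) computation'' that is neither set up correctly nor carried out.

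For comparison, the paper's proof is a direct finite computation rather than a series or superposition argument: writing $s=e^x$, convexity is equivalent to $I_2(s,\b)\teq s^2\pa_s^2\td K_{2,\b}(s)+s\,\pa_s\td K_{2,\b}(s)\geq 0$ for $s\geq 1$; after an explicit (symbolic) simplification, the single elementary bound $\log\f{s+1}{s-1}\geq\f{2}{1+s}$ removes the logarithm, and what is left is $\f{P_1+P_2+P_3}{(s^2-1)^3}$ with explicit expressions in $s$ and $s^{2a}$, $a=\f{\b-1}{2}\leq\f12$, each of which is shown nonnegative using only $s^{2a}\leq s$ and the factorization $P_3\geq 4s^2(s-1)^3$. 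This handles the whole range $s\geq 1$, including the neighborhood of $s=1$ that you correctly identify as the delicate region but do not treat. If you want to salvage your approach, you would need either a genuine grouping argument for the mixed-sign series valid uniformly near $s=1$, or to abandon it in favor of the kind of explicit one-variable estimate the paper uses.
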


The proof is based on estimating $W_{2,\b}^{\prime \prime }$ directly using its explicit formula and elementary inequalities, which is not difficult and deferred to Appendix \ref{app:pos_W}.

\subsubsection{ Proof of Lemma \ref{lem:comp}}
Combining Lemma \ref{lem:pos_W} and Lemma \ref{lem:pos_W2}, we establish that there exists $\b_0 \in (1,2)$, such that for $\b \in [\b_0, 2]$ and any $\xi$, $ \wh W_{j,\b}(\xi) = 2 G_{1,\b}(\xi) \geq 0 , j =1,2$. 
% From \eqref{eq:pos_B1}, \eqref{eq:pos_B2}, we prove $B_j(\b) \geq 0$, which
% along with $\td B(\b) = B_1(\b ) + B_2(\b)$ and \eqref{eq:kernel2} imply the desired result in Lemma \ref{lem:comp}.
From \eqref{eq:pos_B1}, \eqref{eq:pos_B2}, we prove $B_j(\b) \geq 0$. Recall the definitions of $\td B(\b), B_1(\b), B_2(\b)$ from \eqref{eq:kernel2}, \eqref{eq:form_B1}, and \eqref{eq:pos_B2}. We obtain $\td B(\b) = B_1(\b) + B_2(\b) \geq 0$. 

Note that to obtain the equivalence between  the forms of $B(\b)$ in  \eqref{eq:def_Q} and  \eqref{eq:sym}, we require that $\om$ vanishes near $x=0$ at order $|x|^{\g}$ with $\g > \b- 1$. %This requirement is satisfied for $\om, \b$ stated in Lemma \ref{lem:comp}. 
Using the relation \eqref{eq:equiv2} between $\td B(\b)$ and $B(\b)$, we prove \eqref{eq:comp_B} in Lemma \ref{lem:comp} for $\b \in [\b_0, 2)$ and odd $\om \in C^{\al}$ with $\om x^{-1} \in L^{\inf}$. If in addition $\om \in C^{1,\al}$ and $\om_x(0) = 0$, we obtain that the vanishing order of $\om$ near $x=0$ is larger than $1$ and choose $\b = 2$ to establish $B(2) = \td B(2) \geq 0$. We conclude the proof of Lemma \ref{lem:comp}.
% Using the relations in \eqref{eq:sym}, \eqref{eq:kernel}, \eqref{eq:kernel2}, and \eqref{eq:equiv1}, we have 
% \[
% \td B(\b) = B(\b) + (2-\b) 
% \]
%  which
% along with $\td B(\b) = B_1(\b ) + B_2(\b)$ and \eqref{eq:kernel2} imply the desired result in Lemma \ref{lem:comp}.

\section{Global well-posedness}\label{sec:GWP}
%\section{Global well-posedness for $C^{1,\al}$ data}\label{sec:GWP}

In this Section, we use the crucial Lemma \ref{lem:comp} to control $u_x(0,t)$ and then establish the global well-posedness result in Theorem \ref{thm:GWP} using the one-point blowup criterion in Theorem \ref{thm:criterion}. We impose the assumptions $\om_0 \in H^1 \cap X, \om_0(x) x^{-1} \in L^{\inf}$, and $A(\om_0) <+\inf$ stated in Theorem \ref{thm:GWP}.
%Throughout this Section, we impose the assumptions 
%{\color{blue}$\om \in H^1 \cap X$ with $\om_0 / x \in L^{\inf}$ stated in Theorem \ref{thm:GWP}.}
%$\om \in C^{1,\al} \cap X$ stated in Theorem \ref{thm:GWP}.

Recall $Q(\b)$ defined in \eqref{eq:def_Q}. To apply Theorem \ref{thm:criterion}, from H\"older's inequality
\beq\label{eq:hol_ux0}
|u_x(0)| \les \int_0^{\pi/2} |\om(y)| \cot y dy \les Q(\b)^{ 1 / \b} || \om||_{L^1}^{1 - 1 / \b},
\eeq
we only need to control $|| \om||_{L^1}$ and $ Q(\b)$. In 
\eqref{eq:EE_L12},\eqref{eq:EE_L13}, we derive the evolution of $|| \om||_{L^1}$
\beq\label{eq:GWP_L1}
\f{d}{dt} -\int_0^{\pi/2} \om(x) dx
= \f{2}{\pi} \int_0^{\pi/2}\int_0^{\pi/2} \om(x) \om(y) \cot(x + y) dx dy .
\eeq

Recall the discussion of the interaction on the right hand side in Section \ref{sec:intro_diff}. 
For $x+ y \geq \f{\pi}{2}$, the interaction has a negative sign and it will play a crucial role as a damping term.

%For $ \om \in X $, we have $ \om \leq 0$. For $x + y \leq \f{\pi}{2}$, the interaction on the right hand sign has a positive sign due to $\cot(x+y) \geq 0$, which leads to the growth of $|| \om||_{L^1}$. On the other hand, for $x+ y \geq \f{\pi}{2}$, the interaction has a negative sign, which contributes to the decrease of $|| \om||_{L^1}$. As we will see later, the latter plays a crucial role as a damping term. 

\subsection{ Special Case: $\om_0 \in C^{1,\al}, \om_{0,x}(0)=0$}

For initial data $\om_0$ with $\om_{0,x}(0)=0$, $\om_x(0, t) =0$ is preserved and $Q(2,t) = -\int_0^{\pi/2} \om(y) \cot^2 y dy$ is well-defined. Using \eqref{eq:ODE_Q} and Lemma \ref{lem:comp}, we obtain 
\[
\f{d}{dt} Q(2, t) = - B(2,t) \leq 0.
\]

Since $\om \leq 0 $ on $ (0, \f{\pi}{2})$, we derive $Q(2,t) \geq 0$ and
\[
%A(t_0) \leq A(t) \leq 0, \quad  
\int_0^{\pi/2} | \om | \cot^2 y dy 
 =   Q(2, t) \leq   Q( 2, 0 ) = \int_0^{\pi/2} | \om_0 | \cot^2 y dy < + \infty.
\]

Next, we estimate $||\om||_{L^1}$. We first establish an estimate similar to \eqref{eq:EE_L13} 
\beq\label{eq:EE_comp1}
 - \int_0^{x}  \om(y) \cot(x+y ) dy \leq - \int_0^{\pi/2} \om(y) \cot^2 y dy = Q(2,t)
\eeq
for $x \in [0, \f{\pi}{2}]$. Since $\cot z \leq 0$ for $z \geq \f{\pi}{2}$, $\cot y \leq 1$ on $[0, \f{\pi}{4}]$, and $\cot y$ is decreasing on $[0, \pi]$, for $ 0 \leq y \leq x \leq \f{\pi}{2}$, we get
\[
\one_{y \leq x} \cot( x+ y) 
\leq \one_{y \leq \f{\pi}{4} } \one_{y \leq x} \cot (x + y)
\leq \one_{y \leq \f{\pi}{4} } \one_{y\leq x} \cot  y \leq \cot y^2,
\]
where we have used $x+ y \geq \f{\pi}{2}, \cot (x+ y) \leq 0$ if $ \f{\pi}{4} \leq y \leq x$ in the first inequality. Since $\om \leq 0$ on $[0, \f{\pi}{2}]$, we prove \eqref{eq:EE_comp1}. Plugging \eqref{eq:EE_comp1} in the estimates \eqref{eq:EE_L1}-\eqref{eq:EE_L12}, we derive
\[
\f{d}{dt} \int_0^{\pi/2} - \om  dx
\les Q(2,t)  \int_0^{\pi/2} - \om  dx
\leq Q(2,0)  \int_0^{\pi/2} - \om  dx.
\]

Using the above estimate and the interpolation \eqref{eq:hol_ux0} with $\b = 2$, we obtain
\[
|| \om||_{L^1} \leq || \om_0||_{L^1} e^{C Q(2,0) t}, 
\quad | u_x(0) |  \les ( Q(2,t) || \om||_{L^1})^{1/2} 
\les  ( Q(2,0) || \om_0||_{L^1} )^{1/2}e^{C Q(2,0) t},
\]
for some constant $C>0$. Applying the same argument as that in Sections \ref{sec:EE_near} and \ref{sec:EE_far} with $U(t)$ replacing by $C Q(2,0) t$, we establish 
\[
|| \om||_{L^{\inf}}
\leq K(\om_0) \exp( 2 \exp( K(\om_0) \exp( C Q(2,0) t)   )),
 \]
where we have used $\int_0^t \exp( C Q(2,0) s) ds \les K(\om_0) e^{ C Q(2,0) t}$ and $K(\om_0)$ is some constant depending on $ H \om_0(0), H\om_0( \f{\pi}{2}), || \om_0||_{L^1}, Q(2,0)$ and $A(\om_0)$. We prove the result in Theorem \ref{thm:GWP} for the case of $ \om_0 \in C^{1,\al}$ with $\om_{0,x}(0) = 0$.

%\begin{remark}
%We remark that similar a-priori estimates hold for solution from initial data $\om_0$ with lower regularity, e.g. $\om_0 / |x|^{1+\al} \in L^{\inf}$ for some $\al > 0$.
We remark that the above a-priori estimates can be generalized to initial data $\om_0$ with lower regularity, e.g. $\om_0 / |x|^{1+\al} \in L^{\inf}$ for some $\al > 0$ and $\om_0 \in X \cap H^1$.
%with regularity assumption $\om_0 \in C^{1,\al}$ with $\om_{0,x}(0) =0$ can be weaken to 
%\end{remark}

\subsection{ General Case}
%: $\om_{0,x} \neq 0$}
Recall from Section \ref{sec:idea_GWP} the difficulties and ideas in the general case where $\om_0$ can vanish only linearly near $x=0$.
%$\om_{0,x} \neq 0$ 
In this case, %the monotonically decreasing 
the monotone quantity $Q(2,t)$ in the previous case is not well-defined and not applicable. We will exploit a relation similar to the conservation law $\om_x(0, t) = \om_{0,x}(0)$ %in Section \ref{sec:near0} 
and control $Q(\b, t)$ for $\b$ sufficiently close to $2$.

%In the subcritical case, the proof relies on the coercive conserved quantity $Q(2,t)$. In the critical case, the proof is much more challenging since $Q(2,t)$ is not well defined and there is no similar coercive conserved quantity. Note that in the critical case, from \cite{Sve19}, for $\om_0$ close to $A \sin 2 x$, the solution $\om(x, t)$ converges to $A \sin 2 x$ as $t \to \infty$. This put strong constraints on possible conserved quantities. Thus, it is not expected that there is any good conserved quantity similar to some weighted norm of $\om$. See also similar discussion in \cite{Sve19}. 

\subsubsection{ Estimate of $\om x^{-1}$ }\label{sec:near0}
For the less regular initial data $\om_0 \in H^1$ with $\om_0 x^{-1} \in L^{\inf}$, $\om_{x}(0, t)$ is not well-defined. Instead of using the conservation law $\om_{x}(0, t) = \om_{0,x}(0)$, 
%We develop an estimate similar to the conservation law $\om_{x}(0, t) = \om_{0,x}(0)$ by showing 
we show that $\om(x, t) x^{-1}$ cannot grow too fast for $x$ near $0$. Consider the flow map 
\beq\label{eq:flow}
 \f{d}{dt}\Phi(x, t) = u( \Phi(x, t), t), \quad \Phi(x, 0) = x.
\eeq

We focus on $x\in[0, \f{\pi}{2} ]$. Since $u( x, t) \geq 0$, $u(0, t) = 0$, and $u( \f{\pi}{2}, t) = 0$, we get
% Note that $u( x, t) \geq 0$ for $x \in [0,\pi/2]$ and $u(0, t) = 0, u(\pi/2, t) = 0$. For $x \in [0,\pi/2]$, we get
\beq\label{eq:flow2}
%\dot{ \Phi}(x, t) \geq 0 , 
\f{d}{dt}  \Phi (x, t) \geq 0, \quad  0\leq \Phi(x, t_1) \leq \Phi(x, t_2), 
\eeq
for $t_1 \leq t_2$. Using \eqref{eq:DG}, we derive the equation of $\om / x$
\[
\pa_t \f{ \om}{x} + u \pa_x( \f{\om}{x} ) = ( u_x - \f{u}{x} ) \f{\om}{x}.
\]

Fix $\g \in (0, \f{1}{2})$. Using the embedding $ H^1  \hookrightarrow C^{\g}$, we have $\om, u_x \in C^{\g}$. Since $u_x(x) - \f{u(x)}{x} = 0$ at $x = 0$ and $\om \leq 0$ on $[0, \pi/2]$, for $ x \in [0,\pi/2]$, we yield 
% we yield 
% \[
% |u_x(x) - \f{u(x)}{x}| \les x^{\g} || \om||_{C^{\g}}.
% \]
% Recall that $x \in [0, \pi/2]$, we have $\om \leq 0$. It follows 
\[
\bal
\f{d}{dt} \B(-\f{\om( \Phi(x, t), t)}{\Phi(x, t)} \B)
&= (u_x( \Phi(x, t) , t) - \f{ u(\Phi(x, t), t )}{ \Phi(x, t)}  ) \B(   -\f{\om( \Phi(x, t), t)}{\Phi(x, t)} \B) \\
&\les | \Phi(x, t)|^{\g} || \om||_{ H^1 }  \B| \f{\om( \Phi(x, t), t)}{\Phi(x, t)} \B|.
\eal
\]

Denote 
%\beq\label{eq:minf}
\[
m \teq || \om_0 x^{-1}||_{L^{\inf}}.
\]

Using Gronwall's inequality and \eqref{eq:flow2}, we derive
\[
\B|\f{ \om( \Phi(x, t) , t)}{ \Phi(x, t)  } \B|
\leq \exp( C \int_0^t  |\Phi(x, s) |^{\g} || \om(s) ||_{ H^1 } ds )  || \f{\om_0}{x}||_{L^{\inf}}
\leq  m \exp( C |\Phi(x, t)|^{\g}  \int_0^t  || \om(s) ||_{ H^1 } ds )  .
\]

Since $\Phi(\cdot, t)$ is a bijection from $[0, \pi/2]$ to $[0, \pi/2]$ and $x$ is arbitrary, we yield 
\beq\label{eq:conserv}
|\f{ \om(x, t)}{x}| \leq m \exp( C |x|^{\g} \int_0^{ t} || \om(s) ||_{H^1 } ds )
\leq m ( 1+  C |x|^{\g} \exp( C   \int_0^{ t} || \om(s) ||_{ H^1}  ds   ),
\eeq
where we have used $ |x| \leq \pi/2,  e^{ A x} \leq 1 + Ax \cdot e^{ Ax} \leq 1 + C x e^{ C A } $ for some absolute constant $C$ in the last inequality. The above estimate shows that $\limsup_{x \to 0 } | \om(x, t) / x|$ is bounded uniformly in $t$, which is an analog of $\om_x(0, t) = \om_{0, x}(0)$. Moreover, we obtain that $\om(x, t) x^{-1} \in L^{\inf}$.  
%Clearly, similar estimate holds for $\om_0 \in X \cap C^{\al}$ with any $\al \in (0, 1)$ and $\om_0 /x \in L^{\inf}$.

\subsubsection{Weighted $L^1$ estimates}
%Note that $\om_{x}(0,t)$ is conserved: $\om_x(0,t) = \om_{0,x}(0)$. Surprisingly, this one-point conserved quantity allows us to control $Q(\b, t)$ and $|| \om||_{L^1}$ for $\b$ sufficiently close to $2$. 
From the local well-posedness result and \eqref{eq:conserv}, we have $\om(t) \in X\cap H^1$ and $\om(x, t) x^{-1} \in L^{\inf}$, and $\om(t)$ satisfies the assumptions in Lemma \ref{lem:comp}. A key step to control $Q(\b, t)$ is establishing the following weighted $L^1$ estimates.
%A key step is to establish the following weighted $L^1$ estimates.

\begin{lem}\label{lem:extrap_Q}
Let $\b_0$ be the parameter in Lemma \ref{lem:comp}.
For $\b \in [\b_0, 2)$, we have %the following estimates 
\beq\label{eq:extrap_Q0}
 \bal
\f{d}{dt} Q(\b, t) &\leq C (2-\b) Q^2(\b,t) +  C(2-\b) D(t) ,  \quad \f{d}{dt}  || \om||_{L^1}
\leq C  Q^2(\b, t) 
- C_2  D(t), 
\eal
\eeq
for some absolute constant $C, C_2 > 0$, where $D(t) \geq 0$ is a damping term 
%the damping term $D(t) \geq 0$ is 
given by 
\beq\label{eq:damp_L1}
\ D(t) = -\int_0^{\pi/2}\int_0^{\pi/2}
 \om(x)\om(y) \cot(x+y) \one_{x + y > \pi/2} dx dy.
 \eeq
 As a result, for some absolute constant $\lam > 0$, we have 
 \beq\label{eq:extrap_Q}
\f{d}{dt} ( Q(\b, t) + \lam (2-\b)|| \om||_{L^1}) \les (2-\b) Q^2(\b,t).
 \eeq
\end{lem}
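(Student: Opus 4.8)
\textbf{Proof plan for Lemma \ref{lem:extrap_Q}.} The goal is the pair of differential inequalities \eqref{eq:extrap_Q0} and then the combined estimate \eqref{eq:extrap_Q}. The starting points are the two evolution identities already derived: $\frac{d}{dt}Q(\b,t)=-B(\b,t)$ from \eqref{eq:ODE_Q}, and $\frac{d}{dt}(-\int_0^{\pi/2}\om\,dx)=\frac{2}{\pi}\iint_{[0,\pi/2]^2}\om(x)\om(y)\cot(x+y)\,dx\,dy$ from \eqref{eq:GWP_L1}. For the $Q(\b,t)$ inequality, I would insert the bound \eqref{eq:comp_B} of Lemma \ref{lem:comp}, which gives
\[
-B(\b)\le (2-\b)\B(u_x(0)Q(\b)+\f{1}{\pi}\iint_{[0,\pi/2]^2}\om(x)\om(y)(\cot y)^{\b-1}\f{s(s^{\b-1}-1)}{s^2-1}\,dx\,dy\B).
\]
The first term is handled by the interpolation $u_x(0)\les Q(\b)^{1/\b}\|\om\|_{L^1}^{1-1/\b}$ from \eqref{eq:hol_ux0}, combined with Young's inequality, to produce $C(2-\b)(Q^2(\b)+D(t))$-type contributions — here I would need to show the $\|\om\|_{L^1}$ factor can be absorbed into $Q^2$ plus a harmless piece, or (more cleanly) keep $u_x(0)Q(\b)\le Q(\b)^{1+1/\b}\|\om\|_{L^1}^{1-1/\b}\les Q^2(\b)+\|\om\|_{L^1}^2$ and then bound $\|\om\|_{L^1}$ by $Q(\b)$ up to constants since $\cot y\gtrsim 1$ only fails near $y=\pi/2$ — actually $\|\om\|_{L^1}\le \|\om\|_{L^1[0,\pi/4]}+\|\om\|_{L^1[\pi/4,\pi/2]}$, and the first piece is $\les Q(\b)$ while the second must be controlled separately; I will return to this. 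For the kernel integral $\iint \om\om(\cot y)^{\b-1}\frac{s(s^{\b-1}-1)}{s^2-1}$, the integrand factor $\frac{s(s^{\b-1}-1)}{s^2-1}$ is bounded (it tends to $\frac{\b-1}{2}$ at $s=1$ and decays like $s^{\b-2}\to 0$ as $s\to\infty$, with the reflection symmetry), so after symmetrization this is $\les \iint|\om(x)||\om(y)|(\cot y)^{\b-1}\min(1,s^{\b-2})\,dx\,dy$, which I would split into $s\le 1$ and $s\ge 1$ and bound by $Q(\b')$-type quantities — the cleanest route is to note this whole double integral is $\les Q(\b)^2$ after choosing the split so that each factor carries weight $(\cot\cdot)^{\b}$ or less; this needs the integrability hypothesis $\om x^{-1}\in L^\infty$ only through finiteness, the actual size being governed by $Q$.

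For the $\|\om\|_{L^1}$ inequality, the plan is to split $\iint_{[0,\pi/2]^2}\om(x)\om(y)\cot(x+y)$ at $x+y=\pi/2$: on $\{x+y>\pi/2\}$, $\cot(x+y)\le 0$, giving exactly $-D(t)$ with $D(t)\ge 0$ as in \eqref{eq:damp_L1} (note $\om(x)\om(y)\ge 0$ here); on $\{x+y\le\pi/2\}$ the contribution is $\ge 0$ and must be bounded by $C Q^2(\b)$. On that region, by the monotonicity of $\cot$ one has $\cot(x+y)\le\cot(\max(x,y))\le (\cot x)^{\b}(\cot y)^{\b}\cdot(\text{something bounded})$ when $x,y\le\pi/4$ — more carefully, for $x,y\in[0,\pi/4]$ with $x+y\le\pi/2$ one gets $\cot(x+y)\les \frac{1}{x+y}\les \frac{1}{\sqrt{xy}}\le \cot x\cdot 1 \wedge \cdot$; I would use $\cot(x+y)\le \min(\cot x,\cot y)\le (\cot x)^{a}(\cot y)^{1-a}$ for any $a$, pick weights to land on $(\cot x)^{\b}(\cot y)^{\b}$ after noting $\b<2$ so $\b\le 2$ allows $\cot(x+y)\les (\cot x\cot y)^{\b/2}\cdot C$ is false dimensionally — instead the correct bound is $\cot(x+y)\le \cot x$ and $\le \cot y$, hence $\cot(x+y)\le(\cot x)^{\b/ ?}$... the honest version: $\cot(x+y)\le \sqrt{\cot x\cot y}\cdot C$? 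No. I will simply use $\cot(x+y)\les \frac{1}{x+y}\le\frac{1}{2\sqrt{xy}}\les \sqrt{\cot x\cot y}$, and since $\b\ge\b_0>1$, $\sqrt{\cot x\cot y}\le \frac12((\cot x)^{\b-1}+(\cot y)^{\b-1})\cdot C$ is again wrong for small $x$. The resolution is to also keep the factor $|\om(x)/x|\le m e^{\dots}$ — actually the $L^1$ inequality in the lemma carries no smallness $(2-\b)$ in front of $Q^2$, so we have room: bound $\cot(x+y)\one_{x+y\le\pi/2}\les (\cot x)(\cot y)$ on the subregion where both are $\ge 1$, and use boundedness elsewhere, yielding $\iint \les \int|\om|\cot x\,dx\int|\om|\cot y\,dy \le (\text{const})\,Q(\b)^{2/\b}\|\om\|_{L^1}^{2-2/\b}$ by \eqref{eq:hol_ux0}; then absorb $\|\om\|_{L^1}$ using that on the bad region the damping $D(t)$ dominates. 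This absorption — showing the near-$\pi/2$ mass of $\|\om\|_{L^1}$ is controlled by $D(t)$ — is the technical heart.

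The combination \eqref{eq:extrap_Q} is then immediate bookkeeping: form $\frac{d}{dt}(Q(\b,t)+\lam(2-\b)\|\om\|_{L^1})\le C(2-\b)Q^2+C(2-\b)D(t)+\lam(2-\b)(CQ^2-C_2 D(t))$, and choose $\lam$ large enough (depending only on $C,C_2$, absolute) that $C(2-\b)D(t)-\lam(2-\b)C_2 D(t)\le 0$; what remains is $\les(2-\b)Q^2(\b,t)$, which is exactly \eqref{eq:extrap_Q}.

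\textbf{Main obstacle.} The delicate point is controlling the $\{x+y\le\pi/2\}$ (growth) part of the $\|\om\|_{L^1}$ evolution and the $u_x(0)\|\om\|_{L^1}$-type term in the $Q$ evolution purely in terms of $Q(\b)^2$ (and the damping $D$), because a naive bound produces a factor of $\|\om\|_{L^1}$ that is not itself a priori controlled by $Q$ (the region near $x=\pi/2$ contributes to $\|\om\|_{L^1}$ but carries vanishing weight $\cot x$). The key must be to extract from the damping term $D(t)$ enough control of the near-$\pi/2$ mass — i.e. showing $\int_{\pi/4}^{\pi/2}(-\om)\,dx$ and its interaction with the near-$0$ mass is absorbed by $D(t)$ after choosing $\lam$ — and to exploit that the growth kernel $\cot(x+y)\one_{x+y\le\pi/2}$ forces \emph{both} variables to be near $0$, so it genuinely only sees the $Q$-controlled part of $\om$. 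Making this splitting quantitative, with explicit use of the monotonicity and asymptotics of $\cot$ and the bound $\frac{s(s^{\b-1}-1)}{s^2-1}=O(\min(1,s^{\b-2}))$, is where the real work lies; the rest is Young's inequality and the interpolation \eqref{eq:hol_ux0}.
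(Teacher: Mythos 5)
Your skeleton coincides with the paper's: start from $\f{d}{dt}Q(\b,t)=-B(\b,t)$ and Lemma \ref{lem:comp}, split the $\|\om\|_{L^1}$ evolution \eqref{eq:GWP_L1} along $x+y=\pi/2$ to produce the damping $D(t)$, and your final choice of $\lam$ is exactly the paper's bookkeeping for \eqref{eq:extrap_Q}. But the two steps you leave open or assert incorrectly are precisely where the lemma lives. First, your claim that the kernel double integral $\iint\om(x)\om(y)(\cot y)^{\b-1}\f{s(s^{\b-1}-1)}{s^2-1}\,dx\,dy$ is ``$\les Q(\b)^2$'' is false: take $\om$ concentrated in an $\e$-neighborhood of $\pi/2$; then $s\approx 1$, the factor $\f{s(s^{\b-1}-1)}{s^2-1}\approx\f{\b-1}{2}$, and the integral is of size $\|\om\|_{L^1}^2\,\e^{\b-1}$, while $Q(\b)^2\sim\|\om\|_{L^1}^2\,\e^{2\b}$, which is far smaller. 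The damping $D(t)$ is indispensable for this term as well; the paper gets it from the bound $\f{s(s^{\b-1}-1)}{s^2-1}\les s\wedge s^{\b-2}$ together with the pointwise inequality \eqref{eq:trigon2}, $\cot y\,(\cot x)^{\b-2}\wedge\cot x\,(\cot y)^{\b-2}\les(\cot x\cot y)^{\b}+\one_{x+y\geq\pi/2}\cot(\pi-x-y)$.

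Second, you correctly flag the ``technical heart'' (the term $u_x(0)Q(\b)$ and the growth region $\{x+y\le\pi/2\}$ of the $L^1$ evolution), but you do not resolve it, and your first proposed route -- interpolation \eqref{eq:hol_ux0} plus Young -- is exactly the one the paper's remark after the lemma shows is insufficient, because it loses a power of $2-\b$. What closes the argument in the paper is elementary but decisive: on $\{x+y\le\pi/2\}$ one has $\cot x\cot y\ge 1$, hence $(\cot x\cot y)^{\b}\ge\cot x\cot y\ge\min(\cot x,\cot y)\ge\cot(x+y)$, which is \eqref{eq:trigon1} and immediately gives the second inequality in \eqref{eq:extrap_Q0} (you circled around this, missing the observation $\cot x\cot y\ge1$ on that region). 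For $I_1=u_x(0)Q(\b)$ the paper uses a dichotomy: if $u_x(0)\le Q(\b)$ the bound is trivial; otherwise \eqref{eq:hol_ux0} forces $u_x(0)\les\|\om\|_{L^1}$, and then $\int_0^{\pi/2}|\om|\cot y\,dy$ is split at $y=\pi/3$, with $\cot y\les(\cot y)^{\b}$ on $[0,\pi/3]$ and \eqref{eq:trigon3} for the rest, the latter exploiting that $x,y\ge\pi/3$ forces $x+y>\pi/2$ with $-\cot(x+y)\gtrsim 1$, so the near-$\pi/2$ mass is absorbed by $D(t)$. Your intuition that $D(t)$ must control the near-$\pi/2$ mass is right (concretely $\|\om\|_{L^1[\pi/3,\pi/2]}^2\les D(t)$), but without these pointwise inequalities the proposal does not produce \eqref{eq:extrap_Q0}; as written it is a plan with the decisive estimates missing.
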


At first glance, the estimate \eqref{eq:extrap_Q} looks terrible due to the quadratic nonlinearity $Q^2(\b,t)$. Yet, we have a crucial small factor $2-\b$, which can compensate the nonlinearity. 
%The conservation of $\om_x(0, t) =\om_{0,x}(0)$ 
The boundedness of $\om x^{-1}$ for $x$ near $0$ \eqref{eq:conserv}  implies the following leading order structure of $Q(\b,t)$
\[
\bal
Q(\b, t) &= - \int_0^{\pi/2} \om(x,t) (\cot x )^{\b} dx 
\leq m \int_0^1  x \cdot x^{-\b} dx +  \cR(\b, t) \leq \f{ m }{2-\b} + \cR(\b, t),
% Q(\b, t) &= - \int_0^{\pi/2} \om(x,t) (\cot x )^{\b} dx 
% = -\int_0^1 \om_{x}(0,t) x \cdot x^{-\b} dx +  l.o.t = \f{ -\om_{0,x}(0)}{2-\b} + l.o.t.
\eal
\]
where the remainder $\cR(\b, t)$ is of order lower than $(2-\b)^{-1}$. For $\b$ sufficiently close to $2$, we get $(2-\b) Q(\b, t) \les m $, which is time-independent. 
% The conservation of $\om_x(0, t) =\om_{0,x}(0)$ implies the following leading order structure of $Q(\b,t)$
% \beq\label{eq:lead_Q}
% \bal
% Q(\b, t) &= - \int_0^{\pi/2} \om(x,t) (\cot x )^{\b} dx 
% = -\int_0^1 \om_{x}(0,t) x \cdot x^{-\b} dx + \cR(\b, t) = \f{ -\om_{0,x}(0)}{2-\b} + \cR(\b, t), 
% \eal
% \eeq
% where the remainder satisfies $|\cR( \b, t) | \les_{\al}|| \om||_{C^{1,\al}}$ (see \eqref{eq:lead_R}). 
% For $\b$ sufficiently close to $2$, we get $(2-\b) Q(\b, t) \approx -\om_{0,x}(0)$, which is time-independent. 
Formally, the nonlinearity in \eqref{eq:extrap_Q} becomes linear. In Section \ref{sec:boost}, we will apply \eqref{eq:extrap_Q} and this key observation to prove Theorem \ref{thm:GWP}.

%In Section \ref{sec:boost}, we will use this observation and  control some quantity similar to $\lim_{\b \to 2^-} Q(\b, t)$, which is barely beyond the range $\b \geq 2$ where $Q(\b, t)$ is expected to be conserved, with a growth rate similar to the exponential growth. 

% by a cost similar to 

% exponential growth factor similar to 

% an exponential grow

%See Lemma \ref{lem:comp} and the discussion in Section \ref{sec:comp}. In some sense, our estimates on $Q(\b, t)$ to be established are extrapolation.

%Thus, \eqref{eq:extrap_Q} and related estimate can be interpreted as an extrapolation.
%this estimate can be interpreted as an extrapolation.
%The above estimate can be interpreted as an extrapolation, since we can control some quantity similar to $\lim_{\b \to 2^-} Q(\b, t)$, which is barely beyond the range $\b \geq 2$ where $Q(\b, t)$ is expected to be conserved. 

% The first estimate in \eqref{eq:extrap_Q0} %is highly nontrivial since it 
% shows that the forcing term $u_x(0) Q(\b)$ in \eqref{eq:ODE_Q} and \eqref{eq:comp_B} can be controlled roughly by $Q^2(\b)$, which is highly nontrivial
% %  $u_x(0) \les Q(\b)$ is not 
% since the integrand $\om \cot^{\b}(y)$ in $Q(\b)$ is much weaker than $\om \cot y$ in $u_x(0)$ for $y$ close to $\pi/2$.

The first estimate in \eqref{eq:extrap_Q0} is highly nontrivial since the forcing term $u_x(0) Q(\b)$ (see \eqref{eq:est_Q1}) cannot be controlled by $Q^2(\b)$. %We establish it thanks to the magic damping term $D(t)$ \eqref{eq:damp_L1} from \eqref{eq:GWP_L1}. 
The idea behind Lemma \ref{lem:extrap_Q} is that for the forcing terms $B(\b,t)$ in \eqref{eq:ODE_Q} and \eqref{eq:comp_B} and that in \eqref{eq:GWP_L1}, we use the more singular integral $Q(\b, t)$ to control them near $x=0$, and the %crucial 
magic damping term $D(t)$ from \eqref{eq:GWP_L1} to control them near $x = \pi/2$. To prove Lemma \ref{lem:extrap_Q}, we need several 
inequalities, whose proofs are deferred to Appendix \ref{app:lemma}.
%To prove Lemma \ref{lem:extrap_Q}, we need several elementary inequalities that enable us to compare the kernel. 
\begin{lem}\label{lem:trigon}
Denote $a \wedge b = \min(a, b)$. For $x, y \in [0, \pi/2], \b \in [3/2, 2]$, we have 
\begin{align}
\cot(x+y) 
&\leq \one_{x+y \geq \pi/2} \cot(x+y) + (\cot x \cot y)^{\b}  \label{eq:trigon1} ,\\
  \cot y (\cot x )^{\b-2}  \wedge \cot x (\cot y)^{\b-2} 
&\les (\cot x \cot y)^{\b} + \one_{x+y \geq \pi/2} \cot(\pi - x- y) \label{eq:trigon2} ,\\
  \cot y \one_{y \geq \pi / 3 } & \les  (\cot x \cot y)^{\b} + \one_{x+y \geq \pi/2} \cot(\pi - x- y) \label{eq:trigon3} .
%\eal
\end{align}
%We have the following estimate 
\end{lem}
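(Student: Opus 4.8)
The plan is to verify the three inequalities one at a time by elementary trigonometric manipulations, treating the regimes $x+y\le\pi/2$ and $x+y>\pi/2$ separately, since $\one_{x+y\ge\pi/2}\cot(x+y)$ and $\one_{x+y\ge\pi/2}\cot(\pi-x-y)$ vanish in the first regime and it is precisely there that the product terms $(\cot x\cot y)^{\beta}$ must carry the estimate. Throughout I will use that $\cot$ is decreasing on $(0,\pi)$, that $\cot(\pi-z)=-\cot z$, and the addition formula $\cot(x+y)=\frac{\cot x\cot y-1}{\cot x+\cot y}$.

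For \eqref{eq:trigon1}: when $x+y\ge\pi/2$ the right side already contains $\cot(x+y)$ plus the nonnegative term $(\cot x\cot y)^\beta$ (nonnegative on $[0,\pi/2]^2$), so the inequality is trivial. When $x+y\le\pi/2$ we must show $\cot(x+y)\le(\cot x\cot y)^\beta$. Here both $x,y\le\pi/2$, so $\cot x\cot y\ge 0$; using the addition formula, $\cot(x+y)=\frac{\cot x\cot y-1}{\cot x+\cot y}\le\cot x\cot y$ whenever $\cot x\cot y\ge 1$ (which holds since $x+y\le\pi/2$ forces $\tan x\tan y\le 1$, hence $\cot x\cot y\ge 1$), and then $\cot x\cot y\le(\cot x\cot y)^\beta$ because the base is $\ge 1$ and $\beta\ge 3/2>1$. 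This closes \eqref{eq:trigon1}.

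For \eqref{eq:trigon2} and \eqref{eq:trigon3} I would argue similarly. In the regime $x+y\ge\pi/2$ one has $\cot(\pi-x-y)\ge 0$; the candidate worry is that the left side blows up, but $x+y\ge\pi/2$ forces at least one of $x,y$ to be bounded away from $0$, say $y\ge\pi/4$, so $\cot y$ is bounded, and for \eqref{eq:trigon2} the other factor $\cot x(\cot y)^{\beta-2}$ or the minimum with it is then controlled by $\cot(\pi-x-y)=\cot(\pi-x-y)$ once $x$ is near $\pi/2$ (matching singularities at $x\to\pi/2$) and by a constant otherwise; for \eqref{eq:trigon3}, $\cot y\,\one_{y\ge\pi/3}$ is simply bounded. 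In the complementary regime $x+y\le\pi/2$ I would bound the left side by $(\cot x\cot y)^\beta$ directly: for \eqref{eq:trigon2}, $\cot y(\cot x)^{\beta-2}\wedge\cot x(\cot y)^{\beta-2}\le\big(\cot y(\cot x)^{\beta-2}\cdot\cot x(\cot y)^{\beta-2}\big)^{1/2}=(\cot x\cot y)^{\beta-3/2}\le(\cot x\cot y)^\beta$ since the base $\cot x\cot y\ge 1$ and the exponent increased; for \eqref{eq:trigon3}, on $x+y\le\pi/2$ with $y\ge\pi/3$ we would need $x\le\pi/6$, so $\cot x\ge\cot(\pi/6)=\sqrt3$ is large while $\cot y\gtrsim 1$, giving $(\cot x\cot y)^\beta\gtrsim\cot x\gtrsim 1\gtrsim\cot y$; more carefully, using $\cot x\cot y\ge 1$ and $\cot y\le\cot(\pi/3)$ bounded, $\cot y=\cot y\cdot(\cot x\cot y)^0\lesssim(\cot x\cot y)^\beta$. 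I expect the main obstacle to be the bookkeeping in \eqref{eq:trigon2} near the corner $x\to\pi/2$, $y$ small (and the symmetric corner), where one must check that $\cot(\pi-x-y)$, which behaves like $\frac{1}{\pi/2-x-y}\approx\frac1{\pi/2-x}$, genuinely dominates $\cot x(\cot y)^{\beta-2}\approx\frac1{\pi/2-x}\cdot y^{2-\beta}$ — this is fine because $y^{2-\beta}\le(\pi/2)^{2-\beta}$ is bounded for $\beta\le 2$, but it requires care to present cleanly, and I would handle it by splitting further into $x+y\le 3\pi/4$ versus $x+y>3\pi/4$ so that on the latter the singularities align and on the former everything is bounded.
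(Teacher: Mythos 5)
Your proof of \eqref{eq:trigon1} is correct and close to the paper's (the paper uses monotonicity of $\cot$ instead of the addition formula; both reduce to $\cot x\cot y\ge 1$ when $x+y\le\pi/2$). Your handling of the regime $x+y\le\pi/2$ in \eqref{eq:trigon2} via $\min\le$ geometric mean is a nice symmetric alternative to the paper's case analysis, up to an arithmetic slip: the geometric mean is $(\cot x\cot y)^{(\beta-1)/2}$, not $(\cot x\cot y)^{\beta-3/2}$; since both exponents lie in $[0,\beta]$ and the base is $\ge 1$ there, the conclusion is unaffected. The $x+y\le\pi/2$ part of \eqref{eq:trigon3} is also fine.

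The genuine gap is in the regime $x+y\ge\pi/2$ for \eqref{eq:trigon2} and \eqref{eq:trigon3}. Your plan is that the left-hand side is bounded by a constant there, and that after splitting at $x+y=3\pi/4$ either the singularities ``match'' or ``everything is bounded.'' Boundedness of the left side is not enough, because the right side is \emph{not} bounded below on $\pi/2\le x+y\le 3\pi/4$: at $(x,y)=(\epsilon,\pi/2)$ one has $(\cot x\cot y)^{\beta}=0$ and $\cot(\pi-x-y)=\tan\epsilon$, so the right side is arbitrarily small, and the same degeneration occurs near the corners $(0,\pi/2)$, $(\pi/2,0)$ with $x+y$ slightly above $\pi/2$. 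Moreover, the asymptotics behind your ``main obstacle'' discussion are wrong: as $x\to\pi/2$, $\cot x\approx \pi/2-x\to 0$ (not $1/(\pi/2-x)$), and for $x+y$ near $\pi/2$ the term $\cot(\pi-x-y)=\tan(x+y-\pi/2)$ is \emph{small}; it behaves like $1/(\pi-x-y)$ only when $x+y\to\pi$, which is the easy region $x+y>3\pi/4$ where $\cot(\pi-x-y)\ge 1$ already dominates. So your split never addresses the actual delicate region. What is needed, and what the paper does, is to show the left side degenerates at least as fast as the right side: assume WLOG $x\le y$, so the minimum equals $\cot y\,(\cot x)^{\beta-2}\lesssim \cot y$ when $x\le\pi/3$ (the case $x>\pi/3$ being trivial since then $\cot(\pi-x-y)\gtrsim 1$), and then split $x+y>\pi/2$ according to whether $\pi-x-y\le y$ or $\pi-x-y\ge y$: in the first case $\cot(\pi-x-y)\ge\cot y$ closes the estimate; in the second case $\pi-x-y\ge y$ forces $\pi/2-y\ge x/2$, hence $\cot x\cot y\gtrsim(\pi/2-y)/x\gtrsim 1$, so the term $(\cot x\cot y)^{\beta}$ is bounded below by a constant and dominates. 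The same comparison between $\pi-x-y$ and $y$, together with the lower bound $\cot x\cot y\gtrsim 1$, is exactly what is missing from your one-line claim that $\cot y\,\one_{y\ge\pi/3}$ ``is simply bounded'' in \eqref{eq:trigon3}.
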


\begin{proof}[Proof of Lemma \ref{lem:extrap_Q}]

%We first prove the second inequality in \eqref{eq:extrap_Q0}. 
%Recall the identity \eqref{eq:GWP_L1}. 
Using $\om(x) \om(y) \geq 0$ for $x, y \in [0, \pi/2]^2$ and \eqref{eq:trigon1}, we obtain
%\eqref{eq:def_Q}, we obtain 
\[
\bal
\int_0^{ \f{\pi}{2} } \int_0^{ \f{\pi}{2}}
\om(x) \om(y) \cot(x+y)dx dy
&\leq - D(t) + \int_0^{\pi/2} \int_0^{\pi/2} \om(x) \om(y) (\cot x \cot y)^{\b} dx dy  \\
&\leq - D(t) + Q^2(\b, t),
\eal
\]
where $D(t)$ is defined in \eqref{eq:damp_L1}. Using the above estimate and  \eqref{eq:GWP_L1}, we prove the second estimate in \eqref{eq:extrap_Q0}. Recall the ODE of $Q(\b, t)$ \eqref{eq:ODE_Q}. Applying Lemma \ref{lem:comp}, for $\b \in [\b_0, 2)$, we get
\beq\label{eq:est_Q1}
\bal
\f{d}{dt} Q(\b, t)
&\leq (2 - \b)  \B( u_x(0)  Q(\b, t) +  \f{1}{\pi}\iint_{ [0, \pi/2]^2} \om(x)\om(y) (\cot y)^{\b-1}
  \f{   s (s^{\b-1} - 1) }{ s^2 - 1}  dx dy \B) \\
  &\teq (2-\b)( I_1 + I_2),
  \eal
\eeq
where $s = \f{\cot x}{\cot y}$. Next, we estimate $f(s) = \f{   s (s^{\b-1} - 1) }{ s^2 - 1}$. Note that $\b \in (3/2, 2)$. For $s \geq 0$, the following estimate is straightforward 
\[
0 \leq f(s) \les \one_{s < 1/2} s + \one_{ 1/2 \leq s \leq 2}  + \one_{s \geq 2} s^{\b-2}
\les s \wedge s^{\b-2}.
\]

Since $ s = \f{\cot x}{\cot y}$, using the above estimate and \eqref{eq:trigon2}, we yield
 \[
 \bal
 f(s) (\cot y)^{\b-1} & \les  ( s \wedge s^{\b-2} ) \cdot (\cot y)^{\b-1}
= \cot y (\cot x)^{\b-2} \wedge \cot x (\cot y )^{\b-2} \\
&\les (\cot x \cot y)^{\b} + \one_{x+y \geq \pi/2} \cot(\pi - x- y) = (\cot x \cot y)^{\b} - \one_{x+y \geq \pi/2} \cot( x + y).
\eal
\]

Using $\om(x)\om(y) \geq 0$ for $x, y\in [0,\pi/2]$, the above estimate and \eqref{eq:damp_L1},  we derive 
 %and \eqref{eq:trigon2},
\[
0 \leq I_2 
\les \int_0^{\pi/2}\int_0^{\pi/2} \om(x)\om(y)
\B( (\cot x \cot y)^{\b} - \one_{x+y \geq \pi/2} \cot( x + y) \B) dx dy
= Q^2(\b, t)  + D(t) .
\]

%Next, we estimate $I_1$. 
For $I_1$, we cannot establish the desired estimate by comparing the kernel similar to the above
since 
\[
\cot y (\cot x)^{\b} 
\les (\cot x \cot y)^{\b} - \one_{x+y \geq \pi/2} \cot( x + y)
\]
does not holds for $x$ close to $0$ and $y$ close to $\pi/2$. In fact, for $\pi/2- y = t^{\b}, x = t$, with $t$ sufficiently small, the left hand side is of $O(1)$, while the right hand side is $o(t)$. The main difficulty lies in that $ (\cot y)^{\b}$ is too weak to control $\cot y$ for $y$ close to $\pi/2$. 

A key observation is that we can further impose the restriction $Q(\b, t) \leq u_x(0)  \les || \om||_{L^1}$. In fact, if $u_x(0) \leq Q(\b, t)$, we obtain the trivial estimate 
\[
I_1 = u_x(0) Q(\b, t) \leq Q^2(\b, t).
\]

In the other case $Q(\b, t) \leq u_x(0)$, thanks to the interpolation \eqref{eq:hol_ux0}, we derive 
\[
u_x(0) \les Q(\b, t)^{1/\b} || \om||_{L^1}^{1-1/\b} 
\leq (u_x(0) )^{1/\b} || \om||_{L^1}^{1-1/\b} ,
\]
which implies $u_x(0) \les || \om||_{L^1}$. Now, we decompose $I_1= u_x(0) Q(\b, t)$ as follows 
\[
I_1 %= u_x(0) Q(\b, t)
\les \int_0^{\pi/2} |\om| \cot y dy Q(\b, t)
= \int_0^{\pi/3} |\om| \cot y dy Q(\b, t)
+ \int_{\pi/3}^{\pi/2} |\om| \cot y dy Q(\b, t) \teq J_1 + J_2.
\]

For $J_1$, since $\cot y \les (\cot y)^{\b}$ for $ y \leq \pi / 3$, we get 
$J_1 \les Q^2(\b, t)$. For $J_2$, using $Q(\b, t)\leq u_x(0) \les || \om||_{L^1}$, we yield 
\[
J_2 \les \int_{\pi/3}^{\pi/2} |\om(y)| \cot y || \om||_{L^1}
\les \int_{\pi/3}^{\pi/2} \om(y) \cot y dy   \int_0^{\pi/2} \om(x) dx,
\]
where we have used $\om(x) \leq 0$ on $[0,\pi/2]$ to obtain the last inequality. Applying \eqref{eq:trigon3} and $\cot(\pi-x-y) = -\cot(x+y)$, we obtain 
\[
J_2 
\les \int_0^{\pi/2}\int_0^{\pi/2} \om(x)\om(y)
\B( (\cot x \cot y)^{\b} - \one_{x+y \geq \pi/2} \cot( x + y) \B) dx dy
= Q^2(\b, t)  + D(t) .
\]
Combining the above estimates on $J_1, J_2$, in the other case $Q(\b, t) \leq u_x(0)$, we prove 
\[
I_1 \les J_1 + J_2 \les  Q^2(\b, t)  + D(t) .
\]
Combining the above estimates on $I_1, I_2$, we establish the first inequality in \eqref{eq:extrap_Q0}. Estimate \eqref{eq:extrap_Q} follows directly from \eqref{eq:extrap_Q0}
by choosing $\lam > 0$ with  $C_2 \lam  \geq 2C$, e.g. $\lam= \f{2C}{C_2}$.
\end{proof}

\begin{remark}
We cannot apply \eqref{eq:hol_ux0} to estimate $u_x(0)$ in $I_1 $ directly, since 
%In fact, 
such estimate only offers 
%leads to 
\[
\f{d}{dt} ( Q(\b, t) + \mu || \om||_{L^1}) 
\les (2-\b)^{\g} ( Q(\b, t) + \mu || \om||_{L^1})^2
\]
with power $\g <1$ for any well chosen $\mu$, which is not sufficient for our purpose. Compared to \eqref{eq:extrap_Q}, the above estimate loses a small factor $(2-\b)^{1-\g}$,
%there is a loss of power $(2-\b)^{1-\g}$ in the above estimate, 
which is due to the fact that we do not have a good estimate on $|| \om||_{L^1}$, while for $Q(\b, t)$ we have the crucial small factor $2-\b$. We only add minimal amount of $|| \om||_{L^1}$ in the  energy in \eqref{eq:extrap_Q} due to a similar reason. 

\end{remark}

\subsubsection{A bootstrap estimate}\label{sec:boost}

Now, we are in a position to establish the global well-posedness result in Theorem \ref{thm:GWP} in the general case. It follows from a bootstrap lemma.

\begin{lem}\label{lem:boost}
Suppose that $\om_0$ satisfies the assumptions in Theorem \ref{thm:GWP}. Denote $m = || \om_0 x^{-1}||_{L^{\inf}}$. There exists some absolute constant $c$, such that for $\d = \f{c}{m}$, if  $\int_0^{T} u_x(0, s) ds < +\infty$, we have $\int_0^{T + \d} u_x(0, s) ds < +\infty$.
% Suppose that $\om_0$ satisfies the assumptions in Theorem \ref{thm:GWP} with $\om_{0,x} \neq 0$. There exists some absolute constant $c$, such that for $\d = \f{c}{ |\om_{0,x}(0) |}$, if  $\int_0^{T} u_x(0, s) ds < +\infty$, we have $\int_0^{T + \d} u_x(0, s) ds < +\infty$.

\end{lem}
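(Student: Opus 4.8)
The plan is to prove Lemma \ref{lem:boost} by running a bootstrap/continuation argument on the interval $[T, T+\d]$ using the differential inequality \eqref{eq:extrap_Q} from Lemma \ref{lem:extrap_Q} together with the leading-order structure of $Q(\b,t)$ coming from the a-priori bound on $\om(x,t)x^{-1}$ in \eqref{eq:conserv}. The first step is to observe that the hypothesis $\int_0^T u_x(0,s)\,ds<+\infty$, via Theorem \ref{thm:criterion} and the estimates of Section \ref{sec:criterion}, guarantees that the solution remains regular on $[0,T]$ with $\om(T)\in X\cap H^1$ and $\om(x,T)x^{-1}\in L^\infty$; in particular all the quantities $Q(\b,T)$, $\|\om(T)\|_{L^1}$, $\|\om(T)\|_{H^1}$ are finite, and the bound \eqref{eq:conserv} controls $\limsup_{x\to 0}|\om(x,t)/x|$ uniformly on $[0,T]$ in terms of $m$ and $\int_0^T\|\om(s)\|_{H^1}\,ds$.

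Next I would fix $\b$ close to $2$ — how close will be chosen at the end depending only on $m$ — and introduce the energy $\cE(t)\teq Q(\b,t)+\lam(2-\b)\|\om(t)\|_{L^1}$ as in \eqref{eq:extrap_Q}, which satisfies $\f{d}{dt}\cE(t)\les (2-\b)Q^2(\b,t)\les (2-\b)\cE^2(t)$. The key point, already highlighted after Lemma \ref{lem:extrap_Q}, is that \eqref{eq:conserv} gives a ceiling $Q(\b,t)\leq \f{C_0 m}{2-\b}+\cR(\b,t)$ where $\cR$ is of lower order in $(2-\b)^{-1}$; feeding this back, the ODE for $\cE$ behaves essentially \emph{linearly}: $\f{d}{dt}\cE\les (2-\b)Q(\b,t)\cdot Q(\b,t)\les C_0 m\, Q(\b,t)\les C_0 m\,\cE(t)$ plus controlled lower-order terms. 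Hence $\cE(t)$ grows at most like $\cE(T)\exp(C_0 m (t-T))$ on any time interval on which the solution stays regular and \eqref{eq:conserv} remains valid. Integrating the interpolation \eqref{eq:hol_ux0}, $u_x(0,t)\les Q(\b,t)^{1/\b}\|\om\|_{L^1}^{1-1/\b}\les \cE(t)$ (up to constants and powers of $(2-\b)^{-1}$ that are now fixed), so $\int_T^{T+\d}u_x(0,s)\,ds\les \int_T^{T+\d}\cE(s)\,ds\les \cE(T)\cdot\f{e^{C_0 m\d}-1}{C_0 m}$, which is finite for any finite $\d$.

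The subtlety is circular and must be closed by a standard continuation argument: the validity of the estimate $Q(\b,t)\les \f{C_0 m}{2-\b}$ on $[T,T+\d]$ relies on \eqref{eq:conserv}, whose right-hand side contains $\exp(C\int_0^t\|\om(s)\|_{H^1}\,ds)$, and the $H^1$ (indeed $C^\alpha$, $L^\infty$) bounds in turn require, through the machinery of Section \ref{sec:criterion}, a bound on $\int_0^t u_x(0,s)\,ds$. So I would set up a bootstrap: assume on a maximal subinterval $[T,T^*)\subseteq[T,T+\d]$ that $\cE(t)\leq 2\cE(T)e^{C_0 m(t-T)}$ and that $\int_T^t u_x(0,s)\,ds$ is at most twice the bound derived from this, use \eqref{eq:w_linf} and \eqref{eq:conserv} to conclude that the solution stays regular with $\om(x,t)x^{-1}\in L^\infty$ quantitatively, then use \eqref{eq:extrap_Q} and \eqref{eq:hol_ux0} to improve the constants from $2$ to, say, $3/2$, provided $\d$ is small enough — \emph{this is where $\d=c/m$ enters}: the length of the interval must be short relative to the exponential rate $C_0 m$ so that the bootstrap constants do not blow up, and $c$ is chosen as a small absolute constant making all the improve-the-constant inequalities go through. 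A standard open-closed argument then gives $T^*=T+\d$, and in particular $\int_0^{T+\d}u_x(0,s)\,ds=\int_0^T u_x(0,s)\,ds+\int_T^{T+\d}u_x(0,s)\,ds<+\infty$.

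The main obstacle, and the place I expect to spend the most care, is precisely the interplay between the quadratic-looking nonlinearity in \eqref{eq:extrap_Q} and the ceiling on $Q(\b,t)$: one must choose $\b=\b(m)$ close enough to $2$ (but still $\geq\b_0$ from Lemma \ref{lem:comp}) that $(2-\b)Q(\b,t)$ stays comparable to $m$ on the whole interval, while simultaneously the lower-order remainder $\cR(\b,t)$ — which depends on $\|\om\|_{C^{1,\alpha}}$ or $\|\om\|_{H^1}$ and hence on the not-yet-controlled regularity — does not overwhelm the main term. Making the quantifiers consistent (first fix how close $\b$ is to $2$ in terms of $m$ only, then fix $\d=c/m$, then run the bootstrap) is the delicate bookkeeping; once the orders of magnitude are tracked correctly the individual inequalities are routine consequences of \eqref{eq:conserv}, Lemma \ref{lem:extrap_Q}, \eqref{eq:hol_ux0}, \eqref{eq:w_linf}, and Theorem \ref{thm:criterion}.
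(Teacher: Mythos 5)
Your overall ingredients are the right ones, but the way you propose to use them contains a genuine gap. Your linearization $\f{d}{dt}\cE\les (2-\b)Q^2\les C_0 m\,\cE$ rests on the ceiling $(2-\b)Q(\b,t)\les m$ holding \emph{throughout} $(T,T+\d]$, and you obtain that ceiling from \eqref{eq:conserv}--\eqref{eq:lead_R}, whose remainder depends on $\sup_{s\le t}\|\om(s)\|_{H^1}$ for $t>T$ --- precisely the quantity that is not yet controlled and that you must recover from the bootstrap via the multi-exponential estimates of Section \ref{sec:criterion}. This makes the quantifiers circular in a way that does not close: the admissible gap $2-\b$ must be taken $\les m/\cR$, where $\cR$ depends (multi-exponentially) on the bootstrap bound for $\int_T^t u_x(0,s)\,ds$, while your improved bound on that same integral scales like a negative power of $2-\b$ (through $\|\om\|_{L^1}\les \cE/(\lam(2-\b))$ and \eqref{eq:hol_ux0}), hence deteriorates as $\b\to 2$. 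One is left needing a fixed point $M_1\gtrsim \Phi(M_1)$ with $\Phi$ growing multi-exponentially, which in general does not exist. Relatedly, your statement that $\b$ can be fixed ``in terms of $m$ only'' is not tenable: already at time $T$ the remainder in \eqref{eq:lead_R} involves $T$ and $\sup_{t\le T}\|\om(t)\|_{H^1}$, so $\b$ must depend on the solution up to time $T$ (which is harmless, since $\d$ does not).

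The paper's proof avoids the ceiling beyond $T$ altogether, and this is the idea missing from your plan. It uses \eqref{eq:conserv}--\eqref{eq:lead_R} only at the single time $T$ (where all quantities are finite by hypothesis and Theorem \ref{thm:criterion}) to pick $\b_1$ so close to $2$ that $H(\b_1,T)\le \f{2m}{2-\b_1}$, and then works directly with the Riccati inequality $\f{d}{dt}H\le\mu(2-\b_1)H^2$ from Lemma \ref{lem:extrap_Q}: equivalently $\f{d}{dt}H^{-1}\ge-\mu(2-\b_1)$, so $H^{-1}(\b_1,T+\tau)\ge\f{2-\b_1}{2m}-\mu(2-\b_1)\tau$. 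The small factor $2-\b_1$ in the nonlinearity exactly compensates the large datum $H(\b_1,T)\sim(2-\b_1)^{-1}$, so the comparison solution stays finite for a time $\ge\f{1}{4\mu m}$ \emph{independent of} $\b_1$; this is exactly why $\d=c/m$ with an absolute $c$ comes out, with no continuation machinery beyond the observation that finiteness of $H$ keeps $u_x(0,t)$ bounded via \eqref{eq:hol_ux0} and hence keeps the solution in $H^1$ by Theorem \ref{thm:criterion}. If you prefer a bootstrap phrasing, the correct ansatz is on the energy itself, e.g.\ $(2-\b_1)H(\b_1,t)\le 4m$ on $[T,T^*)$, which linearizes the ODE to $\f{d}{dt}H\le 4\mu m H$ and closes for $\d=c/m$ --- but the ceiling must come from the bootstrap quantity (or from the Riccati structure), not from \eqref{eq:conserv} applied at times beyond $T$.
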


\begin{proof}

% Denote 
% \[
% H(\b, t) = Q(\b, t) + \lam (2- \b) || \om||_{L^1} , \quad  m = - \om_{0,x}(0) = | \om_{0,x}(0)|.
% \]
% In view of Theorem \ref{thm:criterion} and \eqref{eq:hol_ux0}, for $\om_0 \in C^{1,\al} \cap X$, the solution $\om(x, t)$ remains in $C^{1,\al}$ if $H(\b, t) < +\infty$ for some $\b < 2$. Thus, it suffices to control $H$. Using Lemma \ref{lem:extrap_Q}, we have 
Without loss of generality, we assume $m > 0$. Recall $Q(\b, t)$ from \eqref{eq:def_Q}. Denote 
\[
H(\b, t) = Q(\b, t) + \lam (2- \b) || \om||_{L^1} .
\]
In view of Theorem \ref{thm:criterion} and \eqref{eq:hol_ux0}, for $\om_0 \in H^1 \cap X$, the solution $\om(x, t)$ remains in $H^1$ if $H(\b, t) < +\infty$ for some $\b < 2$. Thus, it suffices to control $H$. Using Lemma \ref{lem:extrap_Q}, we have 
\beq\label{eq:ODE_H}
\f{d}{dt} H(\b, t) \leq \mu (2-\b)H^2(\b,t)
\eeq
for some absolute constant $\mu>0$ and any $\b \in [\b_0, 2)$. Since $\int_0^T u_x(0, s) ds<0$, using Theorem \ref{thm:criterion}, we obtain $\sup_{t \leq T} || \om(t)||_{H^1} < +\inf, || \om(T)||_{L^1} < +\infty$. Using \eqref{eq:conserv}, we obtain 
\beq\label{eq:lead_R}
\bal
Q(\b, T )
&= \int_0^{\pi/2} |\om| (\cot y)^{\b} dy
\leq \int_0^{1} |\om| y^{\b} dy + C \int_0^{\pi/2} |\om| dy \\
&\leq m \int_0^{1} \B( y^{1-\b} + C y^{\g + 1 - \b} \exp( C T \sup_{t \leq T} || \om(t)||_{H^1} ) \B) dy
+ C || \om(T)||_{L^1} \\
&\leq \f{m}{2-\b} + C m \exp( C T \sup_{t \leq T} || \om(t)||_{H^1} )  + C || \om(T)||_{L^1},
\eal
\eeq
where $C$ is some absolute constant and we have used %$|\cot x - x^{-1}| \les x$ and 
$| (\cot x)^{\b} - x^{-\b}| \les |\cot x - x^{-1}| x^{-\b+1} \les x^{-\b+2} \les 1 $ in the first inequality.
% Recall the decomposition \eqref{eq:lead_Q}. The remainder term satisfies 
% \beq\label{eq:lead_R}
% \bal
% |\cR| & = \B| \int_1^{\pi/2} - \om  (\cot x)^{\b} dx 
% + \int_0^1 (-\om + \om_{x}(0)x) (\cot x)^{\b} 
% - \om_x(0) x ( ( \cot x)^{\b} - x^{-\b} ) dx \B| \\
% & \les || \om||_{L^{\inf}}
% + ||\om||_{C^{1,\al}} \int_0^1 |x|^{1+\al - \b}
% + x^{-\b + 3}    dx 
% \les_{\al} || \om ||_{C^{1,\al}},
% \eal
% \eeq
% where we have used $|\cot x - x^{-1}| \les x$ and $| (\cot x)^{\b} - x^{-\b}|
% \les |\cot x - x^{-1}| x^{-\b+1} \les x^{-\b+2} $. 
% Since $m =  -\om_{0,x}(0) > 0$, 
%From  \eqref{eq:lead_R}, 
Thus, there exists $\b_1 $ slightly less than $2$ , such that 
\[
\bal
H(\b_1, T)  &= Q(\b_1, T) + \lam (2-\b_1) || \om(T)||_{L^1} \\
&\leq   \f{m}{2-\b_1} + C m \exp( C T \sup_{t \leq T} || \om(t)||_{H^1} )  +  C || \om(T) ||_{L^1}  \leq \f{2 m }{2-\b_1}.
\eal
\]
% \[
% \bal
% H(\b_1, T)  &= Q(\b_1, T) + (2-\b_1) || \om(T)||_{L^1}\leq \f{m}{2 - \b_1} + C || \om(T)||_{C^{1,\al}} + (2-\b_1) || \om(T)||_{L^1}  \leq \f{2 m }{2-\b_1}.
% \eal
% \]

Solving the ODE \eqref{eq:ODE_H} with $\b = \b_1$ on $t \geq T$, we yield
\[
  \f{d}{dt} H^{-1}(\b_1, t) \geq - \mu(2-\b_1) ,
\]
which along with the estimate on $H(\b_1, T)$ imply 
\[
  H^{-1}(\b_1, T + \tau) 
  \geq H^{-1}(\b_1, T) - \mu(2 - \b_1) \tau
  \geq \f{2-\b_1}{2m } - \mu(2-\b_1) \tau .
  \]
 Note that $\mu$ is absolute. We choose $\d = \f{1}{4 m \mu} $. Then, for $t \in [T, T+\d]$, we yield
 \beq\label{eq:ODE_H2}
 H^{-1}(\b_1, t) \geq \f{2-\b_1}{2m} - \f{2-\b_1}{4m} = \f{2-\b_1}{4m}, \quad H(\b_1, t) \leq \f{4m}{2-\b_1}.
 \eeq
Applying \eqref{eq:hol_ux0}, we obtain $u_x(0, t) \les  \f{m}{ (2-\b_1)^2}$
%$u_x(0, t) \les C(\b_1, m)$ 
on $[T, T+\d]$. We conclude the proof.
\end{proof}

\begin{remark}\label{rem:boost}
Denote $V(t) = \int_0^t ( u_x(0, s) + 1) ds $. We can obtain an a-priori estimate for $V(t)$ by tracking the bounds in the above proof. Using standard energy estimates and \eqref{eq:w_linf}, we obtain
\[
\bal
%|| \om(t)||_{C^{1,\al}}
& C m \exp( C t \sup_{s \leq t} || \om(s)||_{H^1} )  +  C || \om(t)||_{L^1}  \leq g( V(t), C_1 ), \\
&g( x, c) \teq c \cdot \exp( c \cdot \exp( c\cdot \exp( c  \cdot \exp( c \cdot \exp (c \cdot  \exp(c x )) ))) ) ,
\eal
\]
for some constant $C_1>1$ depending only on the initial data. Note that the estimate of $|| \om||_{L^{\inf}}$ \eqref{eq:w_linf} is triple exponential growth, and then the estimate of $|| \om||_{ H^1 }$ is a quintuple one due to extrapolation in bounding $|| u_x||_{L^{\inf}}$. These estimates further lead to the above sextuple exponential growth. For any $T\geq 0$, choosing $\b_1$ with $2-\b_1 =c\cdot  \f{ m}{ g( V(T), C_1)} $ for some absolute constant $c$ and using \eqref{eq:hol_ux0}, \eqref{eq:ODE_H2}, we yield 
\[
V( T + \d ) \leq  g( V(T), C_2),
\]
for some constant $C_2>0$ depending only on $\om_0$. Since $\d$ and $C_2$ are independent of $T$, iterating the above estimate yields an a-priori estimate for $V(t)$ with any $t \geq 0$.
\end{remark}

\begin{remark}
The above estimate is consistent with the heuristic in the paragraph below \eqref{eq:extrap_Q} that the nonlinearity $(2-\b) Q^2$ in \eqref{eq:extrap_Q} or $(2-\b)H^2$
%the above $(2-\b_1) H^2(\b_1, t)$ 
is essentially linear. In fact, for $t \in [T, T+\d]$, \eqref{eq:ODE_H2} implies $(2-\b_1) Q(\b_1, t) \leq (2-\b_1) H(\b_1, t) \leq 4m$. Formally, $Q(\b, t)$ grows exponentially in $t$ for $\b$ close to $2$, which we can barely afford, while in the previous case, $Q(2,t)$ is bounded uniformly. This argument is similar in spirit to extrapolation, e.g. the BKM blowup criterion \cite{beale1984remarks}.
\end{remark}

% Since the estimate of $|| \om||_{L^{\inf}}$ \eqref{eq:w_linf} is triple exponential growth in $V(t)$, using standard energy estimates and extrapolation in bounding $|| u_x||_{L^{\inf}}$, we can obtain an estimate of $|| \om||_{C^{\al}}$ with 
% quintuple exponential growth in $V(t)$. This further leads to the following estimate of $|| \om||_{C^{1,\al}}$ with sextuple exponential growth 
% \[
% || \om(t)||_{C^{1,\al}} \leq g( V(t), C_1 ), \quad g( x, c) \teq c \cdot \exp( c \cdot \exp( c\cdot \exp( c \cdot \cdot \exp( c \cdot \exp (c \cdot  \exp(c x )) ))) ) ,
% \]
% for some constant $C_1>1$ depending only on the initial data. 

%The sextuple exponential growth is because the estimate of $|| \om||_{L^{\inf}}$ \eqref{eq:w_linf} is triple exponential growth, and then the estimate of $|| \om||_{ C^{\al}}$ is a quintuple one due to extrapolation in bounding $|| u_x||_{L^{\inf}}$. 

\section{Finite time blowup for $C^{\al} \cap H^s$ data}\label{sec:blowup}

In this Section, we prove Theorem \ref{thm:blowup} on finite time blowup for \eqref{eq:DG} with $C^{\al} \cap H^s$ data for any $\al \in (0, 1)$ and $s\in(1/2, 3/2)$. We will use ideas outlined in Section \ref{sec:idea}. 

Since we will adopt several estimates established in 
\cite{lei2019constantin,chen2020slightly}, for consistency, throughout this section, we assume that the solution $\om$ is $2\pi$ periodic. This modification also simplifies our notations. Theorem \ref{thm:blowup} can be established by applying the same argument to $\om_{\pi}( x) \teq \om_{2\pi}(2x)$. As a result, the Hilbert transform and the set $X$ \eqref{eq:X} becomes 
\[
H f \teq \f{1}{2\pi} P.V. \int_{-\pi}^{\pi} \cot \f{x-y}{2}  f(y) dy ,\quad 
X \teq \B\{ f: f \mathrm{ \ is \ odd \ }, 2 \pi-\mathrm{periodic \ and \ }   f(x) \leq 0 , x \in [0,  \pi ] \B\}.
\]

\subsection{ Slightly weakening the effect of advection}\label{sec:idea_blowup}

%There are two important observations on the competition between the advection and vortex stretching. Firstly, in view of Theorem \ref{thm:GWP} and its proof, the data cannot be too regular, e.g. $\om_0$ should be less regular than $C^1$, and the advection should be weakened. This can be done by choosing $C^{\al}$ data with sufficiently small $\al$ as shown in \cite{Elg17}. Secondly, 
%In our previous work \cite{chen2020slightly} on the gCLM model, we construct finite time blowup solution in the case where the advection is only slightly weaker than the vortex stretching. The results in \cite{chen2020slightly} and the discussion in Section \ref{sec:intro_comp} suggest that for any $\al$ slightly less than $1$, there exists $C^{\al}$ data of \eqref{eq:DG} that leads to finite time blowup without weakening the advection substantially. 

Recall the discussion on the competition between advection and vortex stretching in Section \ref{sec:intro_comp}. To characterize that the advection is relatively weak for $\om \in C^{\al} \cap X$ with $\om \approx - C x^{\al}$ near $x=0$, 
%slightly weaker for $\om \in C^{\al} \cap X$, 
%relatively weak for $\om \in C^{\al} \cap X$,
%we use a fundamental idea from 
we study \eqref{eq:DG} using the dynamic rescaling formulation 
\beq\label{eq:DGdyn}
\om_t + u \om_x = (c_{\om} + u_x) \om ,\quad u_x = H \om
\eeq
derived in \eqref{eq:rescal1}-\eqref{eq:rescal3} with the normalization condition 
\beq\label{eq:normal}
c_{\om}(t) = (\al - 1) u_x(0, t),
\eeq
where $c_{\om}$ is a rescaling factor. If $u_x(0, t)$ is bounded away from $0$ : $u_x(0, t) \geq C>0$ for all $t$, the competition between advection and the vortex stretching is encoded in the sign of $c_{\om}$ since $\mathrm{sign}(c_{\om}) = \mathrm{sign}(\al-1)$, which can determine the long time behavior of the solution. See the discussion below \eqref{eq:rescal3}. We remark that the idea and condition \eqref{eq:normal} are similar to those in \cite{chen2020slightly}, which play a crucial role in establishing singularity formation for the gCLM model.

\subsection{Dynamic rescaling formulation}\label{sec:dyn}

We follow the method in \cite{chen2019finite,chen2020slightly} to construct finite time blowup solution using the dynamic rescaling formulation of \eqref{eq:DG}.  Let $ \om(x, t), u(x,t)$ be the solutions of equation \eqref{eq:DG}. It is easy to show that 
\beq\label{eq:rescal1}
  \td{\om}(x, \tau) = C_{\om}(\tau) \om(   x,  t(\tau) ), \quad   \td{u}(x, \tau) = C_{\om}(\tau) 
u( x, t(\tau))
\eeq
are the solutions to the dynamic rescaling equations
 \beq\label{eq:rescal2}
\bal
\td{\om}_{\tau}  +   \td{u}  \td{\om}_x   =   c_{\om} \td{\om} + \td{u}_x \td{ \om  } , \quad \td{u}_x = H \td{\om} ,
% \td{\om}_{\tau}(x, \tau) +   \td{u}  \td{\om}_x(x, \tau)  =   c_{\om}(\tau) \td{\om} + \td{u}_x \td{ \om  } , \quad \td{u}_x = H \td{\om} ,
\eal
\eeq
where  
\beq\label{eq:rescal3}
\bal
  C_{\om}(\tau) = \exp\lt( \int_0^{\tau} c_{\om} (s)  d s \rt),
  % C_{\om}(0), 
  \quad   t(\tau) = \int_0^{\tau} C_{\om}( s) d s.
\eal
\eeq

We will impose some normalization condition on the time-dependent scaling parameter $c_{\om}(\tau)$, and establish that $-C_1 \leq c_{\om}(\tau) \leq -C <0 $ for all $\tau > 0$ and some $C_1, C >0$. Then the solution of \eqref{eq:rescal2} is equivalent to that of the original equation \eqref{eq:DG} via the transformations in \eqref{eq:rescal1}-\eqref{eq:rescal3}.
Moreover, we will establish that the solution $\td{\om}(\cdot, \tau)$ is nontrivial, e.g. $ || \td{\om}( \cdot,\tau) ||_{L^{\infty}} \geq c >0$, for all $\tau >0$. Then the rescaling relationship \eqref{eq:rescal1}-\eqref{eq:rescal3} implies 
\[
C_{\om}(\tau) \leq e^{-C\tau}, \quad t(\infty) \leq \int_0^{\infty}  e^{-C \tau } d \tau =C^{-1} <+ \infty 
\]
and that the solution 
\[
| \om( x,  t(\tau) ) | = C_{\om}(\tau)^{-1}  |\td{\om}(x, \tau) | \geq e^{C\tau} |\td{\om}(x, \tau) | 
\]
blows up at finite time $T = t(\infty)$.

Note that a similar dynamic rescaling formulation was employed in \cite{mclaughlin1986focusing,  landman1988rate} to study the nonlinear Schr\"odinger (and related) equation. 
This formulation is closely related to the modulation technique, 
which has been developed by Merle, Raphael, Martel, Zaag and others, see, e.g. \cite{merle1997stability,kenig2006global,merle2005blow,martel2014blow,merle2015stability}.
%In some literature, this formulation is called the modulation technique. 
It has been a very effective tool to study singularity formation for many problems like the nonlinear Schr\"odinger equation \cite{kenig2006global,merle2005blow}, the nonlinear wave equation \cite{merle2015stability}, the nonlinear heat equation \cite{merle1997stability}, the generalized KdV equation \cite{martel2014blow}. %, and other dispersive problems. 
Recently, it has been used to establish finite time blowup from smooth initial data in model problems for the 3D Euler equations, including the DG model \cite{chen2019finite}, the gCLM model \cite{chen2019finite,chen2020singularity,chen2020slightly,Elg19} and the Hou-Luo model \cite{chen2021HL}.
%, related to the 3D Euler equations.

%fluids mechanics. 

%model problems in fluids mechanics \cite{chen2019finite,chen2020singularity,chen2020slightly,chen2021HL,elgindi2019stability}.

%applied to construct finite time blowup solution from smooth initial data in model problems in fluids mechanics, %including the 
%e.g. the DG model \cite{chen2019finite}, the gCLM model \cite{chen2019finite,chen2020singularity,chen2020slightly,elgindi2019stability} and the Hou-Luo model \cite{chen2021HLsupp}. 
 %\cite{chen2019finite,chen2020singularity,chen2020slightly,chen2021HL}.

To simplify our presentation, we still use $t$ to denote the rescaled time in the rest of this section, unless specified, and drop $\td{\cdot}$ in \eqref{eq:rescal2}. Then \eqref{eq:rescal2} reduces to \eqref{eq:DGdyn}.

\subsection{ Construction of the $C^{\al}$ approximate steady state}

Based on the discussion in Sections \ref{sec:intro_comp} and \ref{sec:idea_blowup}, we first construct an approximate steady state $( \om_{\al},  c_{ \om, \al} )$ of \eqref{eq:DGdyn} with $\om_{\al} \in C^{\al}$ and $\om_{\al} \approx - C x^{\al}$ near $x = 0$. Following the idea in \cite{chen2020slightly}, we perform the construction by perturbing the equilibrium $  \sin(x)$ of \eqref{eq:DG}. A natural choice of $\om_{\al}$ is
\beq\label{eq:wal}
 \om_{\al} = - \sgn(x) |\sin(x)|^{\al} c_{\al}, \quad c_{\al} = ( \f{1}{\pi} \int_0^{\pi} (\sin x)^{\al} \cot \f{x}{2} dx )^{-1}. 
\eeq
We choose the above $c_{\al}$ to normalize $H \om_{\al}(0) = 1$.
% for some constant $c_{\al}$. We choose $c_{\al}$ according to $H \om_{\al}(0) = 1$, which gives
% \beq\label{eq:c_al}
% c_{\al} = ( \f{1}{\pi} \int_0^{\pi} (\sin x)^{\al} \cot \f{x}{2} dx )^{-1}. 
% \eeq
%which is equivalent to $ H \om_{\al}(0) = 1$. 
Let $u_{\al}$ be the associated velocity with $u_{\al, x} = H \om_{\al}$. We choose $c_{\om, \al}$ according to \eqref{eq:normal}
\beq\label{eq:normal_al}
c_{\om,\al} = (\al - 1) u_{\al,x}(0) =\al- 1.
\eeq

Denote 
\beq\label{eq:eta_al}
\om_1 = -\sin x, \quad u_1 = \sin x, \quad \eta_{\al} = \om_{\al} - \om_1.
\eeq

For $\al$ close to $1$, we expect that $(\om_{\al}, u_{\al})$ are close to $(\om_1, u_1)$. %We have the following estimates.

\begin{lem}\label{lem:profile}
Let $\kp_1 = \f{3}{4}, \kp_2 = \f{7}{8}$. For $\kp_2 <\f{9}{10}< \al < 1$ and $x \in [-\pi,\pi]$, we have 
\begin{align}
 &|  \pa_x^i \eta_{\al} | \les (1-\al) |\sin x|^{\kp_2 - i }, \quad  i = 1,2,3 , \label{eq:est_wal1} \\
 &  |H\eta_{\al} |  \les  (1-\al) |x|^{\kp_1}, \quad | \pa_x H \eta_{\al}|   \les (1-\al) |\sin x|^{\kp_1-1} ,  \label{eq:est_ual} \\
  & | (\al-1) \om_{\al} - \sin x ( \om_{\al, xx} - \om_{1,xx}) |  \les  ( (1-\al)  \wedge  |x|^2 )
|\sin x|^{\al-1}.
%\les (1-\al)^{1/2} |x| |\sin x|^{\al-1}
   .\label{eq:est_wal2}
\end{align}

\end{lem}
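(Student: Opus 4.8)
The plan is to reduce all three estimates to explicit computations with $(\sin x)^{\al}$ together with a first-order expansion in the small parameter $\e := 1-\al \in (0,\f1{10})$. I first record that $c_{\al} = 1 + O(1-\al)$, and hence $\al c_{\al} = 1 + O(1-\al)$: from \eqref{eq:wal}, $c_{\al}^{-1} = \f1{\pi}\int_0^{\pi}(\sin x)^{\al}\cot\f x2\,dx$; the integrand equals $1+\cos x$ at $\al = 1$, so $c_1 = 1$, and since the integrand and its $\al$-derivative are dominated near $x=0$ and $x=\pi$ uniformly for $\al$ near $1$, differentiation under the integral sign shows $\f{d}{d\al}c_{\al}^{-1}$ is bounded near $\al = 1$, so $c_{\al}^{-1} = 1 + O(1-\al)$ and, inverting, $c_{\al} = 1 + O(1-\al)$. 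Throughout I will use the elementary inequalities $|t^{\e}-1| \le \e|\log t|$ and $t^{-\e}-1 \le \e|\log t|\,t^{-\e}$ for $t\in(0,1]$, together with $\sup_{0<t\le1}t^{\d}|\log t| < \infty$ for every $\d > 0$; since $\e < \f18$, any logarithm of $\sin x$ appearing below is absorbed into a spare positive power of $\sin x$.

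For \eqref{eq:est_wal1}, on $(0,\pi)$ one has $\eta_{\al} = \sin x - c_{\al}(\sin x)^{\al}$, so $\pa_x^{i}\eta_{\al}$, $i=1,2,3$, is a finite sum of terms $(\text{coefficient})\cdot(\sin x)^{a}(\cos x)^{b}$ with $a\in\{\al-2,\al-1,\al,\al+1\}$. Each coefficient is either $O(1-\al)$ after using the cancellation $\al c_{\al}-1 = O(1-\al)$ (the pieces coming from differentiating $\sin x$ rather than the exponent) or carries an explicit factor $\al-1$ (the pieces coming from differentiating the exponent). The elementary inequalities then bound each term by $(1-\al)(\sin x)^{c}$ with $c \ge \kp_2-i$; the most singular powers are $-1-\e$ for $i=2$ and $-2-\e$ for $i=3$, which are $\ge\kp_2-2 = -\f98$ and $\ge\kp_2-3 = -\f{17}{8}$ because $\e < \f18$, and the logarithms are swallowed by the strict gaps. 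Oddness of $\eta_{\al}$ extends the bounds from $(0,\pi)$ to $[-\pi,\pi]$.

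For \eqref{eq:est_ual}, the first inequality is a H\"older-regularity statement: integrating $|\eta_{\al}'|\les(1-\al)|\sin x|^{-1/8}$ from \eqref{eq:est_wal1} shows $\eta_{\al}\in \dot{C}^{\kp}$ for every $\kp<\f78$ with seminorm $\les 1-\al$, so $\|\eta_{\al}\|_{\dot{C}^{\kp_1}}\les 1-\al$; since the Hilbert transform on $S^1$ is bounded on $C^{\kp_1}$, $\|H\eta_{\al}\|_{\dot{C}^{\kp_1}}\les 1-\al$, and $H\eta_{\al}(0) = H\om_{\al}(0)-H\om_1(0) = 1-1 = 0$ by the normalization of $c_{\al}$, whence $|H\eta_{\al}(x)| = |H\eta_{\al}(x)-H\eta_{\al}(0)| \les (1-\al)|x|^{\kp_1}$. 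For the second inequality I write $\pa_x H\eta_{\al} = H\eta_{\al}'$ and estimate the principal value at a point $x$ by splitting the domain into $|y-x|\le\f12|\sin x|$ and its complement: on the far part $|\eta_{\al}'|\les(1-\al)|\sin x|^{-1/8}$ is used directly, and on the near part the even symmetry of $\eta_{\al}'$ removes the non-integrable part of the kernel $\cot\f{x-y}2$ so that the bound $|\eta_{\al}''|\les(1-\al)|\sin x|^{-9/8}$ controls the remainder, yielding $|\pa_x H\eta_{\al}|\les(1-\al)|\sin x|^{\kp_1-1}$.

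Finally, for \eqref{eq:est_wal2}, inserting $\om_{\al,xx} = -c_{\al}\al(\al-1)(\sin x)^{\al-2}\cos^2 x + c_{\al}\al(\sin x)^{\al}$ and $\om_{1,xx} = \sin x$ on $(0,\pi)$ and using $\cos^2 x = 1-\sin^2 x$ gives, after multiplying by $(\sin x)^{1-\al}$,
\[
\left((\al-1)\om_{\al} - \sin x\,(\om_{\al,xx}-\om_{1,xx})\right)(\sin x)^{1-\al}
= (1-\al)c_{\al}\left(\sin x - \al\cos^2 x\right) + (\sin x)^2\left((\sin x)^{1-\al} - \al c_{\al}\right).
\]
The second summand is $\les (1-\al)(\sin x)^2|\log\sin x|$ by the elementary inequalities, and the first is $\les 1-\al$ since $|\sin x-\al\cos^2 x|\le 2$, which already gives the $(1-\al)$ branch. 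The remaining point — upgrading to the localized weight $(1-\al)\wedge|x|^2$ — is the delicate one: it requires extracting from the first summand the precise cancellation among its near-critical pieces, keeping the Taylor remainders of $\sin x$ and $\cos x$ at the origin and of $(\sin x)^{\e}$, $\al c_{\al}$ at $\e=0$, so that the genuinely non-negligible contribution carries the extra factor $|x|^2$; oddness then extends everything to $[-\pi,\pi]$. I expect this cancellation bookkeeping for \eqref{eq:est_wal2}, rather than the Hilbert-transform split for \eqref{eq:est_ual}, to be the main obstacle.
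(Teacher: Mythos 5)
Your handling of \eqref{eq:est_wal1} is essentially the paper's argument (differentiate $\eta_{\al}=\sin x-c_{\al}(\sin x)^{\al}$ term by term, use $|c_{\al}-1|\les 1-\al$ and elementary inequalities, with the spare power $\kp_2-\al<0$ absorbing logarithms), and your route to \eqref{eq:est_ual} is genuinely different from the paper's: you use H\"older ($C^{\kp_1}$) boundedness of $H$ plus $H\eta_{\al}(0)=0$ for the first bound, and a near/far principal-value splitting at scale $|\sin x|$ (PV cancellation plus the $\eta_{\al}''$ bound on the near part) for the second, whereas the paper uses $L^4$ bounds, the $L^4$ boundedness of $H$, and the commutator identity of Lemma \ref{lem:com} for $D_x=\sin x\,\pa_x$. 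Your route is workable, provided the far-field integrand is bounded by $(1-\al)|\sin y|^{-1/8}$ (not $|\sin x|^{-1/8}$ as written) and the resulting integral is estimated by splitting according to whether $|\sin y|\gtrsim|\sin x|$; this closes with room to spare.

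The genuine gap is \eqref{eq:est_wal2}: you establish only the $(1-\al)$ branch and explicitly defer the $(1-\al)\wedge|x|^2$ refinement as ``cancellation bookkeeping.'' As you have set the computation up, taking the statement literally with $\om_{\al}$, that refinement cannot be completed, because it is false near $x=0$: your own identity shows the first summand $(1-\al)c_{\al}(\sin x-\al\cos^2 x)\to-\al c_{\al}(1-\al)\neq 0$ as $x\to 0$, so for $|x|^2\ll 1-\al$ the quantity is of size $(1-\al)|\sin x|^{\al-1}$, which is not $\les |x|^2|\sin x|^{\al-1}$; no Taylor-remainder extraction produces the missing factor $|x|^2$. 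The quantity the lemma is actually used for (and the one the paper's proof estimates — note the factor $\cos x-\cos^2 x$ appearing there, and the applications in \eqref{eq:est_R2} and \eqref{eq:est_F3}) is $(\al-1)\om_{\al,x}-\sin x\,(\om_{\al,xx}-\om_{1,xx})$. With $\om_{\al,x}$ in place of $\om_{\al}$, your same manipulation gives, after multiplying by $(\sin x)^{1-\al}$,
\begin{equation*}
\bigl((\al-1)\om_{\al,x}-\sin x\,(\om_{\al,xx}-\om_{1,xx})\bigr)(\sin x)^{1-\al}
= c_{\al}\al(1-\al)\cos x\,(1-\cos x)+\sin^2 x\bigl((\sin x)^{1-\al}-\al c_{\al}\bigr),
\end{equation*}
and now the first summand is $\les (1-\al)\wedge|x|^2$ since $|1-\cos x|\les x^2$, while the second is bounded both by $C(1-\al)$ (via $|(\sin x)^{1-\al}-\al c_{\al}|\les(1-\al)(1+|\log\sin x|)$ and $(\sin x)^2|\log \sin x|\les 1$) and by $C(\sin x)^2\les x^2$ (via $|(\sin x)^{1-\al}-\al c_{\al}|\les 1$), which completes the estimate in two lines. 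So the missing idea is not a hidden cancellation in your first summand but the identification of the correct quantity; once that is fixed, your framework goes through.
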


For $x$ near $0$, the above estimates on $\om_{\al}$ are similar to those for $ \om_{\al} = -x^{\al}$ and $\om_1 = -x$. The reader can think of $\kp_1, \kp_2$ close to $1$, and that $\al$ is even closer to $1$. %In the following discussion, we use the implicit notation $ O(f)$ to denote some term $g$ that satisfies $|g| \les f$. It can vary from line to line.

\begin{proof}
Due to symmetry, it suffices to consider $x \geq 0$. 

Firstly, using Lemma \ref{lem:1} and $1\les \al - \kp_2 $, we obtain 
\beq\label{eq:sin_1}
| (\sin x)^{\al} - \sin x | = (\sin x )^{\kp_2}  (\sin x)^{\al - \kp_2} (1 - (\sin x)^{1-\al})
\les (1-\al) (\sin x )^{\kp_2}.
\eeq

Recall $c_{\al}$ defined in \eqref{eq:wal}. Using the above estimate, we obtain 
\beq\label{eq:est_cal}
\f{1}{\pi} \int_0^{\pi} | (\sin x )^{\al} - \sin x | \cot \f{x}{2} dx \les 1- \al,
\quad |c_{\al} - 1| \les 1-\al.
\eeq

Next, we establish the estimate of $\om_{\al}$ defined in \eqref{eq:wal}. A direct calculation yields
\beq\label{eq:Dwal}
\om_{\al, x} = -c_{\al} \al (\sin x)^{\al-1} \cos x , \quad
\om_{\al, xx} =  - c_{\al} \al  (\al-1)( \sin x)^{\al- 2} \cos^2 x + \al c_{\al} (\sin x)^{\al}.
\eeq

 We consider a typical case $i=3$ in \eqref{eq:est_wal1}, and the case $i=1$ or $2$ can be proved similarly. Recall $\om_1, u_1, \eta_{\al}$ from \eqref{eq:eta_al}. Using \eqref{eq:sin_1}, \eqref{eq:est_cal} and $\kp_2 < \al$, we get 
\[
\bal
|\eta_{\al, xx}| &= | \om_{\al, xx} -\sin x|
\les |\al c_{\al} (\sin x)^{\al } - \sin x|  +   (1-\al) (\sin x)^{\al-2}  \\
&\les  |(\sin x)^{\al} - \sin x |+  (1-\al) (\sin x)^{\al-2} 
%\les (1-\al)(  (\sin x)^{\kp_2} +  (\sin x)^{\al-2} ) 
\les (1-\al) (\sin x)^{\kp_2 - 2}.
\eal
\]

For \eqref{eq:est_wal2}, the first bound $(1-\al) |\sin x|^{\al-1}$ follows directly from \eqref{eq:est_wal1}. Using \eqref{eq:Dwal}, $|\om_{1,xx}| = \sin x$ and a direct calculation, we yield 
\[
\bal
&| (\al-1) \om_{\al} - \sin x (\om_{\al, xx} - \om_{1,xx})| \\
\leq & | c_{\al} (\al-1) \al (\sin x)^{\al-1}  (\cos x - \cos^2 x) |
+ C (\sin x)^{\al + 1} +  \sin x |\om_{1,xx}| \\
 \les & (\sin x)^{a-1 } |x|^2 + (\sin x )^{a+1} \les (\sin x )^{a-1} |x|^2,
\eal
\]
where we have used $|1 - \cos x| \les x^2$.

Next, we prove \eqref{eq:est_ual}. Denote $D_x = \sin x \pa_x$. Using \eqref{eq:est_wal1} and $\kp_2 =\f{7}{8}$ close to $1$, we have 
\beq\label{eq:L4}
\bal
&|| \pa_x \eta_{\al}||_{L^4}
\les (1-\al) || \ |\sin x|^{\kp_2 - 1} ||_{L^4} \les 1-\al, \\
& || \pa_x (D_x \eta_{\al} ) ||_{L^4}
\les || \ |\pa_x \eta_{\al} | + |\sin x  \pa_x^2 \eta_{\al} | \ ||_{L^4}
\les (1-\al) |\sin x|^{\kp_2 - 1} ||_{L^4} \les 1-\al.
\eal
\eeq

Recall from \eqref{eq:wal} that $ u_{\al,x}(0) = H \om_{\al}(0) = 1 = u_{1,x}(0)$. It implies $H \eta_{\al}(0) = 0$. Since the Hilbert transform is $L^4$ bounded, using H\"older's inequality and \eqref{eq:L4}, we yield 
%\beq\label{eq:L4_2}
\[
\bal
| H \eta_{\al}(x)| &= | \int_0^x \pa_x H \eta_{\al}(y) dy  |
\leq || \pa_x H \eta_{\al}||_{L^4} \B(\int_0^x 1 dy \B)^{3/4} 
\les || \pa_x H \eta_{\al}||_{L^4} x^{3/4}  \\
&= x^{3/4} || H \pa_x \eta_{\al}||_{L^4}
\les x^{3/4} || \pa_x \eta_{\al}||_{L^4} \les (1-\al) x^{3/4}.
\eal
\]

Since $D_x H \eta_{\al}$ vanishes on $x = 0, \pi$, using an estimate similar to the above,
%\eqref{eq:L4_2}, 
we yield 
\[
| D_x H \eta_{\al}(x)|
\les || \pa_x ( D_x H \eta_{\al}(x) )  ||_{L^4}  ( |x|^{3/4} \wedge |\pi - x|^{3/4} )
\les || \pa_x ( D_x H \eta_{\al}(x) )  ||_{L^4}  |\sin x|^{3/4}.
\]

Applying Lemma \ref{lem:com} ($n=2$), we yield 
\[
\pa_x( D_x H \eta_{\al}) = \pa_x( H( D_x \eta_{\al} ) - H(D_x \eta_{\al} ) (0) )
=\pa_x( H( D_x \eta_{\al} ) = H(\pa_x D_x \eta_{\al}).
\]

Applying \eqref{eq:L4} and the fact that $H$ is $L^4$ bounded, we establish 
\[
| D_x H \eta_{\al}(x)|
\les  ||  H(\pa_x D_x \eta_{\al} ) ||_{L^4} |\sin x|^{3/4} 
\les ||  \pa_x D_x \eta_{\al}  ||_{L^4} |\sin x|^{3/4} 
\les (1-\al )|\sin x|^{3/4} ,
\]
which implies the second inequality in \eqref{eq:est_ual}.
\end{proof}

The above $L^4$ estimate on $H\eta_{\al}$ can be replaced by $L^p$ estimates with larger $p$, which offers more vanishing order of $ H\eta_{\al}$ near $x=0$. Here, the power $|x|^{3/4}$ is sufficient for our later weighted energy estimates. 

% We remark that the above $L^4$ estimate on $H\eta_{\al}$ can be replaced by other estimates, e.g. $L^p$ estimates with larger $p$, which offers more vanishing order of $ H\eta_{\al}$ near $x=0$. Here, the power $|x|^{3/4}$ is sufficient for our later weighted energy estimates. 
 
\subsection{Nonlinear stability of the approximate steady state}

In this Section, we follow \cite{chen2019finite,chen2020slightly} to perform stability analysis around $(\om_{\al}, c_{\om,\al})$ constructed in \eqref{eq:wal}, \eqref{eq:normal_al} and establish the finite time blowup results. We first introduce some weighted norms and spaces.

\begin{definition}\label{def}
Define the singular weight $\rho =( \sin \f{x}{2})^{-2}$, the standard inner product 
$\la \cdot ,\cdot \ra$ on $S^1$, the weighted norms $||\cdot ||_{\cH}$ and the Hilbert spaces $\cH$ as follows
\beq\label{eq:Hnorm}
%\rho =( \sin \f{x}{2})^{-2} , \quad 
\la f, g \ra = \int_0^{2\pi} f g dx, \quad
|| f||_{\cH}^2 \teq \f{1}{4\pi} \int_{-\pi}^{\pi} \f{ |f_{x}|^2}{\sin^2 \f{x}{2} } d x ,
\quad \cH \teq \{ f| f(0 )=0, ||f||_{\cH } < +\infty \} 
\eeq
with inner products $\la \cdot, \cdot \ra_{\cH}$ induced by the $\cH$ norm. 
% and the Hilbert spaces $\cH, X$  
% \[
% \cH \teq \{ f| f(0 )=0, ||f||_{\cH } < +\infty \} 
% \]
% with inner products $\la \cdot, \cdot \ra_{\cH}, \la \cdot , \cdot \ra_X$ induced by the $\cH, X$ norm. 
\end{definition}

The $\cH$ norm was introduced in \cite{lei2019constantin} for the stability analysis of the De Gregorio model. By definition, we have
\beq\label{eq:L2_cH}
\la f, g \ra_{\cH} =(4\pi)^{-1} \la f_x , g_x \rho \ra.
\eeq

\subsubsection{ Linearized equation}

Linearizing \eqref{eq:DGdyn} 
%the dynamic rescaling equation \eqref{eq:DGdyn} 
around $\om_{\al}, c_{\om,\al}$, we obtain the equation for the perturbation $\om, c_{\om}$ 
( ($\om + \om_{\al}, c_{\om} + c_{\om,\al}$) is the solution of \eqref{eq:DGdyn})
\beq\label{eq:lin}
\bal
\om_t &= - u_{\al} \om_x +  u_{\al, x} \om
+ u_x \om_{\al} - u \om_{\al, x} 
 + c_{\om,\al} \om  + c_{\om} \om_{\al}
+ N(\om) + F(\om_{\al}) \\
&\teq \cL_{\al} \om + N(\om) + F(\om_{\al}) ,
\eal
\eeq
where the nonlinear term $N(\om)$ and error term $F(\om_{\al})$ are given by 
\beq\label{eq:NF}
N(\om) = (c_{\om} + u_x) \om - u \om_x,  \quad F(\om_{\al}) = (c_{\om, \al} + u_{\al, x}) \om_{\al}  - u_{\al } \om_{\al, x}.
\eeq
We choose the normalization condition on $c_{\om}$ according to \eqref{eq:normal}
\beq\label{eq:normal_per}
c_{\om} = (\al-1)  u_x(0).
\eeq

Under the  conditions \eqref{eq:normal}, \eqref{eq:normal_per}, it is easy to obtain that the slope of $\om / x^{\al}$ is fixed, i.e.
\[
 \lim_{x \to 0 }\f{\om(x, t) + \om_{\al}(x)}{x^{\al}} =  \lim_{x \to 0 } \f{\om(x, 0) + \om_{\al}(x)}{x^{\al}} , \quad 
 \lim_{x \to 0 }\f{\om(x, t)}{x^{\al}} =  \lim_{x \to 0 } \f{\om(x, 0) }{x^{\al}} .
 %, \quad  \lim_{x \to 0 }\f{\om(x, t)  }{x^{\al}} =  \lim_{x \to 0 } \f{\om(x, 0) {x^{\al}} .
\]
%Since $\lim_{x\to 0} \om_{\al} / x^{\al} = C$ for some constant, 
In particular, if the initial perturbation $\om_0(x)$ vanishes near $x=0$ with order higher than $x^{\al}$, e.g. $x^{2\al}$, the perturbation $\om(x, t)$ will also vanish near $x=0$ with higher order. This allows us to perform energy estimates on $\om$ with a singular weight near $x=0$.

We treat the linearized operator $\cL_{\al}$ as a perturbation to $\cL_1$ 
\[
\cL_1 \om = - u_1 \om_x +  u_{1, x} \om
+  u_x \om_1 - u \om_{1, x} 
= - \sin x \om_x + \cos x \om  - u_x \sin x + u \cos x ,
\]
where we have used the explicit formulas \eqref{eq:eta_al}, and perform the following decomposition 
\beq\label{eq:Lin_decomp}
\bal
\cL_{\al} \om &= \cL_1 \om  - ( u_{\al} -  u_{1} ) \om_x
+ ( u_{\al, x} - u_{1, x}) \om 
+ u_x (\om_{\al} - \om_1)  \\
& \quad - u( \om_{\al, x} - \om_{1, x})    + c_{\om, \al} \om + c_{\om} \om_{\al}  \\
& = \cL_1 \om - u(\eta_{\al}) \om_x
+ H \eta_{\al} \cdot \om 
 + u_x \eta_{\al} - u \eta_{\al, x} 
 + c_{\om, \al} \om + c_{\om} \om_{\al} 
 \teq \cL_1 \om + \cR_{\al} \om, \\
\eal
\eeq
where  $u(\eta_{\al})$ denotes the odd velocity $u$ with $u_x = H \eta_{\al}$. In fact, we have $u(\eta_{\al}) = -( -\pa_{xx})^{-1/2} \eta_{\al}$.

 The operator $\cL_1$ enjoys an important coercive estimate 
established in \cite{lei2019constantin}. The following slight modification of the result in \cite{lei2019constantin} is from \cite{chen2020slightly}.
%Our stability analysis is built on the work of Lei et. al. \cite{lei2019constantin}, in which the authors proved the following results.

\begin{lem}\label{lem:linop}
Suppose that $ f, g \in \cH$ and $\int_{S^1} f dx = 0$. Denote $e_0( x) = \cos x -1$ and 
\[
 f_e = \la f, e_0 \ra_{\cH}  , \quad  \la f, g \ra_Y  \teq \la f - f_e e_0, g - g_e e_0 \ra_{\cH}.
\]
We have :
(a) Equivalence of norms : $( \cH / \R \cdot e_0, \la \cdot , \cdot \ra_Y)$ is a Hilbert space and the induced norm $|| \cdot ||_Y$ satisfies $\f{1}{2} || f ||_{\cH} \leq  || f ||_{Y} \leq || f ||_{\cH} .$

(b) Orthogonality : $||e_0||_{\cH} = 1$ and 
\[
\la f -  f_e e_0 , e_0 \ra_{\cH} = 0, \quad || f ||^2_{\cH} = f_e^2 + || f||_Y^2.
\]
 %$\la f -  f_e e_0 , e_0 \ra_{\cH} = 0$, $ || e_0||_{\cH} = 1$ and $|| f ||^2_{\cH} = f_e^2 + || f||_Y^2$.

(c) Coercivity : $\la \cL_1 f , f \ra_{Y} \leq -\f{3}{8} ||f ||^2_Y$. 
\end{lem}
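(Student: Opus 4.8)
This is, up to the change of period, the coercivity estimate of Lei--Liu--Ren refined in \cite{chen2020slightly}, so the plan is to reprove it along those lines; only part (c) has real content. Parts (a) and (b) are elementary Hilbert-space bookkeeping. First I would record $\|e_0\|_{\cH}=1$: since $e_{0,x}=-\sin x$ and $\sin^2 x/\sin^2\frac{x}{2}=4\cos^2\frac{x}{2}=2(1+\cos x)$, one gets $\|e_0\|_{\cH}^2=\frac{1}{4\pi}\int_{-\pi}^{\pi}2(1+\cos x)\,dx=1$. Then $f_e=\la f,e_0\ra_{\cH}$ is precisely the coefficient of the $\cH$-orthogonal projection of $f$ onto $\R e_0$, so $f-f_e e_0$ is $\cH$-orthogonal to $e_0$ (the stated orthogonality), and the Pythagorean identity gives $\|f\|_{\cH}^2=f_e^2+\|f-f_e e_0\|_{\cH}^2=f_e^2+\|f\|_Y^2$, which is (b); completeness of $(\cH/\R e_0,\la\cdot,\cdot\ra_Y)$ is inherited from that of $\cH$. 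The inequality $\|f\|_Y\le\|f\|_{\cH}$ is then immediate, and the reverse bound $\|f\|_Y\ge\frac12\|f\|_{\cH}$ amounts to $f_e^2\le\frac34\|f\|_{\cH}^2$; here I would use the hypothesis $\int_{S^1}f\,dx=0$: integration by parts turns $f_e$ into the weighted mean $-\frac{1}{4\pi}\int_{-\pi}^{\pi}f(x)\sin^{-2}\frac{x}{2}\,dx$ (boundary terms vanish since $\cot\frac{x}{2}$ vanishes at $\pm\pi$ and $f(0)=0$), which is then estimated against $\|f\|_{\cH}$; equivalently, $f\mapsto\int_{S^1}f$ is $\cH$-continuous and $e_0$ has nonzero mean, so $e_0$ lies at positive $\cH$-distance from the mean-zero subspace, $\frac12$ being the quantitative version of that separation.

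For part (c) the plan is a direct quadratic-form computation. Write $\cL_1 f=(-\sin x\,f_x+\cos x\,f)+(-\sin x\,Hf+u\cos x)=:Tf+Pf$, with $u_x=Hf$. The transport part $T$ is the easy part: $(Tf)_x=-\sin x\,(f_{xx}+f)$, $\sin x\cdot\sin^{-2}\frac{x}{2}=2\cot\frac{x}{2}$, and $(2\cot\frac{x}{2})_x=-\sin^{-2}\frac{x}{2}$, so two integrations by parts give
\[
\la Tf,f\ra_{\cH}=-\tfrac12\|f\|_{\cH}^2-\tfrac{1}{8\pi}\int_{-\pi}^{\pi}\frac{f^2}{\sin^2\frac{x}{2}}\,dx\ \le\ -\tfrac12\|f\|_{\cH}^2,
\]
which already beats the claimed gap. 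The whole difficulty is therefore to show that the nonlocal part obeys $\la Pf,f\ra_{\cH}\le\frac18\|f\|_{\cH}^2$ up to terms that are either nonnegative multiples of $\int f^2\sin^{-2}\frac{x}{2}$ (harmless, as they are dominated by the surplus in the $T$-estimate) or supported in $\R e_0$. For this I would expand in Fourier series on $S^1$ and use that multiplication by $\sin x$ and $\cos x$ shifts frequencies by one, so that $[H,\sin x]$ and $[H,\cos x]$ are explicit finite-rank operators on the $\pm1$ modes; together with $H^2=-\mathrm{Id}$ on mean-zero functions this collapses $\la Pf,f\ra_{\cH}$ to the desired form. Finally I would pass to $\la\cdot,\cdot\ra_Y$ using $\la\cL_1 f,f\ra_Y=\la\cL_1 f,f\ra_{\cH}-f_e\,\la\cL_1 f,e_0\ra_{\cH}$ and the explicit evaluation of the rank-one functional $f\mapsto\la\cL_1 f,e_0\ra_{\cH}$, which cancels the residual $e_0$-contribution and yields $\la\cL_1 f,f\ra_Y\le-\frac38\|f\|_Y^2$.

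The main obstacle is this last bookkeeping in (c): carrying out the Fourier / integration-by-parts computation of $\la Pf,f\ra_{\cH}$ precisely enough that the exact constant $\frac38$ survives and that every error term is sign-definite or lies in $\R e_0$, so that the passage to $\la\cdot,\cdot\ra_Y$ is legitimate. Since $\cL_1$ is a genuine (non-self-adjoint) perturbation of a transport operator, one must isolate its $\cH$-symmetric part correctly before taking inner products, and one must check that the only neutral direction inside $\cH$ is $e_0$ --- this is where the particular weight $\rho=\sin^{-2}\frac{x}{2}$ matters, since the competing would-be neutral modes ($\sin x$ from scaling, $\cos x$ from translation) fail to belong to $\cH$. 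In practice I would import the relevant computations from \cite{lei2019constantin,chen2020slightly} rather than redo them in full.
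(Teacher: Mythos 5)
The paper itself offers no proof of this lemma: it is imported verbatim (up to the period convention) from \cite{chen2020slightly}, which in turn refines the coercivity computation of \cite{lei2019constantin}, so the paper's ``proof'' is the citation. Your handling of (a) and (b) is correct and in fact more explicit than the paper: $\|e_0\|_{\cH}=1$, the projection and Pythagoras identities, and the reduction of $\frac{1}{2}\|f\|_{\cH}\le \|f\|_Y$ to $f_e^2\le \frac34 \|f\|_{\cH}^2$ are all right, and the mean-zero hypothesis is indeed what makes the latter true; your integration by parts gives $f_e=-\frac{1}{4\pi}\int_{-\pi}^{\pi} f\,\sin^{-2}\frac{x}{2}\,dx$, and a short quantitative computation (e.g.\ via the $\cH$-Riesz representative of $f\mapsto\int_{S^1}f\,dx$, which on the $f_x$ side has kernel $\pi\,\sgn(x)-x$) shows the needed bound holds with considerable room to spare. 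Two small cautions: the boundary term at $x=0$ vanishes because $|f(x)|\lesssim \|f\|_{\cH}|x|^{3/2}$, not merely because $f(0)=0$; and the quantitative step for the constant $\frac12$ is only gestured at in your write-up, though it is straightforward. For (c), your transport computation $\la Tf,f\ra_{\cH}=-\frac12\|f\|_{\cH}^2-\frac{1}{8\pi}\int f^2\sin^{-2}\frac{x}{2}\,dx$ is correct, but the remaining claims---that the nonlocal part is controlled by $\frac18\|f\|_{\cH}^2$ modulo sign-definite terms or contributions along $e_0$, and that the cross term $f_e\,\la \cL_1 f, e_0\ra_{\cH}$ in the passage to $\la\cdot,\cdot\ra_Y$ cancels so that the constant $\frac38$ survives---are precisely the nontrivial content and are asserted rather than established, so as a standalone argument this is a plan, not a proof. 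Since you explicitly fall back on importing those computations from \cite{lei2019constantin,chen2020slightly}, your route ultimately coincides with the paper's; what your extra material buys is a self-contained verification of the elementary parts (a)--(b).
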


Using \eqref{eq:L2_cH} and the above result (b), we can represent $\la \cdot , \cdot \ra_Y$ as follows 
\beq\label{eq:YH}
\la f, g \ra_Y =\la f - f_e e_0, g \ra_{\cH} = (4\pi)^{-1} \la f_x + f_e \sin x , g_x \rho \ra,
%= (4\pi)^{-1}\la f_x , g_x \rho\ra+ (4\pi)^{-1} f_e \la 2 \cot \f{x}{2} , g_x \ra
\eeq
where we have used $\pa_x e_0 = -\sin x$.

%\subsubsection{ Energy estimates}
\subsubsection{ Weighted $H^1$ estimates}
We consider odd perturbation $\om$, which satisfies $\int_{S^1} \om dx = 0$. Recall the linearized equation \eqref{eq:lin} and the decomposition \eqref{eq:Lin_decomp}.
Performing energy estimate on $\la \om, \om \ra_Y$ yields 
%\eqref{eq:lin} in the norm $\la \om, \om \ra_Y$ yields 
\beq\label{eq:H1}
\f{1}{2}\f{d}{dt} \la \om, \om \ra_Y = \la \cL_1 \om , \om \ra_Y + 
\la \cR_{\al} \om, \om \ra_Y + \la N(\om) , \om \ra_Y
+ \la F(\om_{\al}), \om \ra_Y .
\eeq

The estimate of the first term $\la \cL_1 \om , \om \ra_Y $ follows from Lemma \ref{lem:linop}
\beq\label{eq:est_main}
\la \cL_1 \om , \om \ra_Y \leq -\f{3}{8} || \om||_Y^2.
\eeq

For the remainder $\cR_{\al}$ in \eqref{eq:Lin_decomp}, a direct calculation yields 
%\beq\label{eq:est_R0}
\[
\bal
\pa_x \cR_{\al}\om &= - u(\eta_{\al}) \om_{xx} 
+ \pa_x H \eta_{\al} \cdot \om + u_{xx} \eta_{\al} - u \eta_{\al, xx} + c_{\om,\al} \om_x 
+ c_{\om} \om_{a, x} \teq  - u(\eta_{\al}) \om_{xx}  + \cR_{\al, 2}\om .
\eal
\]
%\eeq

Applying \eqref{eq:YH}, we derive
\beq\label{eq:est_R}
\bal
&\la \cR_{\al} \om, \om \ra_Y 
=  (4\pi)^{-1}  \la \pa_x \cR_{\al} \om, (\om_x + \om_e \sin x) \rho \ra  \\
 =& (4\pi)^{-1} \la  - u(\eta_{\al}) \om_{xx}, (\om_x + \om_e \sin x) \rho \ra  
 +  (4\pi)^{-1} \la  \cR_{\al, 2}, (\om_x + \om_e \sin x) \rho \ra   \teq I + II.
%= I + II , \\
% &I \teq (4\pi)^{-1} \la  - u(\eta_{\al}) \om_{xx}, (\om_x + \om_e \sin x) \rho \ra  , \quad 
% II \teq (4\pi)^{-1} \la  \cR_{\al, 2}, (\om_x + \om_e \sin x) \rho \ra  
% %\les | \la -u(\eta_{\al})\om_{xx},  (\om_x + \om_e \sin x) \rho \ra |
%+ 
\eal
\eeq

Recall $\rho =( \sin \f{x}{2})^{-2}$. Since $\sin \f{x}{2} \asymp x$, we can essentially treat $\rho$ as $x^{-2}$. 
For $II$, it suffices to estimate $|| \cR_{\al, 2} \om \rho^{1/2}||_2$. Since 
$c_{\om} = (\al-1)u_x(0)$ \eqref{eq:normal_per}, we decompose $\cR_{\al, 2}$ as follows 
\beq\label{eq:est_R2}
\bal
\cR_{\al, 2}
&=\pa_x H \eta_{\al} \cdot \om + u_{xx} \eta_{\al} - (u -u_x(0)\sin x) \eta_{\al, xx}  \\
& \quad + u_x(0)( (\al-1) \om_{ \al, x} - \sin x \cdot \eta_{\al, xx})+ c_{\om,\al} \om_x .
%\teq II_1 + II_2 + II_3 + II_4  + II_5.
\eal
\eeq

Next, we estimate the $L^2(\rho)$ norm of each term. The main difficulty is the estimate of the nonlocal term, e.g.  $|| u_{xx} \eta_{\al} \rho^{1/2}||_2$,
%$\la u_{xx} \eta_{\al}, \om_x \rho \ra$ 
%appeared in the estimate of \eqref{eq:est_R} 
due to the singular weight $\rho$ near $x=0$ and that the profiles $\om_{\al}, \eta_{\al}$ are not smooth near $x=0, \pi$. Since $\eta_{\al} \rho^{1/2} \notin L^{\inf}$ 
 (see \eqref{eq:wal},\eqref{eq:eta_al}), we need to perform a weighted estimate on $u_{xx}$. It is based on the lemma below, which shows that the Hilbert transform commutes with $\f{1}{x}$ up to some lower order terms. 
%Compared to \cite{chen2020slightly}, the essential new difficulty is the term $\la u_{xx} \eta_{\al}, \om_x \rho \ra$ in \eqref{eq:est_R} due to the singular weight $\rho$. In \cite{chen2020slightly}, such term is estimated by applying $L^2$ estimate on $u_{xx}$ and $L^{\inf}$ estimate on a term similar to $\eta_{\al} / \sin \f{x}{2}$. However, here, such term is unbounded (see \eqref{eq:wal},\eqref{eq:eta_al}). To overcome this difficulty, 
%We need the following commutator estimate near $x=0$, whose proof is deferred to Appendix \ref{app:hil}
\begin{lem}\label{lem:com_sing}
Suppose that $ \f{f}{x} \in L^2([-\pi,\pi])$. We have 
\[
\B| \f{ H f - Hf(0)}{x} - H (\f{f}{x}) \B|\les \int_{-\pi}^{\pi} \B| \f{f(y)}{y} \B| dy .
\]
\end{lem}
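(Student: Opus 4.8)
The plan is to write the left–hand side as a genuine (non–singular) integral operator applied to $f$ and then to extract the cancellation that forces its kernel to have size $|y|^{-1}$. Since $\cot$ is odd, $Hf(0)=-\frac{1}{2\pi}\,\mathrm{P.V.}\!\int_{-\pi}^{\pi}\cot\frac{y}{2}\,f(y)\,dy$; inserting $\frac1y=\frac1x+\frac{x-y}{xy}$ into the definition of $H(f/x)$ gives
$H(f/x)(x)=\frac{Hf(x)}{x}+\frac{1}{2\pi}\int_{-\pi}^{\pi}\frac{(x-y)\cot\frac{x-y}{2}}{xy}f(y)\,dy$, no principal value being needed in that integral because $(x-y)\cot\frac{x-y}{2}$ is bounded near $y=x$ and $f/x\in L^1$ near $y=0$. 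Subtracting $Hf(0)/x$ and using this identity, the two copies of $Hf(x)/x$ cancel and one is left with
\[
\frac{Hf(x)-Hf(0)}{x}-H\B(\frac fx\B)(x)=\frac{1}{2\pi}\int_{-\pi}^{\pi}K(x,y)\,f(y)\,dy,\qquad
K(x,y)=\frac{\cot\frac{y}{2}}{x}-\frac{(x-y)\cot\frac{x-y}{2}}{xy}.
\]

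Next I would introduce $g(t)\teq t\cot\frac t2$ (with $g(0)=2$), which is even and real–analytic on $(-2\pi,2\pi)$, hence bounded together with all its derivatives on $[-\pi,\pi]$. Since $\frac{\cot(y/2)}{x}=\frac{g(y)}{xy}$ and $\frac{(x-y)\cot\frac{x-y}{2}}{xy}=\frac{g(x-y)}{xy}$, the kernel collapses to the clean form
\[
K(x,y)=\frac{g(y)-g(x-y)}{xy}=-\frac{1}{y}\int_{0}^{1}g'(sx-y)\,ds,
\]
the second equality by the fundamental theorem of calculus, using that $g$ is even (so $g(y)=g(-y)$) and that $-y$ and $x-y$ differ by exactly $x$. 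This is the heart of the matter: the $y=x$ singularity of the Hilbert kernel has disappeared and $K$ is regular apart from the harmless $1/y$ behaviour at $y=0$. For $|x-y|\le\pi$ the segment joining $-y$ to $x-y$ lies in $[-\pi,\pi]$, so $|K(x,y)|\le\frac1{|y|}\sup_{[-\pi,\pi]}|g'|$; integrating against $f$ then contributes $\les\int_{-\pi}^{\pi}|f(y)/y|\,dy$ to the claim (near $y=0$ one writes the integrand as $\frac{g(y)-g(x-y)}{x}\cdot\frac{f(y)}{y}$ with the first factor bounded, so the integral is absolutely convergent).

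What remains is the region $|x-y|>\pi$; here the only genuinely delicate subregion is $x-y$ close to $2\pi$ (equivalently $x$ near $\pi$ and $y$ near $-\pi$), since elsewhere the segment from $-y$ to $x-y$ stays a fixed distance from the poles $\pm2\pi$ of $g$ and the same mean–value bound applies. In that corner $\mathrm{dist}_{S^1}(x,y)=2\pi-|x-y|$ is small and $g(x-y)$ is near its pole, so the estimate only gives $|K(x,y)|\les\mathrm{dist}_{S^1}(x,y)^{-1}$; this is absorbed using that the functions to which the lemma is applied vanish at $\pm\pi$ (being odd, or multiples of $\sin x$). I expect the main obstacle to be precisely this algebraic reduction — recognizing that $K=\frac{g(y)-g(x-y)}{xy}$ with $g$ even and smooth, hence that the Hilbert–kernel singularity cancels — after which the pointwise bound $|K|\les|y|^{-1}$ and the integration are routine, with the corner region $x,y\approx\pm\pi$ the remaining, more delicate, point.
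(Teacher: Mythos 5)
Your core argument is exactly the paper's. The paper performs the same algebra, collapsing the left-hand side to $\f{1}{2\pi}\int_{-\pi}^{\pi}\f{g(y)-g(y-x)}{xy}\,f(y)\,dy$ with $g(z)=z\cot\f{z}{2}$ even, and then applies a mean-value bound: it computes $|g'(z)|=\f{|\sin z-z|}{2\sin^2(z/2)}\les 1$ on $[-3\pi/2,3\pi/2]$ and concludes $|g(y)-g(y-x)|\les|x|$, which is precisely your bound $|K(x,y)|\le |y|^{-1}\sup|g'|$ obtained from the fundamental theorem of calculus along the segment from $-y$ to $x-y$ (you work on $[-\pi,\pi]$, i.e. $|x-y|\le\pi$; the paper on $[-3\pi/2,3\pi/2]$, i.e. $|x-y|\le 3\pi/2$). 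So on the region where the Lipschitz/mean-value estimate applies, your proof and the paper's are the same.

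The divergence is the corner $|x-y|$ close to $2\pi$. Your instinct that this is the delicate point is correct — the paper's one-line Lipschitz argument does not cover it either, since $y-x$ ranges over $[-2\pi,2\pi]$ while $g$ has poles at $\pm2\pi$ — but your way of disposing of it is not a proof of the stated lemma: you invoke vanishing of $f$ at $\pm\pi$, which is not among the hypotheses, and is not available in the paper's actual application, where the lemma is used with $f=\om_x$ (an even function with no vanishing at $\pm\pi$) to get \eqref{eq:uxx_sing}. In fact no bound by $\int|f/y|\,dy$ alone can hold uniformly in that corner: take $f\ge0$ a unit-mass bump supported at distance $\sim\e_0$ from $-\pi$ and $x=\pi-\d$ with $\d\ll\e_0$; then $g(y-x)\approx -4\pi/(2\pi-(x-y))$ makes the kernel of size $\e_0^{-1}$ with a fixed sign, so the left-hand side is of order $\e_0^{-1}$ while the right-hand side is $O(1)$. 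The correct repair is to restrict to $|x-y|$ bounded away from $2\pi$ (e.g. $|x|\le\pi/2$, which is exactly what the paper's bound on $[-3\pi/2,3\pi/2]$ covers), noting that this is all that is needed: in \eqref{eq:uxx_sing} the region $|x|\gtrsim 1$ is handled trivially by $\|H\om_x/x\|_{L^2(|x|\gtrsim1)}\les\|\om_x\|_{L^2}$. In short, the main body of your argument reproduces the paper's proof; your treatment of the corner is a genuine gap relative to the statement as written, though it sits precisely at the point the paper's own proof also leaves untreated.
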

The proof is deferred to Appendix \ref{app:hil}.
%The above Lemma shows that the Hilbert transform commutes with $\f{1}{x}$ up to some lower order terms, which is similar to the identity in Lemma \ref{lem:com}.  %We will apply this Lemma to $f = \om_x$. In this case, we can further control $\om_x x^{-1}$ since $\om \in \cH$. Note that we cannot apply Lemma \ref{lem:com} since we do not have a good control on $\f{\om_x}{\sin x}$ given $\om \in \cH$
%The above Lemma is similar to the identity in Lemma \ref{lem:com}. The advantage of the above Lemma is that the function $x$ only vanishes at $x=0$, while $\sin x$ in Lemma \ref{lem:com} vanishes at both $x=0$ and $x =\pi$. We will apply this Lemma to $f = \om_x$. In this case, if we apply Lemma \ref{lem:com}, we do not have control on $\f{\om_x}{\sin x}$ since the weight in the norm $\cH$ \eqref{eq:Hnorm} is strong enough near $x=\pi$.
Since $u, \om$ are odd, we get $u_{xx}(0) = 0$. Applying the above Lemma with $f= u_x$ and using the fact that $H$ is $L^2$ bounded, we yield 
\beq\label{eq:uxx_sing}
|| \f{u_{xx}}{x} ||_{L^2}
= || \f{H \om_x}{x} ||_{L^2}
\les ||  H( \f{\om_x}{x} )||_{L^2} + || \f{\om_x}{x}||_{L^1}
\les || \f{\om_x}{x}||_{L^2} \les  || \om||_{\cH}.
\eeq
%We remark that we cannot apply Lemma \ref{lem:com} to estimate the weighted norm of $u_{xx}$ since such estimate would lead to $||\om_x (\sin x)^{-1}||_2$, which cannot be bounded by $|| \om||_{\cH}$ due to the singularity $(\sin x)^{-1}$ near $x = \pi$.
 %We will apply this Lemma to $f = \om_x$. In this case, we can further control $\om_x x^{-1}$ since $\om \in \cH$. Note that we cannot apply Lemma \ref{lem:com} since we do not have a good control on $\f{\om_x}{\sin x}$ given $\om \in \cH$

Applying \eqref{eq:est_wal1} in Lemma \ref{lem:profile}, we obtain 
%\beq\label{eq:est_R21}
\[
|| u_{xx}  \eta_{\al} \rho^{1/2}||_{L^2}
\les || u_{xx} x^{-1} ||_{L^2} || \eta_{\al}||_{L^{\inf}}
\les (1-\al) || \om ||_{\cH}.
\]
%\eeq

Denote $\td u =u-u_x(0) \sin x $. Next we estimate $|| \td u \eta_{\al, xx} \rho^{1/2}||_2$.  
From \eqref{eq:wal} and \eqref{eq:est_wal1}, $\eta_{\al, xx}$ is similar to $|\sin x|^{\al-2}$, which is singular both at $x=0, \pi$. To overcome the singularities from $\eta_{\al, xx}$ and $\rho^{1/2}$, we estimate $\td u (\sin x)^{-1} x^{-1} $. For $ |x| \geq \f{\pi}{2} $, since $\td u(\pi) = 0$ and $|\sin x | \les |\pi - |x| |^{-1}$, we yield 
\[
|\td u (\sin x)^{-1} x^{-1}| \les | \td u |\pi - |x||^{-1} | \les || \pa_x \td u||_{\inf} \les || u_{xx}||_2 \les || \om||_{\cH}.
\]

For $|x|\leq \f{\pi}{2}$, since $\td u(0) = \pa_x \td u(0) = 0$, using integration by parts, we obtain 
\[
|\td u (\sin x)^{-1} x^{-1}| \les \f{|\td u|}{x^2} = \f{1}{x^2} \B| \int_0^{x} \pa_{yy} \td u(y) \cdot (x- y) dy \B|
\les \f{1}{x^2} || \pa_{yy} \td u \cdot y^{-1} ||_2 \B(\int_0^x y^2(x-y)^2 dy \B)^{1/2}.
\]

Since $\pa_{yy} \td u(y) = \pa_{yy} u + u_x(0) \sin y$, using \eqref{eq:uxx_sing}, we derive 
\[
|\td u (\sin x)^{-1} x^{-1}| \les x^{-2} ( || u_{xx} x^{-1} ||_2 + |u_x(0)| ) x^{5/2} 
\les || \om||_{\cH}.
\]

Since $\rho^{1/2} = (\sin \f{x}{2})^{-1} \asymp x^{-1}$, applying the above estimate and \eqref{eq:est_wal1}, we obtain 
\[
\bal
|| (u - u_x(0)\sin x ) \eta_{\al, xx} \rho^{1/2}||_2
&\les || \td u (\sin x)^{-1} \rho^{1/2}||_{\inf} 
|| \eta_{\al, xx} \sin x||_2  \\
&\les (1-\al) || \om||_{\cH} || \ |\sin x |^{\kp_2 - 1} ||_2 \les (1-\al) || \om||_{\cH}.
\eal
\]

% The estimates of other terms in \eqref{eq:est_R2} and $I$ in \eqref{eq:est_R} are relatively simple. Recall that $\rho = (\sin \f{x}{2})^{-1/2}$ in Definition \ref{def}. 
% Since $\om, u-u_x(0)\sin x$ vanishes at $x = 0, \pi$, using the Hardy-type inequality in Lemma \ref{lem:hardy}, we yield 
% \[
% \bal
% &|| \om (\sin x)^{-1} \rho^{1/2} ||_2
% \les || \om x^{-2}||_2 + || \om | \pi- |x||^{-1} ||_2 
% \les || \om_x x^{-1}||_2 + || \om_x||_2  \les || \om_x x^{-1} ||_2
% \les || \om ||_{\cH} ,\\
% &|| (u -u_x(0) \sin x) (\sin x)^{-1} \rho^{1/2} ||_2
% \les   || ( u_x - u_x(0) \cos x ) x^{-1} ||_2 \\
% & \qquad \qquad \qquad \qquad \qquad \qquad \qquad \les  || u_{xx} + u_x(0) \sin x||_2
% \les || u_{xx}||_2 \les || \om_x||_2 \les || \om||_{\cH}.
% \eal
% \]

The estimates of other terms in \eqref{eq:est_R2} and $I$ in \eqref{eq:est_R} are relatively simple. Since $\om$ vanishes at $x = 0, \pi$, using the Hardy-type inequality in Lemma \ref{lem:hardy}, we yield 
\[
\bal
&|| \om (\sin x)^{-1} \rho^{1/2} ||_2
\les || \om x^{-2}||_2 + || \om | \pi- |x||^{-1} ||_2 
\les || \om_x x^{-1}||_2 + || \om_x||_2  \les || \om_x x^{-1} ||_2
\les || \om ||_{\cH} .\\
\eal
\]

Applying the above estimates and \eqref{eq:est_ual} in Lemma \ref{lem:profile}, we obtain 
\[
\bal
&||\pa_x H \eta_{\al} \cdot  \om \rho^{1/2}||_2 
\les  ||\om (\sin x)^{-1} \rho^{1/2}||_2 || \sin(x)  \pa_x H \eta_{\al}||_{L^{\inf}} \les (1-\al) || \om ||_{\cH} .\\
% &|| (u - u_x(0) \sin x) \eta_{\al, xx} \rho^{1/2}||_2 
% \les || (u - u_x(0) \sin x) (\sin x)^{-1} \rho^{1/2}||_2  || \eta_{\al, xx} \sin x||_{L^{\inf}} 
% \les (1-\al) || \om ||_{\cH}.
\eal
\]
%\eeq

Applying \eqref{eq:est_wal2} in Lemma \ref{lem:profile} and $(1-\al) \wedge x^2 \les (1-\al)^{1/2} |x|$, we yield 
\[
\bal
 || u_x(0)( (\al-1) \om_{ \al, x} - \sin x \cdot \eta_{\al, xx}) \rho^{1/2}||_2
\les &  ||u_x||_{L^{\inf}} 
||   (1-\al) \wedge |x|^2 ) |\sin x|^{\al-1} x^{-1} ||_2 \\
 \les & || u_{xx}||_2 (1-\al )^{1/2} ||  \ |\sin x|^{\al-1}||_2 \\
  \les & (1-\al)^{1/2} || \om_x||_2 \les  (1-\al)^{1/2} || \om||_{\cH}.
\eal
\]
% Using $(1-\al) \wedge x^2 \les (1-\al)^{1/2}|x|$ and $|| u_x||_2 \les || u_{xx}||_2 \les || \om_x||_2 \les || \om ||_{\cH}$, we derive 
% \[
% J \les ||\om||_{\cH} (1-\al)^{1/2} || (\sin x)^{\al- 1}||_2 \les  (1-\al)^{1/2}||\om||_{\cH}.
% \]

Recall $c_{\om,\al} = (\al-1) $ from \eqref{eq:normal_al}. The estimate of the last term in \eqref{eq:est_R2} is trivial 
\[
|| c_{\om, \al} \om_x \rho^{1/2}||_2 \les (1-\al) || \om||_{\cH}.
\]

Combining the above $L^2(\rho)$ estimates of each term in \eqref{eq:est_R2}, we establish 
 %of each term in \eqref{eq:est_R2} 
%and applying result (b) in Lemma \ref{lem:linop}, 
\beq\label{eq:est_RII}
|II | \les || \cR_{\al, 2}\rho^{1/2}||_2 || (\om_x + \om_e \sin x )\rho^{1/2} ||_2
\les (1-\al)^{ \f{1}{2}} || \om||_{\cH}
( || \om||_{\cH} + | \om_e|  )
\les (1-\al)^{ \f{1}{2}} || \om||^2_{\cH},
\eeq
where we have applied $|\om_e| \les || \om||_{\cH} $ from Lemma \ref{lem:linop} in the last inequality.

Next, we estimate the term $I$ from \eqref{eq:est_R}. Applying integration by parts, we yield 
\[
\bal
I_1  &\teq \la - u(\eta_{\al}) \om_{xx},  \ \om_x \rho \ra 
= \la  - u(\eta_{\al})\rho, \ \f{1}{2}\pa_x (\om_x)^2 \ra 
= \f{1}{2}\la \pa_x( u(\eta_{\al}) \rho) \rho^{-1}, \ \om_x^2 \rho \ra, \\
I_2 &\teq \la - u(\eta_{\al})\om_{xx}, \ \om_e \sin x \cdot \rho \ra
= \om_e \la \pa_x( u(\eta_{\al}) \rho \cdot \sin x  ), \ \om_x \ra .
%& \les || \pa_x( u(\eta_{\al}) \rho) \rho^{-1}||_{\inf} || \om_x \rho^{1/2}||_2^2\les ||  |u_x(\eta_{\al})| +\B| \f{u(\eta_{\al})}{x} \B| ||_{L^{\inf}} || \om ||_{\cH}
\eal
\]
Since $\rho = (\sin \f{x}{2})^{-2}, |\pa_x \rho | \les  \rho |x|^{-1} $, and $\pa_x u(\eta_{\al}) = H \eta_{\al}$, applying \eqref{eq:est_ual}, we derive 
%\beq\label{eq:est_RI}
\[
\bal
|\pa_x( u(\eta_{\al}) \rho) |
&\les ( |  \pa_x u(\eta_{\al}) | + \B| \f{u(\eta_{\al})}{x} \B| ) \rho 
\les || \pa_x u(\eta_{\al}) ||_{\inf} \rho \les (1-\al) \rho  , \\
| \pa_x( u(\eta_{\al}) \rho \cdot \sin x  )|
&\les |u(\eta_{\al}) \rho | + |  x \pa_x( u(\eta_{\al}) \rho )|
\les  
|| \pa_x u (\eta_{\al} ) ||_{\inf} |x \rho|
+ (1-\al) |x|\rho \les (1-\al) |x|\rho .
\eal
\]

Using the above estimate and the result (b) in Lemma \ref{lem:linop}, we establish 
\beq\label{eq:est_RI}
\bal
|I_1| 
 &\les || \pa_x( u(\eta_{\al}) \rho) \rho^{-1}||_{\inf} || \om_x \rho^{1/2}||_2^2\les
(1-\al) || \om||^2_{\cH} ,\\
|I_2|  &\les (1-\al)  |\om_e| \cdot || \om_x x \rho ||_{L^1}
\les (1-\al)  ||\om||_{\cH} || \om_x \rho^{1/2}||_{L^1} \les (1-\al)  || \om||^2_{\cH} .
\eal
\eeq

Plugging the estimates \eqref{eq:est_RII} and \eqref{eq:est_RI} in \eqref{eq:est_R} and then applying Lemma \ref{lem:linop}, we obtain 
\beq\label{eq:est_Rdone}
|\la \cR_{\al} \om, \om \ra_Y | \les  (1-\al)^{1/2} || \om||_{\cH}^2 
\les  (1-\al)^{1/2} || \om||_{Y}^2  .
\eeq

\subsubsection{Estimates of nonlinear and error terms}

Recall the nonlinear term $N(\om)$ and error term $F(\om_{\al})$ from \eqref{eq:NF}. Since 
%the nonlinear term 
$N(\om)$ is similar to that in \cite{lei2019constantin,chen2020slightly} and the perturbation $\om$ lies in the same space $\cH$, the estimate of $N(\om)$ is almost identical to that in \cite{lei2019constantin,chen2020slightly}. In particular, we yield 
\beq\label{eq:est_N}
|\la N(\om), \om \ra_Y| \les || \om||_{\cH}^3 \les || \om||_{Y}^3
\eeq
and refer the detailed estimates to  \cite{lei2019constantin,chen2020slightly}.

In the following derivation, we use the implicit notation $ O(f)$ to denote some term $g$ that satisfies $|g| \les f$. It can vary from line to line. Due to symmetry, we focus on $x \in [0, \pi]$. 

 For the error term $F(\om_{\al})$, we first compute $\pa_x F(\om_{\al})$ %and then perform a decomposition 
\beq\label{eq:est_F}
\pa_x F(\om_{\al}) 
=  u_{\al, xx} \om_{\al} - u_{\al} \om_{\al, xx}
+ c_{\om,\al} \om_{\al, x} .
%=  u_{\al, xx} \om_{\al}-   u_{\al} \eta_{\al, xx} - u_{\al} \om_{1,xx}  + c_{\om,\al} \om_{\al, x} 
%=  ( u_{1,xx} + \pa_x H \eta_x )  \om_{\al}
\eeq

Recall $u_1 = \sin x, \om_1 = - \sin x, \eta_{\al} = \om_{\al} -  \om_1 $ from \eqref{eq:eta_al}, and $u_{\al,x} - u_{1,x } = H \eta_{\al}$. Applying Lemma \ref{lem:profile} and $|\om_{\al}| \les |\sin x|^{\al}$ \eqref{eq:wal}, we yield 
\beq\label{eq:est_F1}
\bal
 u_{\al, xx} \om_{\al}
 &= (u_{1,xx} + \pa_x H \eta_{\al, x} ) \om_{\al}
 = u_{1,xx} \om_{\al} + O( (1-\al) |\sin x|^{\kp_1- 1 + \al} ) \\
 & =u_{1,xx}  \om_{1} - \sin x \cdot  \eta_{\al} + O( (1-\al) |\sin x|^{\kp_1- 1 + \al} )  \\
 &= (\sin x)^2 + O( (1-\al) |\sin x|^{\kp_1- 1 + \al} )  .\\
 \eal
\eeq

We decompose the second term in \eqref{eq:est_F} as follows 
\beq\label{eq:est_F2}
\bal
u_{\al} \om_{\al, xx} & = u_{\al} \eta_{\al, xx} + u_{\al} \om_{1,xx}
= (u_{\al} - \sin x) \eta_{\al, xx} +  \sin x \cdot \eta_{\al, xx} 
+  u_{\al} \om_{1,xx}  \\
& \teq I_1 + I_2 + I_3.
\eal
\eeq

Using \eqref{eq:est_ual}, we yield 
%Using the second estimate in \eqref{eq:est_ual}, we yield 
\[
|u_{\al,xx}| \les |u_{1,xx}| + |\pa_x H \eta_{\al}| \les |\sin x|^{\kp_1 - 1}, \quad 
|u_{\al} - \sin x| \les  (|| u_x(\eta_{\al}) ||_{\inf} + 1 ) |\sin x | \les |\sin x|.
\]

Recall $u_{\al, x}(0) = 1$ from \eqref{eq:wal}. For $0 \leq x \leq \f{\pi}{2}$, the above estimate implies 
\[
\bal
|u_{\al} -  \sin x|
& \leq |u_{\al} -  x | + C |x|^3
%O(x^3)
=  \B| \int_0^x ( u_{\al, x}(x) - u_{\al,x}(0) ) dx \B| + C|x|^3 \\
&= \B| \int_0^x   u_{\al,xx}(y)  \cdot (x- y) dy \B| + C |x|^3 
 \les \int_0^x y^{\kp_1 - 1} (x-y) dy + C|x|^3
%= x^{\kp_1 + 1} \int_0^1 s^{\kp_1 - 1} (1 - s) ds + C|x|^3 
\les |x|^{\kp_1 + 1 }.
\eal
\]
%where we have performed a change of variable $y = x s $. 
Therefore, we yield 
\[
|u_{\al} - \sin x| \les 
\one_{x \leq \pi/2}  |x|^{\kp_1 + 1}  + \one_{x>\pi/2} |\sin x|
%\min( |x|^{\kp_1 + 1} , |\sin x|) 
\les |\sin x| \cdot  |x|^{\kp_1},
\]
which along with \eqref{eq:est_wal1} imply the estimate of $I_1$ in \eqref{eq:est_F2}
\[
|I_1 | \les (1-\al) |\sin x|^{\kp_2 - 1} |x|^{\kp_1}.
\]

For $I_3$ in \eqref{eq:est_F2}, applying \eqref{eq:est_ual} and $u_1 = \sin x, \om_1 = -\sin x$, we get
\[
I_3 = u_1 \om_{1,xx} + (u_{\al} - u_1) \om_{1,xx}
 = (\sin x )^2 + O( |\sin x|^2 || u_{\al, x}||_{\inf}  )
 = (\sin x )^2 + O( (1-\al)|\sin x|^2 ).
\]

Recall $c_{\om,\al} = \al-1$ from \eqref{eq:normal_al}. We combine $I_2$ in \eqref{eq:est_F2} and $c_{\om,\al} \om_{\al, x}$ in \eqref{eq:est_F} and then apply \eqref{eq:est_wal2} to obtain  
\[
|   c_{\om,\al} \om_{\al, x} -I_2|
 = | (\al-1) \om_{\al, x} - \sin x \cdot \eta_{\al, xx} |
 \les ( (1-\al) \wedge |x|^2 ) |\sin x|^{ \al - 1}
 \les  (1-\al)^{1/2} |x| \cdot |\sin x|^{ \al - 1} .
\]
%+ ( u_1 + u(\eta_{\al}) ) \om_{1,xx}

Plugging the above estimates on $I_i$ and $c_{\om, \al}$ in \eqref{eq:est_F2}, we establish 
\beq\label{eq:est_F3}
u_{\al} \om_{\al, xx} - c_{\om,\al} \om_{\al, x}
=  I_1 + I_3 + ( I_2 -  c_{\om,\al} \om_{\al, x} )
= (\sin x)^2 
+ O( (1-\al)^{1/2} |x|^{\kp_1} | \sin x|^{\kp_2-1} ),
\eeq
where we have used $|\sin x| \leq |\sin x|^{\kp_2- 1}, |\sin x| \les |x| \les 1$ and $\kp_2 < \al$ to combine the estimates of $I_i$ in the last estimate. 

Recall $\kp_1 = \f{3}{4}, \kp_2 = \f{7}{8}$ from Lemma \ref{lem:profile}. Combining \eqref{eq:est_F}, \eqref{eq:est_F1} and \eqref{eq:est_F3}, we establish
\[
\bal
\pa_x F(\om_{\al})
&= (\sin x)^2 \cdot (1 - 1) 
+  O( (1-\al) |\sin x|^{\kp_1- 1 + \al} )  +
O (1-\al)^{1/2} |x|^{\kp_1} | \sin x|^{\kp_2-1}  \\
&=(1-\al)^{1/2} |\sin x|^{\kp_2-1} |x|^{\kp_1},  
\eal
\]
where we have used $| \sin x|^{\kp_1 + \al - \kp_2} \les |\sin x|^{\kp_1} \les |x|^{\kp_1}$
%$|\sin x|^{\kp_1}  \les |x|^{\kp_1}$ and $\kp_2 < \al$ 
to obtain the last estimate. Using the above estimate and Lemma \ref{lem:linop}, we prove 
\beq\label{eq:est_Fdone}
\bal
|\la F(\om_{\al}), \om \ra_Y|
&\les || F(\om_{\al}) ||_Y ||\om||_Y
\les || \pa_x F(\om_{\al}) \rho^{1/2} ||_2 || \om||_Y \\
&\les (1-\al)^{1/2} ||\  |\sin x|^{\kp_2-1} |x|^{\kp_1-1} ||_2 || \om||_Y
\les (1-\al)^{1/2} ||\om||_Y,
\eal
\eeq
where the integral is bounded since $2\kp_2 - 2 = - \f{1}{4} > -1,  2\kp_2 + 2 \kp_1 - 4
= - \f{3}{4} > -1$.

\subsubsection{Nonlinear stability and finite time blowup}\label{sec:non_stab}

Combining \eqref{eq:est_main}, \eqref{eq:est_Rdone}, \eqref{eq:est_N} and \eqref{eq:est_Fdone}, we establish the following nonlinear estimate for some absolute constant $C>0$
\[
\f{1}{2}\f{d}{dt}  || \om||_Y^2 
\leq - ( \f{3}{8}  - C|1-\al|^{1/2} )  || \om ||_Y^2
+ C |1-\al|^{1/2} || \om ||_Y + C|| \om ||_Y^3.
\]
Therefore, there exist absolute constants $\al_0 < 1$ sufficiently close to $1$ and  $\mu >0$, such that for any $\al \in (\al_0, 1)$, if the initial perturbation satisfies $|| \om_0 ||_Y <  \mu |1-\al|^{1/2}$, then 
%\beq\label{eq:a-priori}
\[
%\bal
|| \om(t)||_Y < \mu |1-\al|^{1/2},  \quad c_{\om, \al } + c_{\om}(t) = (\al - 1)( 1 + u_x(0))
\leq (\al-1) (1 - C|\al-1|^{1/2}) \leq \f{1}{2} (\al - 1)
\]
%\eal
%\eeq
holds true for all $t> 0$. Since the weight $\rho = O(1)$ near $x=\pi$ and $ ( \pa_x \om_{\al})^2 \rho $ is integrable near $x = \pi$, we can choose initial perturbation $\om_0$ such that $|| \om_0||_Y < \mu |1-\al|^{1/2}$, $\om_0 \in C^{2}( (-\pi/3, \pi/ 3))$ and $\om_0 + \om_{\al} \in C^{\al} \cap C^{\inf} (S^1 \bsh \{0 \} )$. For example, $\om_0$ can be $-\om_{\al}$
near $x=\pi$, $\om_0 = 0$  near $x=0$ and smooth in the intermediate region. A simple Lemma \ref{lem:sobolev} shows that $ \om_0 + \om_{\al} \in H^s$ for any $s < \al + \f{1}{2}$, and a direct calculation gives $\int_0^{\pi} | \sin x \cdot f_x^2 / f | dx <+\inf$ where $f =\om_0 + \om_{\al}$. Using the rescaling argument in Section \ref{sec:dyn}, we establish finite time blowup of \eqref{eq:DG} from $\om_0 + \om_{\al}$. 
%We refer more detailed argument to \cite{chen2019finite,chen2020slightly}.

The condition $\int_0^T u_{phy,x}(0, t) dt = \infty $ in Theorem \ref{thm:blowup}, where $u_{phy}$ is the velocity in \eqref{eq:DG}, follows from Theorem \ref{thm:criterion} or a calculation using the above a-priori estimates on the perturbation and the rescaling relations \eqref{eq:rescal1}-\eqref{eq:rescal3}. Due to the inclusion  $ C^{\al} \subset C^{\al_1} , H^s \subset H^{s_1} $ for $0 <\al_1  < \al, s_1 < s$, we conclude the proof of Theorem \ref{thm:blowup}.

\section{Concluding remarks}\label{sec:conclude}

%In this paper, %we studied the regularity conjecture of the De Gregorio model \eqref{eq:DG} for initial data in the class $X$ \eqref{eq:X}. 
We have constructed a finite time blowup solution of the De Gregorio model \eqref{eq:DG} from $C^{\al}$ initial data for any $0<\al< 1$, and established the global well-posedness (GWP) from initial data $\om_0 \in H^1 \cap X$ with $\om_0(x) x^{-1} \in L^{\infty}$, based on a one-point blowup criterion. These results verified the conjecture on global regularity of the DG model on $S^1$ for smooth data in $X$, and showed that the advection can prevent singularity formation if the initial data is smooth enough. 

Our analysis provides valuable insights on the global well-posedness of \eqref{eq:DG} with more general data, and it is likely that some results are generalizable. A potential direction is to generalize the one-point blowup criterion to a finite-points version. For simplicity, we assume that the number of zeros of $\om(x,t)$ is finite, and the zeros are $x_i(t), i=1,2,..,n$ with $\pa_x \om( x_i(t), t) \neq 0$. 
It is shown in \cite{Sve19} that the number $n$ and $\pa_x \om( x_i(t), t), i=1,2.,,.n$ are conserved. Denote $N_{\pm}(t) \teq \{ x: \om(x,t) = 0, \ \sgn( \om_x(x,t)) = \pm 1 \} $. A natural generalization of Theorem \ref{thm:criterion} is that %under suitable assumption on $\om_0$, 
the solution of \eqref{eq:DG} cannot be extended beyond $T$ if and only if 
\beq\label{eq:Mpoint}
\int_0^T \sum_{ x \in N_-(t)} |u_x( x , t)| d t = \infty. 
\eeq
A weaker version is that $\sum_{ i=1}^n |u_x( x_i(t), t)|$ controls the breakdown of the solution. These blowup criteria are consistent with that of the CLM model. See the discussion in Section \ref{sec:CLM}. We believe that these criteria are important for the GWP from general smooth initial data. 

Passing from \eqref{eq:Mpoint} to the GWP, a possible approach is to estimate functionals and  quadratic forms similar to those in Section \ref{sec:regular} in suitable moving frames. We remark that our proof of Lemma \ref{lem:comp} does not require the assumption on the sign of $\om$. Thus, it is conceivable that the argument can be adapted to study other scenarios. 

Our analysis has benefited from the property that the zeros of $\om$ with  $\om \in X$ \eqref{eq:X} are essentially fixed. For more general data, controlling the locations of the zeros of $\om$ can be a challenging problem.

For the gCLM model on a circle with a parameter $a>1$ and $\om_0 \in C^{\inf} \cap X$, 
%following the argument in Sections \ref{sec:regular}, \ref{sec:GWP}, we can study 
monotonicity of $\int_0^{\pi/2} | \om(y)| (\cot y)^{\b} dy$ with $\b = \b(a) < 2$ and a-priori estimates of $|| \om(t)||_{L^1}, u_x(0, t)$ can be studied by the argument in Sections \ref{sec:regular}, \ref{sec:GWP}.
%following the argument in Sections \ref{sec:regular}, \ref{sec:GWP}, we can study the conservation of $\int_0^{\pi/2} | \om(y)| (\cot y)^{\b} dy$ with $\b = \b(a) < 2$ and a-priori estimates of $|| \om(t)||_{L^1}, u_x(0, t)$. 
 These a-priori estimates shed some helpful light on the regularity of the gCLM model with $\om_0 \in C^{\inf} \cap X$.
Note that for $a> a_0$ with $a_0 \approx 1.05$, these estimates have been established in the arXiv version of \cite{chen2019finite}. 

% for $a> a_0$ with $a_0 \approx 1.05$. %These a-priori estimates 
%They shed some useful light on the regularity of the gCLM model. 

%Some estimates in Sections \ref{sec:regular}, \ref{sec:GWP} can be applied to study the 
%Estimates in Sections \ref{sec:regular}, \ref{sec:GWP} shed some useful light on theregularity of the gCLM model with parameter $a>1$ and $\om_0 \in X$. For $a>1$, following similar argument, we can study the conservation of $\int_0^{\pi/2} \om(y) (\cot y)^{\b}$ with $\b = \b(a) < 2$ and establish a-priori estimate of $|| \om(t)||_{L^1}, u_x(0, t)$.

\vspace{0.2in}
\noindent
{\bf Acknowledgments.} 
JC is grateful to Vladimir Sverak for introducing the De Gregorio model at the AIM Square. He would like to thank Yao Yao for the discussion at the AIM square on the potential blowup criterion for the gCLM model, which inspired Section \ref{sec:CLM}. He also acknowledges the support from AIM. He is also grateful to Thomas Hou for valuable comments on an earlier version of this work. He would also like to thank the referee for the constructive comments on the original manuscript and a question that inspires the author to weaken the regularity assumption in Theorem \ref{thm:GWP} in the original manuscript.
%which improve the quality of our paper. 
This research was supported in part by grants DMS-1907977 and DMS-1912654 from the National Science Foundation.

\appendix

\section{}

\subsection{Properties of the Hilbert transform and functional inequalities}\label{app:hil}

The following Cotlar's identity for the Hilbert transform is well known, see, e.g. \cite{duoandikoetxea2001fourier,chen2019finite,Elg17}.
\begin{lem} \label{lem:tri}
For $f \in C^{\inf}(S^1)$, we have 
\[
H (f H f ) = \f{1}{2} ( (Hf)^2  - f^2).
\]
\end{lem}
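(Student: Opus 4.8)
The plan is to prove Cotlar's identity by passing to Fourier series on $S^1$, where the Hilbert transform acts as the Fourier multiplier $s(k) = -i\,\sgn(k)$ (with $s(0) = 0$). Since $f \in C^{\inf}$ its Fourier coefficients decay rapidly, so every rearrangement and term-by-term manipulation below is automatically justified. I would first record the reduction to the mean-zero case: writing $\hat f(0) = \f{1}{2\pi}\int_{S^1} f\,dx$, a direct check (see the last paragraph) shows that in general $H(fHf) = \f12\big((Hf)^2 - f^2\big) + \f12\hat f(0)^2$, and the stated identity is precisely its mean-zero specialization — which is all that is ever needed in the paper, since $H$ is applied there only to odd functions.

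The cleanest route uses the analytic completion. Set $\phi \teq f + iHf$ and $\psi \teq f - iHf$. Computing Fourier coefficients gives $\hat\phi(k) = (1+\sgn(k))\hat f(k)$ for $k \neq 0$ and $\hat\phi(0) = \hat f(0)$, so $\phi$ is supported on nonnegative frequencies, and similarly $\psi$ on nonpositive frequencies. From $f = \f12(\phi+\psi)$ and $Hf = \f{1}{2i}(\phi-\psi)$ one gets $f\,Hf = \f{1}{4i}(\phi^2 - \psi^2)$. Now $\phi^2$ is again supported on nonnegative frequencies, hence $H(\phi^2) = -i\big(\phi^2 - \widehat{\phi^2}(0)\big)$, and likewise $H(\psi^2) = i\big(\psi^2 - \widehat{\psi^2}(0)\big)$. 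Substituting, and using $\phi^2 + \psi^2 = \f12\big((\phi+\psi)^2 + (\phi-\psi)^2\big) = 2\big(f^2 - (Hf)^2\big)$ together with $\widehat{\phi^2}(0) = \widehat{\psi^2}(0) = \hat f(0)^2$, yields the claimed formula (with the $\f12\hat f(0)^2$ correction, which vanishes when $f$ has mean zero).

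Equivalently, one can verify the identity directly at the multiplier level: after symmetrizing the convolution $\widehat{f\,Hf}(n) = \sum_{j+k=n}\hat f(j)\,s(k)\,\hat f(k)$ in $j,k$, the desired equality reduces to the pointwise relation $s(n)\big(s(j)+s(k)\big) = s(j)s(k) - 1$ for every $j+k = n$ with $(j,k)\neq(0,0)$, which follows from a short case analysis on the signs of $j,k,n$ (using $(-i)^2 = -1$ and $\sgn(m)\sgn(-m) = 1$ for $m \neq 0$). I expect the only genuine subtlety — and the spot where a careless argument would slip — to be the bookkeeping of the zero Fourier mode: the multiplier relation fails exactly at $j = k = n = 0$, and that single exceptional term is what produces the constant $\f12\hat f(0)^2$; tracking it is the reason the clean statement is really a statement about mean-zero $f$. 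Everything else is elementary.
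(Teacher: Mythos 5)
Your proof is correct, but note that the paper does not actually prove this lemma at all --- it is quoted as a known identity (Cotlar's identity) with citations to \cite{duoandikoetxea2001fourier,chen2019finite,Elg17} --- so your Fourier-side argument is a genuinely self-contained substitute rather than a variant of the paper's argument. Both of your routes (the analytic completion $\phi=f+iHf$, $\psi=f-iHf$ with spectra in $k\geq 0$ and $k\leq 0$, and the multiplier identity $s(n)(s(j)+s(k))=s(j)s(k)-1$ after symmetrization) are standard and sound, and your bookkeeping of the zero mode is a real point, not pedantry: as literally stated the identity fails for $f\equiv 1$ (left side $0$, right side $-\f{1}{2}$), and the correct general form is $H(fHf)=\f12((Hf)^2-f^2)+\f12\hat f(0)^2$ in your normalization. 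This does not affect the paper, since every application (the proof of Lemma \ref{lem:cancel} with $f=\sin(2x)\om_x$, and \eqref{eq:EE_A3} with $f=\om$) involves a mean-zero function, but your sharpened statement is the honest one.

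One small slip in your parenthetical for the second route: $\sgn(m)\sgn(-m)=-1$ for $m\neq 0$, not $+1$; what you actually need (and what your case analysis implicitly uses) is $s(m)s(-m)=(-i\,\sgn(m))(-i\,\sgn(-m))=(-1)\cdot(-1)=1$, i.e.\ the two minus signs from $(-i)^2=-1$ and from $\sgn(m)\sgn(-m)=-1$ cancel. With that correction the multiplier verification goes through exactly as you describe, with the single exceptional index $j=k=n=0$ producing the $\f12\hat f(0)^2$ term.
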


We have the following commutator identity from Lemma 2.6 in \cite{chen2020slightly}.
\begin{lem}\label{lem:com}
For $f \in H^1(S^1)$ with  period $n \pi$, we have 
\[
H(  \sin (\f{2x }{n}) f_x ) - \sin( \f{2x}{n}) H f_x = - \f{2}{n^2 \pi } \int f \sin(2 y) dy = H( \sin ( \f{2x}{n}) f_x)(0).
\]
\end{lem}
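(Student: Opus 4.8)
The plan is to diagonalize everything on the Fourier side. Write $e_k(x) \teq e^{2 i k x / n}$ for $k \in \bZ$; these form an orthogonal basis of the $L^2$ space of $n\pi$-periodic functions, and with the standard normalization of $H$ on the circle of period $n\pi$ (rescaling \eqref{eq:hil}) one has $H e_k = -i\,\sgn(k)\, e_k$, in particular $H e_0 = 0$, the sign being fixed by, e.g., $H(\sin x) = -\cos x$. Since trigonometric polynomials are dense in $H^1(S^1)$, and since both sides of the claimed identity depend continuously on $f \in H^1$ — the left side because $f \mapsto H(\sin(\f{2x}{n}) f_x) - \sin(\f{2x}{n}) H f_x$ is bounded from $H^1$ into $L^2$ ($H$ and multiplication by the bounded function $\sin(\f{2x}{n})$ are $L^2$-bounded and $f\mapsto f_x$ is bounded $H^1 \to L^2$), the right side because $f \mapsto \int_0^{n\pi} f(y)\sin(\f{2y}{n})\,dy$ is a bounded functional on $L^1 \supset H^1$ — it suffices to establish the identity for $f = e_k$ and then extend by linearity and density.

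First I would record that multiplication by $g \teq \sin(\f{2x}{n}) = \f{1}{2i}(e_1 - e_{-1})$ shifts modes, $g\, e_k = \f{1}{2i}(e_{k+1} - e_{k-1})$, so a one-line computation gives
\[
H(g\, e_k) - g\, H e_k = \f{1}{2}\B[ (\sgn(k) - \sgn(k+1))\, e_{k+1} + (\sgn(k-1) - \sgn(k))\, e_{k-1} \B].
\]
Because $\sgn$ is locally constant away from $0$, the right side vanishes for $|k| \geq 2$; it equals $-\f{1}{2}(e_1 + e_{-1})$ for $k = 0$ and the constant $-\f{1}{2}$ for $k = \pm 1$. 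Applying this termwise to $f_x = \sum_k \f{2 i k}{n}\hat f(k)\, e_k$, whose $e_0$-coefficient vanishes, the only non-constant contribution (the $k=0$ one) is killed, leaving a constant: $H(g f_x) - g H f_x = -\f{i}{n}(\hat f(1) - \hat f(-1))$. With $\hat f(k) = \f{1}{n\pi}\int_0^{n\pi} f(y) e^{-2iky/n}\,dy$ this is exactly $-\f{2}{n^2\pi}\int_0^{n\pi} f(y)\, g(y)\,dy$, the stated constant. Finally, since $g(0) = \sin 0 = 0$, evaluating $H(g f_x) - g H f_x$ at $x = 0$ returns $H(g f_x)(0)$, which is the last equality; this pointwise evaluation is legitimate precisely because the left side has just been shown to be a constant function (for trigonometric polynomials directly, then in the $H^1$-limit).

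I do not expect a genuine obstacle: the content is just the observation that the commutator of $H$ with a multiplication operator only ``sees'' the jump of $\sgn$ at the origin, in the same spirit as Cotlar's identity (Lemma \ref{lem:tri}). The only points needing care are fixing the correct sign/normalization of $H$ on the period-$n\pi$ circle so the multiplier is $-i\,\sgn(k)$; verifying the density/continuity step that upgrades the identity from trigonometric polynomials to all of $H^1$; and justifying the value at $x=0$, which — as noted — is fine because the left side is constant. An alternative, Fourier-free route would manipulate the kernel $\cot(\f{x-y}{n})$ directly via the addition formula for $\cot$; it works too but is messier than this spectral computation.
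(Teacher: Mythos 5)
Your Fourier computation is correct, and it is genuinely different from what the paper does: the paper gives no direct argument at all, but instead quotes the case $n=2$ from Lemma 2.6 of \cite{chen2020slightly} and says the general case follows by rescaling. Your route diagonalizes $H$ as the multiplier $-i\,\sgn(k)$ on the modes $e^{2ikx/n}$, observes that the commutator of $H$ with multiplication by the single harmonic $\sin(\f{2x}{n})$ vanishes on $|k|\geq 2$ and produces only constants on $k=0,\pm1$, and then uses that $f_x$ has no mean to kill the non-constant contribution; this is self-contained, makes the mechanism (the commutator only sees the jump of $\sgn$ at $0$) completely transparent, and avoids relying on an external reference, at the cost of the routine density/continuity bookkeeping that the paper's one-line citation sidesteps. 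Two small points to tighten. First, your computation actually yields $-\f{2}{n^2\pi}\int f(y)\sin(\f{2y}{n})\,dy$, i.e.\ the weight adapted to the period; this agrees with the displayed statement (which has $\sin(2y)$) only for $n=1$, the case actually used in Lemma \ref{lem:cancel}, so you should flag that the stated integrand appears to carry a typo rather than assert your constant "is exactly the stated constant." Second, the justification of the final evaluation at $x=0$ is a bit loose: constancy a.e.\ of the left side does not by itself assign a value to $H(\sin(\f{2x}{n})f_x)$ at the point $0$ for general $f\in H^1$. The cleanest fix is to note that at $x=0$ the kernel combination $\cot(\f{y}{n})\sin(\f{2y}{n})=2\cos^2(\f{y}{n})$ is bounded, so $H(\sin(\f{2x}{n})f_x)(0)$ is an absolutely convergent integral, and one integration by parts evaluates it directly to $-\f{2}{n^2\pi}\int f(y)\sin(\f{2y}{n})\,dy$, confirming the last equality without any limiting argument.
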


The case $n=2$ is proved in \cite{chen2020slightly}. The general case follows by a rescaling argument.

We use the following important Lemma to establish the energy estimate in Section \ref{sec:criterion}.
\begin{lem}\label{lem:cancel}
Suppose that $\om \in H^1$ is $\pi$-periodic and odd. We have $\int_{S^1} \om_x H \om_x \cdot \sin(2x) dx = 0 $.
\end{lem}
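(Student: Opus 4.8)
The plan is to expand $\om$ in a Fourier sine series and reduce the identity to a statement about the interaction of Fourier coefficients under the Hilbert transform and multiplication by $\sin(2x)$. Since $\om$ is odd and $\pi$-periodic, write $\om(x) = \sum_{k\ge 1} a_k \sin(2kx)$ with real coefficients $a_k$; the $H^1$ hypothesis guarantees $\sum k^2 a_k^2 < \infty$, which makes all the manipulations below absolutely convergent. The Hilbert transform on $\pi$-periodic functions acts by $H\sin(2kx) = -\cos(2kx)$ (with the sign convention making $u_x = H\om$ consistent with \eqref{eq:hil}; one should double-check the sign, but the vanishing of the integral will not depend on it). Differentiating, $\om_x = \sum_k 2k\, a_k \cos(2kx)$ and $H\om_x = \sum_k 2k\, a_k \sin(2kx)$ — that is, $H\om_x = (H\om)_x$ up to the constant/mean, and here there is no mean term.

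First I would compute $\om_x \cdot H\om_x = \big(\sum_j 2j a_j \cos 2jx\big)\big(\sum_k 2k a_k \sin 2kx\big)$ and use the product-to-sum formula $\cos 2jx \sin 2kx = \tfrac12\big(\sin 2(k+j)x + \sin 2(k-j)x\big)$. Multiplying by $\sin 2x$ and integrating over $S^1$ (a period interval of length $\pi$), only terms of the form $\sin 2(k\pm j)x \cdot \sin 2x$ with $k\pm j = \pm 1$ survive, since $\int_{S^1}\sin 2mx \sin 2x\, dx = 0$ unless $|m| = 1$. So the integral collapses to a sum over pairs $(j,k)$ with either $k+j = 1$ (impossible for $j,k \ge 1$) or $|k-j| = 1$. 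Writing out the $|k-j|=1$ contributions, the pair $(j,k) = (m, m+1)$ contributes a term proportional to $j\cdot a_j a_k = m(m+1)a_m a_{m+1}$ coming from $\cos 2mx \sin 2(m+1)x$, while the pair $(j,k) = (m+1,m)$ contributes a term proportional to $(m+1)\cdot a_{m+1}a_m$ coming from $\cos 2(m+1)x \sin 2mx$; one checks the two contributions carry opposite signs (because $\sin 2(k-j)x$ is odd in $k-j$) and, after accounting for the $2j\cdot 2k$ weights which are symmetric, they cancel in pairs. Hence the total integral is $0$.

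A cleaner packaging of the same computation, which I would probably use in the writeup to avoid index bookkeeping: note $\int_{S^1}\om_x\, H\om_x \sin(2x)\,dx = \tfrac14 \int_{S^1} \big((\om_x)^2\big)' \cdot (\text{something})$? — no; better, use that $\om_x H\om_x = \tfrac12 \partial_x\big( \text{?}\big)$ is not exact, so instead integrate by parts once: $\int \om_x (H\om_x)\sin 2x\,dx = -\int \om \,\partial_x\big((H\om_x)\sin 2x\big)dx = -\int \om\big((H\om_{xx})\sin 2x + 2(H\om_x)\cos 2x\big)dx$. Combined with the Fourier computation this gives a self-consistent check. Alternatively, apply Cotlar's identity (Lemma \ref{lem:tri}) in the form $H(\om_x H\om_x) = \tfrac12((H\om_x)^2 - \om_x^2)$ together with the commutator Lemma \ref{lem:com} ($n=2$), $H(\sin 2x\, \om_x) = \sin 2x\, H\om_x$ since the mean-type term $\int \om \sin 2y\, dy$ appears but we can track it; then $\int_{S^1}\om_x H\om_x \sin 2x\,dx$ becomes an integral of $H$ of something against $1$, which vanishes because $\int_{S^1} Hg = 0$ for any $g$. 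I expect the main obstacle to be purely organizational: getting the sign conventions for $H$ and the normalization of the Fourier/period interval consistent, and making sure the boundary-free integration by parts is justified in $H^1$ (it is, by density of smooth functions and the $L^2$-boundedness of $H$). The mathematical content — a one-line parity cancellation among Fourier modes — is elementary; the care needed is only in bookkeeping, so I would lead with the Fourier argument and mention the Cotlar-identity route as a remark.
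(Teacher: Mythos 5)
Your main (Fourier) argument is correct, and it is a genuinely different route from the paper's. You expand $\om=\sum_{k\ge1}a_k\sin(2kx)$, note $\om_x H\om_x=\sum_{j,k}4jk\,a_ja_k\cos(2jx)\sin(2kx)$, and observe that after multiplying by $\sin(2x)$ only the modes with $|k-j|=1$ survive, where the weight $4jk$ is symmetric in $(j,k)$ while the sign coming from $\sin\big(2(k-j)x\big)$ is antisymmetric, so the contributions of $(m,m+1)$ and $(m+1,m)$ cancel pairwise; the $H^1$ hypothesis gives $\sum k^2a_k^2<\infty$, which justifies the term-by-term integration (e.g.\ via $L^2$ convergence of the truncated series and $L^2$-boundedness of $H$), and the conclusion is insensitive to the sign convention for $H$, as you say. (Minor bookkeeping slip: the $(m,m+1)$ contribution is proportional to $jk\,a_ja_k=m(m+1)a_ma_{m+1}$, not $j\,a_ja_k$; your subsequent sentence makes clear you intend the symmetric weight, so this does not affect the argument.) The paper instead argues structurally: it applies the commutator identity of Lemma \ref{lem:com} with $n=1$ to replace $\sin(2x)H\om_x$ by $H\big(\sin(2x)\om_x\big)$ up to a constant that integrates to zero against $\om_x$, then writes $\frac{1}{\sin(2x)}=\frac12\big(\cot x+\cot(\tfrac{\pi}{2}-x)\big)$ so that, with $f=\sin(2x)\om_x$, the integral becomes $\frac{\pi}{2}\big(H(fHf)(\tfrac{\pi}{2})-H(fHf)(0)\big)$, and finishes with Cotlar's identity (Lemma \ref{lem:tri}) together with $f(0)=f(\tfrac{\pi}{2})=0$ and $Hf(0)=Hf(\tfrac{\pi}{2})$. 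Your approach buys elementarity and self-containedness (no commutator or Cotlar input, and the parity cancellation is transparent); the paper's buys reuse of the structural identities it needs elsewhere and avoids Fourier bookkeeping. One caution about your closing remark: the alternative ``Cotlar route'' as you sketch it is not right — Lemma \ref{lem:com} with $n=2$ pertains to $2\pi$-periodic functions (here you need $n=1$), and the integral does not reduce to ``$H$ of something integrated against $1$''; the paper's mechanism is pointwise evaluation of $H(fHf)$ at $0$ and $\tfrac{\pi}{2}$. In fact the cleanest non-Fourier packaging of your idea is: after the commutator step, $S=\int_{S^1}\om_x\,H\big(\sin(2x)\om_x\big)\,dx$, and antisymmetry of $H$ gives $S=-\int_{S^1}H\om_x\cdot\sin(2x)\om_x\,dx=-S$, hence $S=0$. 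Since that remark is offered only as an aside, it does not affect the validity of your proof.
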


%\subsubsection{Proof of \eqref{eq:cancel1}}
\begin{proof}
We prove the identity for smooth function $\om \in C^{\inf}$, and the general case $\om \in H^1$ can be obtained by approximation. Applying Lemma \ref{lem:com} with $f = \om$ and $n=1$ yields
\[
S\teq \int_{S^1} \om_x H \om_x \cdot \sin(2x) dx  = \int_{S^1} \om_x \B( H( \sin(2x) \om_x) - H( \sin(2x) \om_x)(0) \B) dx
= \int_{S^1} \om_x  H( \sin(2x) \om_x)  dx.
%= \int_{S^1} \f{1}{\sin(2x)} \sin(2x) \om_x  H( \sin(2x) \om_x)  dx
\]

Denote $f = \sin (2 x) \om_x$. Using $\f{1}{\sin (2x)} = \f{1}{2} (\tan x + \cot x) = \f{1}{2} (\cot(\f{\pi}{2} - x) + \cot(x))$, \eqref{eq:hil} and Lemma \ref{lem:tri}, we obtain 
\[
\bal
S &= \f{1}{2} \int_{S^1}  (\cot(\f{\pi}{2} - x) + \cot(x)) f \cdot Hf dx 
= \f{\pi}{2} \B( H ( f Hf)(\f{\pi}{2}) -   H ( f Hf)( 0) \B) \\
&= \f{\pi}{4} \B( (Hf)^2(\f{\pi}{2}) - f^2(\f{\pi}{2}) -  ( Hf)^2(0) - f^2(0))  \B).
\eal
\]

Since $\om \in C^{\inf}$ and it is odd, we get $f(0) = f(\f{\pi}{2}) = 0$. Note that 
\[
Hf(\f{\pi}{2}) - Hf(0 )
= \f{1}{\pi} \int_{S^1} \B( \cot( \f{\pi}{2} - x) + \cot x \B) \sin(2x) \om_x dx
= \f{1}{\pi}\int_{S^1} \f{2 }{ \sin(2x) } \sin(2x) \om_x dx =0.
\]

We obtain $S = 0$ and establish the desired result.
\end{proof}

We use the following simple Lemma from \cite{chen2019finite2} to estimate the profile in Section \ref{sec:blowup}.
\begin{lem}\label{lem:1}
For $x \in [0,1]$, $\al , \lam >0$, we have
\[
(1 - x^{\al} ) x^{\lam} \leq \f{\al}{\lam}.
\]
\end{lem}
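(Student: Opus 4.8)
\textbf{Proof proposal for Lemma \ref{lem:1}.}

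The plan is to reduce the two-variable inequality to a one-variable optimization. First I would fix $\al>0$ and view the left-hand side as a function of $x$ alone, $h(x) \teq (1-x^{\al}) x^{\lam}$ on $[0,1]$, with $\lam>0$ a parameter. Since $h(0)=h(1)=0$ and $h>0$ on $(0,1)$, the maximum is attained at an interior critical point $x_*\in(0,1)$ determined by $h'(x_*)=0$. Computing $h'(x) = \lam x^{\lam-1} - (\al+\lam) x^{\al+\lam-1} = x^{\lam-1}\bigl(\lam - (\al+\lam)x^{\al}\bigr)$, the critical point satisfies $x_*^{\al} = \tfrac{\lam}{\al+\lam}$, hence $1 - x_*^{\al} = \tfrac{\al}{\al+\lam}$.

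Next I would plug this back in. We get
\[
h(x_*) = \bigl(1 - x_*^{\al}\bigr)\, x_*^{\lam} = \f{\al}{\al+\lam}\, x_*^{\lam} \leq \f{\al}{\al+\lam} \leq \f{\al}{\lam},
\]
using $0 < x_*^{\lam} \leq 1$ for the middle inequality and $\al+\lam \geq \lam$ for the last. This gives $h(x) \leq h(x_*) \leq \tfrac{\al}{\lam}$ for all $x\in[0,1]$, which is the claim.

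There is essentially no obstacle here; the only minor point to be careful about is the degenerate behavior at the endpoints (one must note $h$ extends continuously by $0$ to $x=0$ when $\lam<1$, so that the maximum is genuinely interior and the critical-point computation is valid), and that $x_*^{\lam}\le 1$ since $x_*\in(0,1)$ and $\lam>0$. Alternatively, one could avoid calculus entirely: bound $1-x^{\al} \leq -\log(x^{\al}) = \al\log(1/x)$ and then $x^{\lam}\log(1/x) \leq \tfrac{1}{e\lam} \leq \tfrac{1}{\lam}$ by maximizing $t\mapsto t^{\lam}\log(1/t)$, but the direct critical-point argument above is cleaner and gives the stated constant immediately.
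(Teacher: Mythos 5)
Your proof is correct, but it takes a different route from the paper. You optimize $h(x)=(1-x^{\al})x^{\lam}$ by calculus: the critical point satisfies $x_*^{\al}=\f{\lam}{\al+\lam}$, giving the bound $\f{\al}{\al+\lam}x_*^{\lam}\leq\f{\al}{\al+\lam}\leq\f{\al}{\lam}$, and your endpoint remarks (continuity at $x=0$, $x_*^{\lam}\leq 1$) are exactly the right points to check. The paper instead argues purely algebraically via Young's inequality, writing $(1-x^{\al})x^{\lam}=\f{\al}{\lam}\cdot\bigl(\f{\lam}{\al}(1-x^{\al})\bigr)(x^{\al})^{\lam/\al}$ and bounding the product by the weighted power mean, which yields $\f{\al}{\lam}\bigl(\f{\lam/\al}{1+\lam/\al}\bigr)^{\lam/\al+1}\leq\f{\al}{\lam}$. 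The two arguments are equivalent in content — Young's equality case is precisely your critical point, and both in fact identify the sharp maximum $\f{\al}{\al+\lam}\bigl(\f{\lam}{\al+\lam}\bigr)^{\lam/\al}$ before relaxing to $\f{\al}{\lam}$ — so the difference is one of taste: yours is the direct optimization a reader would reproduce immediately, while the paper's avoids any differentiability or endpoint discussion at the cost of invoking Young's inequality.
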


\begin{proof}
For the sake of completeness, we present the proof. Using Young's inequality, we prove 
\[
(1 - x^{\al} ) x^{\lam} 
= \f{\al}{\lam}  \cdot ( \f{\lam}{\al} (1 - x^{\al}) )  (x^{\al})^{\lam / \al}
\leq \f{\al}{\lam}  \B( \f{ \f{\lam}{\al}( 1 - x^{\al}) + \f{\lam}{\al} x^{\al} }{ 1 + \lam / \al} \B)^{\lam/\al + 1}
= \f{\al}{\lam}  \B( \f{ \lam / \al}{ 1 + \lam / \al} \B)^{\lam/\al + 1}\leq \f{\al}{ \lam}.
\]
\end{proof}

We have the following Hardy-type inequality \cite{hardy1952inequalities} in bounded domain.
\begin{lem}\label{lem:hardy}
For $p > 1$ and $L > 0$, suppose that $f x^{-p/2}, f_x x^{-p/2 + 1} \in L^2( [0, L])$. We have 
\[
\int_0^L \f{f^2}{x^{p}} dx \les_p \int_0^L \f{f_x^2}{x^{p-2}} dx.
\]

\end{lem}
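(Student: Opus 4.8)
\textbf{Proof proposal for the Hardy-type inequality (Lemma \ref{lem:hardy}).}

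The plan is to establish the inequality by integration by parts, exploiting the exact power $x^{-p}$ so that differentiating $x^{-(p-1)}$ produces the correct weight. First I would write $\int_0^L f^2 x^{-p}\,dx = \int_0^L f^2 \cdot \frac{-1}{p-1}\,\partial_x\bigl(x^{-(p-1)}\bigr)\,dx$, which is legitimate precisely because $p>1$. Integrating by parts,
\[
\int_0^L \frac{f^2}{x^{p}}\,dx = \frac{-1}{p-1}\,f(L)^2 L^{-(p-1)} + \frac{-1}{p-1}\lim_{x\to 0^+}\bigl(-f(x)^2 x^{-(p-1)}\bigr) + \frac{2}{p-1}\int_0^L \frac{f f_x}{x^{p-1}}\,dx.
\]
The boundary term at $L$ has a favorable sign (it is $\le 0$ after the overall minus is absorbed), so it can be dropped. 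The boundary term at $0$ must be shown to vanish: from the assumptions $f x^{-p/2},\,f_x x^{-p/2+1}\in L^2([0,L])$ one sees $f(x) = \int_0^x f_x(t)\,dt$ with $f(0)=0$ in an appropriate sense, and a Cauchy--Schwarz estimate $|f(x)|^2 \le \bigl(\int_0^x t^{p-2}\,dt\bigr)\bigl(\int_0^x f_x(t)^2 t^{-(p-2)}\,dt\bigr) \lesssim_p x^{p-1}\,o(1)$ as $x\to 0^+$, which forces $f(x)^2 x^{-(p-1)}\to 0$. This is the one place requiring care, and I expect it to be the main (though mild) obstacle: one has to justify that $f$ is absolutely continuous with the stated vanishing rate at the origin purely from the weighted $L^2$ hypotheses, rather than assuming $f(0)=0$ outright.

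Once the boundary terms are handled, I am left with
\[
\int_0^L \frac{f^2}{x^{p}}\,dx \le \frac{2}{p-1}\int_0^L \frac{|f|\,|f_x|}{x^{p-1}}\,dx = \frac{2}{p-1}\int_0^L \frac{|f|}{x^{p/2}}\cdot\frac{|f_x|}{x^{p/2-1}}\,dx.
\]
Applying the Cauchy--Schwarz inequality to split the last integral, with weights chosen so the first factor reproduces the left-hand side,
\[
\int_0^L \frac{f^2}{x^{p}}\,dx \le \frac{2}{p-1}\Bigl(\int_0^L \frac{f^2}{x^{p}}\,dx\Bigr)^{1/2}\Bigl(\int_0^L \frac{f_x^2}{x^{p-2}}\,dx\Bigr)^{1/2}.
\]
Dividing through by $\bigl(\int_0^L f^2 x^{-p}\,dx\bigr)^{1/2}$ (which is finite by hypothesis, so the division is valid) and squaring yields $\int_0^L f^2 x^{-p}\,dx \le \frac{4}{(p-1)^2}\int_0^L f_x^2 x^{-(p-2)}\,dx$, which is the claimed bound with implied constant $\frac{4}{(p-1)^2}$ depending only on $p$. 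If one prefers to avoid the a priori finiteness assumption in the division step, the same conclusion follows by first running the argument on $[\varepsilon,L]$ and letting $\varepsilon\to 0^+$, using monotone convergence on the right-hand side.
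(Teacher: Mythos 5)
Your proof is correct and follows essentially the same route the paper indicates: the paper does not spell out the argument but states that the lemma "can be proved by applying an integration by parts argument" (deferring details to the supplementary material of \cite{chen2021HL}), and your integration by parts with the weight $x^{-(p-1)}$, the Cauchy--Schwarz treatment of the vanishing boundary term at the origin, and the final division step is exactly that standard Hardy argument, yielding the constant $4/(p-1)^2$.
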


It can be proved by applying an integration by parts argument. A proof can be founded in the Supplementary material of \cite{chen2021HL}. 

Next, we prove the commutator-type Lemma \ref{lem:com_sing}.

\begin{proof}[Proof of Lemma \ref{lem:com_sing}]
A direct calculation yields
\[
\bal
S &\teq \f{1}{x}  (H f - Hf(0)) - H (\f{f}{x})
= \f{1}{2\pi} \int_{-\pi}^{\pi} ( \f{1}{x} \cot \f{x-y}{2}  
 +   \f{1}{x}  \cot \f{y}{2} - \f{1}{y} \cot \f{x-y}{2} 
  )  f (y) dy  \\
  & = \f{1}{2\pi} \int_{-\pi}^{\pi} \f{1}{x} ( y \cot \f{y}{2} - (x-y) \cot \f{x-y}{2} )  \f{f(y)}{y} dy = \f{1}{2\pi} \int_{-\pi}^{\pi} \f{1}{x} ( g(y) - g(y-x))  \f{f(y)}{y} dy,
  \eal
\]
where $g(z) = z \cot \f{z}{2}$ and it satisfies $g(z) = g(-z)$. Since $g$ is Lipschitz on $[-3\pi/2, 3\pi/2]$
\[
|g^{ \prime}(z)|  = \B| \cot \f{z}{2} - \f{z}{2 ( \sin \f{z}{2})^2} \B|
= \f{ |\sin z - z|}{ 2 (\sin \f{z}{2})^2} \les \f{z^3}{z^2} \les 1,
\]
applying $ |g(y) -g(y-x)| \les |x|$, we prove the desired result.
\end{proof}

\subsection{Derivation of a model for 2D Boussinesq equations}\label{app:bous}

%In this subsection, 
We derive the model \eqref{eq:Bous2}-\eqref{eq:Bous3}, and discuss its connections with \eqref{eq:DG}. Recall the Boussinesq equations \eqref{eq:Bous} and \eqref{eq:bous20}
% Taking $x-$derivative on \eqref{eq:Bous} and using the incompressible condition $u_{2, y} = -u_{1,x}$, we yield 
%\beq\label{eq:bous20}
\[
\pa_t \th_x + \uu \cdot \na  \th_{x} = - u_{1,x} \th_x - u_{2,x}\th_y =
u_{2,y}\th_x -  u_{2,x}\th_y.
\]
%\eeq

Inspired by the anisotropic property of $\th$ in \cite{chen2019finite2}, i.e. $|\th_y| << |\th_x| $ near the origin, we drop the $\th_y$ term. To study the $y$-advection, we further drop the $x$-advection. Then we obtain \eqref{eq:Bous2}
%\beq\label{eq:Bous30}
\[
\pa_t \th_x + u_2  \pa_y \th_x = u_{2, y} \th_x.
\]
%\eeq
Since $\th_x$ is the forcing term in the $\om$ equation in \eqref{eq:Bous}, it leads to a strong alignment between $\th_x$ and $\om$. Thus, we simplify the $\om$-equation in \eqref{eq:Bous} by  $\om = \th_x$, which leads to the following Biot-Savart law in \eqref{eq:Bous3}
%we further assume $\om = \th_x$, which leads to the Biot-Savart law in \eqref{eq:Bous3}
\[
\uu = \na^{\perp} (-\D)^{-1} \th_x, \quad u_{2, y} = \pa_{xy}(-\D)^{-1} \th_x.
\]

% %Equation \eqref{eq:Bous30} captures the competition between the vortex stretching $u_{2,y}\th_x$ and the $y-$ advection in the Boussinesq equations \eqref{eq:bous20}. 
% This model relates to \eqref{eq:DG} via the connections $\th_x \to -\om, \pa_{xy}(-\D)^{-1} \to -H$. The velocities of two models $u_2$ and $u$ are related via $u_{2,y} = \pa_{xy}(-\D)^{-1} \th_x \approx - H ( -\om) = H \om = u_x$. Both solutions enjoy similar sign and symmetry properties. In fact, suppose that $\th_x$  satisfies the symmetry and sign properties in the hyperbolic flow scenario stated in Section \ref{sec:incomp}. The induced flow $u_2(x,y)$ is odd in $y$ with $u_2(x,y)  > 0$ in the first quadrant near $(0,0)$. The odd symmetries of $\th_x, u_2$ in $y$ are the same as those of $\om, u$ in \eqref{eq:DG} for the class $X$ \eqref{eq:X}. Moreover, for fixed $x>0$, $-\th_x(x, \cdot)$ and $\om$ satisfy similar sign conditions, and $u_2(x, \cdot)$ and $u$ satisfy similar sign conditions near the origin.

This model relates to \eqref{eq:DG} via the connections $\th_x \to -\om, \pa_{xy}(-\D)^{-1} \to -H$. The velocities of the two models $u_2$ and $u$ are related via $u_{2,y} = \pa_{xy}(-\D)^{-1} \th_x \approx - H ( -\om) = H \om = u_x$. 
Moreover, the solutions of the two models enjoy similar sign and symmetry properties. 
Suppose that $\th_x$  satisfies the sign and symmetry properties in the hyperbolic-flow scenario.
The induced flow $u_2(x,y)$ is odd in $y$ with $u_2(x,y)  > 0$ in the first quadrant near $(0,0)$. The odd symmetries of $\th_x, u_2$ in $y$ are the same as those of $\om, u$ in \eqref{eq:DG} for class $X$ \eqref{eq:X}. Moreover, for fixed $x>0$, $-\th_x(x, \cdot)$ and $\om$ satisfy similar sign conditions, and $u_2(x, \cdot)$ and $u$ satisfy similar sign conditions near the origin.

\subsection{Derivation of \eqref{eq:sym}-\eqref{eq:kernel} }\label{app:sym}

Recall the formulas of $u_x, u $ in \eqref{eq:hil} and the quadratic form in \eqref{eq:def_Q}. Using integration by parts, we obtain 
\[
B(\b) = \int_0^{\pi/2} (2 u_x \om - (u\om)_x ) \cot^{\b} x dx 
= 2 \int_0^{\pi/2} u_x \om \cot^{\b} x dx 
-\b \int_0^{\pi/2} u \om \cot^{\b-1} x \f{1}{\sin^2 x} dx \teq I + II.
\]
The boundary terms $u \om \cot^{\b} x \B|_0^{\pi/2}$ in the integration by parts vanish since $u(\pi/2) = 0$ and $u(x) = O(x), \om(x) = O( x^{ \g})$ with $\g > \b - 1$ near $x =0$ by the assumption in Lemma \ref{lem:comp}.

Since $\om$ is odd, using \eqref{eq:hil} and symmetrizing the kernel,  we yield 
\[
I= \f{2}{\pi}\int_0^{\pi/2} \om(x) \cot^{\b} x \int_0^{\pi/2} \om(y) ( \cot(x- y) - \cot( x+ y)) dy
= \f{1}{  \pi} \int_0^{\pi/2}\int_0^{\pi/2} \om(x) \om(y) P_1(x, y) dx dy, 
\]
where 
\[
P_1(x, y) = \cot^{\b} x  (\cot(x- y) - \cot( x+ y)) 
+ \cot^{\b} y( \cot(y-x) - \cot(x+y) ) .
\]

Recall $s =  \f{\cot x}{\cot y}$ in \eqref{eq:def_s}. We get $\cot x = s \cot y$. We expand $\cot(x-y), \cot(x+y)$ as follows 
\[
\bal
\cot(x- y) &= \f{ \cot x \cot y + 1}{ \cot y - \cot x} 
= \f{s \cot^2 y +1}{ \cot y \cdot (1-s)} ,  \quad \cot(x+ y) = \f{ \cot x \cot y - 1}{ \cot y + \cot x} 
= \f{s \cot^2 y -1}{ \cot y \cdot (1 + s)} .
\eal
\]
Thus, we obtain 
\[
\bal
 \cot(x- y) - \cot( x+ y)
 &=  \cot y ( \f{s}{1-s} - \f{s}{1+s} ) + \f{1}{\cot y} (\f{1}{1-s} + \f{1}{1+s})
 =  \cot y \f{2s^2}{1-s^2} + \f{1}{\cot y}\f{2}{1-s^2} ,\\
 \cot(y- x) - \cot( x+ y)
 &=  \cot y ( -\f{s}{1-s} - \f{s}{1+s} ) + \f{1}{\cot y} ( - \f{1}{1-s} + \f{1}{1+s}) \\
 &=  -\cot y \f{2s}{1-s^2} - \f{1}{\cot y}\f{2 s}{1-s^2}.
 \eal
\]

Using the above formulas and $\cot^{\b} x = s^{\b} \cot^{\b} y$, we yield 
\[
\bal
P_1 &= \cot^{\b} y \cdot s^{\b} ( \cot y \f{2s^2}{1-s^2} + \f{1}{\cot y}\f{2}{1-s^2} )
+ \cot^{\b} y ( -\cot y \f{2s}{1-s^2} - \f{1}{\cot y}\f{2 s}{1-s^2} ) \\
& = \cot^{ \b + 1} y  ( s^{\b+1} - 1) \f{2s}{ 1 - s^2}
+ \cot^{\b-1} y (s^{\b-1} - 1) \f{2s} {1 - s^2 }.
\eal
\]
We remark that $P_1 \leq 0$ since $\f{1- s^{\tau} }{1-s^2 } \geq 0$ for $s >0, \tau > 0$. 

For $II$, using \eqref{eq:hil}, we get 
\[
II = \f{\b}{\pi} \int_0^{\pi/2} \f{\om(x)}{\sin^2 x} \cot^{\b-1} x \int_0^{\pi/2} \om(y) \log \B| \f{\sin(x+y)}{\sin(x-y)} \B| dy
= \f{1}{\pi} \int_0^{\pi/2}\int_0^{\pi/2} P_2(x, y) \om(x) \om(y) dx dy,
\]
where 
\[
P_2 = \f{\b}{2}
\B(  \f{ \cot^{\b-1} x}{\sin^2 x}  + \f{ \cot^{\b-1} y}{\sin^2 y}   \B)
\log \B| \f{\sin(x+y)}{\sin(x-y)} \B| .
\]

Note that 
\[
\f{\cot^{\b-1} z}{\sin^2 z} = \cot^{\b-1} z + \cot^{\b+1} z, \quad 
%\cot^2 x + 1, 
\B| \f{\sin(x+y)}{\sin(x-y)} \B|
= \B|\f{\cot x + \cot y}{ \cot x - \cot y} \B|= \B| \f{1+s}{1-s} \B|.
\]
 We derive
\[
P_2(x, y) = \f{\b}{2} 
\B( \cot^{\b+1} y ( 1 +s^{\b+1})\log\B| \f{1+s}{1-s} \B| 
+
\cot^{\b-1} y ( 1 +s^{\b-1})\log\B| \f{1+s}{1-s} \B|  \B) .
\]
We remark that $P_2$ is positive. Combining the formulas of $P_1, P_2$, we derive 
\eqref{eq:sym}-\eqref{eq:kernel}.

\subsection{ Positive definiteness of the kernel}\label{app:pos_W}

In this subsection, we prove Lemmas \ref{lem:pos_W} and Lemma \ref{lem:pos_W2}, which are related to the positive definiteness of the kernel $K_{i,\b}$. We establish \eqref{eq:ver2} for $x_0 = \log \f{5}{3}$ in Appendix \ref{app:convex}.

%\subsection{Proof of Lemma \ref{lem:pos_W} }\label{app:conver}

\begin{proof}[ Proof of Lemma \ref{lem:pos_W}]

We show that there exists $\b_0\in (1,2)$, such that conditions \eqref{eq:ver10}-\eqref{eq:ver3} hold for $W = W_{1,\b}, G = G_{1,\b}$ with $ \b \in [\b_0, 2]$. Then using the same argument as that in Section \ref{sec:pos_W1}, we obtain $G_{1,\b}(\xi) \geq 0$ for all $\xi$ and $\b \in [\b_0, 2]$. 

Firstly, we impose $\b \in [1.9, 2]$. Recall $G_{j, \b}$ defined in \eqref{eq:def_G} 
\beq\label{eq:conv_W11}
G_{j, \b}(\xi) = \int_{0}^{\inf} W_{j,\b}(x) \cos(x \xi) dx,
\eeq
and $W_{1,\b}$ in  \eqref{eq:tK1}, \eqref{eq:kernelW}. Clearly, $W_{1,\b}(x)$ converges to $W_{1,2}(x)$ as $\b \to 2$ almost everywhere. Moreover, from the formula of $W_{1,\b}$ and the decay estimate \eqref{eq:decay_W1}, we have  
\beq\label{eq:conv_W12}
|W_{1,\b}(z) | \les \one_{|z| > 1} e^{- |z| / 4} + \one_{ |z| \leq 1} (1 +| \log |z| | ),
\eeq
where the term $\log |z|$ is due to the logarithm singularity $\log |s-1| = \log |e^z - 1|$ in \eqref{eq:tK1}. Thus, using dominated convergence theorem, we yield 
%that $G_{1,\b}(\xi) \to G_{1,2}(\xi)$ as $\b \to 2^-$.
\[
\lim_{\b\to 2^-} G_{1,\b}(\xi) = G_{1, 2}( \xi).
\]
Using \eqref{eq:conv_W11} and \eqref{eq:conv_W12}, we obtain that $G_{1,\b}(\xi)$ is equi-continuous 
\[
|\pa_{\xi} G_{j,\b}(\xi)|
\leq \int_0^{\inf} |W_{j,\b}(x)| |x| dx \les 1.
\]
Thus, we obtain that $G_{1,\b}(\xi)$ converges to $G_{1,2}(\xi)$ uniformly for $\xi \in [0, M]$, where $M$ is the parameter in Lemma \ref{lem:pos_W}.

For $x$ near $0$, from \eqref{eq:tK1} and \eqref{eq:kernelW}, we have 
\[
W_{1,\b}(x) = -\f{\b}{2} ( e^{ \f{\b+1}{2}x  }  + e^{ - \f{\b+1}{2} x}) \log| e^x - 1| + S_{\b}(x),
\]
where $S_{\b}(x)$ is smooth near $x=0$. Thus a direct calculation yields 
\beq\label{eq:conv_W13}
\bal
\pa_{xx} W_{1,\b}(x) 
&\geq -\f{\b}{2} ( e^{ \f{\b+1}{2}x  }  + e^{ - \f{\b+1}{2} x}) \pa_{xx}  \log| e^x - 1|
- \f{C}{|x|} \\
&\geq \f{\b}{2} ( e^{ \f{\b+1}{2}x  }  + e^{ - \f{\b+1}{2} x}) \f{e^x}{ (e^x-1)^2}
- \f{C}{ |x|}   \geq \f{\b}{x^2} - \f{C}{|x|}
\eal
\eeq
for some absolute constant $C > 0$ and $|x| < \f{1}{2}$. Therefore, there exists $\d > 0$, such that 
\beq\label{eq:conv_W14}
\pa_{xx} W_{1,\b}(x) > 0, \quad x \in [0,\d].
\eeq
%$\pa_{xx} W_{1,\b}(x) > 0$ for $x \in [0, \d]$. 
Note that  $W_{1,\b}(x) = \td K_{1,\b}( e^x)$ \eqref{eq:tK1} is smooth for $(\b, x ) \in [1.9 , 2] \times [\d, x_0]$,
where $x_0$ is the parameter in Lemma \ref{lem:pos_W}. We get that $\pa_{xx} W_{1,\b}(x) $ converges to $\pa_{xx} W_{1,2}(x)$ uniformly for $x  \in [\d, x_0] $ as $\b \to 2$, and that $ \pa_x W_{1, \b}(x_0 ) \to  \pa_x W_{1,2}(x_0)$ as $\b \to 2$.

Next, we consider the integral on $W^{\prime \prime \prime}$ in \eqref{eq:ver3}. We need the decay estimate of $W_{1,\b}^{\prime \prime \prime}$. For $r = e^{x_0} > 1$ and $s \geq r > 1$, performing Taylor expansion on $\log | \f{s+1}{s-1}|$ and $\f{1}{s^2 - 1}$, we obtain that the kernel $\td K_{1,\b}$ \eqref{eq:tK1} enjoys the expansion 
\beq\label{eq:conv_W142}
\td K_{1,\b} = \sum_{i \geq 1} a_i(\b) s^{-\al_i(\b)} , \quad |a_i(\b)| \les 1, \quad   \max( \f{\b-1}{2}, \f{i - 2}{10} )  \leq  \al_i(\b) \leq 10 (i+1).
\eeq
with $\al_i(\b)$ increasing. Since the expansions for $\log | \f{s+1}{s-1}|$ and $\f{1}{s^2-1}$ converge uniformly for $s \geq r > 1$, the above expansion also converges uniformly. Thus, we can exchange the summation and derivatives when we compute $\pa_x^k \td K_{1,\b}$. We are interested in the leading order term in the above expansion. It
%The leading order term 
decays at least $s^{- (\b-1)/2}$ since other terms in $\td K_{1,\b}$ that decay more slowly, such as $s^{ \f{\b-1}{2}}$, are canceled. Using $W_{1,\b}(x) = \td K_{1,\b}(e^x)$ and \eqref{eq:conv_W142}, for $x \geq x_0 > 0$, we yield 
\[
|\pa^3_{x}W_{1,\b} (x)|
= \B| \pa_x^3  \sum_{i\geq 1} a_i(\b) e^{- \al_i(\b) x} \B|
= \B|  \sum_{i\geq 1} a_i(\b) (-\al_i(\b))^3 e^{- \al_i(\b) x} \B|
\les e^{-  \f{\b-1}{2} x} \les e^{- x / 4},
\]
where the implicit constant can depend on $x_0$. Note that $\pa_x^3 W_{1,\b}(x) \to \pa_x^3 W_{1,2}(x)$ for any $x \geq x_0 > 0$ as $\b \to 2$. Using dominated convergence theorem, we yield 
\beq\label{eq:conv_W15}
\lim_{ \b \to 2^-} \int_{x_0}^{\inf} |\pa_{x}^3 W_{1,\b}(x)| dx 
= \int_{x_0}^{\inf} |\pa_{x}^3 W_{1,2}(x)| dx .
\eeq

Note that the conditions \eqref{eq:ver10}-\eqref{eq:ver3} hold with strictly inequality for $W= W_{1,2}, G = G_{1,2}$. From the uniform convergences $G_{1,\b}(\xi) \to G_{1,2}(\xi)$ on $[0, M]$, $\pa_x^2 W_{1,\b}(x) \to \pa_x^2 W_{1,2}(x)$ on $[\d, x_0]$, 
$ \pa_x W_{1, \b}(x_0 ) \to  \pa_x W_{1,2}(x_0)$ as $\b \to 2$, \eqref{eq:conv_W14}
and \eqref{eq:conv_W15}, we conclude that there exists $\b_0 \in (1,2)$, such that \eqref{eq:ver10}-\eqref{eq:ver3} hold for $ W=W_{1,\b}, G = G_{1,\b}$ with $\b \in [\b_0,2]$.
\end{proof}

\subsubsection{ Convexity of $W_{i,\b}$}\label{app:convex} %Condition \eqref{eq:ver2} and Lemma \ref{lem:pos_W2} relate to the convexity of the kernel. 

We first establish \eqref{eq:ver2} for $x_0 = \log \f{5}{3}$ and then prove Lemma \ref{lem:pos_W2}. 

Since $W_{i,\b}$ is given explicitly in \eqref{eq:tK1}, \eqref{eq:kernelW} and \eqref{eq:kernel_W2},, to simplify the derivations, we have used \textit{Mathematica}. All the symbolic derivations and simplification steps are given in Mathematica (version 12) \cite{DG2021Matlabcode}. We only provide the steps that require estimates. 

%Since $W_{i,\b}$ is given explicitly in \eqref{eq:kernelW} and \eqref{eq:kernel_W2}, to simplify the derivations, we have used Mathematica for symbolic calculation. All the derivation and simplification steps are given in Mathematica. We only provide the steps that require estimates.

Suppose that $W(x) = K(e^x)$ and denote $s= e^x$. Using the chain rule, we yield 
\beq\label{eq:Wxx}
\bal
\pa_{xx} W_{i,\b}(x) &=\pa_{xx} \td K_{i,\b}(e^x)
= e^{2x}  ( \pa^2 \td K_{i,\b} )(e^x)
+ e^x ( \pa \td K_{i,\b})(e^x)  \\
&= s^2 \pa^2 \td K_{i,\b}(s) + s \pa \td K_{i,\b}(s) \teq I_i(s, \b).
\eal
\eeq
To establish \eqref{eq:ver2}: $\pa_{xx} W_{1,2}(x) > 0$ for $x \in [0, x_0],  x_0 = \log \f{5}{3}$, it suffices to prove $I_1(s, 2) >0$ for $s \in [1, 5/3]$. For $i = 1, \b = 2$, 
using symbolic calculation, we yield 
\[
I_1(s, 2) = \f{P_1 +  P_2 }{4 s^{3/2} (1+s)^3} , \quad P_2 = 9(1+s)^4(1-s+s^2) \log \B| \f{s+1}{s-1} \B| .
\]
We do not write down the expression of $P_1$ since it is an intermediate term and is not used directly. We provide its formula in Mathematica \cite{DG2021Matlabcode}. Using $\log (1 +z) \leq z$ for $z> -1$, we yield 
\beq\label{eq:log_ineq}
\log \B| \f{ 1 + s}{1-s} \B| = -\log \B| \f{ 1-s}{1 + s} \B| \geq - ( - \f{2}{1+s} ) = \f{2}{1+s}.
\eeq

Using the above inequality and simplifying the expression, we yield 
\[
\bal
I_1(s, 2)& \geq \f{1}{4 s^{3/2} (1+s)^3} ( P_1 +  
9(1+s)^4(1-s+s^2) \f{2}{s+1} ) = \f{P_3}{4 s^{3/2} (1+s)^3}, \\
  P_3 & = - \f{2( -9 + 9 s + 27 s^2 -18 s^3 - 59 s^4 + 9 s^5 + 9 s^6)}{ (s-1)^2}.
\eal
\]

%Thus, to obtain $I_1(s, 2) \geq 0$ for $s \in [1, 5/3]$, it suffices to verify $P_3 \geq 0$. z

Since $s \in [1, \f{5}{3}]$, using $s^i \leq s^j, i \leq j$ and $ 9s + 9s^2 \leq 15 + 25 < 41$, we obtain 
\[
-9 + 9 s + 27 s^2 -18 s^3 - 59 s^4 + 9 s^5 + 9 s^6
<  (9s + 27 s^2 - 18 s^3 - 18 s^4)
+ s^4( 9 s  + 9 s^2 - 41) < 0,
\]
which implies $P_3 > 0$ on $[1, \f{5}{3}]$. It follows $I_1(s, 2) >0$ on $[1, 5/3]$ and \eqref{eq:ver2} with $x_0 = \log \f{5}{3}$.

Next, we prove Lemma \ref{lem:pos_W2}.
\begin{proof}
Recall $W_{2,\b}(x) = \td K_{2,\b}(e^x)$ and their formulas from \eqref{eq:kernel_W2}. Denote $s = e^x$. Using \eqref{eq:Wxx}, it suffices to prove that $I_2(s,\b) \geq 0$ for all $s = e^x \geq 1$. Using symbolic calculation, we have
% Since $x \geq 0$, we get $s \geq 1$. Using the chain rule, we yield 
% \[
% \pa_{xx} W_{2,\b}(x) =\pa_{xx} \td K_{2,\b}(e^x)
% = e^{2x}  ( \pa^2 \td K_{2,\b} )(e^x)
% + e^x ( \pa \td K_{2,\b})(e^x) 
% = s^2 \pa^2 \td K_{2,\b}(s) + s \pa \td K_{2,\b}(s) \teq I(s, \b).
% \]
% Thus, it suffices to prove that $I(s,\b) \geq 0$ for all $s = e^x \geq 1$. To simplify the derivation, we have used Mathematica for symbolic calculation. In particular, we have
\[
I_2(s, \b) = \f{\b}{2} s^{-a} ( I_{2,1}(s,\b) + a^2  (1 + s^{2a}) \log \f{1 + s}{s-1}   ), \quad a = \f{\b-1}{2} .
\]
where $I_{2,1}(s,\b)$ is an intermediate term and its formula is given in Mathematica \cite{DG2021Matlabcode}.
%We do not write down the expression of $I_{2,1}(s,\b)$ since it is an intermediate term and is not used directly. We provide its formula in Mathematica.  
Since $\b >0$, using \eqref{eq:log_ineq}, we yield
%Since $\b > 0, s \geq 1$, using $\log (1 + z)\leq z, z > -1$, we get $\log \f{1+s}{s-1}  = - \log \f{s-1}{s+1} \geq - (- \f{2}{s+1}) = \f{2}{s+1}$. Thus, we yield 
\[
I_2(s,\b)
\geq \f{\b}{2} s^{-a} ( I_{2,1}(s,\b) + a^2  (1 + s^{2a}) \f{2}{1+s}  )
\teq  \f{\b}{2} s^{-a} I_{2,2}(s,\b).
\]

Next, we show that $I_{2,2}(s,\b) \geq 0$.  Simplifying the expression, we obtain
\[
\bal
I_{2,2}(s,\b) &= \f{P_1 + P_2 + P_3 }{(s^2 - 1)^3}, 
\quad P_1 = -2a^2 (s^2 - 1)^2 (1 -2s + s^{2a}), \\
 P_2 & = 8as (s^2 - 1)(s^2 + s^{2a}) , \quad P_3  = 4s (3 s^2 + s^4 - s^{2a} - 3 s^{2+2a}).
\eal
\] 
Since $a = \f{\b-1}{2} \in [0, \f{1}{2}]$ and $ s \geq 1$, we get $2 s -1 - s^{2a} \geq 2s - 1 -s = s-1 \geq 0 $. Thus, we obtain $P_1, P_2 \geq 0$. Using $s^{2a} \leq s$ again, we derive 
\[
P_3 \geq 4 s (3s^2 + s^4 - s -3 s^3)
 = 4 s^2(  s^3 - 1  + 3 s- 3s^2)
 = 4s^2(s-1)( s^2 + s+1 - 3 s)
 = 4s^2(s-1)^3 \geq 0 . 
\]
Combining the above estimates of $P_i$, we establish $I_2(s,\b) \geq 0$ for $s\geq 1, \b > 1$, 
which further implies $\pa_{xx} W_{2,\b} \geq 0$ for $x \geq 0$.
% Thus, we obtain $P_1 \geq 0$. Recall that we assume $\b \in [\f{7}{4}, 2]$. It follows $a \in [\f{3}{8}, \f{1}{2}]$. Since $s \geq 1$, we derive 
% \[
% \bal
% P_2 & \geq 3 s(s^2 - 1)(s^2 + s^{2a})
% + 4s (3 s^2 + s^4 - s^{2a} - 3 s^{2+2a})
% = s ( 9s^2 + 7 s^4 - 7 s^{2a} - 9 s^{2 +2a}) \\
% & \geq s ( 9s^2 + 7 s^4 - 7 s - 9 s^{3}) 
% = s^2 ( 7 s^3 - 7 + 9 s - 9 s^2   )
% = s^2 (s-1)( 7s^2 + 7s + 7 - 9s)  \geq 0,
% \eal
% \]
% where we have used $ s^{2a} \leq s$ in the second inequality, and $s \geq 1$ in the last inequality. Therefore,  for $\b \in [\f{7}{4},2]$, we prove $P_1, P_2 \geq 0$, which implies $I(s,\b) \geq 0$.
\end{proof}

\subsection{Proof of other Lemmas}\label{app:lemma}

\begin{proof}[Proof of Lemma \ref{lem:trigon}]
Recall that $x, y\in [0, \pi/2]$ and $\b \in [3/2, 2]$. In the following estimates, the reader can think of the special case $\b = 2$.

For $x + y \leq \f{\pi}{2}$, since $y \leq \f{\pi}{2} - x$ and $\cot z$ is decreasing on $[0, \pi]$, we have 
\beq\label{eq:tri_pf_1}
\cot x \cot y \geq \cot x \cot (\pi/2- x) = 1.
\eeq
Since $\min(x, y)\leq \f{1}{2}(x+y) \leq \f{\pi}{4}$,  we obtain $\max(\cot x, \cot y) \geq 1$ and 
\[
(\cot x \cot y )^{\b} \geq \cot x \cot y \geq \min( \cot x , \cot y) \geq \cot (x+y).
\]
The case $x + y \geq \f{\pi}{2}$ is trivial, and we prove \eqref{eq:trigon1} in Lemma \ref{lem:trigon}. Next, we consider \eqref{eq:trigon2}
\[
I \teq \cot y (\cot x )^{\b-2}  \wedge \cot x (\cot y)^{\b-2} 
\les (\cot x \cot y)^{\b} + \one_{x+y \geq \pi/2} \cot(\pi - x- y)  \teq J.
\]
Note that $\one_{x+y \geq \pi/2} \cot(\pi - x- y)$ is nonnegative. Without loss of generality, we assume $x \leq y$. Since $\b \leq 2$ and $\cot x \geq \cot y$, we get 
\[
 I = \cot y (\cot x)^{\b-2} .
\]
%since $\b \leq 2$. 

\paragraph{\bf{Case 1: $x+y \leq \pi/2$}}
Since $x \leq y$ and $x \leq \f{1}{2} (x+y) \leq \f{\pi}{4}$, using \eqref{eq:tri_pf_1},  $\cot x \geq 1$, $\cot x \geq \cot y$ and $\b \in [1,2]$, we yield 
\[
J \geq (\cot x \cot y)^{\b} 
\geq (\cot x \cot y)^{\b-1} 
\geq (\cot y)^{\b-1} \geq \cot y (\cot x)^{\b-2} = I.
\]

\paragraph{\bf{Case 2: $x+y > \f{\pi}{2}$}}
In this case, $J$ contains the term $\cot (\pi - x- y) \geq 0$.

\textbf{Case 2.a:} $x > \f{\pi}{3}$. Since $y \geq x \geq \f{\pi}{3}$, we have $\cot y \leq \cot x, \cot x \les 1$ and $\cot( \pi - x- y) \geq \cot \f{\pi}{3} \gtr 1$. It follows 
\[
I \leq \cot x (\cot x)^{\b-2} = (\cot x)^{\b-1}\les 1 \les  \cot( \pi - x- y) \les   J .
\]

\textbf{Case 2.b:} $x \leq \f{\pi}{3}$ and $ \pi - x - y \leq y$. Since $1 \les \cot x$ and $\cot z$ is decreasing on $[0, \pi]$, we yield 
\[
I \les \cot y \leq \cot(\pi - x- y).
\]

\textbf{Case 2.c:} $x \leq \f{\pi}{3}$ and $ \pi - x - y \geq y$. Since $ y \geq \f{1}{2}(x+y) \geq \f{\pi}{4}, x \leq \f{\pi}{3}$, we have 
\[
\cot x \gtr  x^{-1}, \quad \cot y  \gtr \cos y \gtr \pi / 2- y.
%\quad \cot x \cot y \gtr \f{\pi/2- y}{x} \gtr 1,
\]
%where in the last inequality, we have used $\pi/2- y \geq x$ from $\pi - x - y \geq y$.
Note that  $\pi - x - y \geq y$ implies $\pi/2 - y \geq x / 2$. We yield 
\[
\cot x \cot y \gtr  \f{\pi/2 - y}{x} \gtr 1,
\]
which along with $1 \les \cot x, \cot y \leq \cot x, \b \in [1,2]$ imply 
\[
I \leq \cot y (\cot y)^{\b-2}
= (\cot y)^{\b-1} \les (\cot x \cot y)^{\b-1}
\les (\cot x \cot y)^{\b} \les J.
\]
We conclude the proof of \eqref{eq:trigon2}. 

%\paragraph{ \bf{ Proof of \eqref{eq:trigon3}} 
Next, we prove \eqref{eq:trigon3}
\[
II \teq  \cot y \one_{ y\geq \pi/3}
\les (\cot x \cot y)^{\b} + \one_{x+y \geq \pi/2} \cot(\pi - x- y)  = J.
\]

We focus on $ y \geq \pi / 3$.
We consider three cases: (a) $x + y \leq \pi /2$, (b) $x+y > \pi/2$ and $\pi-x- y\leq y$, (c) $x+y > \f{\pi}{2}, \pi - x - y \geq y$. In the first case, from \eqref{eq:tri_pf_1}, we have $J \geq 1\gtr II$. In the second case, since $\cot z$ is decreasing, we get 
\[
J \geq \cot(\pi-x-y) \geq \cot(y) \geq II.
\]

In the third case, since $ x \leq \pi - 2 y \leq \pi - 2\pi / 3\leq \pi / 3$, $ y \geq \f{\pi}{3}$
and $\pi /2 - y \geq x/2$, using the same argument as that in the above Case 2.c, we yield 
\[
\cot x \cot y \gtr 1,  \quad J \geq  (\cot x  \cot y)^{\b} \gtr 1 \gtr II . 
\]
So far, we conclude the proof of \eqref{eq:trigon3} and Lemma \ref{lem:trigon}.
\end{proof}

%We have the following 

%the following Lemma shows that 
The initial data constructed in Section \ref{sec:non_stab} enjoys the following regularity in Sobolev space.

\begin{lem}\label{lem:sobolev}
Suppose that $\om_0$ satisfies $\om_0 + \om_{\al} \in C^{\inf}(S^1 \bsh \{ 0\})$ and $\om_0 \in C^{2}( -\pi/3, \pi/3)$, then $\om_0 + \om_{\al} \in H^s$ for any $s < \al + \f{1}{2}$.
\end{lem}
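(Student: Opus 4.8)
The plan is to localize the regularity analysis near the single bad point $x=0$, away from which $\om_0+\om_\al$ is already smooth (hence in $H^t$ for all $t$ locally), and then quantify the borderline Sobolev regularity of $\om_\al$ itself, which behaves like $-\sgn(x)|x|^\al$ near the origin. First I would fix a smooth cutoff $\chi$ supported in $(-\pi/3,\pi/3)$ with $\chi\equiv 1$ on $(-\pi/4,\pi/4)$, and write $f\teq\om_0+\om_\al=\chi f+(1-\chi)f$. On $\supp(1-\chi)$ the function $f$ agrees with a $C^\infty(S^1\setminus\{0\})$ function away from a neighborhood of $0$, so $(1-\chi)f\in C^\infty(S^1)\subset H^s$ for every $s$. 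Thus it suffices to show $\chi f\in H^s$ for $s<\al+\tfrac12$. Since $\om_0\in C^2((-\pi/3,\pi/3))$, we have $\chi\om_0\in H^2\subset H^s$ for $s<2$; because $\al<1$, $\al+\tfrac12<\tfrac32<2$, so the only genuine contribution is $\chi\om_\al$.

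Next I would analyze $g\teq\chi\om_\al$. Recall from \eqref{eq:wal} that $\om_\al=-\sgn(x)|\sin x|^\al c_\al$, so near $0$, $g(x)=-c_\al\,\sgn(x)|x|^\al\,(1+O(x^2))\chi(x)$, and $g$ is $C^\infty$ away from $0$. The standard way to read off the Sobolev exponent is via the Fourier side: the model function $\phi(x)\teq\sgn(x)|x|^\al$ (suitably cut off) has Fourier coefficients decaying like $|k|^{-\al-1}$, which places it in $H^s$ precisely for $s<\al+\tfrac12$ (since $\sum|k|^{2s}|k|^{-2\al-2}<\infty\iff 2s-2\al-2<-1$). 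Concretely I would either (i) compute $\widehat{g}(k)$ directly by integration by parts once, using that $g'(x)=-c_\al\al|x|^{\al-1}(1+O(x^2))\chi(x)+C^\infty$, so $g'$ is integrable with $|\widehat{g'}(k)|\les\int_0^{\pi/3}|x|^{\al-1}\,dx\cdot$(oscillation bound)$\les|k|^{-\al}$, hence $|\widehat{g}(k)|\les|k|^{-1-\al}$ for $k\neq 0$; or (ii) invoke the characterization $H^s=B^s_{2,2}$ and the fact that $\om_\al\in C^\al$ with the sharp Hölder exponent and the precise one-sided behavior at $0$, together with the smoothness elsewhere, to conclude $\om_\al\in B^s_{2,2}$ for all $s<\al+\tfrac12$ by a Littlewood–Paley / dyadic decomposition: the piece of $\om_\al$ at frequency $\sim 2^j$ has $L^2$ norm $\les 2^{-j(\al+1/2)}$ because the singularity is a single algebraic cusp of order $\al$. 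Summing $\sum_j 2^{2js}2^{-2j(\al+1/2)}$ converges iff $s<\al+\tfrac12$.

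The cleanest route, and the one I would actually write out, is route (i): reduce to the explicit model cusp. Write $\om_\al(x)=-c_\al\,\sgn(x)|x|^\al+r(x)$ where $r\in C^{1,\beta}$ near $0$ for some $\beta>0$ (from the Taylor expansion $|\sin x|^\al=|x|^\al(1-\tfrac{\al}{6}x^2+\dots)$, the correction is $|x|^{\al}\cdot O(x^2)=O(|x|^{\al+2})$, which is $C^2$ since $\al+2>2$), and $r$ is smooth away from $0$; so $\chi r\in H^2\subset H^s$. For the model term, set $p(x)=\sgn(x)|x|^\al\chi(x)$; then $p$ is odd, and its sine coefficients satisfy $\hat p_k=\tfrac{2}{\pi}\int_0^{\pi/2}x^\al\chi(x)\sin(2kx)\,dx+O(|k|^{-N})$ (the tail from where $\chi$ varies contributes rapidly decaying terms), and one integration by parts plus the elementary bound $\big|\int_0^{A}x^{\al-1}\sin(2kx)\,dx\big|\les|k|^{-\al}$ (rescale $u=2kx$) gives $|\hat p_k|\les|k|^{-1-\al}$, whence $\|p\|_{H^s}^2\les\sum_k|k|^{2s}|k|^{-2-2\al}<\infty$ for $s<\al+\tfrac12$. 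Combining the three pieces — $(1-\chi)f\in C^\infty$, $\chi\om_0\in H^2$, $\chi r\in H^2$, and $\chi p\in H^s$ for $s<\al+\tfrac12$ — yields $\om_0+\om_\al\in H^s$ for all $s<\al+\tfrac12$.

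\textbf{Main obstacle.} The only real subtlety is making the Fourier-coefficient bound for the cusp $\sgn(x)|x|^\al$ rigorous and sharp: one must handle the endpoint contributions from the cutoff $\chi$ (trivial, rapidly decaying) and, more importantly, confirm that the decay rate $|k|^{-1-\al}$ is not improved by cancellation (it is not — this is exactly why $H^{\al+1/2}$ fails and $H^s$ for $s<\al+1/2$ holds, matching the $H^s$ hypothesis with $s<\tfrac32$ when $\al$ can be taken close to $1$). This is a standard computation with oscillatory integrals $\int_0^A x^{\al-1}\sin(2kx)\,dx$, so the "hard part" is really just bookkeeping; no new ideas beyond the Littlewood–Paley / explicit-Fourier dichotomy are needed. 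One should also double-check that $c_\al$ is finite and positive (guaranteed by \eqref{eq:est_cal}: $|c_\al-1|\les 1-\al$) so that the model term is genuinely present with a nonzero coefficient, though for the upper bound $\om_0+\om_\al\in H^s$ only finiteness of $c_\al$ is used.
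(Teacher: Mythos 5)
Your proposal is correct and follows essentially the same route as the paper: localize with a cutoff so that only the cusp of $\om_\al$ at $x=0$ matters, then show the Fourier (sine) coefficients of the cut-off cusp decay like $|k|^{-1-\al}$ by integration by parts, which gives membership in $H^s$ exactly for $s<\al+\f12$. The only cosmetic differences are that you first peel off a $C^2$ remainder to reduce to the model cusp $\sgn(x)|x|^{\al}$ and bound the resulting oscillatory integral by rescaling, whereas the paper estimates the coefficients of $\chi\om_{\al}$ directly by splitting the integral at $x=1/k$ and integrating by parts a second time on the outer region.
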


\begin{proof}
Let $\chi$ be a smooth even cutoff function on $S^1$ ($2\pi$ periodic) with $\chi(x)= 1$ for $|x| \leq \f{\pi}{8}$ and $\chi(x) = 0$ for $|x| \geq \f{\pi}{4}$. We decompose $\om_0 + \om_{\al}$ as follows 
\[
\om_0 + \om_{\al} = \chi \om_{\al} + \chi \om_0 + (1-\chi)(\om_0 + \om_{\al}) = I + II + III.
\]
Clearly, $II, III \in C^2 \subset H^{s_1}$ for any $s_1 \leq 2$. Denote $f_{\al} = \chi \om_{\al}$. Since $f_{\al}$ is odd, it enjoys an expansion $\om_{\al}(x) = \sum_{k \geq 1} a_k \sin (kx)$. 
Next, we estimate $a_k$. Using integration by parts, we yield 
\[
a_k = C \int_0^{\pi} f_{\al}  \sin (kx) dx
=  \f{C}{k} \int_0^{\pi} f^{\prime}_{\al} \cos k x dx
=  \f{C}{k} \int_0^{\pi} (\one_{x\leq 1/k} + \one_{ 1/k \leq x \leq \pi/4} ) f^{\prime}_{\al} \cos k x dx \teq J_1 + J_2,
%= \f{C}{k} (\int_0^{1/k}  +  \int_{1/k}^{\pi/2} )f^{\prime}_{\al} \cos k x  dx
%f^{\prime}_{\al} \cos k x  dx + \int_{1/k}^{\pi/2} f^{\prime}_{\al} \cos k x  dx )  
\]
where the restriction $\one_{x \leq \pi/4}$ is due to the fact that $\chi$ is supported in $|x| \leq \pi/4$. Recall the formula of $\om_{\al}$ from \eqref{eq:wal}. A direct calculation yields 
\[
|J_1| \les_{\al} k^{-1} \int_0^{1/k} |f_{\al}^{ \prime }| dx
\les \int_0^{1/k} |x|^{\al-1} dx \les_{\al} k^{-1-\al}.
\]
For $J_2$, using $\cos k x = \pa_x \f{\sin kx}{k}, |\pa_x^i \om_{\al} (x)|\les |x|^{\al-i}$ and integration by parts again, we derive 
\[
\bal
|J_2 | &\les_{\al} k^{-1} \B( \B|  \f{\sin (k \cdot k^{-1}) }{k} f_{\al}^{\prime}( \f{1}{k} )  
 \B|  + \f{1}{k} \int_{1/k}^{\pi/4}  \B| f^{\prime \prime}_{\al} \sin k x \B| dx  
 \B)
 \les_{\al} k^{-1} ( \f{1}{k} (\f{1}{k})^{\al-1} 
+ \f{1}{k} \int_{1/k}^{\pi/4} |x|^{\al - 2} dx
 ) \\
& \les_{\al} k^{-1} ( k^{-\al} + k^{-1}  (k^{-1})^{\al-1} ) \les_{\al} k^{-\al-1}.
 \eal
\]

Therefore, for $s < \al + \f{1}{2}$, we establish 
\[
\sum_{k\geq 1} |a_k|^2 k^{2s} \leq \sum_{ k \geq 1} k^{-2 - 2\al + 2s} < +\infty,
\]
which implies $ \om_{\al} \chi = f_{\al} \in H^s$. We conclude the proof.
\end{proof}

%\subsection{\bf{Rigorous error control}}\label{app:ver}

\subsection{\bf{Rigorous verification }}\label{app:ver}

To establish Lemma \ref{lem:comp}, we need to verify conditions \eqref{eq:ver10}, \eqref{eq:ver3} in Lemma \ref{lem:pos_W}. Note that condition \eqref{eq:ver2} has been verified in Appendix \ref{app:convex}.

Since the kernel $W_{1,2}$ is explicit  \eqref{eq:tK1},\eqref{eq:kernelW}, to simplify the derivations, we have used \textit{Mathematica}. All the symbolic derivations and simplification steps are given in Mathematica (version 12). We only provide the steps that require estimates. All the numerical computations and quantitative verifications are performed in MATLAB (version 2019a) in double-precision floating-point operations. The Mathematica and MATLAB codes can be found via the link \cite{DG2021Matlabcode}. We will also use interval arithmetic \cite{rump2010verification,moore2009introduction} and refer the discussions to Appendix \ref{app:interval}.

To obtain \eqref{eq:ver10}, using the approach in Section \ref{sec:ver}, we only need to verify \eqref{eq:ver1}. Conditions \eqref{eq:ver1} and \eqref{eq:ver3} involve a finite number of integrals and the Lipschitz constant $b_1$ in \eqref{eq:lip}. Since these conditions are not tight, we use the following simple method to verify them. %Similar ideas ave been used in \cite{chen2019finite,chen2021HL} for rigorous estimates of integrals. 

%\subsubsection{Rigorous esim}

%\subsubsection{ Rigorous estimates of integrals.}

To estimate the integral of $f$ on $[A, \infty)$ with $A \geq0$, we first choose $B$ sufficiently large and partition $[A, B]$ into $A = y_0  < y_1 < ... < y_N = B$. We will estimate the decay of $f$ in the far field in Appendix \ref{app:decay}, and treat the integral in $[B,\infty)$ as a small error. For each small interval $I = [y_i, y_{i+1}]$,
%$[y_i, y_{i+1}]$ with $i < N$, 
we use a trivial first order method to estimate the integral 
\beq\label{eq:ing_low}
 |I| \min_{x\in I} f(x) \leq \int_I f(x) dx \leq  |I| \max_{ x\in I} f(x), \quad |I| = y_{i+1} - y_i.
 %(b-a) \min_{x\in I} f(x) \leq \int_a^b f(x) dx \leq  (b-a) \max_{ x\in [a,b]} f(x).
\eeq

Denote by $ f^u(I), f^l(I)$ the upper and lower bounds for $f$ in $I$. To use \eqref{eq:ing_low}, we estimate $f^l(I), f^u(I)$ for each interval $I=[y_i, y_{i+1}]$. 
For simplicity, we drop the dependence on $I$.
%To simplify the notation, we drop the dependence on $I$.

We simplify $W_{1,2}$ defined in \eqref{eq:tK1},\eqref{eq:kernelW} as $W$. All the integrands involved in \eqref{eq:ver1}, \eqref{eq:ver3}, \eqref{eq:lip} are $ W(x) \cos(x \xi)$ for $\xi = ih, i =0,1,.., \f{M}{h}, |W(x) x|, |W^{\prime \prime \prime}(x)|$. To obtain the piecewise upper and lower bounds for these integrands, using basic interval arithmetics, see, e.g. \cite{gomez2019computer}
\beq\label{eq:int_basic}
\bal
&(f g )^u = \max( f^u g^u, f^l g^u, f^u g^l, f^l g^l), \quad  (f g )^l = \min( f^u g^u, f^l g^u, f^u g^l, f^l g^l),   \\
&|f|^u = \max( |f^l|, |f^u|)    , \quad
(f-g)^l = f^l - g^u, \quad (f-g)^u = f^u - g^l,
\eal
\eeq
we only need to obtain the bounds for $\cos(x\xi), W, |W x|, W^{\prime \prime \prime}$. Those for $x$ are trivial.

\subsubsection{ Upper and lower bounds for $W, W  x, W^{\prime \prime \prime}$}
We simplify $\td K_{1,2}$ in \eqref{eq:tK1} as $\td K$. Denote $s= e^x$. Using the chain rule and $W(x) = \td K(e^x) = \td K(s)$, we get 
\[
\bal
 \pa_x^3 W(x) &= \pa_x^3 \td K(e^x) 
= e^{3x} (\pa^3 \td K)(e^x) + 3 e^{2x} (\pa^2 \td K)(e^x) + e^x (\pa \td K)(e^x) \\
&=s^3 \pa^3 \td K(s) + 3 s^2 \pa^2 \td  K(s) + s \pa \td K(s) \teq D^3 \td K(s).
\eal
\]
Since $e^x$ is increasing, the bounds for $W$ on $[x_l, x_u]$ and those for $\td K$ on $[e^{x_l}, e^{x_u}]$ enjoys 
%$[s_l, s_u]$with $s_l = e^{x_l}, s_u  =e^{x_u}]$ 
\beq\label{eq:W_lu} 
\bal
f^l &= g^l( e^{x_l}, e^{x_u}), \quad  f^u = g^u( e^{x_l}, e^{x_u}),  \\
 (f, g) &= (W, \td K), \quad  ( \pa_x^3 W, D^3 \td K) , \quad ( W(x) x, \td K(s) \log s)
% f^l = g^l( e^{x_l}, e^{x_u}), \quad  f^u = g^u( e^{x_l}, e^{x_u}), 
% \quad (f, g) = (W, \td K), \mathrm{ \ or \ } (f, g) = ( \pa_x^3 W, D^3 \td K).
% W_l = \td K^l( e^{x_l}, e^{x_u}), \  W_u = \td K^u( e^{x_l}, e^{x_u}),
% \quad
% \pa_x^3 W_l = D^3 \td K^l( e^{x_l}, e^{x_u}), \  \pa_x^3 W_u = D^3 \td K^u( e^{x_l}, e^{x_u}),
\eal
\eeq

%to obtain the upper and lower bound of $W, \pa_x^3 W$, 
Thus it suffices to get bounds for $\td K, \td K \log (s), D^3 \td K$. Recall $\td K$ from \eqref{eq:tK1} with $\b=2$. 
\beq\label{eq:tK_b2}
 \td K(s) = ( s^{ \f{3}{2}}  + s^{- \f{3}{2}} ) \log \B| \f{s+1}{s-1} \B| - \f{  s^{ \f{ 3}{2}}  - s^{- \f{ 3}{2}} }{s^2 -1} 2s 
 = ( s^{ \f{3}{2}}  + s^{- \f{3}{2}} ) \log \B| \f{s+1}{s-1} \B| - 2 s^{-\f{1}{2} } \f{s^2 + s + 1}{s+1}.
\eeq
%For $ 1 \leq s_l < s_u$, 

In the interval $s \in [s_l, s_u]$ with  $ 1 \leq s_l < s_u$, using monotonicity, e.g. $s^{3/2} \in  [s_l^{3/2},s_u^{3/2}]$,
%, s^{-1/2} \in [s_u^{-1/2}, s_l^{-1/2}] $, 
the fact that $\log \B| \f{s+1}{s-1} \B| $ is decreasing  and \eqref{eq:int_basic},  we get the upper and lower bounds for $\td K$% on $[s_l, s_u]$ 
\beq\label{eq:K_lu}
\bal
\td K^l(s_l, s_u) &= (s_l^{3/2} + s_u^{-3/2} ) \log \B| \f{ s_u+1}{s_u-1} \B|
- 2 s_l^{-1/2}  \f{s_u^2 + s_u +1}{s_l + 1} , \\
\td K^u(s_l, s_u) &= (s_u^{3/2} + s_l^{-3/2} ) \log \B| \f{ s_l+1}{s_l-1} \B|
- 2 s_u^{-1/2}  \f{s_l^2 + s_l +1}{s_u + 1} .
\eal
\eeq

%For $W(x)x$ in \eqref{eq:lip}, since $W(x) x = \td K(s) \log(s), s = e^x$, using \eqref{eq:W_lu}, we only need to get the bounds for $\td K(s) \log(s)$. 
Next, we consider $\td K \log s$. For $s \in [s_l, s_u]$ with $s_l \geq 1$, since $\log s \geq 0$,  we get 
\[
   \td K(s) \log (s) \leq \td K^u \log (s) \leq \max( \td K^u \log s_l, \td K^u \log s_u).
\]
Similarly, we obtain the lower bound for $ \td K \log s$.
%The bounds for $W x$ can be obtained using \eqref{eq:int_basic} and \eqref{eq:K_lu}.
 Yet, near $s=1$, the upper bound blows up due to $\log |s_l-1|$ in $\td K^u$. Note that $\log s \leq s-1$. Using \eqref{eq:log_ineq}, for $s\geq 1$, we get
\[
\pa_s ( (s-1)\log \B| \f{s+1}{s-1} \B|   ) = (\f{1}{s+1} - \f{1}{s-1}) (s-1) + \log \f{s+1}{s-1}
= -\f{2}{s+1} +  \log \f{s+1}{s-1} \geq 0.
\]
Thus, $  \log \B| \f{s+1}{s-1} \B|  (s-1) $ is increasing on $[s_l, s_u]$ and 
\[
\log \B| \f{s+1}{s-1} \B|  \log s \leq \log \B| \f{s+1}{s-1} \B|  \cdot (s-1) \leq 
\log \B| \f{s_u+1}{s_u-1} \B|  \cdot (s_u-1).
\]

We obtain the following improvement for the upper bound of $\td K(s) \log s$ on $[s_l, s_u]$
\beq\label{eq:Klog}
\td K(s ) \log( s)
\leq ( s_u^{3/2} + s_l^{-3/2}) \log \B|\f{s_u+1}{s_u-1} \B| \cdot (s_u-1) 
- 2 s_u^{-1/2}  \f{s_l^2 + s_l +1}{s_u + 1}  \cdot \log(s_l)
\eeq

For $D^3 \td K(s)$, firstly, using symbolic computation, we yield 
\beq\label{eq:D3tK_1}
\bal
D^3 \td K(s) &= \f{P_{42}(s) - P_{41}(s) + P_5(s)}{P_6(s)},  \quad  P_{42}(s) = 180 s^3 + 180 s^7 ,  \\
P_{41}(s) & = 54  s + 54 s^2  + 266  s^4 + 124  s^5 + 266  s^6  + 54 s^8 + 54 s^9 ,\\
  P_5(s) &= 27 (s^2 - 1)^4(1+s + s^2) \log \B| \f{s+1}{s-1} \B|, 
\quad P_6(s) = 8 (s-1)^3 s^{3/2}(1+s)^4 .
\eal
\eeq
Since $  1 \leq s_l < s_u$ and $P_{41}, P_{42}, P_6$ are increasing, we get $P_m^u = P_m(s_u), P_m^l = P_m(s_l)$ for index $m=41,42 $ or $m=6$. The bounds for $P_5$ are also trivial 
\[
P_5^l = 27 (s_l^2 - 1)^4(1+s_l + s_l^2) \log \B| \f{s_u+1}{s_u-1} \B|,
\  P_5^u = 27 (s_u^2 - 1)^4(1+s_u + s_u^2) \log \B| \f{s_l +1}{s_l-1} \B|.
\] 
Using the bounds for $P_{41}, P_{42}, P_5, P_6$ and \eqref{eq:int_basic}, we can further derive the bounds for $D^3 \td K$. 

% With the above bounds for $\td K$, for $s_l = e^{x_l}, s_u = e^{x_u}$, on $[x_l, x_u]$, we yield the bounds for $W$
% \[
% W_l = \td K^l( e^{x_l}, e^{x_u}), \quad 
% W_u = \td K^u( e^{x_l}, e^{x_u})
% \]

\subsubsection{Upper and lower bounds for $\cos( x \xi)$}

For $f \in C^2([a,b])$ and $x \in [a, b]$, the basic linear interpolation implies $f(x) = \f{x-a}{b-a} f(b) + \f{b-x}{b-a} f(a)+ \f{1}{2}f^{\prime \prime}( x_1) (x-a)(x-b)$ for some $x_1 \in [a,b]$ and 
\[
\min( f(a), f(b)) - \f{(b-a)^2}{8} || f^{\prime \prime }||_{L^{\inf}[a,b]} \leq f(x) \leq 
\max( f(a), f(b))+ \f{ (b-a)^2}{8} || f^{\prime \prime }||_{L^{\inf}[a,b]} .
\]
Applying the above estimate to $ f(x) = \cos(x \xi)$ and $| f^{\prime \prime}(x) | \leq \xi^2$, we derive the upper and lower bounds for $ \cos (x \xi )$ on $[a, b]$.

To verify \eqref{eq:ver1}, it suffices to get a lower bound for $G(\xi)$ with $\xi = j h$. Applying \eqref{eq:W_lu},  \eqref{eq:K_lu}, the above estimate for $\cos (x \xi)$ and \eqref{eq:ing_low}, we yield 
\[
\int_{y_i}^{y_{i+1}} \cos (x \xi) W(x) dx  \geq (y_{i+1} - y_i) \cdot I^l, \quad I(x) \teq \cos ( x \xi) W( x ) .
\]
%The lower bound 
The term $I^l$ can be obtained using \eqref{eq:int_basic}. For $y_i$ close to $0$, we should avoid using \eqref{eq:int_basic} to derive $I^l$ since it involves $ W^u( x_l, x_u) = \td K^u( e^{x_l}, e^{x_u})$ \eqref{eq:K_lu}, which blows up near $x =0$. For $x \xi \leq \pi/2$, since $\cos(x \xi) \geq 0$, we derive $I^l$ using
\[
\cos (x\xi) W(x) \geq \cos(x \xi) W^l \geq \min( (\cos (\cdot \xi))^l W^l, (\cos(\cdot \xi ) )^u W^l ) . 
\]

For large $ \xi$, the above estimate is not sharp due to large oscillation in $\cos (x \xi)$. Denote $m = \f{W_l + W_u}{2}, h_0 = b-a$. We consider an improved estimate 
\[
\bal
&\int_{a}^{b} \cos (x \xi) W(x) dx
= \int_{a}^{b} \cos (x \xi) (W(x) - m) dx 
+ m \int_a^b \cos (x \xi ) dx   \\
\geq & m \f{\sin (x \xi)}{\xi} \B|_a^b - h_0  | \cos ( x \xi ) |^u |W - m|^u
\geq %\f{W_l + W_u}{2} \f{\sin (x \xi)}{\xi} \B|_a^b
\f{W_l + W_u}{2} \f{\sin (b \xi) - \sin(a\xi)}{\xi} 
- h_0  | \cos ( x \xi ) |^u \f{W_u - W_l}{2},
\eal
\]
where we have used $W - m \in [W_l - m, W_u -m] = [ - \f{W_u -W_l}{2} ,\f{W_u - W_l}{2}]$. 

Using the above estimates, we obtain the lower bound of the integral in $G(\xi)$ \eqref{eq:def_G} in a finite domain. The integrals in \eqref{eq:lip} and \eqref{eq:ver3} in a finite domain are estimated  similarly.

\subsubsection{ Decay estimates of $W, \pa_x^3 W$}\label{app:decay}

It remains to estimate the integrals in \eqref{eq:ver1}, \eqref{eq:def_G}, \eqref{eq:lip} and \eqref{eq:ver3} in the far field. For $s>1$, using Taylor expansion, we yield 
\beq\label{eq:log_ineq2}
\log \B| \f{s+1}{s-1} \B| 
= \sum_{k\geq 1} \f{2}{2k-1} s^{-( 2k-1)}, 
\quad  \B| \log \B| \f{s+1}{s-1} \B| - \f{2}{s}  \B| \leq \f{2}{3} \sum_{k\geq 2} s^{-(2k-1)}
= \f{2}{3} \f{s^{-3}}{1-s^{-2}}.
\eeq

Using the above estimate and \eqref{eq:tK_b2}, we obtain 
\[
\bal
|\td K| &\leq \B| s^{3/2} \cdot \f{2}{s} - 2 s^{- \f{1}{2}}  \f{s^2 + s + 1}{1+s}  \B|
+ s^{3/2} \cdot \f{2}{3}\f{s^{-3}}{1-s^{-2}} + s^{-3/2} \log \B| \f{s+1}{s-1} \B|  
\teq I_1 + I_2 + I_3.
\eal
\]

Note that $I_1 = \f{2s^{-1/2}}{s+1 } \leq 2s^{-3/2}$. We derive 
\beq\label{eq:Ktail_1}
|\td K | \leq s^{-3/2} ( 2 + \f{2}{3} \f{1}{1 - s^{-2}}  + \log \B| \f{s+1}{s-1} \B| ) 
\teq s^{-3/2} \td K_{tail}(s).
\eeq

Next, we estimate  $D^3 \td K$ \eqref{eq:D3tK_1}. Using \eqref{eq:log_ineq2}, we decompose $P_5$ in \eqref{eq:D3tK_1} as follows 
\[
|P_5 - P_{5, M}| \leq  P_{5,err}, \quad 
P_{5,M} =  27 (s^2 - 1)^4(1+s + s^2) \f{2}{s},
\quad  P_{5, err} = 27 (s^2 - 1)^4(1+s + s^2) \f{2}{3} \f{s^{-3}}{1 - s^{-2}}.
\]

Recall $P_{41}, P_{42}, P_6$ from \eqref{eq:D3tK_1}. Denote $P_7 = P_{42} - P_{41} + P_{5, M}$. We estimate \eqref{eq:D3tK_1} as follows 
\beq\label{eq:D3tK_2}
|D^3 \td K| \leq  \f{ | P_{42} - P_{41} + P_{5, M} | + P_{5, err}  }{P_6} 
\leq \f{ |P_7|}{ P_6} + \f{ P_{5,err}}{P_6}.
\eeq

By definition, $P_7$ is a sum of a polynomial of $s$ and $s^{-1}$. Simplifying the expression of $P_7$ (see details in \cite{DG2021Matlabcode}) and using the triangle inequality, we yield 
\[
|P_7| \leq P_8 = 54 + 54s^{-1} + 216 s + 270 s^2 + 288 s^3 + 58 s^4 + 16 s^5 + 482 s^6 + 
  18 s^7 \teq  s^7 P_{8, tail}(s),
\]
where $P_{8,tail} \teq P_8 (s) s^{-7}$ is decreasing in $s$.  %The detail is recorded in Mathematica. 
For $P_6$ \eqref{eq:D3tK_1} and the error term $P_{5, err}$, we have 
\[
\bal
P_6 &= 8 (-1 + s)^3 s^{3/2} (1 + s)^4
\geq  s^{7 + 3/2} \cdot 8 (1- s^{-1})^3 
 \teq s^{7 + 3/2} P_{6, tail} (s),  \\
\f{ P_{5,err}}{P_6 } &= \f{9 (1 + s + s^2)}{ 4 s^{5/2} (1 + s)}
\leq s^{-5/2} \f{9(1+s)}{4} \leq s^{-3/2} \f{9}{4} (1 + s^{-1})
\teq s^{-3/2} E_{tail}(s).
 \eal
\]

Plugging the above estimates in \eqref{eq:D3tK_1}, \eqref{eq:D3tK_2}, we obtain 
\beq\label{eq:Ktail_2}
|D^3 \td K(s)| \leq \f{ |P_7| }{P_6}  +  \f{P_{5, err}}{ P_6}
%\f{ | P_{41 } - P_{42} + P_{5, M} | + P_{5,err} }{P_6}
\leq 
\f{ P_8 }{P_6}  +  \f{P_{5, err}}{ P_6}
\leq s^{- \f{3}{2}} ( \f{P_{8, tail}}{ P_{6,tail}} + E_{tail})
\teq s^{- \f{3}{2} }  \td K_{tail, 2}.
\eeq

Clearly, $\td K_{tail}(s) $ is decreasing. Since $P_{8,tail}, E_{tail}$ are decreasing and $ P_{6,tail}$ is increasing, $\td K_{tail, 2}$ is decreasing. Using $W(x) = \td K(e^x)$, we estimate the integrals in  $G(\xi)$ \eqref{eq:def_G} and \eqref{eq:lip} in the far field 
as follows 
\beq\label{eq:int_tail}
\bal
\B| \int_B^{\inf} W(x) \cos (x\xi) dx \B| 
&\leq \td K_{tail}(e^B) \int_B^{\inf} e^{-3x/2} dx 
= \td K_{tail}(e^B) \f{2}{3} e^{-3B/2} ,  \\
 \int_B^{\inf} |W(x) x | dx 
&\leq \td K_{tail}(e^B) \int_B^{\inf} e^{-3x/2}  x dx 
= \td K_{tail}(e^B) ( \f{2 B}{3} + \f{4}{9}) e^{-3B/2}, 
\eal
\eeq
and treat them as error. Similarly, we estimate the integral in \eqref{eq:ver3} in the far field. 

So far, we conclude the estimates of all the integrals in \eqref{eq:ver1}, \eqref{eq:def_G}, \eqref{eq:lip} and \eqref{eq:ver3}. 

% %The tail of the integral in \eqref{eq:ver3} 
% Similarly, we estimate the tail of the integral in \eqref{eq:ver3}.
% We conclude the estimates of the integrals in \eqref{eq:ver1}, \eqref{eq:def_G}, \eqref{eq:lip} and \eqref{eq:ver3}. 

\subsubsection{ Interval arithmetic}\label{app:interval}

To implement the above estimates and verify \eqref{eq:ver1}, \eqref{eq:ver3} rigorously, we adopt the standard method of interval arithmetic \cite{rump2010verification,moore2009introduction}. In particular, we use the MATLAB toolbox INTLAB (version 11 \cite{Ru99a}) for the interval computations. Every single real number $p$ involved in the above estimates is represented by an interval $[p_l, p_r]$ that contains $p$, where $[p_l ,p_r]$ are some floating-point numbers. We refer to \cite{gomez2019computer,chen2019finite,chen2021HL} for related discussion.

% \subsubsection{ Interval arithmetic}

% To implement the estimates below and verify \eqref{eq:ver1}, \eqref{eq:ver3} rigorously, we adopt the standard method of interval arithmetic (see \cite{rump2010verification,moore2009introduction}). Every single real number $p$ involved in the estimates is represented by an interval $[p_l, p_r]$ that contains $p$, where $[p_l ,p_r]$ are some floating-point numbers. We refer to \cite{gomez2019computer,chen2019finite,chen2021HL} for more discussion. In particular, we use the MATLAB toolbox INTLAB (version 11 \cite{Ru99a}) for the interval computations. 

\bibliographystyle{plain}
\bibliography{selfsimilar}

\end{document}